\setlist[enumerate]{parsep=0pt plus 4pt,topsep=0pt plus 4pt}
\newcommand\excise[1]{}
\newtheorem{theorem}{Theorem}
\newtheorem{thm}{Theorem}[section]
\newtheorem{lemma}[thm]{Lemma}
\newtheorem{prop}[thm]{Proposition}
\newtheorem{conj}[thm]{Conjecture}
\newtheorem{cor}[thm]{Corollary}
\newtheorem{claim}[thm]{Claim}
\newtheorem*{claim*}{Claim}
\theoremstyle{definition}
\newtheorem{defn}[thm]{Definition}
\newtheorem{remark}[thm]{Remark}
\newtheorem{example}[thm]{Example}
\newtheorem{notation}[thm]{Notation}
\numberwithin{equation}{section}
\newcounter{separated-sec}
\newcounter{collapse-sec}
\newcommand{\Ring}[1]{\ensuremath{\mathbb{#1}}}
\renewcommand\>{\rangle}
\newcommand\<{\langle}
\newcommand\BB{\mathfrak{B}}
\newcommand\DD{\mathbb{D}}
\newcommand\EE{\mathbb{E}}
\newcommand\HH{\mathcal{H}}
\newcommand\LL{\mathcal{L}}
\newcommand\MM{\mathcal{M}}
\newcommand\NN{\Ring{N}}
\newcommand\RR{\Ring{R}}
\newcommand\XX{\mathcal{X}}
\newcommand\bb{\mathfrak{b}}
\newcommand\dd{\mathbf{d}}
\newcommand\xx{\mathbf{x}}
\newcommand\bP{\mathbf{P}}
\newcommand\cC{\mathcal{C}}
\newcommand\cF{\mathcal{F}}
\newcommand\cI{\mathcal{I}}
\newcommand\cN{\mathcal{N}}
\newcommand\cP{\mathcal{P}}
\newcommand\cR{\mathcal{R}}
\newcommand\cS{\mathcal{S}}
\newcommand\fC{\mathfrak{C}}
\newcommand\oB{{\hspace{.2ex}\overline{\hspace{-.2ex}B}\hspace{.2ex}}}
\newcommand\oC{{\hspace{.2ex}\overline{\hspace{-.2ex}C}\hspace{.2ex}}}
\newcommand\oX{{\hspace{.2ex}\overline{\hspace{-.2ex}X}\hspace{.2ex}}}
\newcommand\tF{\widetilde{F}}
\newcommand\ta{\widetilde{a}}
\newcommand\ud{d}
\newcommand\td{\widetilde{d}}
\newcommand\ve{\varepsilon}
\newcommand\bmu{{\bar\mu}}
\newcommand\bnu{{\bar\nu}}
\newcommand\hmu{{\widehat\mu}}
\newcommand\Fd{F_\delta}
\newcommand\Gn{G_n}
\newcommand\IZ{\cI(Z)}
\newcommand\Ti{T_\infty}
\newcommand\ev{\mathscr{E}}
\newcommand\Cmu{C_{\hspace{-.2ex}\mu}}
\newcommand\Emu{E_\mu}
\newcommand\Fdn{F_{\delta_n}}
\newcommand\Fmu{F}
\newcommand\Gmu{\Gamma_{\hspace{-.1ex}\mu}} 
\newcommand\Rmu{\ol{\RR_+\mu}}
\newcommand\Smu{S_\bmu\MM}
\newcommand\Tmu{T_\bmu\MM}
\newcommand\Ups{\Upsilon} 
\newcommand\Upt{\noheight{$\widetilde\Upsilon$}} 
\newcommand\bPi{\bP_\infty}
\newcommand\bgn{\ol g_n}
\newcommand\evn{\ev_n}
\newcommand\muL{\mu^\LL}
\newcommand\Chmu{C_{\hspace{-.2ex}\hmu}}
\newcommand\Cmue{C_{\hspace{-.2ex}\mu}^{\hspace{.05ex}e}}
\newcommand\Emue{E_\mu^{\hspace{.1ex}e}}
\newcommand\Fdnp{F_{\delta^{\hspace{.1ex}\prime}_n}}
\newcommand\Lmui{\LL^{-1}_{|\mu|}}
\newcommand\NLmu{N_{\muL}}
\newcommand\SEmu{S\hspace{-.2ex}\Emu} 
\newcommand\Tmue{T^{\hspace{.1ex}e}_\bmu\MM}
\newcommand\Tmus{T^{\hspace{.1ex}s}_\bmu\MM}
\newcommand\Tmux{T_\bmu\XX}
\newcommand\into{\hookrightarrow}
\newcommand\oCmu{\oC_{\hspace{-.45ex}\mu}}
\newcommand\oXnL{\oX{\hspace{-.2ex}}_n^\LL}
\newcommand\onto{\twoheadrightarrow}
\newcommand\ttdn{\noheight{$\theta_{\td_n}$}}
\newcommand\tudn{\noheight{$\theta_{\ud_n}$}}
\newcommand\wXnL{X_n^\LL}
\newcommand\oCmue{\oC{}_{\hspace{-.2ex}\mu}^{\hspace{.05ex}e}}
\newcommand\oSCmu{S\oCmu} 
\newcommand\pDelta{\Delta'}
\newcommand\pdelta{\delta^{\hspace{.2ex}\prime}}
\newcommand\nablaq{\nabla_{\hspace{-.25ex}q\hspace{.25ex}}}
\newcommand\nablamu{\nabla_{\hspace{-.25ex}\mu}}
\newcommand\nablatF{\nabla_{\hspace{-.4ex}\tF}}
\newcommand\ppDelta{\Delta''}
\newcommand\nablabmu{\nabla_{\hspace{-.25ex}\bmu}}
\renewcommand\implies{\Rightarrow}
\newcommand\ol[1]{{\overline{#1}}}
\newcommand\wt[1]{{\widetilde{#1}}}
\newcommand\interior[1]{{\kern0pt#1}^{\mathrm{o}}}
\DeclareMathOperator*\argmax{argmax}
\DeclareMathOperator*\argmin{argmin}
\DeclareMathOperator\CAT{CAT}
\DeclareMathOperator\Cov{Cov} 
\DeclareMathOperator\Disc{Disc}
\DeclareMathOperator\hull{hull}
\DeclareMathOperator\supp{supp}
\DeclareMathOperator\tangCOV{\Sigma}
\DeclarePairedDelimiter\abs{\lvert}{\rvert}
\newcommand\noheight[1]{\raisebox{0pt}[0pt][0pt]{#1}}
\definecolor{teal}{HTML}{029386}
\begin{document}

\mbox{}\vspace{-3ex}
\title[Central limit theorems for Fr\'echet means on stratified spaces]%
{Central limit theorems for Fr\'echet means\\on stratified spaces}%
\author{Jonathan C. Mattingly}
\address{\rm Departments of Mathematics and of Statistical Sciences,
Duke University, Durham, NC 27708}
\urladdr{\url{https://scholars.duke.edu/person/jonathan.mattingly}\vspace{-1ex}}
\author{Ezra Miller}
\address{\rm Departments of Mathematics and of Statistical Sciences,
Duke University, Durham, NC 27708}
\urladdr{\url{https://scholars.duke.edu/person/ezra.miller}\vspace{-1ex}}
\author[Do Tran]{Do Tran}%
\address{\rm Georg-August Universit\"at at G\"ottingen, Germany\vspace{-1ex}}

\makeatletter
  \@namedef{subjclassname@2020}{\textup{2020} Mathematics Subject Classification}
\makeatother
\subjclass[2020]{Primary: 60F05, 53C23, 60D05, 60B05, 49J52, 62R20,
62R07, 57N80, 58A35, 47H14, 58C20, 49J50, 58Z05, 53C80, 62G20, 62R30, 28C99;
Secondary: 58K30, 57R57, 62G35, 62G10, 90C17, 90C31, 49Q12, 92B10}

\date{11 November 2023}

\begin{abstract}
Fr\'echet means of samples from a probability measure~$\mu$ on any
smoothly stratified metric space~$\MM$ with curvature bounded above
are shown to satisfy a central limit theorem (CLT).  The methods and
results proceed by introducing and proving analytic properties of the
\emph{escape vector} of any finitely supported measure~$\delta$
in~$\MM$, which records infinitesimal variation of the Fr\'echet
mean~$\bmu$ of~$\mu$ in response to perturbation of~$\mu$ by adding
the mass $t\delta$ for $t \to 0$.  The CLT limiting distribution~$\cN$
on the tangent cone~$\Tmu$ at the Fr\'echet mean~$\bmu$ is
characterized in four ways.  The first uses tangential collapse~$\LL$
to compare $\Tmu$ with a linear space and then applies a distortion
map to the usual linear CLT to transfer back to $\Tmu$.  Distortion
is defined by applying escape after taking preimages under~$\LL$.  The
second characterization constructs singular analogues of Gaussian
measures on smoothly stratified spaces and expresses~$\cN$ as the
escape vector of any such \emph{Gaussian mass}.  The third
characterization expresses~$\cN$ as the directional derivative, in the
space of measures on~$\MM$, of the barycenter map at~$\mu$ in the
(random) direction given by any Gaussian mass.  The final
characterization expresses~$\cN$ as the directional derivative, in the
space~$\cC$ of continuous real-valued functions on~$\Tmu$, of a
minimizer map, with the derivative taken at the Fr\'echet function $F
\in \cC$ along the (random) direction given by the negative of the
Gaussian tangent field induced by~$\mu$.  Precise mild hypotheses on
the measure~$\mu$ guarantee these CLTs, whose convergence is proved
via the second characterization of~$\cN$ by formulating a duality
between Gaussian masses and Gaussian tangent fields.
\end{abstract}

\maketitle
\vspace{-1.5ex}
\tableofcontents
\vspace{-3ex}

\section*{Introduction}\label{s:intro}

\addtocontents{toc}{\protect\setcounter{tocdepth}{1}}
\subsection*{Overview}

The central limit theorem (CLT) in~$\RR^d$ records how the averages of
size~$n$ samples are asymptotically distributed as $n$ grows and the
differences between the population and empirical averages are rescaled
by a factor of~$\sqrt n$.  In nonlinear intrinsic geometric settings,
averages are replaced by Fr\'echet means, which are expected
square-distance minimizers \cite{frechet1948}.  Laws of large numbers
for Fr\'echet means have been known for nearly half a century in great
generality for samples from separable metric spaces
\cite{ziezold1977}.  However, geometric CLTs have been much less
forthcoming, with the first results in smooth settings arriving in the
early 2000s \cite{bhattacharya-patrangenaru2003,
bhattacharya-patrangenaru2005}.  The few CLTs available in singular
(that is, non-smooth) contexts occur on spaces built from flat pieces
glued together in combinatorial ways \cite{hotz-et-al.2013,
barden-le-owen2013, kale-2015, barden-le-owen2018, barden-le2018}.  Of
these, only \cite{barden-le2018}, whose CLT occurs on nonnegatively
curved spaces glued from right-angled orthants,
treats phenomena at singularities in codimension~more~than~$2$.

General stratified sample spaces are considered explicitly in prior
research; see \cite[Section~2.1]{eltzner-huckemann2019} and
\cite{harms-michor-pennec-sommer2020} for apt, comprehensive reviews
of the history and literature, including (in the former case) a
summary of hypotheses that various known CLTs assume.  However,
non-smooth variation of Fr\'echet means and more general
descriptors have until now been skirted, as pointed out in
\cite[Remark~2.8(ii)]{eltzner-huckemann2019}, by either assuming or
proving that the relevant descriptor lies in a smooth space---or at
least in a smooth part of the space of descriptors.  The goal here is
to face the singularities head on, treating truly singular variation
of empirical Fr\'echet means in a general geometric context.  Even
deciphering the form taken by a CLT in the singular setting is a major
theoretical stride.

Smooth CLTs proceed by approximating a manifold with its tangent space
at the Fr\'echet mean.  The known singular CLTs reduce to linear cases
by gluing flat pieces.  What should a CLT look like on general
non-smooth spaces, where tangent spaces are not linear and no
combinatorial gluing or flat pieces are available?  Our main results
answer this question by
\begin{itemize}
\item%
highlighting appropriate classes of singular spaces and measures on
them;
\item%
identifying the ``Gaussian'' random variables to which suitably
rescaled empirical barycenters converge;
\item%
understanding convergence to these distributions; and
\item%
encapsulating how geometry of the space is reflected in the limiting
distribution
\end{itemize}
to formulate and prove the first general CLT for Fr\'echet means in
singular spaces.  This paper is the final in a series
whose prior three installments develop relevant convex geometry in
tangent cones of spaces with curvature bounded above
\cite{shadow-geom}, introduce suitable singular spaces and show how to
collapse them onto smooth spaces \cite{tangential-collapse}, and prove
preliminary central limiting convergence along tangent rays at
singular Fr\'echet means in these spaces \cite{random-tangent-fields}.

\subsection*{Motivations}

Our initial impetus to consider CLTs in singular spaces comes from
statistics, where increasingly complex (``big'') data are sampled from
increasingly \mbox{complex} spaces of data objects; see
\cite{huckemann-eltzner2020, ooda-2021}, for example.  The spaces
parametrize data objects such as phylogenetic and more general trees
\cite{BHV01, holmes2003, feragen-lo-et-al2013,
lin-sturmfels-tang-yoshida2017, lueg-garba-nye-huckemann2021}, shapes
\cite{le2001, kendall-barden-carne-le99}, positive semi-definite
matrices \cite{groisser-jung-schwartzman2017, buet-pennec2023}, or
arbitrary mixtures of linear subspaces or more general smooth spaces.
Applications arise in disciplines ranging from computer vision
\cite{hartley-trumpf-dai-li2013} to medical image analysis
\cite{pennec-sommer-fletcher2020, pritchard-stephens-donnelly2000},
for instance.  Despite the wealth of examples, sophisticated
statistical techniques based on geometry currently lack mathematical
foundations for probability and analysis in singular settings.
A~non-smooth CLT serves as a crucial first step.  \mbox{Quantitative}
bounds can then lead, for example, to confidence regions (generalizing
\cite{willis-2019} for treespace) or singularity hypothesis testing
by detecting whether asymptotics in a given dataset behave in smooth
or singular ways.
In addition, the inverse Hessian (see Section~\ref{b:distortion}) is
potentially key to generalizing the Cram\'er--Rao bound on variance in
parametric statistics involving families of distributions indexed by
manifolds
to much more general settings, perhaps including essentially
nonparametric settings in which
distributions can be selected from more or less arbitrary spaces of
measures.  This approach could have impact throughout theoretical and
applied statistics, including applications to topological data
analysis via the space of
persistence~diagrams~\cite{mileyko-mukherjee-harer2011}.

The statistical perspective leads naturally to fundamental
probabilistic problems, such as characterizing Gaussian objects in
singular settings: do they have alternative interpretations in terms
of infinite divisibility, heat dissipation, or random walks on
singular spaces \cite{brin-kifer2001, bouziane2005, nye-white2014,
nye2020}?  Again, the spaces in all of these studies of Brownian
motion are glued in combinatorial ways from flat pieces, so general
foundations are called for to enable unfettered investigations into
geometric probability of singularities.  Preliminary discussions
indicate that singular Gaussians via tangential collapse are pivotal
for proving that relevant processes are martingales
\cite{arya-mukherjee-2023}.

On the metric geometry side, local features induce curvature that
deforms the asymptotics of large samples.  In the smooth CLT
\cite{bhattacharya-patrangenaru2005}, curvature enters as a correction
upon passing from a Riemannian manifold to its tangent space at the
Fr\'echet mean, where the asymptotics naturally occur.  Turning this
observation on its head, understanding asymptotics of large samples
furnishes insight into local curvature invariants.  The metric
geometry of shadows \cite{shadow-geom}, tangential collapse
\cite{tangential-collapse}, and distortion (Section~\ref{b:clt}), for
instance, generalize the concept of angle deficit to singularities of
stratified $\CAT(\kappa)$ spaces which, when integrated, could yield
global topological information.  Similarly, any advance on Brownian
motion in singular spaces could have topological consequences, in
analogy to the smooth setting, where Brownian motion leads to a proof
of the Atiyah--Singer index theorem \cite{hsu-1988}.
Hence another key motivation for this paper is to initiate a program
to probe the geometry of singularities via asymptotics of sampling.

This program in singularity theory is especially tantalizing when the
singularties are algebraic, as resolution of singularities
\cite{hironaka1964} permits comparison of smooth and singular
settings.  It suggests a functorial central limit theory that pulls
back or pushes forward local sheaf-theoretic information around
Fr\'echet means under dominant algebraic maps to correct for
distortion, just as inverse Hessians on Riemannian manifolds (or the
distortion map in our singular setting here) corrects the passage from
a space to its tangent space.  The first step would be to show that
CLTs via desingularization agree with CLTs calculated directly on
singular spaces, via the methods here, when the spaces satisfy the
relevant smoothly stratified hypotheses.  Desingularization then
extends the theory to all varieties.  Such a theory stands in analogy
with how multiplier ideals encode their analytic incarnations in terms
of locally summable holomorphic germs \cite{nadel-1990}
algebraically as sections of $\mathbb{Q}$-divisors
\cite[Remark~5.9]{demailly-2012}.  Thinking further along algebraic
lines, when samples are taken from a moduli space, so sample points
represent algebraic varieties or schemes, a CLT would summarize
variation of average behavior in families.

\vspace{-.2ex}

\subsection*{Methods}

The leaps required to formulate and derive the singular CLT involve
multiple changes in
\pagebreak[4]
perspective which, taken together, constitute a powerful toolkit for
analyzing random variables valued in singular spaces.
These include especially
\begin{enumerate}
\item\label{i:shadow}%
comparison of the tangent cone at the population mean with the tangent
cones infinitesimally near the population mean
\cite{tangential-collapse} and application of the resulting tangential
collapse to push the CLT from a singular to a linear setting;
\item\label{i:inverseHessian}%
direct geometric construction of the inverse Hessian of the Fr\'echet
function (the \emph{distortion map}), given that the Hessian itself
becomes singular in this setting;
\item\label{i:ray-by-ray}%
interpretation of the CLT as convergence of random tangent fields to a
Gaussian random field ray by ray at the Fr\'echet mean
\cite{random-tangent-fields} followed by its transformation, through a
variational problem, to full spatial variation about
the~mean;
\item\label{i:escape}%
characterization of the limiting distribution of a measure~$\mu$ as
the distribution of means after perturbing~$\mu$ by adding a
singular-Gaussian random measure.
\end{enumerate}
To orient the reader, it may help to discuss these developments in
more detail.

To begin, even the transition from sample space to tangent object at
the population mean requires novel input in the singular context,
namely the introduction \cite{tangential-collapse} of a new class of
singular spaces suitable for central limit theorems.  These metric
spaces are optimally general: they\vspace{-.5ex}
\begin{itemize}
\item%
have curvature bounded above in the usual Alexandrov sense
\cite{BBI01};
\item%
are stratified in the usual topological sense
\cite{goresky-macpherson1988}; and
\item%
admit logarithm and exponential maps that are locally inverses of each
other, to enable transfer from the original space to the tangent cone.
\end{itemize}\vspace{-.5ex}
See Section~\ref{b:CAT(k)}--\ref{b:collapse} for a summary of the
salient points.%
\enlargethispage*{1.5ex}

Broadly speaking, the driving theoretical advance here reduces
variation of Fr\'echet means~$\bmu_n$ of samples~$\mu_n$ from a
population measure~$\mu$ to variation of the Fr\'echet mean~$\bmu$
of~$\mu$ itself upon perturbing~$\mu$ by adding an infinitesimal
finitely supported mass~$t\delta$ with $t \to 0$.  Behavior of the
resulting \emph{escape vector}~(\S\ref{b:intro-escape}) as a function
of~$\delta$ is what distinguishes the singular setting from the smooth
setting (where escape can be phrased in terms of influence functions;
see Remark~\ref{r:influence}).  Indeed, in linear settings, adding a
point mass~$\delta$ to~$\mu$ simply causes the mean~$\bmu$ to move
along a vector pointing toward the support of~$\delta$.  In smooth
settings, local perturbation of~$\mu$ in the tangent space behaves
similarly, but transferring that motion back from the tangent space to
the sample space incurs a distortion quantified by the inverse Hessian
of the Fr\'echet function to capture the average spread rate of
geodesics through~$\bmu$.  In the singular setting of smoothly
stratified spaces, the geometry is set up so that local perturbation
of~$\mu$ in the tangent cone makes sense.  But the Hessian is
singular: geodesics can branch at~$\bmu$.  The distortion inherent in
transfer from the tangent cone to the sample space therefore requires
direct construction of the inverse Hessian~(\S\ref{b:distortion})
rather than proceeding by way of the Hessian itself.

The geometry of distortion on smoothly stratified metric spaces is
distinctly singular in nature.  As might be expected from branching
geodesics, it collapses sectors containing sizable open sets; this is
the geometry of shadows \cite{shadow-geom} and tangential collapse
\cite{tangential-collapse}, which in turn form foundations for the
construction of Gaussian objects
in singular contexts~(\S\ref{b:gaussians}).  These \emph{Gaussian
{}masses} are measures on the tangent cone defined essentially as
sections (also known in probabilistic language as selections) of
linear Gaussian random variables under tangential collapse.

But, crucially, the limiting distribution in a singular CLT is not
Gaussian: rather, it is a type of convex projection of a Gaussian.  In
the smooth setting, since the tangent space at the Fr\'echet mean is
already linear, distortion is a linear map.  The lesson from the
singular setting is to interpret that linear map as escape.  Thus the
limiting distribution of~$\mu$ is the escape vector of the Gaussian
{}mass~$\Gmu$ constructed from~$\mu$ via tangential collapse
(Theorem~\ref{t:intro-CLT} in~\S\ref{b:gaussians}).

Beyond producing limiting distributions, passage from a Gaussian
{}mass~$\Gmu$ to its escape vector is integral to the proof of
the singular CLT.  Indeed, thinking of this passage ray by ray amounts
to minimizing the Fr\'echet function of~$\mu$ after perturbing it by a
Gaussian random tangent field~$G$: a family of univariate Gaussian
random variables indexed by the tangent cone at~$\mu$
\cite{random-tangent-fields}.  (Random tangent fields are reviewed in
Section~\ref{b:random-tangent-fields}.)  This perspective is made
precise by a duality between $\Gmu$ and~$G$ akin to Riesz
representation~(\S\ref{b:random-tangent-fields}).  The power of this
duality stems from the CLT for random tangent fields
\cite{random-tangent-fields} (Theorem~\ref{t:tangent-field-CLT}),
which translates, via duality and the continuous mapping theorem, into
a perturbative version of the CLT
(Theorem~\ref{t:intro-perturbation-CLT} in
\S\ref{b:via-random-fields}) from which all other versions of the CLT
follow.

Expressing the CLT via distortion (Theorem~\ref{t:intro-CLT}
in~\S\ref{b:stratified}) or Gaussian escape
(Theorem~\ref{t:intro-perturbation-CLT}
in~\S\ref{b:via-random-fields}) recovers a limiting distribution on a
Euclidean space at the cost of pushing forward to the tangent vector
space of an infinitesimally nearby point selected arbitrarily from the
relevant stratum whose closure contains the population mean.  Thinking
more deeply about the perturbation proof techniques leads naturally to
more refined and intrinsic interpretations of the CLT as assertions
concerning directional derivatives in spaces of measures or spaces of
functions~(\S\ref{b:directional}), which lend fresh insight even in
the classical multivariate Euclidean setting.

\subsection*{Outline}

The exposition in Section~\ref{s:core} provides a detailed overview of
the core results, methods, and motivations.  It starts with a review
of the usual linear CLT and its generalization to smooth manifolds,
viewed through a lens that casts the singular CLT as a natural
outgrowth, culminating in a first precise statement of the singular
CLT~\mbox{(\S\ref{b:linear}--\ref{b:stratified})}.  Subsequent
subsections of Section~\ref{s:core} serve as guides through the
geometric, analytic, and probabilistic ingredients of the statement
and proof of the singular
CLT~(\S\ref{b:intro-escape}--\ref{b:via-random-fields}) as well as its
interpretations via perturbation~(\S\ref{b:via-random-fields}) and as
directional derivatives in measure or function
spaces~(\S\ref{b:directional}).

\addtocontents{toc}{\protect\setcounter{tocdepth}{2}}
\subsection*{Acknowledgements}

The authors are grateful to Stephan Huckemann for numerous discussions
on topics related to this work, both at Duke and at G\"ottingen, and
\enlargethispage*{1.7ex}%
for funding many visits to enable these discussions.  DT~was partly
funded by DFG~HU 1575/7.  JCM thanks NSF RTG grant DMS-2038056 for
general support as well as~Davar Khoshnevisan and Sandra Cerrai for
enlightening conversations.  Amy Willis observed connections to
influence functions (Remark~\ref{r:influence}) during a
presentation~of escape vectors at an Institute for Mathematical and
Statistical Innovation
workshop in Chicago,~July~2023.


\section{Core ideas}\label{s:core}

\subsection{Linear setting}\label{b:linear}
\mbox{}\medskip

\noindent
In a vector space such as $\RR^m$, the law of large numbers (LLN) for
a collection of independent random variables $X_1,X_2,\dots$, all with
a common distribution $\mu$, states~that
\begin{align}\label{eq:vectorLLN}
  \bmu_n
  :=
  \frac1n \sum_{i=1}^n X_i\,
  \xrightarrow{n\to\infty\;}{}
  \!\int x \,\mu(dx)\,
  =:
  \bmu\
\text{ almost surely.}
\end{align}
Thus the \emph{empirical mean} $\bmu_n$ fluctuates more tightly around
the \emph{population mean} $\bmu$ with increasing sample size~$n$.
The Central Limit Theorem (CLT) asserts that expanding the
fluctuations (or, equivalently, zooming in) by a factor of~$\sqrt n$
causes them to stabilize to a fixed limiting distribution:
\begin{equation}\label{eq:vectorCLT}
  \sqrt n\,(\bmu_n - \bmu)
  =
  \frac1{\sqrt n}\sum_{i=1}^n X_i
  \xrightarrow{n\to\infty\;}{}
  N_\Sigma
  \text{ in distribution,}
\end{equation}
where $N_\Sigma \sim \cN(0,\Sigma)$ is a random variable distributed
as a Gaussian measure with the same covariance~$\Sigma$ as~$\mu$.

\subsection{Smooth manifold setting}\label{b:smooth}
\mbox{}\medskip

\noindent
Generalizing the LLN and CLT to an ambient space that is a smooth
manifold instead of a vector space requires reinterpreting both
\eqref{eq:vectorLLN} and~\eqref{eq:vectorCLT} since addition,
subtraction, and rescaling are not defined on manifolds.

The first ingredient in these reinterpretations comes from viewing the
mean of a measure~$\mu$ in light of its central importance in
statistics, namely as the deterministic point which minimizes the
expected squared distance from a random point distributed according
to~$\mu$.  More explicitly, the formulas for $\bmu$ and $\bmu_n$
in~\eqref{eq:vectorLLN} ask what point~$p$ minimizes, respectively,
the \emph{Fr\'echet function}
\begin{equation}\label{eq:FrechetF}
  F_\mu(p) = \frac12\int_\MM d(p,x)^2\,\mu(dx)
\end{equation}
of the \emph{population measure}~$\mu$ and the Fr\'echet function
$F_{\mu_n}(p)$ of the \emph{empirical measure}
$$
  \mu_n(dx) = \frac 1n (\delta_{X_1} + \dots + \delta_{X_n}).
$$
As usual, $\delta_x$ here is the unit
singular measure supported at~$x$, defined by $\delta_x(A) = 1$ if $x
\in A$ and $0$ otherwise.

On a metrized manifold
the Fr\'echet function~\eqref{eq:FrechetF} of a measure~$\mu$ is well
defined.  Hence $\bmu$ and~$\bmu_n$ can be defined as points~$p$ that
respectively minimize~$F_\mu(p)$ and $F_{\mu_n}(p)$:
\begin{equation}\label{eq:mean}
  \bmu = \argmin_p F_\mu(p).
\end{equation}
The classical summation formulas for these minimizers are not defined
on an arbitrary manifold, but the empirical measure~$\bmu_n$ is,
because measures on any space can be added.  Hence on a manifold the
statement
\begin{equation}\label{eq:smoothLLN}
  \bmu_n \xrightarrow{n\to\infty\;}{} \bmu \ \text{ almost surely}
\end{equation}
of the LLN remains unchanged, as should be expected \cite{ziezold1977}.

Turning to the CLT, it is natural to recast~\eqref{eq:vectorCLT} as a
statement taking place in the tangent space $\Tmu$ of the
manifold~$\MM$ being sampled.  Indeed, since~\eqref{eq:vectorCLT}
concerns the variation of rescaled differences $\bmu_n - \bmu$ as the
(moving) empirical mean converges to the (fixed) population mean, the
limit is naturally a random tangent vector.  The limiting process can
be brought entirely into the tangent space using the \emph{logarithm
map} centered at the mean~$\bmu$, which takes a point in~$\MM$ to the
direction and distance that~$\bmu$ moves to get there:
\begin{align*}
\\[-5ex]
  \log_\bmu: \MM^* & \to \Tmu
\\*
                 v &\mapsto \dd(\bmu,v)V,
\end{align*}
where $\MM^*$ is the set of points in~$\MM$ with a unique shortest
path to~$\bmu$ (so~$\MM^*$ is the complement of the cut locus
of~$\bmu$)
and $V$ is the tangent vector at~$\bmu$ of the geodesic from~$\bmu$
to~$v$ traversed at unit speed in the given metric~$\dd$.

The pushforward of~$\mu$ to the tangent space~$\Tmu$ is a measure $\nu
= (\log_\bmu)_\sharp \mu$ defined~by
$$
  \nu(A) = \mu\bigl(\log_\bmu^{-1}(A)\bigr)
$$
for measurable sets $A \subseteq \Tmu$.  Note that $\nu$ has
mean~$0$ automatically.  The linear CLT
$$
  \sqrt n\,\bnu_n
  \xrightarrow{n\to\infty\;}{}
  \cN(0,\Sigma)
$$
for this pushforward from~$\MM$ to~$\Tmu$ has limiting
distribution~$\cN(0,\Sigma)$, a Gaussian random variable taking values
in~$\Tmu$ with the usual bilinear covariance
\begin{equation}\label{eq:Rcov}
\begin{split}
  \Sigma: \Tmu \times \Tmu
        & \to \RR
\\
  (U,V) & \mapsto \int_{\Tmu} \<U,y\>\<V,y\>\, \nu\: dy.
\end{split}
\end{equation}

However, pushing $\mu$ forward to the tangent space~$\Tmu$
unceremoniously flattens the geometry of~$\mu$ in a way that erases
the curvature of the sample space~$\MM$ at the Fr\'echet mean~$\bmu$.
The situation is remedied by pushing~$\cN$ forward through the
\emph{distortion map}
$$
  \HH: \Tmu \to \Tmu
$$
that coincides with the inverse of the Hessian at~$\bmu$ of the
Fr\'echet function~\eqref{eq:FrechetF}.  The distortion map encodes,
for example, a tug of the covariance toward the origin in directions
where geodesics spread rapidly as they depart the Fr\'echet mean.
What results is the smooth CLT, to the effect that asymptotically, the
rescaled sample Fr\'echet means are distributed as the pushforward of
the Gaussian measure~$\cN(0,\Sigma)$ through the distortion map~$\HH$
\cite[\S2]{bhattacharya-patrangenaru2005}:
\begin{equation}\label{eq:smoothCLT}
  \lim_{n\to\infty} \sqrt n \log_\bmu \bmu_n
  \sim
  \HH_\sharp\cN(0,\Sigma).
\end{equation}
Note that rescaling by $\sqrt n$ makes sense in the tangent
space~$\Tmu$, unlike the expression on the left-hand side
of~\eqref{eq:vectorCLT}, where the scaling, difference, and sum do not
make formal sense in the nonlinear setting.

Typically, proofs of the LLN \eqref{eq:smoothLLN} or the CLT
\eqref{eq:smoothCLT} proceed by expanding the pushed-foward empirical
mean $\log_\bmu \bmu_n$ in a Taylor series and studying either the
limit (for~LLN) or rescaled limit (for CLT) in the tangent
space~$\Tmu$.  Doing so uses that the Fr\'echet function is
differentiable and $\Tmu$ is a vector space, so the classical LLN or
CLT applies to the terms from the Taylor expansion.  These classical
LLN or CLT results are then distorted by~$\HH$ to obtain the limiting
distribution on~$\Tmu$~in~\eqref{eq:smoothCLT}.

\subsection{Stratified setting}\label{b:stratified}
\mbox{}\medskip

\noindent
Confinement of empirical Fr\'echet means to decreasing neighborhoods
of the population mean pushes the CLT naturally to the tangent space,
which in the smooth manifold setting of the previous subsection is a
vector space.  This linearity in the smooth setting recovers a CLT
with the same form~\eqref{eq:smoothCLT} as the ordinary CLT in a
vector space~\eqref{eq:vectorCLT}, the differences being
\begin{itemize}
\item%
the logarithm map on the left-hand side of~\eqref{eq:smoothCLT},
which fulfills the need to push forward to the linear setting, and
\item%
the distortion map~$\HH$ on the right-hand side
of~\eqref{eq:smoothCLT}, which accounts for the curvature lost in
transition from the manifold to the tangent space.
\end{itemize}
In the stratified setting, where the ambient metric space is a
possibly singular union of smooth manifolds
(Definition~\ref{d:stratified-space}), the same general framework
succeeds, but this time with an extra step to reach the linear
setting: if the Fr\'echet mean~$\bmu$ is a singular point, then the
singularity must be flattened by a map $\LL: \Tmu \to \RR^m$.  Indeed,
the defining property of a non-smooth point is the failure of its
tangent space to be linear.

This brings us to the first version of our primary result.
Theorem~\ref{t:intro-CLT} is stated with complete precision (see
Theorem~\ref{t:clt}), using terminology we hope is evocative enough to
suffice until it is explained in
Sections~\ref{b:intro-escape}--\ref{b:gaussians}.

\begin{theorem}\label{t:intro-CLT}
Fix a smoothly stratified metric space~$\MM$ with localized immured
amenable probability measure~$\mu$.  A tangential collapse $\LL: \Tmu
\to \RR^m$ of~$\mu$ exists, and if $\cN(0,\Sigma)$ is a Gaussian
random vector in~$\RR^m$ with the same covariance $\Sigma =
\Cov\bigl((\LL \circ \log_\bmu)_\sharp \mu\bigr)$ as the pushforward
of~$\mu$ under logarithm at the Fr\'echet mean~$\bmu$ followed by
collapse,~then
\begin{equation}\label{eq:stratifiedCLT}
  \lim_{n\to\infty} \sqrt n \log_\bmu \bmu_n
  \sim
  \HH_\sharp\cN(0,\Sigma),
\end{equation}
where $\HH: \RR^m \to \Tmu$ is the distortion map.
\end{theorem}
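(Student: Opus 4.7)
The plan is to interpret $\sqrt n\,\log_\bmu \bmu_n$ as the \emph{escape vector} of the empirical perturbation $\mu_n - \mu$ at scale $1/\sqrt n$, and then to identify its scaling limit via the escape of a Gaussian mass dual (in the sense of Gaussian tangent fields) to the Euclidean Gaussian $\cN(0,\Sigma)$. The argument breaks into three movements: (a)~construct the right-hand side of~\eqref{eq:stratifiedCLT} abstractly as an escape vector; (b)~establish convergence of rescaled empirical first-variation tangent fields to a Gaussian tangent field via the random tangent fields CLT of~\cite{random-tangent-fields}; and (c)~transfer that convergence through the escape construction to the left-hand side of~\eqref{eq:stratifiedCLT}.

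For~(a), I would push $\mu$ forward by $\LL \circ \log_\bmu$ to a measure $\nu$ on $\RR^m$; amenability and immuredness of $\mu$ ensure $\LL$ exists and that $\nu$ has a well-defined covariance $\Sigma$, yielding the Euclidean Gaussian $\cN(0,\Sigma)$. The Gaussian mass $\Gmu$ on $\Tmu$ is obtained as a measure-valued section of $\cN(0,\Sigma)$ along $\LL$, and the distortion map $\HH$ is constructed directly (rather than as an inverse of a degenerate Hessian) so that $\HH_\sharp \cN(0,\Sigma)$ coincides by definition with the escape vector of $\Gmu$; this identification is tautological once the escape/distortion framework of Sections~\ref{b:distortion}--\ref{b:gaussians} is in place.

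For~(b), I would invoke Theorem~\ref{t:tangent-field-CLT}: the rescaled empirical tangent field $\sqrt n\,(\nabla F_{\mu_n} - \nabla F_\mu)$ at $\bmu$ converges in distribution to a Gaussian tangent field $G$, which under the Riesz-type duality of~\S\ref{b:random-tangent-fields} corresponds to~$\Gmu$. Localization controls $F_\mu$ near $\bmu$; amenability supplies strict convexity of $F_\mu$ on a neighborhood of the Fr\'echet mean, so that $\bmu$ is unique and stable under small perturbations; and the immured hypothesis keeps $\bmu$ genuinely interior to its stratum, so the tangent-cone CLT framework applies without boundary pathology and the argmin of the perturbed functional remains in the regime where $\log_\bmu$ is well defined.

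The hard part will be~(c): showing that the minimizer of $F_\mu + \tfrac1{\sqrt n}\,G_n$ depends continuously on the perturbing tangent field $G_n$ in a topology strong enough to invoke the continuous mapping theorem against the convergence $G_n \goesto G$ of~(b). In smooth statistics this is a quadratic expansion around a non-degenerate Hessian; here the Hessian is singular because geodesics can branch at $\bmu$, so the implicit function argument fails and must be replaced by the variational identification of the escape vector as the argmin of the first-order expansion $F_\mu + G$, coupled with a continuity statement for that argmin in $G$. Amenability supplies the coercivity and local strict convexity needed to make the argmin continuous, while tangential collapse $\LL$ transports the problem into the linear setting of $\RR^m$, where continuity of the perturbed convex minimization is standard. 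Composing~(a)--(c) then yields~\eqref{eq:stratifiedCLT}.
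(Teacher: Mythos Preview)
Your three-movement outline matches the paper's architecture exactly: the perturbative CLT (Theorem~\ref{t:perturbative-CLT}) is proved via the random-field CLT plus a continuous mapping argument, and the distortion formulation (Theorem~\ref{t:clt}) is then a tautological repackaging. But several attributions and one substantive step are off.

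First, the hypotheses do not play the roles you assign. Existence of the tangential collapse~$\LL$ comes from the \emph{localized} hypothesis alone (Theorem~\ref{t:collapse}), not from amenability or immuredness. Amenability is what makes the second-order Taylor expansion of~$F$ at~$\bmu$ exist (Proposition~\ref{p:Lambda}) and hence what makes escape vectors well defined (Theorem~\ref{t:escape-vector-confinement}); uniform convexity of~$F$ near~$\bmu$ comes already from the $\CAT(\kappa)$ structure (Lemma~\ref{l:uniform-convexity}), not from amenability. The immured hypothesis does \emph{not} say $\bmu$ is interior to its stratum; it says that Fr\'echet means of finite samples from~$\supp\mu$ land in~$\hull\mu$ after logarithm (Definition~\ref{d:immured}), and its only role is to upgrade confinement of escape vectors from the escape cone~$\Emu$ to the fluctuating cone~$\Cmu$ (Corollary~\ref{c:confinement-to-Cmu}), which is where~$\LL$ is injective.

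Second, there is a genuine step you skip between~(b) and~(c): $\bmu_n$ minimizes~$F_{\mu_n}$, not $F_\mu$ perturbed by a tangent field. The paper spends real effort (Proposition~\ref{p:transform}, via the uniform LLN in Lemma~\ref{l:converge-of-hessian} and tightness in Proposition~\ref{p:tightness-of-sqrtn-mu_n}) showing that the average sample perturbation $h_n \in \argmin_x\bigl(F(x) - \bgn(\log_\bmu x)\bigr)$ satisfies $\sqrt n\,\dd(h_n,\bmu_n) \to 0$. Without this transformation, your $\argmin$ in~(c) is not a priori related to~$\bmu_n$.

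Third, your plan for~(c)---transport the continuity problem to~$\RR^m$ via~$\LL$ and invoke standard convex perturbation theory---is not how the paper proceeds, and it would face the obstacle that~$\LL$ is only injective on~$\oCmu$, so you would first need confinement, which is itself part of what~(c) must establish. The paper instead works intrinsically in~$\Tmu$: it derives explicit polar-coordinate formulas for the approximate minimizers (Lemma~\ref{l:formula-c_n}, Lemma~\ref{l:formula-wd_n}) and proves the required continuity directly (Theorem~\ref{t:convergence-of-Upsilon}, Corollary~\ref{c:convergence-of-Upsilon}), using~$\LL$ only to extract a convergent representing sequence of discrete measures (Lemma~\ref{l:represent}).
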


Theorem~\ref{t:intro-CLT} is proved by way of a perturbative version
of the CLT~(\S\ref{b:via-random-fields}), which is stated in terms of
escape vectors~(\S\ref{b:intro-escape}) of Gaussian
{}masses~(\S\ref{b:gaussians}).  Escape is also one of the two
ingredients in distortion~(\S\ref{b:distortion}), the other being
tangential collapse \cite{tangential-collapse}, which is also the
basis for defining Gaussian {}masses.  The perturbative CLT itself
leads directly not only to Theorem~\ref{t:intro-CLT} but also to more
sophisticated perspectives on what a singular CLT should mean, as
singular analogues of G\^ateaux derivatives in spaces of measures or
functions~(\S\ref{b:directional}).

It should be noted that the path to the CLT here reprises certain key
elements of \cite{barden-le2018}, notably with our limit logarithm
maps
and escape cones,
which play roles
analogous to those in the developments by Barden and Le
\cite{barden-le2018} on orthant spaces \cite{centroids}; see
\cite[Remark~3.3]{shadow-geom} and
\cite[Remark~2.13]{tangential-collapse} for comments directly
connecting the relevant definitions to corresponding definitions in
\cite{barden-le2018}.  Our abstract take on these notions yields
complete control over how the limiting distribution behaves on the
boundary of its support without relying on the piecewise Euclidean
nature of orthant spaces.

\subsection{Escape vectors}\label{b:intro-escape}
\mbox{}\medskip

\noindent
Perturbing a measure~$\mu$ on a space~$\MM$ by adding an infinitesimal
point mass~$t\delta_x$ at~$x \in\nolinebreak \MM$ for $t \to 0$ causes
the Fr\'echet mean~$\bmu$ to wiggle.  When~$\MM$ is~$\CAT(\kappa)$,
the direction and magnitude of the \emph{escape vector}
(Definition~\ref{d:escape-vector}) along which this wiggle occurs
depends only on the logarithm $X = \log_\bmu x$.  Therefore this
version of escape amounts to a function $\ev: \Tmu \to \Tmu$.

More generally, since it is needed for technical reasons (that lead to
foundational reimagining; see Remark~\ref{r:Gmu}), this discussion
works as well when $\mu$ is perturbed by an infinitesimal measure
$t\delta$ for $t \to 0$ that is finitely supported in~$\MM$.  In that
case the escape vector (Definition~\ref{d:delta-escape}) only depends
on the corresponding measure~$\Delta = \log_\bmu \delta$ that is
finitely supported on the tangent cone~$\Tmu$.  Formalizing and
proving these assertions occupies the entirety of
Section~\ref{s:escape}, as we explain here.

Escape highlights a telling difference between the smooth and singular
cases: escape is unfettered in the smooth case, whereas in the
singular case, the escape vector of a measure~$\Delta$ sampled
from~$\Tmu$ (Definition~\ref{d:sampled-from}) only occurs along a
certain convex set of vectors, which constitute the \emph{escape cone}
$\Emu$ (Definition~\ref{d:escape-cone}).  This paper is, in a sense, a
detailed analytic study of escape: its
\begin{itemize}
\item%
constructions, implicit and explicit, to show it is well defined
(Theorem~\ref{t:escape-vector-confinement});
\item%
confinement to the escape cone~$\Emu$
(Theorem~\ref{t:escape-vector-confinement});
\item%
continuity as a function of the input measure~$\Delta$
(Corollary~\ref{c:escape-continuity});
\item%
consequences for asymptotics, when applied to a Gaussian {}mass, which
yields the limiting distribution in the perturbative CLT
(Theorem~\ref{t:perturbative-CLT}); and
\item%
contribution to distortion (Definition~\ref{d:distortion-map}) in
concert with tangential collapse.
\end{itemize}
In addition, the need to apply a continuous mapping theorem requires
the development of a functional version of escape
(Section~\ref{s:confinement}; see \S\ref{b:via-random-fields} for
discussion).

For the purpose of asymptotics of sample Fr\'echet means, the
measure~$\delta$ is sampled not merely from~$\MM$ but from the
potentially smaller set that is the support of~$\mu$.  For such
measures the escape vector $\ev(\delta)$ lies in the \emph{fluctuating
cone}~$\Cmu$ (Definition~\ref{d:fluctuating-cone}): the intersection
of the escape cone~$\Emu$ with the convex cone in~$\Tmu$ generated by
the logarithm of the support of~$\mu$.  This further ``fluctuating
confinement'' (Corollary~\ref{c:confinement-to-Cmu}) occurs when the
population measure~$\mu$ is \emph{immured} (Section~\ref{b:immured}),
a mild hypothesis that requires Fr\'echet means of finite samples
from~$\mu$ to land, after taking logarithm at~$\bmu$, in the convex
cone generated by the support.

\subsection{Distortion and amenability}\label{b:distortion}
\mbox{}\medskip

\noindent
The difference between the singular and smooth CLT is not visible in
the equations---Eq.~\eqref{eq:smoothCLT} and~\eqref{eq:stratifiedCLT}
are identical on both sides---but lies rather in the \emph{tangential
collapse}
$$
  \LL: \Tmu \to \RR^m
$$
from \cite{tangential-collapse}, which is reviewed in
Section~\ref{b:collapse}.  Its presence complicates the distortion
map~$\HH$ in fundamentally singular (that is, non-smooth) ways that
highlight the geometry behind two of the basic analytic hypotheses on
the measure in Theorem~\ref{t:intro-CLT}.

Already in the smooth setting, the CLT~\eqref{eq:smoothCLT}
encapsulates a notion of curvature embedded in the distortion
map~$\HH$.  Specifically, because the Fr\'echet function~$F(x)$ is the
expectation of the squared distance function $x \mapsto \dd(w,x)^2$,
its Hessian at~$\bmu$ furnishes a notion of spreading rate for
geodesics through~$\bmu$, from which sectional curvature can be
inferred.  Being the inverse of the Hessian at~$\bmu$ of~$F$, the
distortion map $\HH$ provides a $\mu$-average rate at which geodesics
through~$\bmu$ spread, which is a notion of Ricci curvature at~$\bmu$.
As a result, Theorem~\ref{t:intro-CLT} requires the squared distance
function to have second derivatives, at least directionally, and for
those directional derivatives to have $\mu$-expectations.  That is the
meaning of the \emph{amenable} hypothesis
(Definition~\ref{d:amenable}): for all~$w$ outside of the cut locus of
the Fr\'echet mean~$\bmu$, the squared distance from~$w$ has
second-order directional derivative at~$\bmu$ dominated by a
$\mu$-integrable function on~$\MM$.  This standard analytic sort of
hypothesis on the measure~$\mu$ allows differential and Taylor
expansion techniques for optimization.

Tangential collapse is guaranteed to exist when the measure~$\mu$ is
\emph{localized} (Definition~\ref{d:localized}): it has unique
Fr\'echet mean~$\bmu$, its Fr\'echet function is locally convex in a
neighborhood of~$\bmu$, and the logarithm map $\log_\bmu: \MM \to
\Tmu$ is $\mu$-almost surely uniquely defined.  This localized
hypothesis is satisfied, for instance, if the support of~$\mu$ lies in
a metric ball of radius less than $\pi/\sqrt\kappa$ on
a~$\CAT(\kappa)$ space \cite[Example~2.2]{tangential-collapse}.  In
particular, all measures are localized when $\kappa = 0$.

With tangential collapse in hand, distortion is its inverse image
followed by escape (Definition~\ref{d:distortion-map}).  That is,
given a point~$V$ in the convex hull of the image of tangential
collapse~$\LL$, find a measure~$\Delta$ sampled from~$\Tmu$ whose
image under~$\LL$ is~$V$, and take the escape vector of~$\Delta$.
Thus $\HH(V) = \ev \circ \LL^{-1}(V)$.  The fact that $V$ need not lie
in the image of~$\LL$ but only the convex hull of the image of~$\LL$
is why escape vectors of points do not suffice: $V$ might not have a
preimage point under~$\LL$
(Remark~\ref{r:gaussian-section}).  For geometric intuition regarding
distortion, keep in mind the description of tangential collapse
in terms of shadows \cite[Remark~3.18]{shadow-geom}: some subcone
of~$\Tmu$ collapses to a single ray while the rest of~$\Tmu$ remains
intact locally isometrically.

\subsection{Gaussians on singular spaces}\label{b:gaussians}
\mbox{}\medskip

\noindent
Tangential collapse pushes the (singular) tangent cone~$\Tmu$ to a
(smooth) linear space~$\RR^m$.  In so doing, the population
measure~$\mu$ pushes forward to a measure~$\hmu$ on~$\Tmu$, and from
there to a measure~$\muL$ on~$\RR^m$.  Of course, $\muL$ satisfies its
own ordinary linear CLT, with Gaussian limiting
distribution~$\cN(0,\Sigma)$.

Recovering a Gaussian object on~$\Tmu$ from this setup proceeds as in
distortion:
\begin{itemize}
\item%
let $\NLmu = \sim \cN(0,\Sigma)$ be a linear Gaussian-distributed
vector in~$\RR^m$, and
\item%
define a \emph{Gaussian {}mass} $\Gmu$ to be a lift of $\NLmu$
to a random measure sampled from the support of~$\hmu$ in~$\Tmu$
(Definition~\ref{d:gaussian-section}).
\end{itemize}
Thus $\Gmu$ is a random finitely supported measure on the tangent
cone~$\Tmu$.

In the end, the interest lies in the law not of~$\Gmu$ but of its
escape vector $\ev(\Gmu)$, for that turns out to be the limiting
distribution in Theorem~\ref{t:intro-CLT}; see
Theorem~\ref{t:intro-perturbation-CLT} in~\S\ref{b:via-random-fields}.
That is, distortion of a linear Gaussian equals escape of a Gaussian
{}mass:
$$
  \HH(\NLmu) = \ev(\Gmu).
$$

This formula pinpoints fluctuating confinement, as discussed in
\S\ref{b:intro-escape}, as essential for the theory surrounding
distortion and Gaussian {}masses.  Beyond the fact that the
fluctuating cone~$\Cmu$ maps isometrically to its image under
tangential collapse~$\LL$ (by definition; see
Section~\ref{b:collapse}), fluctuating confinement renders distortion
well defined.  That is because the relevant preimages of~$\NLmu$
under~$\LL$ all have the same escape vector
(Proposition~\ref{p:reduced-escape}), which in turn is precisely
because the support of the Gaussian vector~$\NLmu$ is generated by
samples from~$\mu$.

\subsection{Convergence via random tangent fields}\label{b:via-random-fields}
\mbox{}\medskip

\noindent
On a $\CAT(\kappa)$ space~$\MM$, a \emph{random tangent field} at a
point~$\bmu$ is a collection of real-valued random variables indexed
by unit tangent vectors at~$\bmu$ (see
Section~\ref{b:random-tangent-fields},~which summarizes
\cite{random-tangent-fields}).  One way of producing such fields is by
taking the \emph{inner product} of a fixed tangent vector~$V$
at~$\bmu$ with a $\Tmu$-valued random variable $X_i =\nolinebreak
\log_\bmu x_i$, where $x_i \sim \mu$ is an $\MM$-valued random
variable.  The inner product $\<X_i, V\>$ is not linear in the usual
sense---there is no ambient linear structure on the tangent
cone~$\Tmu$---but is merely a measure of angle derived from the metric
on the unit sphere around~$\bmu$ (Definition~\ref{d:inner-product}).
Starting with $n$ independent random variables $x_1,\dots,x_n$
in~$\MM$ therefore yields an \emph{empirical tangent field}
$$
  \Gn = \frac 1{\sqrt n} \sum_{k=1}^n \<X_i, V\>
$$
that is \emph{representable} (Definition~\ref{d:representable}) via
inner products with any vector~$V$ in the escape cone~$\Emu$ (see
\S\ref{b:intro-escape}).

The CLT for random tangent fields \cite{random-tangent-fields}
(reviewed here as Theorem~\ref{t:tangent-field-CLT} and
Corollary~\ref{c:cont-realization}) captures the asymptotic variation
of large sample means ray by ray: $\Gn \to G$, where $G$ is a
\emph{Gaussian} tangent field
(Definition~\ref{d:gaussian-tangent-field}).  One consequence of the
random field CLT is that Gaussian tangent fields are also
representable, and in fact each is represented by a Gaussian {}mass,
at least along rays in the fluctuating cone
(Theorem~\ref{t:Riesz-representation}):
$$
  G(X) = \<\Gmu, V\>_\bmu
$$
for all fluctuating vectors~$V$.

Thinking of inner products $\<X_i, V\>$ and $\<\Gmu, V\>$ as random
continuous real-valued functions of $V \in \Tmu$ that happen to be
representable as inner products, the escape vectors~$\ev(X_i)$
and~$\ev(\Gmu)$ can be thought of as vectors in~$\Tmu$ that minimize
perturbations of the Fr\'echet function by adding random representable
functions~$R$.  This minimization is continuous as a function of~$R$
(that is the role of Section~\ref{s:confinement}; see
Theorem~\ref{t:convergence-of-Upsilon}), in analogy with continuity of
escape for measures (Corollary~\ref{c:escape-continuity}), and it is
similarly confined to the fluctuating cone when the measure is immured
(Theorem~\ref{t:convergence-of-Upsilon}), in analogy with escape for
measures (Corollary~\ref{c:confinement-to-Cmu}).  The continuous
mapping theorem therefore applies, assembling the ray-by-ray
convergence of the random field CLT into the following convergence of
spatial variation around the Fr\'echet~mean~$\bmu$.

\begin{theorem}[Perturbative~CLT]\label{t:intro-perturbation-CLT}
Fix a localized immured amenable measure~$\mu$ on a smoothly
stratified metric space~$\MM$.  The empirical Fr\'echet mean~$\bmu_n$
and escape vector~$\ev(\Gmu)$ of any Gaussian {}mass~$\Gmu$~satisfy
$$
  \lim_{n\to\infty} \sqrt n \log_\bmu \bmu_n
  \overset{d}=
  \ev(\Gmu).
$$
\end{theorem}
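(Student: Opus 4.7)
The plan is to reduce the perturbative CLT to the ray-by-ray CLT for random tangent fields (Theorem~\ref{t:tangent-field-CLT}) via a continuous mapping argument on the space of representable random fields. First, I would reformulate the rescaled empirical log-mean $\sqrt n\, \log_\bmu \bmu_n$ as the argmin of a Fréchet-type functional on $\Tmu$ perturbed by the empirical tangent field $\Gn$. Because $\bmu_n$ minimizes $F_{\mu_n}$ on~$\MM$, its image under $\log_\bmu$ minimizes the pullback of $F_{\mu_n}$ to~$\Tmu$; a Taylor expansion around $V = 0$, combined with the amenable hypothesis bounding second-order directional derivatives of squared distance by a $\mu$-integrable function, yields a quadratic-plus-linear decomposition whose linear part is precisely~$\Gn(V)$ plus a residual vanishing uniformly after rescaling by~$\sqrt n$. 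Writing $\Upsilon(R)$ for the argmin over~$\Emu$ of the limit quadratic Fréchet functional minus the representable field~$R$, this identifies $\sqrt n\,\log_\bmu \bmu_n$ with $\Upsilon(\Gn) + o_{\!p}(1)$.

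Next, I would apply the tangent-field CLT to obtain $\Gn \goesto G$ in distribution in the topology of representable random tangent fields, where $G$ is Gaussian. Invoking the Riesz-type representation (Theorem~\ref{t:Riesz-representation}), I would select a Gaussian mass~$\Gmu$ representing~$G$ along the fluctuating cone, so that $G(V) = \<\Gmu, V\>_\bmu$ for every $V \in \Cmu$. By the same variational characterization used to identify $\sqrt n\,\log_\bmu\bmu_n$ with $\Upsilon(\Gn)$, the value $\Upsilon(G)$ coincides with the escape vector $\ev(\Gmu)$: tracking the first-order motion of~$\bmu$ when~$\mu$ is perturbed by the infinitesimal Gaussian mass $t\Gmu$ reduces, after amenability-based Taylor expansion, to minimizing the limit Fréchet functional with the Gaussian field subtracted, the minimizer being confined to the fluctuating cone~$\Cmu$ by immuredness (Corollary~\ref{c:confinement-to-Cmu}).

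Finally, I would invoke the continuous mapping theorem. Continuity of the argmin functional~$\Upsilon$ on the space of representable random fields is exactly what Theorem~\ref{t:convergence-of-Upsilon} in Section~\ref{s:confinement} provides: fluctuating confinement in the immured case ensures that the argmin is stable under perturbations of the input field, in analogy with the continuity of escape for measures (Corollary~\ref{c:escape-continuity}). Combining $\Gn \goesto G$ with continuity of~$\Upsilon$ transfers convergence as $\Upsilon(\Gn) \goesto \Upsilon(G) = \ev(\Gmu)$, which together with the $o_{\!p}(1)$ estimate from the Taylor expansion yields $\sqrt n\,\log_\bmu\bmu_n \goesto \ev(\Gmu)$ in distribution.

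The hard part will be certifying the continuity of~$\Upsilon$ in a topology strong enough to accommodate the convergence $\Gn \goesto G$ yet weak enough for the tangent-field CLT to deliver. Because $\Upsilon$ involves minimization of a strictly convex quadratic plus a generically nonsmooth representable field over the non-linear escape cone~$\Emu$, standard argmin-continuity arguments from convex analysis must be adapted to the singular setting: one has to rule out escape of the minimizer to the boundary or to infinity under small perturbations of the field, and one has to match the topology on representable fields (essentially uniform-on-compacta on the unit tangent sphere) with the Euclidean-like distance on argmins inside~$\Cmu$. The immured hypothesis, via fluctuating confinement, is the decisive tool for controlling the boundary behavior, and this is precisely where the technical heart of the paper resides.
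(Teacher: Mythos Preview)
Your proposal is correct and follows the same route as the paper's proof of Theorem~\ref{t:perturbative-CLT}: reduce $\sqrt n\log_\bmu\bmu_n$ to a perturbed argmin via Proposition~\ref{p:transform}, invoke the tangent-field CLT (Theorem~\ref{t:tangent-field-CLT}) and the Riesz-type representation (Theorem~\ref{t:Riesz-representation}), then push convergence through via the argmin-continuity results of Section~\ref{s:confinement}. The one refinement the paper adds is that $G$ is representable only on~$\oCmu$, not on all of~$\Emu$, so rather than continuity of a single map~$\Upsilon$ the argument uses the \emph{extended} continuous mapping theorem \cite[Theorem~1.11.1]{VW13} with a sequence of maps $\evn(R)=\frac{1}{t_n}\Upt(t_nR)$ defined on representable fields converging to a limit map $\ev_0$ defined on all of $\cC(\Tmu,\RR)$; this is precisely what Corollary~\ref{c:convergence-of-Upsilon} is set up to deliver.
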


Theorem~\ref{t:intro-perturbation-CLT} is stated in more
detail---including explicit formulas for the polar coordinates of the
Gaussian tangent field~$G$---as Theorem~\ref{t:perturbative-CLT}.
That is before Theorem~\ref{t:intro-CLT}, which is proved as an easy
consequence in Theorem~\ref{t:clt}, once the definition of distortion
is made precise in Definition~\ref{d:distortion-map}.

\subsection{CLTs as directional derivatives}\label{b:directional}
\mbox{}\medskip

\noindent
Tangential collapses provide a concrete handle on what the limiting
distribution in the singular CLT looks like, but the construction may
feel noncanonical, even if the end result does not depend on the
choices of tangential resolution and Gaussian {}mass.  That
said, another major outcome of our methods is that they allow
statements of the CLT that are universal in the sense they visibly
involve only intrinsic information that requires no choices.

To begin, view the process of taking Fr\'echet means of measures as a
map to~$\MM$ from the space~$\cP_2\MM$ of $L^2$-measures on~$\MM$,
namely
\begin{align*}
  \bb : \cP_2\MM & \to \MM
\\
             \mu &\mapsto \bmu.
\end{align*}
The escape vector can be seen (Lemma\,\ref{l:bb}) as a directional
derivative (Definition~\ref{d:bb-directional-deriv}):
$$
  \nablamu\bb(X) = \ev(X) \text{ for all } X \in \Tmu.
$$
which allows reinterpretion of the limiting distribution of~$\bmu_n$
as a directional derivative.

\begin{theorem}\label{t:deriv-of-barycenter}
The limiting distribution in the CLT for~$\bmu_n$
(Theorems~\ref{t:intro-CLT} and~\ref{t:intro-perturbation-CLT}) is the
directional derivative, in the space $\cP_2(\MM)$ of measures, of the
barycenter map~$\bb$ at~$\mu$ in the direction of a random discrete
measure given by any Gaussian {}mass~$\Gmu$:
$$
  \lim_{n\to\infty} \sqrt n \log_\bmu \bmu_n
  \overset{d}=
  \nablamu\bb(\Gmu).
s$$
\end{theorem}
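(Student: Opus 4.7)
The plan is to derive Theorem~\ref{t:deriv-of-barycenter} as a direct corollary of the Perturbative CLT (Theorem~\ref{t:intro-perturbation-CLT}) combined with Lemma~\ref{l:bb}, which identifies the directional derivative of the barycenter map with the escape vector via $\nablamu\bb(X) = \ev(X)$ for each finitely supported $X \in \Tmu$. Since the Perturbative CLT already asserts
$$
\lim_{n \to \infty} \sqrt{n}\,\log_\bmu \bmu_n \overset{d}{=} \ev(\Gmu),
$$
the task reduces to reinterpreting the right-hand side as $\nablamu\bb(\Gmu)$.

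First I would verify that $\nablamu\bb(\Gmu)$ is a well-defined random variable in $\Tmu$. By construction (Definition~\ref{d:gaussian-section}), a Gaussian {}mass $\Gmu$ is almost surely a finitely supported measure on $\Tmu$ obtained by lifting $\NLmu \sim \cN(0,\Sigma)$ through tangential collapse. Lemma~\ref{l:bb} then applies realization by realization to yield $\nablamu\bb(\Gmu(\omega)) = \ev(\Gmu(\omega))$, and joint measurability follows from continuity of $\ev$ on the space of finitely supported measures (Corollary~\ref{c:escape-continuity}) together with the measurability of $\Gmu$ itself.

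Combining these observations, both sides of the equation in the theorem are equal---almost surely on the right, in distribution on the left---to the single random vector $\ev(\Gmu)$, and the conclusion follows by transitivity of equality in distribution.

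The hard part is not in this brief assembly but in Lemma~\ref{l:bb} itself, on which the argument rests. The main obstacle will be justifying that perturbing $\mu$ by $t\Delta$ and tracking the resulting motion of the Fr\'echet mean through $\log_\bmu$ produces, to first order as $t \to 0^+$, exactly the escape vector of $\Delta$, despite the absence of a linear structure on $\cP_2\MM$ and despite the Hessian of $F_\mu$ at $\bmu$ being singular in the truly stratified case. This requires matching the first-order optimality conditions for the perturbed Fr\'echet function $F_{\mu + t\Delta}$ against the variational characterization of escape via the fluctuating cone (Corollary~\ref{c:confinement-to-Cmu}). Once Lemma~\ref{l:bb} is secured, the present theorem is essentially the Perturbative CLT restated in coordinate-free language on the space of measures, emphasizing that the limiting distribution is intrinsically the image of a random infinitesimal perturbation of $\mu$ under the first-order variation of the barycenter map.
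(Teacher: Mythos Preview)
Your proposal is correct and follows essentially the same approach as the paper: the paper's proof (stated as Theorem~\ref{t:barycenter-CLT}) consists of exactly the two lines you identify, namely invoking Theorem~\ref{t:perturbative-CLT} to get $\ev(\Gmu)$ as the limiting distribution and then citing Lemma~\ref{l:bb} to rewrite $\ev(\Gmu)$ as $\nablamu\bb(\Gmu)$. Your additional remarks on measurability and on the underlying difficulty of Lemma~\ref{l:bb} are accurate context but not needed for the proof itself.
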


Theorem~\ref{t:deriv-of-barycenter} is restated and proved as
Theorem~\ref{t:barycenter-CLT}.

Alternatively, the limiting distribution in the CLT of~$\bmu_n$ can be
viewed as the directional derivative of the \emph{confined minimizer
map} on continuous functions from the tangent cone to~$\RR$, namely
\begin{align*}
\fC: \cC(\Tmu,\RR) & \to \Tmu
\\
                 f &\mapsto \fC(f) \in \argmin_{X \in \oCmue} f(X),
\end{align*}
which takes each function~$f$ to a choice of minimizer in the closed
fluctuating cone~$\oCmue$.  (``$\fC$''~here is for ``confined''.)
For this purpose, the \emph{directional derivative} of the confined
minimizer $\fC$ at $\tF = \Fmu \circ \exp_\bmu$ along a function $R :
\Tmu \to \RR$ is
$$
  \nablatF\fC(R)
  =
  \lim_{t \to 0} \frac{\fC(\tF + tR)}{t}.
$$

\begin{theorem}\label{t:deriv-of-minimizer-map}
The limiting distribution in the CLT for~$\bmu_n$
(Theorems~\ref{t:intro-CLT} and~\ref{t:intro-perturbation-CLT}) is the
directional derivative, in the space $\cC(\Tmu,\RR)$, of the confined
minimizer map~$\fC$ at the Fr\'echet function $\tF$ along the negative
of the Gaussian tangent field~$G$ induced by~$\mu$:
$$
  \lim_{n\to\infty} \sqrt n \log_\bmu \bmu_n
  \overset{d}=
  \nablatF \fC(-G).
$$
\end{theorem}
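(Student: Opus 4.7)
The plan is to combine the perturbative CLT (Theorem~\ref{t:intro-perturbation-CLT}), which identifies $\lim_{n\to\infty}\sqrt n\,\log_\bmu\bmu_n$ in distribution with the escape vector $\ev(\Gmu)$, together with the Riesz-type representation of Gaussian tangent fields by Gaussian masses supplied by Theorem~\ref{t:Riesz-representation}. It thus suffices to establish the equality in distribution
$$\nablatF\fC(-G)\,\overset{d}=\,\ev(\Gmu)$$
after coupling $G$ and $\Gmu$ through the Riesz identity $G(X)=\<\Gmu, V\>_\bmu$ on fluctuating directions.

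First I would unfold the left-hand side using the definition of the directional derivative of $\fC$, so that $\nablatF\fC(-G)=\lim_{t\to 0^+}\tfrac{1}{t}\,\fC(\tF - tG)$ is a rescaled limit of confined minimizers over $\oCmue$ of $X\mapsto \tF(X)-tG(X)$. Under the Riesz coupling, the forcing term $-tG(X)$ coincides, as a function of $X\in\oCmue$, with the linear-in-$X$ contribution obtained by expanding in $t$ the Fréchet function of $\mu$ perturbed by the finitely supported mass $t\Gmu$ (transferred to $\Tmu$ through $\exp_\bmu$). On the other hand, by the construction of escape for Gaussian masses in Section~\ref{b:gaussians} together with fluctuating confinement (Corollary~\ref{c:confinement-to-Cmu}), $\ev(\Gmu)$ is itself the $t\to 0^+$ rescaled limit of confined minimizers over $\oCmue$ of exactly the same perturbation. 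Thus both sides arise as limits of confined minimizers of a common variational problem matching to leading order in $t$.

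The remaining step is to pass rigorously to the limit, which is the role of the continuous-mapping / $\Upsilon$-convergence machinery of Section~\ref{s:confinement}: Theorem~\ref{t:convergence-of-Upsilon} provides the continuity of confined minimization in its forcing function, and its accompanying fluctuating confinement guarantees that the minimizer remains in $\oCmue$. Applied to the coupled family $(G,\Gmu)$ as $t\to 0^+$, this delivers $\tfrac{1}{t}\fC(\tF - tG)\to \ev(\Gmu)$ almost surely, from which the distributional identity follows and, combined with Theorem~\ref{t:intro-perturbation-CLT}, closes the proof. The hard part will be the Taylor-expansion comparison in the second paragraph, since $\tF$ need not be twice differentiable in the classical sense and its Hessian is genuinely singular; instead of invoking any implicit-function theorem, the argument must use only the directional second-order expansions afforded by the amenable hypothesis and the convex-variational characterization of escape, with fluctuating confinement cutting the minimization down to the convex region $\oCmue$ on which these tools suffice.
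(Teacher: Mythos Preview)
Your approach is correct and follows the same overall strategy as the paper: reduce to showing $\nablatF\fC(-G)=\ev(\Gmu)$, then invoke the perturbative CLT. The difference is that you over-engineer the final step. Observe that $\fC(\tF-tG)$ is \emph{literally} the Gaussian tangent perturbation $H(t)$ from Definition~\ref{d:H}, so $\nablatF\fC(-G)=\lim_{t\to 0}\tfrac{1}{t}H(t)$. But the identity $\lim_{t\to 0}\tfrac{1}{t}H(t)=\ev(\Gmu)$ is already the rightmost equality in Theorem~\ref{t:perturbative-CLT}, whose proof replaces $G(X)$ by $\<\Gmu,X\>$ on~$\oCmue$ via Theorem~\ref{t:Riesz-representation} and then applies the escape characterization in Theorem~\ref{t:escape-vector-confinement}(\ref{b}) directly. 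All of the Taylor-expansion comparison and $\Upsilon$-convergence machinery you anticipate needing was already absorbed into the proof of Theorem~\ref{t:perturbative-CLT}; there is no ``hard part'' remaining here. In particular, you need not re-invoke Theorem~\ref{t:convergence-of-Upsilon} or worry about singular Hessians: once $G$ is identified with $\<\Gmu,\cdot\>$ on the confined domain, the minimization problem is exactly the one characterizing $\ev(\Gmu)$.
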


Theorem~\ref{t:deriv-of-minimizer-map} is restated and proved as
Theorem~\ref{t:minimizer-CLT}.  Ideally, the confined minimization
over the closed fluctuating cone would be strengthened to minimize
over the entire tangent cone.  This is explained and stated precisely
as Conjecture~\ref{conj:minimizer-CLT}, which says that
Theorem~\ref{t:deriv-of-minimizer-map} holds after the word
``confined'' is deleted and both occurrences of the symbol~$\fC$ are
replaced by the \emph{minimizer map}
\begin{align*}
\BB: \cC(\Tmu,\RR) &\to \Tmu
\\
                 f &\mapsto \BB(f) \in \argmin_{X \in \Tmu} f(X),
\end{align*}
which takes each function~$f$ to a choice of minimizer anywhere in the
tangent cone.  (``$\BB$'' here is for ``barycenter''.)

%

\section{Geometric and probabilistic prerequisites}\label{s:prereqs}

\noindent
Formulating central limit theorems on singular spaces first requires a
class of spaces with sufficient structure.  These spaces are recalled
from \cite{tangential-collapse} in Section~\ref{b:collapse}, along
with the comparison of their tangent cones to linear spaces by
tangential collapse.  On the probabilistic side, a central limit
theorem proof must eventually rely on a convergence theorem, which in
this case is recalled from \cite{random-tangent-fields} in
Section~\ref{b:random-tangent-fields}.  Minimal basic background on
geometry of~$\CAT(\kappa)$ spaces necessary to make precise statements
of these prerequisites and the main new results later on occupies
Section~\ref{b:CAT(k)}.  More details, proofs, and foundations can be
found in the prequels \cite{shadow-geom, tangential-collapse,
random-tangent-fields} and in the sources they cite, especially
\cite{BBI01} and~\cite{bridson2013metric}.  In particular, the
exposition here assumes knowledge of $\CAT(\kappa)$ spaces; a
bare-bones introduction to those, tailored to the new developments,
can be found in \cite[Section~1]{shadow-geom}.

\subsection{\texorpdfstring{$\CAT(\kappa)$}{CAT(k)} background}\label{b:CAT(k)}

\begin{prop}\label{p:cat-spaces}
Any point~$\bmu$ in any $\CAT(\kappa)$ space~$\MM$ has a \emph{tangent
cone}~$\Tmu$ that is $\CAT(0)$ and carries an action of the
nonnegative real numbers~$\RR_+$ by scaling.  The set of unit vectors
in~$\Tmu$ is the \emph{unit tangent sphere}~$\Smu$.
\end{prop}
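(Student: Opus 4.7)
The plan is to recall the standard tangent cone construction for $\CAT(\kappa)$ spaces, as developed in \cite{BBI01} and \cite{bridson2013metric}; the proposition collects well-known consequences of the upper curvature bound rather than proving anything new. First I would define the \emph{space of directions} $\Smu$ at~$\bmu$ as the metric completion of equivalence classes of nontrivial geodesic germs emanating from~$\bmu$, two germs being equivalent when they subtend a zero Alexandrov angle. The $\CAT(\kappa)$ upper curvature bound at~$\bmu$ ensures that the Alexandrov angle between any pair of such germs exists as the monotone limit of comparison angles along the germs, so the resulting angle function is genuinely a metric on~$\Smu$ rather than merely a pseudo-metric.

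Next I would define $\Tmu$ as the \emph{Euclidean cone} over $\Smu$: the set $(\RR_{\geq 0} \times \Smu)/{\sim}$ where $(0,u) \sim (0,v)$ for all $u,v \in \Smu$, equipped with the cone metric
\begin{equation*}
  \dd\bigl((s,u),(t,v)\bigr)^2
  = s^2 + t^2 - 2st\cos\bigl(\min\{\pi,\dd_\Smu(u,v)\}\bigr).
\end{equation*}
This construction visibly supports the scaling action $\lambda\cdot(t,u) = (\lambda t, u)$ of $\RR_+$, and the section $\{1\}\times\Smu$ is exactly the set of norm-$1$ vectors, so the second and third assertions of the proposition are immediate from the definition.

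The remaining claim that $\Tmu$ is $\CAT(0)$ follows from a classical two-step argument. First, Nikolaev's theorem ensures that the space of directions at any point of a $\CAT(\kappa)$ space is itself a $\CAT(1)$ space; its proof rescales short triangles near~$\bmu$ and passes to the limit using monotonicity of comparison angles. Second, Berestovskii's theorem ensures that the Euclidean cone over any $\CAT(1)$ space is $\CAT(0)$; its proof compares cone triangles against flat Euclidean triangles by unfolding along spherical triangles in the link~$\Smu$.

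The main obstacle, such as it is, is bookkeeping rather than geometry: aligning the conventions used here with those of the prequels \cite{shadow-geom, tangential-collapse, random-tangent-fields} and confirming that the definitions of $\Smu$ and~$\Tmu$ adopted in those sources coincide with the formulation above. Since the proposition is merely fixing notation and recording standard consequences of the $\CAT(\kappa)$ hypothesis for use in subsequent sections, no new input is needed beyond invoking the cited results.
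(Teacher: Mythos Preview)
Your proposal is correct and aligned with the paper's treatment: the paper states this proposition as background without proof, referring the reader to the prequels \cite{shadow-geom, tangential-collapse, random-tangent-fields} and the standard sources \cite{BBI01, bridson2013metric} for details. Your sketch of the space-of-directions construction, the Euclidean cone metric, and the Nikolaev--Berestovskii argument for the $\CAT(0)$ property is exactly the standard route those references take, so there is nothing to add.
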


The following assertion about angles and the unit sphere metric comes
from \cite[Lemma~9.1.39]{BBI01}, but it might be helpful to note that
it appears as \cite[Lemma~1.2]{random-tangent-fields} and
\cite[Proposition~1.7]{shadow-geom} also.

\begin{lemma}\label{l:angular-metric}
The unit sphere $\Smu$ in Proposition~\ref{p:cat-spaces} is a length
space (induced from~$\Tmu$) whose \emph{angular metric}~$\dd_s$ allows
angles to be defined by
$$
  \dd_s(V, W) = \angle(V, W)
  \text{ whenever } V, W \in \Smu
  \text{ with } \angle(V, W) < \pi
$$
and $\angle(V, W) = \pi$ if $\dd_s(V, W) \geq \pi$ (see
Remark~\ref{r:path-through-apex} for geometric explanation).
\end{lemma}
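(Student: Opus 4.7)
The plan is to reduce the statement to the Euclidean cone structure that $\Tmu$ inherits from $\Smu$ in the $\CAT(\kappa)$ setting, and then read off the two stated relations from the law of cosines. First, I invoke the standard description of the tangent cone of a $\CAT(\kappa)$ space: $\Tmu$ is isometric to the Euclidean cone $C(\Smu)$, meaning that as a set $\Tmu \cong [0,\infty) \times \Smu / (\{0\}\times \Smu)$, with cone metric
$$
  \dd(rV, sW)^2 = r^2 + s^2 - 2rs\cos\bigl(\min(\dd_s(V,W), \pi)\bigr)
$$
whenever $V, W \in \Smu$ and $r, s \geq 0$, where $\dd_s$ is the intrinsic (length) metric on $\Smu$ induced from $\Tmu$. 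This realization simultaneously builds $\Smu$ as a length space and specifies its intrinsic metric as the input that recovers the metric on $\Tmu$ via the cone formula.

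Next, I would specialize to $r = s = 1$ and unpack the angle. Applied to unit vectors $V, W \in \Smu$, the cone formula gives $\dd(V, W)^2 = 2 - 2\cos\bigl(\min(\dd_s(V, W),\pi)\bigr)$. Because $\Tmu$ is $\CAT(0)$ by Proposition~\ref{p:cat-spaces}, the Alexandrov angle $\angle(V, W)$ between the two geodesic rays $t \mapsto tV$ and $t \mapsto tW$ at the apex $\0$ is read off the Euclidean comparison triangle with sides $1, 1, \dd(V,W)$, yielding
$$
  \cos\angle(V, W) = 1 - \tfrac12 \dd(V,W)^2 = \cos\bigl(\min(\dd_s(V,W), \pi)\bigr).
$$
Since both $\angle(V,W)$ and $\min(\dd_s(V,W), \pi)$ lie in $[0, \pi]$ and cosine is injective there, I conclude $\angle(V, W) = \min(\dd_s(V, W), \pi)$.

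The two clauses of the lemma are immediate consequences. If $\angle(V, W) < \pi$ then the minimum is strictly less than $\pi$, forcing $\dd_s(V, W) < \pi$ and giving $\angle(V, W) = \dd_s(V, W)$; conversely, if $\dd_s(V, W) \geq \pi$ then the minimum equals $\pi$, so $\angle(V, W) = \pi$. Meanwhile, $\Smu$ is a length space because $\dd_s$ is, by construction, the infimum of lengths of curves lying in $\Smu$ measured by arc length in the restricted metric from $\Tmu$.

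The main obstacle is just the careful justification of the cone description of $\Tmu$: the identification of tangent cones of $\CAT(\kappa)$ spaces with Euclidean cones over their spherical directions rests on the uniform convergence of rescaled metrics $\dd/\lambda$ as $\lambda \to 0$, and on verifying that Alexandrov angles exist to endow $\Smu$ with a well-defined length metric. These are exactly the structural inputs captured in Proposition~\ref{p:cat-spaces}; once they are in hand, the lemma reduces to the law of cosines in a Euclidean triangle with two unit sides, with no further analytic work required. This matches the citation to \cite[Lemma~9.1.39]{BBI01}, which packages precisely this chain of reasoning.
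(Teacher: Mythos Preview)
Your proof is correct. The paper does not actually prove this lemma; it simply cites \cite[Lemma~9.1.39]{BBI01} (and the parallel statements in the prequels) without argument. What you have written is a faithful unpacking of that cited result: the Euclidean cone structure on~$\Tmu$, the law of cosines specialized to unit vectors, and the observation that $\angle(V,W) = \min(\dd_s(V,W),\pi)$ on $[0,\pi]$ by injectivity of cosine. So there is nothing to compare---you have supplied the proof that the paper chose to outsource.
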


The following notions regarding inner products on $\CAT(\kappa)$
spaces are taken from \cite[Definitions~1.11, 1.12, and~1.13 along
with Lemma~1.21]{shadow-geom}.

\begin{defn}\label{d:inner-product}
The \emph{inner product} of tangent vectors $V, W \in \Tmu$ at any
point~$\bmu$ is
$$
  \<V,W\>_\bmu = \|V\|\|W\| \cos\bigl(\angle(V,W)\bigr).
$$
When the point~$\bmu$ is clear from context, the subscript $\bmu$ is
suppressed.
\end{defn}

\begin{remark}\label{r:path-through-apex}
The shortest path in~$\Tmu$ from $V$ to~$W$ passes through the cone
point if $\dd_s(V,W) \geq \pi$ and otherwise behaves as the edge
opposite the angle of size~$\dd_s(V,W)$ between $V$ and~$W$ in the
triangle they span, which is Euclidean by \cite[Lemma~3.6.15]{BBI01}.
That is, the \emph{conical metric} on~$\Tmu$ is given by
$$
  \dd_p(V,W)
  =
  \sqrt{\|V\|^2 + \|W\|^2 - 2\<V,W\>}\
  \text{ for } V,W \in \Tmu.
$$
\end{remark}

\begin{lemma}\label{l:inner-product-is-continuous}
If a basepoint~$\bmu$ in a $\CAT(\kappa)$ space has been fixed, then
the inner product function $\<\,\cdot\,,\,\cdot\,\>_\bmu: \Tmu \times
\Tmu \to \RR$ is continuous.
\end{lemma}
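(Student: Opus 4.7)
The plan is to reduce continuity of the inner product to continuity of the conical metric~$\dd_p$, bypassing any continuity issues for the angle function~$\angle$ itself. From the formula in Remark~\ref{r:path-through-apex}, squaring and rearranging yields
\[
  \<V,W\>_\bmu \;=\; \tfrac12\bigl(\|V\|^2 + \|W\|^2 - \dd_p(V,W)^2\bigr),
\]
which expresses the inner product as a polynomial in three functions on $\Tmu \times \Tmu$ whose continuity is either immediate or standard.

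First I would observe that $(V,W) \mapsto \dd_p(V,W)$ is continuous: since $\dd_p$ is the metric on $\Tmu$ (Remark~\ref{r:path-through-apex}), the triangle inequality gives $|\dd_p(V,W) - \dd_p(V',W')| \leq \dd_p(V,V') + \dd_p(W,W')$, so $\dd_p$ is $1$-Lipschitz with respect to the product metric. Next, the norm $V \mapsto \|V\| = \dd_p(V,\0)$ is continuous as a composition of continuous maps. Combining these with continuity of addition, multiplication, and squaring on~$\RR$, the right-hand side of the displayed identity is continuous on $\Tmu \times \Tmu$, and hence so is the inner product.

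There is no real obstacle beyond checking that the formula from Remark~\ref{r:path-through-apex} inverts cleanly to the identity above, which is algebra, and observing that it handles the case $\dd_s(V,W)\geq\pi$ correctly: there the geodesic passes through the apex, so $\dd_p(V,W)^2 = (\|V\|+\|W\|)^2$, giving $\<V,W\>_\bmu = -\|V\|\|W\|$, which is consistent with $\cos(\angle(V,W)) = \cos\pi = -1$ in Definition~\ref{d:inner-product}. In other words, the cosine combines with the two norm factors to smooth over any jumps in the angle function, and the inner product inherits its continuity directly from the metric on~$\Tmu$.
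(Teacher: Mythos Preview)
Your proof is correct. The paper does not supply its own proof of this lemma; it is imported from \cite[Lemma~1.21]{shadow-geom}, so there is no in-paper argument to compare against. Your approach---inverting the conical metric formula of Remark~\ref{r:path-through-apex} to write $\<V,W\>_\bmu = \tfrac12(\|V\|^2 + \|W\|^2 - \dd_p(V,W)^2)$ and then observing that each term is continuous---is the standard one and handles all cases cleanly, including the apex and the $\dd_s \geq \pi$ regime.
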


\begin{defn}\label{d:log-map}
Fix $\bmu \in \MM$.  For $v$ in the set $\MM' \subseteq \MM$ of points
in the $\CAT(\kappa)$ space~$(\MM,\dd)$ with unique shortest path
to~$\bmu$, let the unit-speed shortest path from~$\bmu$ to~$v$
be~$\gamma_v$, with tangent vector $V = \gamma_v'(0)$ at~$\bmu$.  The
\emph{log map} is
\begin{align*}
  \log_\bmu : \MM' & \to \Tmu
\\
                 v &\mapsto \dd(\bmu,v) V.
\end{align*}
A~$\CAT(0)$ space~$\XX$ is \emph{conical} with apex~$\bmu$ if the log
map $\XX \to \Tmux$ is an isometry.
\end{defn}

The next concept was introduced in
\cite[Definition~2.1]{tangential-collapse} to guarantee existence of
tangential collapse as in Theorem~\ref{t:collapse}.

\begin{defn}[Localized measure]\label{d:localized}
A measure~$\mu$ on $\CAT(\kappa)$~$\MM$ is \emph{localized}~if
\begin{itemize}
\item%
$\mu$ has unique Fr\'echet mean~$\bmu$,
\item%
$\mu$ has locally convex Fr\'echet function in a neighborhood of~$\bmu$,
and
\item%
the logarithm map $\log_\bmu: \MM \to \Tmu$ is $\mu$-almost surely
uniquely defined.
\end{itemize}
Denote the pushforward of a localized measure~$\mu$ to~$\Tmu$ by
$$
  \hmu = (\log_\bmu)_\sharp\mu.
$$
\end{defn}

\begin{defn}\label{d:directional-derivative}
Fix a localized measure~$\mu$ on a $\CAT(\kappa)$ space~$\MM$.  The
Fr\'echet function~$F$ has \emph{directional derivative} at~$\bmu$
given by
\begin{align*}
  \nablabmu F : \Tmu & \to \RR
\\
                  V &\mapsto \frac{d}{dt} F(\exp_\bmu tV)|_{t=0}
\end{align*}
in which the exponential is a geodesic with constant speed and
tangent~$V$ at~$\bmu$.
\end{defn}

The next three concepts are \cite[Definition~2.12, 2.17,
and~2.18]{tangential-collapse}, where explanations of their
background, geometry, and motivation can be found.

\begin{defn}[Escape cone]\label{d:escape-cone}
The \emph{escape cone} of a localized measure~$\mu$ on a
$\CAT(\kappa)$ space~$\MM$ is the set~$\Emu$ of tangent vectors along
which the directional derivative
of the Fr\'echet function vanishes at~$\bmu$:
\begin{align*}
  \Emu &= \{X \in \Tmu \mid \nablabmu F(X) = 0\}.
\end{align*}
\end{defn}

\begin{defn}[Hull]\label{d:hull}
If~$\XX$ is a conical $\CAT(0)$ space, then the \emph{hull} of any
subset $\cS \subseteq \XX$ is the smallest geodesically convex cone
$\hull\cS \subseteq \XX$ containing~$\cS$.  For a localized
measure~$\mu$ on a $\CAT(\kappa)$ space~$\MM$, set
$$
  \hull\mu
  =
  \hull\supp(\hmu),
$$
the hull of the support in~$\XX = \Tmu$ of the pushforward measure
$\hmu = (\log_\bmu)_\sharp\mu$.
\end{defn}

\begin{defn}[Fluctuating cone]\label{d:fluctuating-cone}
The \emph{fluctuating cone} of a localized measure~$\mu$ on a
$\CAT(\kappa)$ space~$\MM$ is the intersection
\begin{align*}
  \Cmu &= \Emu \cap \hull\mu
  \\   &= \{V \in \hull\mu \mid \nablabmu F(V) = 0\}
\end{align*}
of the escape cone and hull of~$\mu$.  Let $\oCmu$ be the closure
in~$\Tmu$ of the escape cone~$\Cmu$.
\end{defn}

\subsection{Tangential collapse on smoothly stratified spaces}\label{b:collapse}
\mbox{}\medskip

\noindent
Tangential collapse forces the tangent cone of a smoothly stratified
metric space at the Fr\'echet mean of a given measure onto a linear
space in a way that preserves enough of the geometry relevant to
asymptotics of sampling.  The essential results on tangential collapse
are proved in \cite{tangential-collapse}, which also introduces the
relevant class of spaces.  The results in this subsection (except for
the last) are restated or derived from \cite[Definition~3.1,
Definition~4.13, Theorem~4.21, and
Corollary~4.22]{tangential-collapse}.

\begin{defn}[Smoothly stratified metric space]\label{d:stratified-space}
A \emph{smoothly stratified metric space} is a complete, geodesic,
locally compact $\CAT(\kappa)$ space that decomposes as
$$
  \MM = \bigsqcup_{j=0}^d \MM^j
$$
into disjoint locally closed \emph{strata}~$\MM^j$ such that the
stratum~$\MM^j$ for each~$j$ has closure
$$
  \overline{\MM^j} = \bigcup_{k\leq j}\MM^k,
$$
and both of the following hold.
\begin{enumerate}
\item\label{i:manifold-strata}%
(Manifold strata).  The space $(\MM^j,\dd|_{\MM^j})$ is, for each
stratum $\MM^j$, a smooth manifold whose geodesic distance is the
restriction $\dd|_{\MM^j}$ of~$\dd$ to~$\MM^j$.

\item\label{i:exponentiable}%
(Local exponential maps).  The exponential map $\exp_p = \log_p^{-1}$
is locally well defined and a homeomorphism around each point $p \in
\MM$; that is, there exists $\ve >0$ such that the restriction of the
logarithm map $\log_p$ at~$p$ to an open ball $B(p,\ve) \subseteq \MM$
is a homeomorphism onto its image.

\end{enumerate}
\end{defn}

\begin{defn}[Tangential collapse]\label{d:collapse}
A \emph{tangential collapse} of~$\mu$ on a smoothly stratified metric
space~$\MM$ is a map $\LL: \Tmu \to \RR^m$ to an
inner~product~space~such~that
\begin{enumerate}
\item\label{i:pushforward-mean}%
the Fr\'echet mean of the pushforward $\LL_\sharp\hmu$ is
$\LL\bigl(\log_\bmu (\bmu)\bigr) = 0$, where $\hmu =
(\log_\bmu)_\sharp\mu$;
\item\label{i:injective}%
the restriction of~$\LL$ to the closure~$\oCmu$ of the fluctuating
cone~$\Cmu$ is injective;
\item\label{i:partial-isometry}%
for any tangent vector $V \in \Tmu$ and any fluctuating vector $U \in
\Cmu$,
$$
  \<U,V\>_\bmu = \bigl\<\LL(U),\LL(V)\bigr\>_{\LL(\bmu)}.
$$
\item\label{i:homogeneous}%
$\LL$ is \emph{homogeneous}, meaning $\LL(tV) = t\LL(V)$ for all
real~$t \geq 0$ and~$V \in \Tmu$; and
\item\label{i:continuous}%
$\LL$ is continuous.
\end{enumerate}
\end{defn}

\begin{thm}[Tangential collapse]\label{t:collapse}
Any localized probability measure~$\mu$
on a smooth\-ly stratified metric space~$\MM$ admits a tangential
collapse~$\LL: \Tmu \to \RR^m$ such that
\begin{enumerate}
\item\label{i:proper}%
$\LL$ is the composite of a proper map followed by a convex geodesic
projection from a $\CAT(0)$ space onto a subset, and
\item\label{i:stratum}%
$\RR^m$ is the tangent space to a particular smooth stratum of~$\MM$
containing~$\bmu$.
\end{enumerate}
\end{thm}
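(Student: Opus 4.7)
The plan is to construct $\LL$ in the two-stage factorization demanded by property~\ref{i:proper}: a proper map from $\Tmu$ into an auxiliary $\CAT(0)$ space, followed by a convex geodesic projection onto a convex subset identified with $\RR^m$, the tangent space to a smooth stratum of $\MM$ whose closure contains $\bmu$. First I would select the stratum. Since $\mu$ is localized, the fluctuating cone $\Cmu$ records exactly the directions at $\bmu$ along which nontrivial geodesic variation of the Fr\'echet mean can occur; by the stratification (Definition~\ref{d:stratified-space}) there is a smooth stratum $\MM^j$ of some dimension $m$ whose closure contains $\bmu$ and whose tangent directions at~$\bmu$ accommodate~$\oCmu$ isometrically. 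Roughly, $\MM^j$ is the smallest smooth stratum near~$\bmu$ into which geodesics realizing $\supp\hmu$ extend. Set $\RR^m$ to be the linear space of tangent directions at $\bmu$ into $\MM^j$, which is indeed linear because $\MM^j$ is a smooth manifold in a neighborhood of its closure point~$\bmu$.

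Next I would construct the two maps. For the proper map $\Tmu \to \XX$, for each $V \in \Tmu$ exponentiate along $V$ by a short distance $t$, compare via the shadow geometry of \cite{shadow-geom} to a direction pointing from $\bmu$ into~$\MM^j$, and take the rescaled $t \to 0^+$ limit; this yields a homogeneous, continuous, proper map into an ambient $\CAT(0)$ space $\XX \supseteq \RR^m$ in which $\RR^m$ embeds as a closed geodesically convex subset. Then compose with the closest-point projection $\XX \to \RR^m$, which is canonical, continuous, and $1$-Lipschitz in any $\CAT(0)$ space. The resulting composite is~$\LL$; homogeneity~\ref{i:homogeneous} and continuity~\ref{i:continuous} are inherited from both stages, and property~\ref{i:pushforward-mean} follows because $\log_\bmu(\bmu) = 0$ maps to $0$ at every step.

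The crux is injectivity on $\oCmu$ (property~\ref{i:injective}) and inner-product preservation for fluctuating vectors (property~\ref{i:partial-isometry}). By the choice of $\MM^j$, the fluctuating cone consists of directions along which geodesic flow from~$\bmu$ extends into $\MM^j$, so the shadow map restricts to an isometric embedding of $\oCmu$ into $\XX$ landing already inside the convex subset $\RR^m$, where the closest-point projection fixes it pointwise. Angles between a vector in~$\oCmu$ and an arbitrary tangent vector are then preserved by the same isometric calculation, yielding~\ref{i:partial-isometry}. The hard part is precisely this isometric character of the shadow map on~$\oCmu$ and the matching dimension~$m$ of~$\MM^j$: one must verify that the fluctuating directions genuinely correspond to an $m$-dimensional linear span of directions into~$\MM^j$, with no hidden folding or angle distortion. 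This is exactly what the shadow geometry of \cite{shadow-geom} is designed to supply, providing a canonical isometric comparison between tangent data at~$\bmu$ and nearby tangent data in a well-chosen smooth stratum. Once that geometry is in hand, the projection step and the verification of the remaining axioms are routine $\CAT(0)$~machinery.
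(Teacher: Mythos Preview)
Your proposal has the right architecture---a proper ``shadow'' map followed by a $\CAT(0)$ convex projection---but it collapses what is in the paper an \emph{iterative} construction into a single step, and that single step does not in general suffice. The paper's proof invokes the \emph{terminal collapse} from \cite[Theorem~4.21]{tangential-collapse}, which factors as $\LL = \bPi \circ \LL_\circ$ where $\LL_\circ$ is a \emph{composite d\'evissage}: a finite composition of limit logarithm maps, each one proper by \cite[Corollary~3.19]{shadow-geom}. A single limit log map along a direction into some stratum~$\MM^j$ moves $\Tmu$ to the tangent cone at a nearby point of~$\MM^j$, but that new tangent cone can still be singular if~$\MM^j$ is not maximal. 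The d\'evissage repeats the process, climbing the stratification one step at a time, until it reaches a \emph{terminal} smoothly stratified space whose relevant stratum is maximal, so that the tangent cone there is genuinely linear. Only then is the final geodesic projection~$\bPi$ applied.

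Your sentence ``there is a smooth stratum $\MM^j$ \dots\ whose tangent directions at~$\bmu$ accommodate~$\oCmu$ isometrically'' is where the gap shows. You have not said how to choose this stratum, and the heuristic ``smallest smooth stratum near~$\bmu$ into which geodesics realizing $\supp\hmu$ extend'' is neither precise nor obviously correct. The whole point of the iterated d\'evissage in \cite{tangential-collapse} is that the correct target~$\RR^m$ and the isometric embedding of~$\oCmu$ into it emerge only after the inductive climb terminates; they are not visible from a single shadow comparison at~$\bmu$. Your final paragraph correctly identifies the isometry on~$\oCmu$ as the crux, but appealing to \cite{shadow-geom} alone does not furnish it---\cite{shadow-geom} supplies one rung of the ladder, and the present theorem needs the full climb built in~\cite{tangential-collapse}.
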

\begin{proof}
In the terminal collapse $\LL = \bPi \circ \LL_\circ: \Tmu
\to\nolinebreak \Ti$ from \cite[Theorem~4.21]{tangential-collapse},
the composite d\'evissage $\LL_\circ$
\cite[Definition~4.12]{tangential-collapse}
is defined by composing a sequence of limit logarithm maps
\cite[Definition~3.3]{shadow-geom}, each of which is proper by
\cite[Corollary~3.19]{shadow-geom},
and by \cite[Definition~4.15]{tangential-collapse}
the terminal projection $\bPi$ is geodesic projection of the terminal
smoothly stratified metric space onto the tangent space of the
terminal stratum \cite[Definition~4.8]{tangential-collapse}.
\end{proof}

The target of tangential collapse is less important to CLTs than the
convex hull of its image, characterized in
\cite[Lemma~4.25]{tangential-collapse} as follows.

\begin{lemma}\label{l:supp-NLmu}
If $\LL: \Tmu \to \RR^m$ is a tangential collapse of a measure~$\mu$
and $\muL = \LL_\sharp\hmu = (\LL \circ \log_\bmu)_\sharp \mu$, then
$\hull\muL = \hull\LL(\supp\hmu) \cong \RR^\ell$ is a linear
subspace~of\/~$\RR^m$.
\end{lemma}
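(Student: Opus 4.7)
The plan is to use duality in~$\RR^m$ to show that the convex conical hull $K := \hull\LL(\supp\hmu)$ is already a linear subspace, and then to argue that $\hull\muL$ coincides with this same subspace. The trivial half-inclusion $K \subseteq \hull\muL$ comes for free: since $\LL$ is continuous by Definition~\ref{d:collapse}(\ref{i:continuous}), the pushforward satisfies $\supp\muL = \overline{\LL(\supp\hmu)} \supseteq \LL(\supp\hmu)$, and taking convex conical hulls preserves containment.

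The key step is to show that $K$ is linear. Writing $K^\ast = \{v \in \RR^m : \<v,w\> \geq 0 \text{ for all } w \in K\}$ for the dual cone, it suffices to verify that $K^\ast = -K^\ast$, for then $K = K^{\ast\ast}$ is automatically a linear subspace as well. So fix $v \in K^\ast$. By definition, the continuous function $X \mapsto \<v, \LL(X)\>$ is nonnegative on~$\supp\hmu$. On the other hand, Definition~\ref{d:collapse}(\ref{i:pushforward-mean}) says that the Fr\'echet mean of~$\muL$ in the linear space~$\RR^m$ vanishes, and in~$\RR^m$ this Fr\'echet mean is just the ordinary expectation, so
$$
  \int_{\Tmu} \<v, \LL(X)\> \, d\hmu(X)
  \;=\; \left\<v,\, \int_{\Tmu} \LL(X) \, d\hmu(X)\right\>
  \;=\; \<v,\, 0\> \;=\; 0.
$$
A nonnegative continuous function whose integral against~$\hmu$ vanishes must vanish on the support of~$\hmu$, so $\<v, \LL(X)\> = 0$ throughout $\supp\hmu$. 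Taking positive linear combinations extends this to $\<v,w\> = 0$ on all of~$K$, whence $-v \in K^\ast$.

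It follows that $K$ is a linear subspace of~$\RR^m$, say $V \cong \RR^\ell$. In particular $V$ is closed, so from $\LL(\supp\hmu) \subseteq V$ one gets $\supp\muL = \overline{\LL(\supp\hmu)} \subseteq V$, and hence $\hull\muL \subseteq V = K$. Combined with $K \subseteq \hull\muL$, this yields both the equality $\hull\muL = \hull\LL(\supp\hmu)$ and its identification with~$\RR^\ell$.

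I expect the main point requiring care to be the duality argument itself, specifically the transition from ``$\<v,\LL(\cdot)\>$ is $\hmu$-a.e.\ zero'' to ``it vanishes throughout $\supp\hmu$,'' which hinges on continuity of $\LL$ and on the characterization of the support as the smallest closed set of full measure. Everything else is bookkeeping. Notably, the homogeneity and partial-isometry properties of tangential collapse play no role here; only continuity~(\ref{i:continuous}) and the centering~(\ref{i:pushforward-mean}) of the pushforward measure are used.
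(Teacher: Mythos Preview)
The paper does not prove this lemma in-text; it is imported from the companion paper \cite[Lemma~4.25]{tangential-collapse}, so there is no local proof to compare against. Your duality argument is a clean, self-contained route, and the core idea---that $v \in K^*$ forces $\langle v, \LL(\cdot)\rangle$ to have vanishing $\hmu$-integral and hence, by continuity, to vanish identically on $\supp\hmu$---is correct and nicely isolates exactly which axioms of tangential collapse are needed.

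There is one genuine gap. You write that $K^* = -K^*$ makes ``$K = K^{**}$ \ldots\ automatically a linear subspace,'' but the bipolar theorem only yields $K^{**} = \overline{K}$; you have not shown $K$ is closed, and Definition~\ref{d:hull} does not build closedness into $\hull$. What you have actually established is $K^* = V^\perp$ for $V = \operatorname{span}\LL(\supp\hmu)$, hence $\overline{K} = V$, and it remains to exclude $K \subsetneq V$. This can be done: your argument shows that $\LL(\supp\hmu)$ is contained in no proper closed half-space of~$V$ through~$0$, so by the supporting-hyperplane theorem $0$ lies in the interior (relative to~$V$) of $\overline{\operatorname{conv}}\,\LL(\supp\hmu)$; since a convex set in~$\RR^\ell$ and its closure share the same interior, in fact $0 \in \operatorname{int}_V \operatorname{conv}\,\LL(\supp\hmu)$. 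A ball about~$0$ inside this convex hull generates all of~$V$ under positive scaling, giving $K = V$ directly. The identical reasoning with $\supp\muL$ in place of $\LL(\supp\hmu)$ yields $\hull\muL = V$ as well, so both the equality and the linearity follow once this step is inserted.
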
 

\subsection{Random tangent fields and their CLT}\label{b:random-tangent-fields}
\mbox{}\medskip

\noindent
The precursor to the geometric central limit theorems in
Section~\ref{s:CLT} is a ray-by-ray version, phrased here in terms of
random tangent fields.  These results summarize the entirety of
\cite[Section~2]{random-tangent-fields}, followed by \cite[Theorem~4.2
and Corollary~4.3]{random-tangent-fields}.  The exposition in that
paper draws connections to classical central limit theorems.

\begin{defn}[Random tangent field]\label{d:random-tangent-field}
Let $(\Omega,\cF,\bP)$ be a complete probability space and $\MM$ a
smoothly stratified metric space.  A stochastic process~$f$ indexed by
the unit tangent sphere~$\Smu$ at a point $\bmu \in \MM$, meaning a
measurable map
\begin{align*}
  f: \Omega \times \Smu & \to \RR,
\end{align*}
is a \emph{random tangent field} on~$\Smu$.  Typically $\omega$ is
suppressed in the notation.  A random tangent field~$f$ is
\emph{centered} if its expectation vanishes: $\EE f(V) = 0$ for all $V
\in \Smu$.
\end{defn}

\begin{remark}\label{r:random-tangent-field}
A random tangent field~$f$ is often thought of and notated as a
collection of functions $\{f(V): \Omega \to \RR \mid V \in \Smu\}$
indexed by unit tangents at~$\bmu$.  Any \mbox{$\MM$-valued} random
variable $x = x(\omega)$ and function $g\hspace{-.1ex}:\hspace{-.1ex}
\MM \times \Smu \!\to\! \RR$ induce a random tangent~field
$$
  (\omega,V) \mapsto g\bigl(x(\omega),V\bigr),
$$
typically written $g(x,V)$ by abuse of notation.
\end{remark}

\begin{remark}\label{r:extend-to-Tmu}
A random tangent field~$f$ canonically extends to a homogeneous
stochastic process indexed by~$\Tmu$ instead of~$\Smu$, where $f$ is
\emph{homogeneous} if $f(tV) = tf(V)$ for all real~$t \geq 0$ and~$V
\in \Tmu$.  This extension was not needed in
\cite{random-tangent-fields} but arises later here, the first time
being at the start of Section~\ref{b:transform}; see
Definition~\ref{d:average-sample-perturbation}.
\end{remark}

\begin{remark}\label{r:nablamuF(V)=m(mu,V)}
To center a random tangent field, subtract the \emph{tangent
mean~function}
$$
  m(\mu,V)
  =
  \int_\MM \<\log_\bmu y, V\>_\bmu\mu(dy),
$$
or equivalenty add the derivative $\nablabmu F(V)$ by
\cite[Corollary~2.7]{tangential-collapse},
which asserts
$$
  \nablabmu F(V) = -\int_{\Tmu}\<W,V\>_\bmu\,\hmu(dW)
$$
for any localized measure~$\mu$ on any $\CAT(0)$ space~$\MM$.  The
tangent mean function $m(\mu,V) = -\nablabmu F(V)$ is never strictly
positive (as $\bmu$ is a minimizer of~$F$) and vanishes precisely when
$\nablabmu F(V) = 0$.  This vanishing exactly means that $V$ lies in
the escape cone~$\Emu$
(Definition~\ref{d:escape-cone}).
In~summary, $m(\mu,V) \leq 0$, with equality if and only if $V \in
\Emu$.
\end{remark}

\begin{defn}[Gaussian tangent field]\label{d:gaussian-tangent-field}
A \emph{Gaussian tangent field with covariance $\tangCOV$} is a
centered random tangent field $G:\Omega \times \Smu \to \RR$ such
that
\begin{enumerate}
\item%
the \emph{covariance} $(V,W) \mapsto \EE\,G(V)G(W)$ of~$G$
is $\tangCOV(V,W)$ for all $V,W \in \Smu$,~and
\item%
for all $V_1,\ldots, \hspace{-1pt}V_n \!\in\! \Smu$,
$\bigl(G(V_1),\ldots, G(V_n)\bigr)$ is multivariate Gaussian
\mbox{distributed}.
\end{enumerate}
The \emph{Gaussian tangent field induced by~$\mu$} is the centered
Gaussian random tangent field $G: \Omega \times \Smu \to \RR$ whose
covariance is $\EE G(V)G(W) = \tangCOV(\mu,V,W)$ for
$$
  \tangCOV(\mu,V,W) 
  =
  \int_\MM \bigl[\<\log_\bmu y, V\>_\bmu - m(\mu,V)\bigr]\,
           \bigl[\<\log_\bmu y, W\>_\bmu - m(\mu,W)\>_\bmu\bigr]\,\mu(dy).
$$
\end{defn}

\begin{defn}\label{d:empirical-tangent-field}
Let $\MM$ be a $\CAT(\kappa)$ space~$\MM$ with a measure~$\mu$.  Fix a
sequence $\xx = (x_1,x_2,\ldots) \in \MM^\NN$ of mutually independent
random variables $x_i \sim \mu$.  Write the $n^\mathrm{th}$ empirical
measure of~$\mu$ induced by~$\xx$ as $\mu_n = \frac 1n
\sum_{i=1}^n\delta_{x_i}$.  Let
$$
  g_i(V) = \<\log_\bmu x_i, V\>_\bmu - m(\mu,V)
$$
be the \emph{centered random tangent field induced by~$\mu$}.
Then
\begin{align*}
  \Gn(V) &= \frac 1{\sqrt n} \sum_{k=1}^n g_i(V)
\intertext{is the \emph{empirical tangent field} induced by~$\mu$.  In
contrast, the average of $g_1,\dots,g_n$ is}
   \bgn(V) &= \frac 1n \sum_{k=1}^n g_i(V)
           = \frac 1{\sqrt n} \Gn. \notag
\end{align*}
\end{defn}

The \emph{stratified $\CAT(\kappa)$ space} hypothesis in the following
results, which are \cite[Theorem~4.2 and
Corollary~4.3]{random-tangent-fields},
includes all smoothly stratified metric spaces.  More precisely, such
spaces satisfy all axioms for smoothly stratified metric spaces except
for existence of local exponential maps in
Definition~\ref{d:stratified-space}.\ref{i:exponentiable}; see
\cite[Definition~1.8]{random-tangent-fields}.

\begin{thm}[Random field CLT]\label{t:tangent-field-CLT}
Fix a stratified $\CAT(\kappa)$ space~$\MM$ and a localized
measure~$\mu$ on~$\MM$.  Let $G_n = \sqrt n\,\bgn$ be the empirical
tangent fields
for the collection of independent random variables~$x_i$ each
distributed according to~$\mu$, and let $G$ be the Gaussian tangent
field induced by~$\mu$.
Then the $\Gn$ converge to~$G$:
$$
  \Gn \overset{d\;}\to G
$$
in distribution as $n \rightarrow \infty$.
\end{thm}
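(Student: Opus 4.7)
\medskip

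\noindent\textbf{Proof proposal.}
The empirical tangent fields $\Gn$ are stochastic processes indexed by the unit tangent sphere $(\Smu,\dd_s)$, which is compact since $\MM$ is locally compact (Definition~\ref{d:stratified-space}) and the tangent cone has a natural compact sphere of directions. The plan is to prove convergence in distribution in the Banach space $C(\Smu,\RR)$ with the uniform topology via the standard two-step recipe: (i)~convergence of finite-dimensional distributions of $\Gn$ to those of $G$, and (ii)~tightness of the family $\{\Gn\}_{n\ge 1}$ in $C(\Smu,\RR)$.

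For (i), fix any $V_1,\dots,V_k\in\Smu$. The random vector $(\Gn(V_1),\dots,\Gn(V_k))$ is $\tfrac{1}{\sqrt n}$ times a sum of i.i.d.\ centered random vectors $(g_i(V_1),\dots,g_i(V_k))$, where each coordinate is bounded in absolute value by $\dd(\bmu,x_i)+|m(\mu,V_j)|$ because $|\<\log_\bmu y, V\>_\bmu|\le\dd(\bmu,y)$ for $V\in\Smu$ by Definition~\ref{d:inner-product}. The hypothesis that $\mu$ is localized (so $\bmu$ is the Fr\'echet mean and $F(\bmu)<\infty$) gives $\int \dd(\bmu,y)^2\,\mu(dy)<\infty$, hence finite second moments. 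A direct computation matches the covariance matrix of the summands with $\tangCOV(\mu,V_j,V_k)$ from Definition~\ref{d:gaussian-tangent-field}, and the classical multivariate CLT delivers the target multivariate Gaussian marginal of $G$.

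For (ii), the core estimate is a uniform modulus-of-continuity bound for $g_i$ on $(\Smu,\dd_s)$. By Lemma~\ref{l:angular-metric}, $\dd_s$ agrees with angle when the latter is less than $\pi$; since $\cos$ is $1$-Lipschitz, the inner product $V\mapsto\<\log_\bmu x_i, V\>_\bmu = \dd(\bmu,x_i)\cos\angle(\log_\bmu x_i,V)$ is Lipschitz in $V\in\Smu$ with random constant $\dd(\bmu,x_i)$. The same reasoning applied pointwise in $y$ to $m(\mu,V)$ shows the recentering term is Lipschitz with constant $\int\dd(\bmu,y)\,\mu(dy)$. Combining these with finite second moments of the envelope, a chaining argument using metric entropy of the compact sphere $\Smu$ (bounded via local manifold parametrizations coming from Definition~\ref{d:stratified-space}.\ref{i:manifold-strata} applied near $\bmu$, together with a uniform cover of $\Smu$) yields a sub-Gaussian increment bound of the form $\EE|\Gn(V)-\Gn(V')|^2\le C\,\dd_s(V,V')^2$ and, via Pollard's or Dudley's functional CLT for empirical processes, the required equicontinuity in probability.

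The main obstacle is controlling the metric entropy of $\Smu$ uniformly enough to apply empirical process machinery when only finite second moments of the envelope $\dd(\bmu,\cdot)$ are available and when $\Smu$ itself may be a singular space---not a smooth manifold---with branching directions corresponding to different strata abutting $\bmu$. Handling this requires exploiting the smoothly stratified structure to subdivide $\Smu$ into strata-indexed pieces on each of which the angular metric is quasi-isometric to a standard spherical metric, and then gluing the chaining estimates across stratum boundaries without losing the sub-Gaussian control. This is exactly the step where one expects to invoke, rather than reprove, the full strength of \cite[Theorem~4.2]{random-tangent-fields}.
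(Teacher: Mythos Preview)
Your outline is sound and matches the intended approach: the paper does not actually prove this theorem but imports it wholesale from the companion paper \cite[Theorem~4.2]{random-tangent-fields}, as flagged at the start of Section~\ref{b:random-tangent-fields}. The proof there follows precisely the two-step scheme you describe---finite-dimensional convergence via the classical multivariate CLT (using square-integrability of $\dd(\bmu,\cdot)$ from localization), plus tightness in $\cC(\Smu,\RR)$ via a chaining argument exploiting the Lipschitz dependence of $V\mapsto\<\log_\bmu x,V\>_\bmu$ on angle and a metric-entropy bound for $(\Smu,\dd_s)$---and indeed Remark~\ref{r:chaining} in the present paper points back to that chaining argument (\cite[Theorem~3.1 and Lemma~5.8]{random-tangent-fields}) as the prototype for the entropy estimates used here.

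One minor correction: the theorem is stated for a \emph{stratified} $\CAT(\kappa)$ space, which (as the paper notes just before the theorem) is weaker than smoothly stratified in that local exponential maps are not assumed. So your appeal to Definition~\ref{d:stratified-space}.\ref{i:manifold-strata} for the entropy bound on $\Smu$ is available, but you should not lean on Definition~\ref{d:stratified-space}.\ref{i:exponentiable}. This does not affect your argument, since the entropy control comes from the stratified structure of $\Smu$ itself (finitely many manifold pieces, each with polynomial covering numbers), not from exponentiation. Your final paragraph correctly identifies this entropy step on a singular $\Smu$ as the nontrivial ingredient, and rightly defers to the cited source.
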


\begin{cor}\label{c:cont-realization}
In the setting of Theorem\,\ref{t:tangent-field-CLT} there exist
versions of\hspace{.2ex} $\Gn$ and~$G$~so~that
\begin{enumerate}
\item%
$\Gn \to G$ almost surely in the space $\cC(\Smu, \RR)$ of continuous
functions from the unit tangent sphere at~$\bmu$ to~$\RR$, equipped
with the supremum norm, and
\item%
$G$ is almost surely H\"older continuous for any H\"older exponent
less than~$1$.
\end{enumerate}
\end{cor}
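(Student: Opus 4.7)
The plan is to upgrade the distributional convergence $\Gn \overset{d}\to G$ in Theorem~\ref{t:tangent-field-CLT} to weak convergence in the Polish space $\cC(\Smu,\RR)$ by establishing tightness of $\{\Gn\}$ there, and then invoke Skorokhod's representation theorem on this separable Banach space to produce versions with almost sure convergence. Hölder continuity of~$G$ will then follow from the classical Kolmogorov--Chentsov continuity criterion, with Gaussianity of~$G$ boosting a second-moment increment bound to moment bounds of all orders.

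The key technical estimate is a uniform Lipschitz bound on increments of the centered field $g_1(V) = \<\log_\bmu x_1, V\>_\bmu - m(\mu, V)$. Because cosine is $1$-Lipschitz and $\<X,V\>_\bmu = \|X\|\cos\angle(X,V)$ for unit $V \in \Smu$ by Definition~\ref{d:inner-product}, any $X \in \Tmu$ and $V,W \in \Smu$ satisfy $|\<X,V\>_\bmu - \<X,W\>_\bmu| \le \|X\|\,\dd_s(V,W)$. Applied to $X = \log_\bmu x_1$ and combined with the analogous bound on $m(\mu,V) - m(\mu,W)$, this yields
\[
  \EE\bigl(g_1(V) - g_1(W)\bigr)^2 \le C\,\dd_s(V,W)^2,
\]
where $C$ is proportional to $\EE\|\log_\bmu x_1\|^2$, finite under the CLT hypothesis on~$\mu$. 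The same second-moment bound then holds for $\Gn(V) - \Gn(W)$ uniformly in~$n$, and since $(G(V), G(W))$ is bivariate Gaussian it upgrades to $\EE|G(V) - G(W)|^{2p} \le C_p\,\dd_s(V,W)^{2p}$ for every integer $p \ge 1$.

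Since~$\MM$ is smoothly stratified and locally compact, $\Smu$ is compact and the stratification yields controlled metric entropy $\log N(\Smu, \dd_s, \ve) = O(\log(1/\ve))$, so the Kolmogorov--Chentsov chaining argument applies on $(\Smu, \dd_s)$: taking $p$ arbitrarily large in the Gaussian moment bounds produces a version of~$G$ that is almost surely Hölder of any exponent strictly less than~$1$, proving part~(2). For tightness of $\{\Gn\}$, one combines the uniform increment bound, upgraded to moments of order~$2p$ via Rosenthal's inequality, with tightness of the scalar marginals $\Gn(V_0)$ at a fixed base vector $V_0 \in \Smu$. The main obstacle is securing sufficient uniform moment control on $\Gn(V) - \Gn(W)$ for chaining at the prelimit stage: for~$G$ Gaussianity is automatic, but for~$\Gn$ this may require either a truncation-plus-CLT-tail comparison or a mildly stronger integrability hypothesis on $\|\log_\bmu x_1\|$ than finite variance. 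Once tightness is in hand, the finite-dimensional convergence from Theorem~\ref{t:tangent-field-CLT} promotes to $\Gn \Rightarrow G$ in $\cC(\Smu,\RR)$, and Skorokhod's representation delivers the versions required in part~(1).
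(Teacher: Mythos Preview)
The paper does not prove this corollary: it is imported wholesale from the prequel \cite[Corollary~4.3]{random-tangent-fields}, so there is no in-paper proof to compare against directly. That said, the hints scattered through the present paper---the entropy bound $\log N(\Smu,\dd_s,\ve) = O(\log(1/\ve))$ in \cite[Lemma~3.6]{random-tangent-fields} invoked in the proof of Lemma~\ref{l:tightness-of_sqrtn_mu_n:sup_1}, the tightness of $\Gn$ in $\cC(\Smu,\RR)$ from \cite[Lemma~5.8]{random-tangent-fields} cited in Lemma~\ref{l:tight-h_n}, and Remark~\ref{r:chaining} on chaining---make clear that the prequel's argument follows exactly the architecture you outline: Lipschitz increment bound from the cosine estimate, entropy control from the stratified structure, chaining for tightness of $\{\Gn\}$, weak convergence in $\cC(\Smu,\RR)$, Skorokhod, and Kolmogorov--Chentsov with Gaussian moment boosting for the H\"older claim.

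Your one flagged concern---that prelimit tightness of $\Gn$ might need more than $L^2$ control on $\|\log_\bmu x_1\|$---is resolvable without extra hypotheses. The function class $\{V \mapsto \<\,\cdot\,,V\>_\bmu : V \in \Smu\}$ is Lipschitz-parametrized by the compact space $(\Smu,\dd_s)$ with finite entropy integral, hence is Donsker under only the square-integrability already assumed; this is the content of the chaining in \cite[Theorem~3.1 and Lemma~5.8]{random-tangent-fields} and the standard bracketing Donsker theorems you allude to (e.g.\ \cite[Lemma~19.34]{Van00}). So no truncation or higher-moment hypothesis is actually needed, and your sketch is complete once that is slotted in.
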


\section{Hypotheses on the population measure}\label{s:measures}

\noindent
Given a measure~$\mu$ on a smoothly stratified metric spaces, the
localized hypothesis in Definition~\ref{d:localized} guarantees
existence of tangential collapse in Theorem~\ref{t:collapse} and the
central limit theorem for random tangent fields in
Theorem~\ref{t:tangent-field-CLT}.  The CLTs in Section~\ref{s:CLT}
require additional mild analytic and geometric hypotheses on~$\mu$
(Sections~\ref{b:amenable} and~\ref{b:immured}, respectively) to allow
techniques involving Taylor expansion and convex optimization.

\subsection{Amenable measures}\label{b:amenable}

\begin{remark}
The CLT on manifolds is derived from the Taylor expansion of the
Fr\'echet function at the mean.  It was pointed out in \cite{Tra20}
that the spreading rate of geodesics at~$\bmu$ shapes the asymptotic
behavior of the CLT.  The spreading rate of geodesics at the mean is
captured by the inverse of the Hessian of the Fr\'echet function
\cite[Section~4.1]{Tra20}.  On manifolds, this rate can be quantified
by computing the second derivative of the squared distance function.
It turns out that to derive a CLT for Fr\'echet means, full
twice-differentiability of the squared distance function is not
needed.  Instead, only existence to second order of the Fr\'echet
derivative is needed.
\end{remark}

Recall that $\Smu$ is the unit tangent sphere
(Lemma~\ref{l:angular-metric}) at the Fr\'echet mean~$\bmu$, and that
the \emph{cut locus} of a point is the closure of the set of points
with more than one shortest path to~$\bmu$ (see \cite{le-barden14},
for example, in a Riemannian manifold context that is relevant to the
current purpose).  Thus a point~$w$ lies outside of the cut locus
of~$\bmu$ if $w$ has a neighborhood whose points all have unique
shortest paths to~$\bmu$.

\begin{defn}[Amenable]\label{d:amenable}
A measure~$\mu$ on a $\CAT(\kappa)$ space~$\MM$ is \emph{amenable} if
for all~$w$ not in the cut locus of the Fr\'echet mean, the \emph{half
square-distance function}
$$
  \rho_w = \frac 12 \dd^2(w,\mathord{\,\cdot\,})
$$
has second-order directional derivative
\begin{align*}
  \lambda_w: \Smu & \to \RR
\\*
                     Z &\mapsto \nablabmu^2 \rho_w(Z,Z)
\end{align*}
at~$\bmu$ dominated by a $\mu$-integrable function $\mathfrak{f}: \MM
\to \RR_{\geq 0}$; that is
\begin{enumerate}
\item%
$|\lambda_w(Z)| \leq \mathfrak{f}(w)$ for all $Z \in \Smu$, and
\item%
$\int_\MM \mathfrak{f}(w) \mu(dw) < \infty$.
\end{enumerate}
\end{defn}

The derivative in Definition~\ref{d:amenable} amounts to taking the
second derivative of the restriction of the function~$\rho_w$ to the
geodesic tangent to~$Z$.  The Fr\'echet function of any amenable
measure~$\mu$ has a second-order Taylor expansion at~$\bmu$.  To
formalize this in Proposition~\ref{p:Lambda}, and indeed for many
calculations throughout the paper---especially
Section~\ref{s:escape}---it is cleanest to use polar coordinates.
This is a manifestation of the transition from spatial to radial
variation in \cite{random-tangent-fields}.

\begin{defn}[Polar coordinates]\label{d:polar}
Fix a point~$\bmu$ in a $\CAT(\kappa)$ space~$\MM$.  The \emph{polar
coordinates} of $x \in \MM \setminus \{\bmu\}$ lying outside the cut
locus of~$\bmu$ are the \emph{radius} $r_x = \dd(x,\bmu)$ and the
\emph{direction} $\theta_x = \frac 1{r_x} \log_\bmu x$.  For the case
of~$\bmu$, set $r_\bmu = 0$ and let $\theta_\bmu$ take any value.
\end{defn}

\begin{remark}\label{r:polar}
Even though polar coordinates are defined outside of the cut locus and
the point~$\bmu$ itself, they are not truly coordinates, because
distinct points can have the same polar coordinates.  At issue is the
same phenomenon that prevents exponential maps from existing: a single
geodesic emanating from~$\bmu$ can bifurcate at a singularity.  Viewed
backward, two shortest paths to~$\bmu$ can join and thereafter share
the same terminal segment; this is the phenomenon underlying the
notion of shadow \cite[Definition~3.7]{shadow-geom}; see
\cite[Figure~1]{kale-2015} for a concrete example.  Local exponential
maps on smoothly stratified metric spaces
(Definition~\ref{d:stratified-space}) at least make polar coordinates
into true coordinates locally near~$\bmu$.
\end{remark}

\begin{prop}[Taylor expansion]\label{p:Lambda}
Fix a $\CAT(\kappa)$ space~$\MM$ and an amenable measure~$\mu$
on~$\MM$.  There exists a continuous function
$$
  \Lambda_\bmu: \Smu \to \RR
$$
such that the Taylor expansion of~$F(x)$ can be written in polar
coordinates as
$$
  F(x)
  =
  F(\bmu) + r_x\nablabmu F(\theta_x) + r_x^2\Lambda_\bmu(\theta_x) + o(r_x^2).
$$
\end{prop}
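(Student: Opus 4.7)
The plan is to establish the expansion by applying a pointwise second-order Taylor expansion of the half square-distance $\rho_w=\tfrac12\dd^2(w,\ccdot)$ at $\bmu$ along the radial geodesic in direction $\theta_x$, integrating that identity against $\mu$, and using the amenability envelope $\mathfrak{f}$ to commute the Taylor limit with integration against $\mu$.  The function $\Lambda_\bmu$ will emerge as one-half of the $\mu$-average of the directional second derivatives~$\lambda_w$.

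For each $w$ outside the cut locus of $\bmu$ (and $\mu$-a.e.\ $w$ lies there under the localization inherent to the amenability framework), the univariate map $h_w\colon s\mapsto\rho_w(\exp_\bmu(s\theta_x))$ is twice differentiable at $s=0$ by Definition~\ref{d:amenable}, with $h_w(0)=\rho_w(\bmu)$, $h_w'(0)=\nablabmu\rho_w(\theta_x)$, and $h_w''(0)=\lambda_w(\theta_x)$.  Taylor's theorem then yields
\begin{equation*}
  \rho_w(x) = \rho_w(\bmu) + r_x\,\nablabmu\rho_w(\theta_x) + \tfrac{r_x^2}{2}\,\lambda_w(\theta_x) + r_w(x),
\end{equation*}
with $r_w(x)/r_x^2\to 0$ as $r_x\to 0$.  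Averaging against $\mu$ term by term recovers $F(x)$ on the left, $F(\bmu)$ from the constant term, and---via a standard dominated convergence argument exchanging $d/ds|_{s=0}$ with $\int\ccdot\,\mu(dw)$ as in Remark~\ref{r:nablamuF(V)=m(mu,V)}, using the $\mu$-integrable envelope $|\nablabmu\rho_w(\theta_x)|\le\dd(w,\bmu)$---the coefficient $\nablabmu F(\theta_x)$ from the linear term.  Setting
\begin{equation*}
  \Lambda_\bmu(Z):=\tfrac12\int_\MM\lambda_w(Z)\,\mu(dw)
\end{equation*}
gives a pointwise finite function on $\Smu$, since amenability bounds $|\lambda_w(Z)|\le\mathfrak{f}(w)$ with $\mathfrak{f}$ being $\mu$-integrable.

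Continuity of $\Lambda_\bmu$ on $\Smu$ follows from a second application of dominated convergence once one knows that $Z\mapsto\lambda_w(Z)$ is continuous for each fixed $w$ outside the cut locus (which is inherited from continuity of the inner product on $\Tmu$ in Lemma~\ref{l:inner-product-is-continuous} together with the second-order structure furnished by amenability), combined with the uniform $\mu$-integrable envelope $\mathfrak{f}$.

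The principal obstacle will be promoting the pointwise statement $r_w(x)/r_x^2\to 0$ to the integrated statement $\int_\MM r_w(x)\,\mu(dw)=o(r_x^2)$.  My plan is to produce a $\mu$-integrable envelope $|r_w(x)|/r_x^2\le G(w)$ valid uniformly in small $r_x$ and in $\theta_x\in\Smu$, and then apply dominated convergence on the family indexed by $r_x\downarrow 0$.  Such an envelope can be obtained by writing the Taylor remainder in integral mean-value form along the geodesic from $\bmu$ to $x$ and controlling the second variation of $h_w$ on a small neighborhood of $0$ via the $\CAT(\kappa)$ second-order comparison inequality for squared distance functions, producing a bound of the form $G(w)\le C_1\mathfrak{f}(w)+C_2(1+\dd(w,\bmu)^2)$.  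Both summands are $\mu$-integrable---the first by amenability, the second by finiteness of the Fr\'echet function---so dominated convergence applies.  The delicate point is that amenability only posits existence of the second derivative of $h_w$ at $s=0$; the uniform envelope must therefore rely on the geometric $\CAT(\kappa)$ bounds on $\rho_w$ in a neighborhood of $\bmu$ rather than on pointwise twice differentiability of $h_w$ on intervals.
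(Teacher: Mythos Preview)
Your approach is essentially the same as the paper's, only far more detailed: the paper's entire proof reads ``By the amenable hypothesis (Definition~\ref{d:amenable}), a Taylor expansion of the square distance function integrates to get a Taylor expansion of the Fr\'echet function~\eqref{eq:FrechetF}.''  You have correctly identified and worked through the analytic subtleties---the dominated convergence needed to integrate the remainder, the continuity of~$\Lambda_\bmu$, and the delicate point that amenability only posits the second derivative at the single point $s=0$---that the paper's one-line argument simply glosses over.
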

\begin{proof}
By the amenable hypothesis (Definition~\ref{d:amenable}), a Taylor
expansion of the square distance function integrates to get a Taylor
expansion of the Fr\'echet function~\eqref{eq:FrechetF}.
\end{proof}

\begin{lemma}\label{l:uniform-convexity}
The second-order directional derivative $\Lambda_\bmu (\theta_x)$ in
Proposition~\ref{p:Lambda} is positive.  In
addition, there exists a constant $C$ independent of $\mu$ such that
$$
  F(x) - F(\bmu) \geq C\dd^2(x,\bmu).
$$
\end{lemma}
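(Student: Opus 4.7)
The plan is to derive both conclusions from the standard uniform convexity of squared distance in $\CAT(\kappa)$ spaces. Concretely, for any point $w \in \MM$ outside the cut locus of~$\bmu$ and any unit-speed geodesic $\gamma$ emanating from~$\bmu$ (restricted to a ball small enough to invoke the $\CAT(\kappa)$ comparison), the second-order directional derivative
$$
  \lambda_w(Z) = \nablabmu^2\rho_w(Z,Z),
  \qquad Z = \gamma'(0),
$$
is bounded below by a positive constant $c$ depending only on~$\kappa$ and the diameter of the ball. For $\kappa = 0$ this is the familiar fact that $t \mapsto \rho_w(\gamma(t))$ is $1$-strongly convex along unit-speed geodesics, so $\lambda_w(Z) \geq 1$; for $\kappa > 0$ the analogous comparison with the model sphere yields $\lambda_w(Z) \geq c(\kappa,D) > 0$ whenever the radii involved stay below the appropriate convexity threshold.

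Next I would integrate this pointwise bound against~$\mu$. The amenable hypothesis (Definition~\ref{d:amenable}) supplies a $\mu$-integrable domination of $\lambda_w$, so the Taylor expansion of $\rho_w$ at~$\bmu$ integrates termwise and identifies the coefficient in Proposition~\ref{p:Lambda} as
$$
  \Lambda_\bmu(\theta) = \tfrac12 \int_\MM \lambda_w(\theta)\,\mu(dw).
$$
Since $\mu$-almost every $w$ lies off the cut locus of~$\bmu$ (by the localized hypothesis, which ensures $\log_\bmu$ is $\mu$-a.s.\ uniquely defined) and $\mu$ is a probability measure, the pointwise bound $\lambda_w(\theta) \geq c$ yields $\Lambda_\bmu(\theta) \geq c/2 > 0$ uniformly in $\theta \in \Smu$, which is the first assertion.

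For the quadratic lower bound on $F(x) - F(\bmu)$, I would promote the infinitesimal estimate at~$\bmu$ to a global one along each unit-speed geodesic $\gamma$ from~$\bmu$ to~$x$. Integrating the $\CAT(\kappa)$ second-derivative bound $(\rho_w \circ \gamma)''(t) \geq c$ for $\mu$-a.e.\ $w$ and then over~$\mu$ gives $(F \circ \gamma)''(t) \geq c$ on the relevant interval. Combined with the fact that $\nablabmu F(\theta) \geq 0$ for every direction $\theta$ (by Remark~\ref{r:nablamuF(V)=m(mu,V)}, since $m(\mu,V) = -\nablabmu F(V) \leq 0$ at a Fr\'echet minimum), two integrations yield
$$
  F(x) - F(\bmu)
  \;\geq\;
  r_x\,\nablabmu F(\theta_x) + \tfrac{c}{2}r_x^2
  \;\geq\;
  \tfrac{c}{2}\,\dd^2(x,\bmu),
$$
so $C = c/2$ works; the constant depends only on the geometry (on~$\kappa$ and on an ambient diameter bound), not on the fine structure of~$\mu$.

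The main obstacle will be propagating the infinitesimal convexity at~$\bmu$ into a genuine quadratic lower bound valid at every~$x$, rather than only an asymptotic one as $r_x \to 0$. The localized hypothesis (which provides local convexity of~$F$ near~$\bmu$) together with the $\CAT(\kappa)$ restriction on support diameter should make this routine, since along each geodesic from~$\bmu$ the second derivative of $F \circ \gamma$ stays bounded below by~$c$ throughout the relevant range. A minor secondary point is to verify that the $\mu$-null cut locus of~$\bmu$ contributes nothing to $\Lambda_\bmu$, which follows directly from the localized hypothesis.
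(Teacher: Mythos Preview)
Your proposal is correct and follows essentially the same route as the paper: both derive the conclusions from the uniform (strong) convexity of squared distance in $\CAT(\kappa)$ spaces, integrated against~$\mu$. The paper simply cites this as a black box (\cite[Proposition~2.3 and~4.4]{sturm2003} for $\kappa = 0$, \cite[Corollaries~12 and~16]{Yo17} for $\kappa > 0$), whereas you unpack the argument; your ``main obstacle'' dissolves if you use the integral form of the $\CAT(\kappa)$ inequality directly rather than passing through second derivatives, which is exactly what those cited results do.
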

\begin{proof}
Since $\MM$ is a $\CAT(\kappa)$ space and $\bmu$ is the minimizer of
the Fr\'echet function $F$ of~$\mu$, \cite[Corollary~12]{Yo17} (for
the case $\kappa > 0$) together with \cite[Proposition~2.3]{sturm2003}
(for the case $\kappa = 0$) says that $F(x)$ is uniformly convex.
Thus the second-order directional derivative $\Lambda_\bmu(\theta_x)$
in Proposition~\ref{p:Lambda} is positive.

Furthermore, \cite[Corollary~16]{Yo17} (for $\CAT(\kappa)$ with
$\kappa > 0$) together with \cite[Proposition~4.4]{sturm2003} (for
$\CAT(0)$) yields a lower quadratic bound for the rate of increase
of~$F$ at the minimizer~$\bmu$, which implies the desired inequality.
\end{proof}

\begin{remark}\label{r:smeary}
The amenable hypothesis prevents the measure~$\mu$ from being smeary
as in \cite{eltzner-huckemann2019} because the smeary arises when the
quadratic Taylor term vanishes.  Therefore smeary cases are not
covered by the central limit theorems here.
\end{remark}

\begin{example}\label{e:cut-locus}
On a smooth manifold of curvature bounded above, placing additional
mass near the cut locus in one direction can induce fluctuations of
the Fr\'echet mean in unrelated directions.  However, doing so
requires singular Hessian at the mean \cite[Theorem~8 and
Theorem~10]{tran-eltzner-huckemann2021}), which forces the measure to
violate the strong convexity property in
Lemma~\ref{l:uniform-convexity}, so $\mu$ is not amenable.  For a
specific example, let $\mu$ be uniform on a small disk centered at the
south pole in the $2$-sphere.  Add two point masses
$\delta_{\gamma(t)}$ and $\delta_{\gamma(-t)}$ along an arbitrary
direction~$V$.  Choose $t$ so that $t\cot t = -\frac{1-\ve}{\ve}$
(Figure~\ref{f:cut-locus}).  Then the Hessian of the perturbed measure
is singular along a different direction~$W$.  The proof of
\cite[Theorem~10]{tran-eltzner-huckemann2021} yields that the
empirical Fr\'echet mean~$\bmu_n$ is asymptotically concentrated along
the direction~$W$ with a scaling rate of $n^{1/6}$ although the two
perturbation masses are situated along direction~$V$.
\begin{figure*}
\begin{center}
  \includegraphics[scale=0.19]{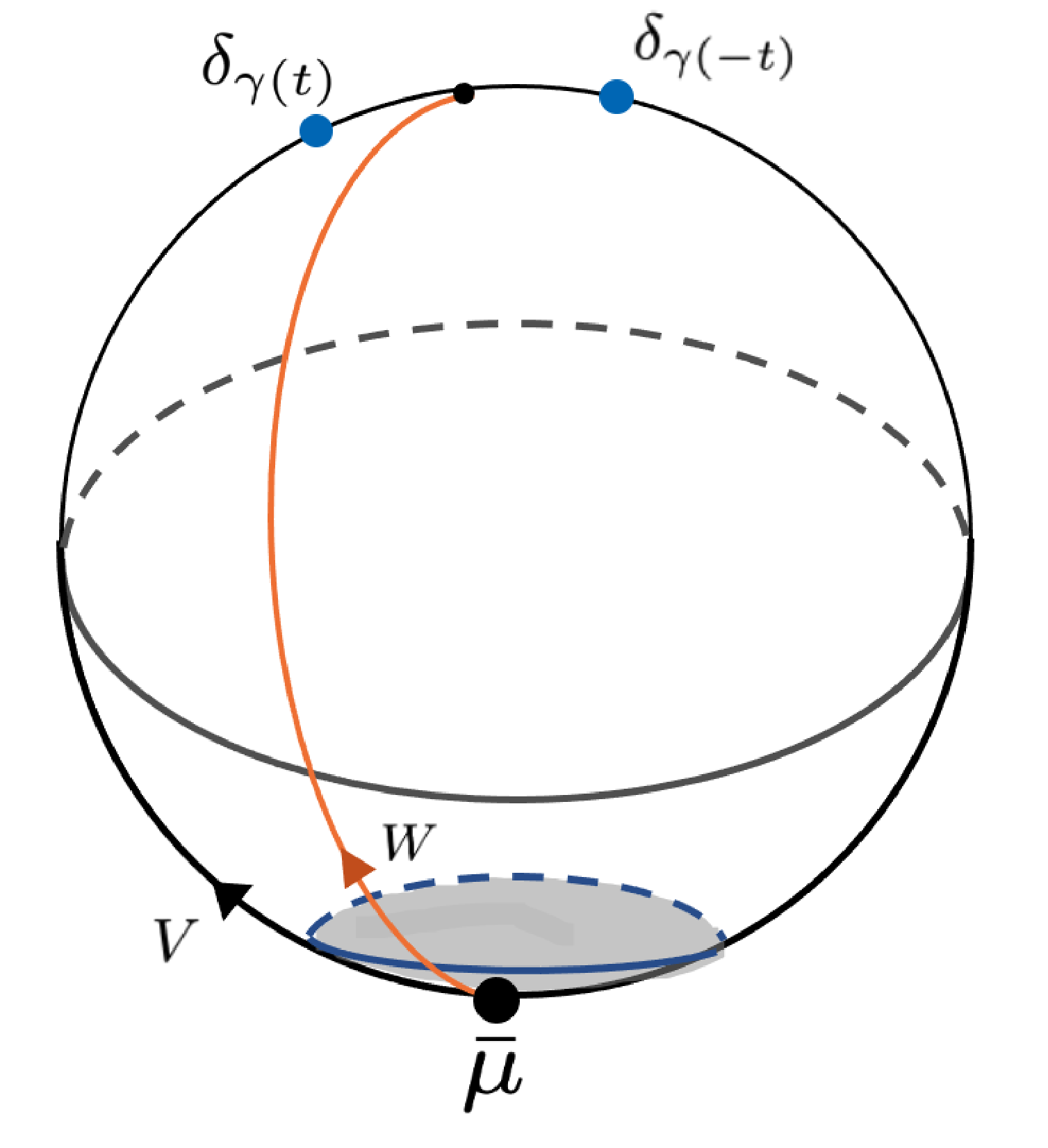}
\end{center}
\caption{Fluctuation of a non-amenable mean in an unexpected
direction}
\label{f:cut-locus}
\end{figure*}
Despite this fluctuation being in an unexpected direction, the
fluctuation does not exit the hull, since the measure~$\mu$ is
supported in a full (albeit small) neighborhood of the~south~pole.
\end{example}

\pagebreak[4]%

\subsection{Immured measures}\label{b:immured}
\mbox{}\medskip

\noindent
A point mass added to a measure~$\mu$ can cause the Fr\'echet mean to
escape from~$\bmu$ in directions that $\mu$ is unable to predict if
the added mass lies outside the support of~$\mu$.  In contrast, the
CLT only concerns perturbations of~$\mu$ by masses sampled from~$\mu$.
It~is reasonable to demand that Fr\'echet means of such samples land,
after taking logarithm at~$\bmu$, in the convex cone generated by the
support.

\begin{defn}[Immured]\label{d:immured}
A measure~$\mu$ on a smoothly stratified metric space~$\MM$
is~\emph{immured} if the Fr\'echet mean~$\bmu$ has an open
neighborhood $U \subseteq \MM$ such that $\log_\bmu \bmu_n
\in\nolinebreak \hull\mu$ whenever $\mu_n$ is a finitely supported
measure on~$\supp\mu$ and $\bmu_n \in U$.
\end{defn}

\begin{example}\label{e:immured-CAT(0)}
Every measure on a $\CAT(0)$ cone is immured, since the log map is an
isometry there and the Fr\'echet mean of any measure on any $\CAT(0)$
space (conical or otherwise) is a limit of points in convex hulls of
finite subsets of the support \cite[Theorem~4.7]{sturm2003}.  In
particular, every measure on an open book \cite{hotz-et-al.2013} or a
phylogenetic tree space as in~\cite{BHV01} is immured.
\end{example}

\begin{example}\label{e:immured-locally-convex}
A measure~$\mu$ on any smoothly stratified metric space~$\MM$ is
immured if the support of~$\mu$ contains a neighborhood of its
Fr\'echet mean~$\bmu$, because then $\hull\mu = \Tmu$.  More
generally, $\mu$~is immured if its support is locally convex
near~$\bmu$ in the sense that $B(\bmu,\ve) \cap \supp\mu$ is convex in
the ball~$B(\bmu,\ve)$ of radius~$\ve$ around~$\bmu$ for some $\ve >
0$.
\end{example}

\begin{example}\label{e:miracle}
Suppose that $\mu$ is a measure supported on the spine of an open book
\cite[Theorem~2.9]{hotz-et-al.2013}.  Perturbing~$\mu$ by adding a
mass on a single page of the open book nudges the Fr\'echet
mean~$\bmu$ onto that page.  Hence the escape cone~$\Emu$ equals the
entire open book.  Limit log along any individual page therefore
collapses~$\Emu$ non-injectively.  This elementary example explains
why it is vital to control potential fluctuations by immuring them
within the convex hull of the support of the measure
(Lemma~\ref{l:immured} and Corollary~\ref{c:confinement-to-Cmu}) in
addition to their automatic confinement
to~$\Emu$~(Theorem~\ref{t:escape-vector-confinement}).
\end{example}

\begin{remark}\label{r:immured}
The immured condition becomes relevant at the very end of
Section~\ref{s:escape}.\ref{b:convergence}, specifically
Lemma~\ref{l:immured} on the way to
Corollary~\ref{c:confinement-to-Cmu}, where it confines escape vectors
to the fluctuating cone~$\Cmu$.  No result before then, including the
rest of Section~\ref{s:escape}.\ref{b:convergence}
through Theorem~\ref{t:escape-vector-confinement}, invokes the immured
hypothesis.  It is possible
the immured condition is not necessary at all, for any of the main
results; see Remark~\ref{r:ideally}.
\end{remark}

\pagebreak[4]

\section{Escape vectors and confinement}\label{s:escape}

Perturbing the measure~$\mu$ by adding a small point mass at~$x \in
\MM$ causes the Fr\'echet mean to escape from the population mean in a
deterministic way, as a function of~$x$
(Definition~\ref{d:escape-vector}).  The central limit theorem can be
expressed as identifying the distribution that results when the
point~$x$ is a random tangent vector generating a Gaussian random
tangent field (Theorem~\ref{t:perturbative-CLT} via
Theorem~\ref{t:Riesz-representation}).  The connection to classical
expressions of the CLT as identifying the limiting distribution of
rescaled Fr\'echet means of samples from~$\mu$ proceeds by directly
comparing those means to perturbations along empirical random tangent
fields (Proposition~\ref{p:transform}), which suffices by the CLT for
random tangent fields (Theorem~\ref{t:tangent-field-CLT}).

The reduction of the CLT to empirical tangent perturbation occupies
Section~\ref{b:transform}, after introducing concepts surrounding
escape vectors in Section~\ref{b:escape}.  The empirical perturbation
itself is naturally expressed as a minimization over~$\MM$ or, by
taking logarithms, over the tangent cone~$\Tmu$ at the Fr\'echet mean.
That minimization is difficult to analyze directly.  However, as
Section~\ref{b:convergence} shows via well chosen approximations of
the problem, it becomes tractable---with explicit formulas and
straightforward uniqueness---when the minimization is taken only over
the escape cone~$\Emu$.  This tack succeeds because perturbation
of~$\mu$ naturally causes the Fr\'echet mean to exit from~$\bmu$ along
an escape vector.  Confinement to~$\Emu$ in this manner is the main
result of Section~\ref{s:escape},
Theorem~\ref{t:escape-vector-confinement}.  It is particularly potent
for central limit theorems when the perturbation of~$\mu$ occurs in
the support of~$\mu$, in which case the escape is further confined to
the fluctuating cone~$\Cmu$ (Corollary~\ref{c:confinement-to-Cmu}).
A~functional version of fluctuating confinement in
Section~\ref{s:confinement} relates convergence in the context of
escape vectors to convergence in the context of random tangent fields
from Section~\ref{b:random-tangent-fields}, which is needed for an
application of the continuous mapping theorem to the random field CLT
from Theorem~\ref{t:tangent-field-CLT}; see
Remark~\ref{r:ev-continuity}.

The setting throughout Section~\ref{s:escape} is that of
Definition~\ref{d:empirical-tangent-field}, except that $\MM$ is not
merely $\CAT(\kappa)$ but a smoothly stratified metric space:
$x_1,x_2,\ldots\!\sim \mu$ are $\MM$-valued independent random
variables with distribution~$\mu$ and empirical measure $\mu_n = \frac
1n \sum_{i=1}^n \delta_{x_i}$, whose Fr\'echet function
$$
  F_n
  =
  F_{\mu_n}
  =
  \frac 1n \sum_{j=1}^n \rho_{x_j}
$$
is expressed compactly using the half square-distance function $\rho$
from Definition~\ref{d:amenable}.

The measure~$\mu$ is repeatedly (but not always) required to be
amenable throughout this section because of
Lemma~\ref{l:converge-of-hessian} and its heavily cited consequence,
Proposition~\ref{p:transform}.  That said, many of the definitions
make sense for measures that need not be amenable.

In this section, the subscript $\bmu$ is often suppressed in the inner
product $\<Y, Z\>_\bmu$ from Definition~\ref{d:inner-product} when
there is no ambiguity about the identity of the base point.


\subsection{Escape vectors}\label{b:escape}
\mbox{}\medskip

\noindent
The main result of this section,
Theorem~\ref{t:escape-vector-confinement}, shows that the output of
the following definition exists and is well defined.

\begin{defn}\label{d:escape-vector}
Let $\mu$ be a measure on a smoothly stratified metric space~$\MM$.
Fix $x \in \MM$ with unit measure~$\delta_x$ supported at~$x$.  For
positive $t \to 0$, the perturbed measures $\mu + t\delta_x$ have
Fr\'echet means $\ol{\mu + t\delta_x}$.
\begin{enumerate}
\item\label{i:x}%
The \emph{escape vector} of~$x$ is
$$
  \ev(x)
  =
  \lim_{t\to 0^+} \frac 1t \log_\bmu(\,\ol{\mu + t\delta_x}\,).
$$

\item\label{i:X}%
If $X \in \Tmu$ is a tangent vector with exponential $x = \exp_\bmu X
\in \MM$, then the \emph{escape vector} of~$X$ is $\ev(X) = \ev(x)$.

\item\label{i:rescale}%
More generally, if $X \in \Tmu$ is any tangent vector, then set
$\ev(X) = \frac 1r\ev(rX)$ for any $r > 0$ such that $rX$ is
exponentiable
(cf.~Definition~\ref{d:stratified-space}.\ref{i:exponentiable}).
\end{enumerate}
\end{defn}

\begin{remark}\label{r:escape-cone}
It is not obvious that the limit in
Definition~\ref{d:escape-vector}.\ref{i:x} exists.  Indeed, existence
is part of the main result of this section,
Theorem~\ref{t:escape-vector-confinement}, whose primary conclusion is
that escape vectors of arbitrary points in~$\MM$ lie in the escape
cone~$\Emu$ from Definition~\ref{d:escape-cone}; that is, escape
vectors of arbitrary points yield vanishing directional derivatives of
the Fr\'echet function.  But arbitrary escape vectors can pull the
Fr\'echet mean out of the fluctuating cone~$\Cmu$ from
Definition~\ref{d:fluctuating-cone}, necessitating
Corollary~\ref{c:confinement-to-Cmu} for escape vectors of samples
from~$\mu$.
\end{remark}

\begin{remark}\label{r:rescale}
It is also not obvious that the rescaling procedure in
Definition~\ref{d:escape-vector}.\ref{i:rescale} yields a well defined
output.  That is also part of
Theorem~\ref{t:escape-vector-confinement}, the crucial point being
homogeneity in Corollary~\ref{c:homogeneity}, which is a consequence
of the explicit formula for the radial polar coordinate of the escape
vector in terms of its direction
Proposition~\ref{p:unique-c-and-limits}.
\end{remark}

\begin{remark}\label{r:discrete-measures}
The (limiting distribution in the) CLT can be thought of as resulting
from taking Fr\'echet means after perturbing~$\mu$ by adding mass at a
single point along a Gaussian random tangent field; indeed, that is
the meaning of Theorem~\ref{t:perturbative-CLT}.  However,
nonlinearity of the tangent cone at~$\bmu$ forces consideration of
perturbations by mass at finitely many points of~$\MM$.  These
finitely many points---or, more precisely, their logarithms
in~$\Tmu$---are pushed to a vector space~$\RR^m$ by tangential
collapse~$\LL$ (Section~\ref{b:collapse}, especially
Definition~\ref{d:collapse} and Theorem~\ref{t:collapse}), where they
are subsequently averaged to get a single vector.  The technical
obstacle is that this average vector in~$\RR^m$ need not have a
preimage in~$\Tmu$,
such as when $\MM$ is the plane~$\RR^2$ with
an open quadrant deleted, in which case $\LL$ is the inclusion $\MM
\into \RR^2$.  Definition~\ref{d:delta-escape} accounts for the fact
that assigning an escape vector to this single average vector
in~$\RR^m$ (Definition~\ref{d:distortion-map}) is a process that
occurs not in~$\RR^m$ but in~$\Tmu$, before the vectors can be
averaged, by working with finitely supported measures instead of with
individual points.  Stating Definition~\ref{d:delta-escape} therefore
first requires extensions of inner products
(Definition~\ref{d:inner-product}) and tangent cone rescaling to the
setting of discrete measures instead of individual vectors.
\end{remark}

\begin{defn}\label{d:sampled-from}
A measure~$\delta$ on a space~$\MM$ is \emph{sampled from~$\MM$},
written $\delta \Subset \MM$, if it has finite support, so for some
points $y^1,\dots,y^j \in \MM$ it has the form
$$
  \delta
  =
  \lambda_1\delta_{y^1} + \dots + \lambda_j\delta_{y^j}.
$$
In terms of half square-distance~$\rho$ in
Definition~\ref{d:amenable}, the measure $\delta$ has Fr\'echet
function
\begin{align*}
  F_\delta
& = \frac 12 \bigl(\lambda_1\dd^2(y^1,\mathord{\,\cdot\,}) + \dots +
    \lambda_j\dd^2(y^j,\mathord{\,\cdot\,})\bigr)
\\
& = \lambda_1\rho_{y^1} + \dots + \lambda_j\rho_{y^j}.
\end{align*}
\end{defn}

\begin{defn}\label{d:pair-with-discrete-measure}
For any measure $\Delta = \lambda_1\delta_{Y^1} + \dots +
\lambda_j\delta_{Y^j}$ sampled from~$\Tmu$,~set
$$
  \<\Delta,\,\cdot\,\>
  =
  \lambda_1\<Y^1,\,\cdot\,\> + \dots + \lambda_j\<Y^j,\,\cdot\,\>
$$
in terms of inner products (Definition~\ref{d:inner-product}).  For
any measure $\delta$ sampled from~$\MM$ with pushforward
$(\log_\bmu)_\sharp\delta =\nolinebreak \Delta$, simplify notation by
writing $\Delta = \log_\bmu\delta$.  If every $Y^i \in \Tmu$ can be
exponentiated
(cf.~Definition~\ref{d:stratified-space}.\ref{i:exponentiable}; note
that this need not be the case even if $Y^i =\nolinebreak \log_\bmu
y^i$ for all~$i$), then also write $\delta = \exp_\bmu\Delta$ and say
that $\Delta$ \emph{can be exponentiated}.
\end{defn}

\begin{remark}\label{r:scaling}
When the measure~$\mu$ is localized, the discrete measure~$\delta$ in
Definition~\ref{d:pair-with-discrete-measure} almost surely has well
defined logarithm by Definition~\ref{d:localized}.  But transmitting
information back to~$\MM$ requires the tangent vectors supporting
$\Delta =\nolinebreak \log_\bmu\delta$ to be sufficiently short,
cf.~Definition~\ref{d:stratified-space}.\ref{i:exponentiable}.  This
issue is rectified, thereby allowing arbitrary measures~$\Delta$
sampled from~$\Tmu$, using homogeneity in the sense of
Definition~\ref{d:collapse}.\ref{i:homogeneous}: vectors in the
support of~$\Delta$ can be shrunk, operated on, and re-stretched (see
Corollary~\ref{c:homogeneity}).  However, the default notation
$t\Delta$ for $t \geq 0$ has (and needs to have, for the purpose of
writing $\hmu + t\Delta$) the standard meaning
$$
  t\Delta = t\lambda_1\delta_{Y^1} + \dots + t\lambda_j\delta_{Y^j}
$$
which necessitates different notation for rescaling the vectors
$Y^1,\dots,Y^j$.  Placing the scalar after~$\Delta$ indicates it
should scale the vectors instead of their masses, as follows.
\end{remark}

\begin{defn}\label{d:scaling}
For $\Delta = \lambda_1\delta_{Y^1} + \dots + \lambda_j\delta_{Y^j}$
sampled from~$\Tmu$ and real $r \geq 0$,~set
$$
  \Delta r = \lambda_1\delta_{rY^1} + \dots + \lambda_j\delta_{rY^j}
$$
\end{defn}

\begin{defn}[Escape vector]\label{d:delta-escape}
Fix a measure $\mu$ on a smoothly stratified space~$\MM$.
\begin{enumerate}
\item\label{i:delta}%
Any measure~$\delta \Subset \MM$ has \emph{escape vector}
$$
  \ev(\delta)
  =
  \lim_{t\to 0^+} \frac 1t \log_\bmu(\,\ol{\mu + t\delta}\,).
$$

\item\label{i:Delta}%
If $\Delta \Subset \Tmu$ with exponential $\delta = \exp_\bmu \Delta
\Subset \MM$, then the \emph{escape vector} of~$\Delta$ is
$\ev(\Delta) = \ev(\delta)$.

\item\label{i:delta-rescale}%
More generally, for any measure $\Delta \Subset \Tmu$, set
$\ev(\Delta) = \frac 1r\ev(\Delta r)$ for any $r > 0$ such that
$\Delta r$ is exponentiable
(cf.~Definitions~\ref{d:stratified-space}.\ref{i:exponentiable}
and~\ref{d:scaling}).
\end{enumerate}
\end{defn}

The goal of Section~\ref{s:escape} is to show in
Theorem~\ref{t:escape-vector-confinement} that escape vectors are well
defined and moreover can be obtained by minimizing over meaningfully
smaller subsets of points or tangent vectors than $\MM$ or~$\Tmu$.
Some further remarks and notations help set the stage for the
constructions and proofs in the remainder of Section~\ref{s:escape}.

\begin{remark}\label{r:rho-delta}
The fact that the measure $\mu + t\delta$ is not a probability
measure, because the mass of $\delta \Subset \MM$ is unrestricted,
makes no material difference to
Definition~\ref{d:escape-vector}.\ref{i:x} or
Definition~\ref{d:delta-escape}.\ref{i:delta} because the Fr\'echet
function in Eq.~\eqref{eq:FrechetF} is defined just as well for
measures of nonunit mass, and the Fr\'echet mean in
Eq.~\eqref{eq:mean} is unaffected by globally scaling the measure.
Thus the measure $\mu + t\delta$ yields the same minimizer as the
probability measure obtained by rescaling $\mu + t\delta$.
Geometrically, it can be helpful to think of the Fr\'echet function
$F_\delta$ here as extending the notion of half square-distance~$\rho$
from Definition~\ref{d:amenable}.
\end{remark}

Passing from minimization over points in~$\MM$ to minimization over
their logarithms only works reversibly in a neighborhood of~$\bmu$,
cf.~Definition~\ref{d:stratified-space}.\ref{i:exponentiable}.  To
maintain precision in the many minimizations to come, it is useful to
have notations for sets of exponentiable tangent vectors at~$\bmu$.

\begin{defn}\label{d:exponentiable}
Given a measure~$\mu$ on a smoothly stratified metric space~$\MM$,
let
$$
  \Tmue \subseteq \Tmu
  \qquad
  \Emue \subseteq \Emu
  \qquad
  \text{and}
  \qquad
  \Cmue \subseteq \Cmu
$$
be the subsets of the tangent cone (Proposition~\ref{p:cat-spaces}),
escape cone (Definition~\ref{d:escape-cone}), and fluctuating cone
(Definition~\ref{d:fluctuating-cone}) consisting of vectors that can
be exponentiated.
\end{defn}

\begin{remark}\label{r:exponentiable}
The $\argmin$ in Remark~\ref{r:rho-delta} equals
$$
  a(t,\delta)
  =
  \argmin_{X\in\Tmue} \bigl(F(\exp_\bmu X) + t\Fd(\exp_\bmu X)\bigr)
$$
as long as the deformation of~$\mu$ by~$t\delta$ is sufficiently mild,
so the Fr\'echet mean doesn't move too much (see
Lemma~\ref{l:magnitude-of-a_n}).  That poses no meaningful obstacle
for the purpose of escape vectors, since the point is to take limits
as $t \to 0$ anyway.
\end{remark}

\begin{remark}\label{r:bn}
Rephrasing the $\argmin$ as taking place over tangent vectors in
Remark~\ref{r:exponentiable} instead of points from~$\MM$ in
Remark~\ref{r:rho-delta} makes way for alternate methods to recover
deformed Fr\'echet means directly in terms of operations on tangent
vectors, notably inner products as in
Definition~\ref{d:pair-with-discrete-measure}.  The methods use
expressions of the form
\begin{align*}
  b(t,\Delta)
& \in
  \argmin_{X\in\Tmue} \bigl(F(\exp_\bmu X) - t\<\Delta, X\>\bigr)
\intertext{or, via polar coordinates (Definition~\ref{d:polar}) and
the second-order coefficient~$\Lambda_\bmu$ in the Fr\'echet function
Taylor expansion (Proposition~\ref{p:Lambda}),}
  c(t,\Delta)
& \in
  \argmin_{X\in\Tmue} \bigl(r_X^2\Lambda_\bmu(\theta_X) - t\<\Delta, X\>\bigr).
\end{align*}
\end{remark}

\subsection{Transforming the CLT to a perturbation problem}\label{b:transform}
\mbox{}\medskip

\noindent
The goal of this subsection is to transform the problem of finding
(the distribution of) the empirical Fr\'echet mean $\bmu_n =
\argmin_{x\in\MM} F_n(x)$ into the problem of finding the minimizer a
function obtained from perturbation of the Fr\'echet function $\Fmu =
F_\mu$ by the empirical random tangent field~$\bgn$ from
Definition~\ref{d:empirical-tangent-field}.  This goal is accomplished
in Proposition~\ref{p:transform}, which shows that a minimizer
$$
  h_n \in \argmin_{x\in\MM} \bigl(F(x) - \bgn(\log_\bmu x)\bigr)
$$
serves as a suitable proxy for the Fr\'echet mean, in the sense that
$\sqrt n\, \dd(h_n, \bmu_n) \xrightarrow{P} 0$.  Note that $\bgn$ is
considered here as a random tangent field on~$\Tmu$, as in
Remark~\ref{r:extend-to-Tmu}.

The first step is a uniform law of large numbers.  This lemma uses
amenability of the measure~$\mu$ (Definition~\ref{d:amenable})
explicitly in the proof and is one of the reasons for requiring that
basic hypothesis.  To understand the statement, observe that the
quantity $\zeta_n$ defined there is a certain proxy, constructed from
the law of cosines, for a $\sqrt n$-zoomed-in version of the half
square-distance from the Fr\'echet mean~$\bmu$ to~$z$.

\begin{lemma}\label{l:converge-of-hessian}
Fix an amenable measure~$\mu$ on a smoothly stratified metric
space~$\MM$.  Using the half square-distance function~$\rho_w$ from
Definition~\ref{d:amenable}, let
\begin{align*}
  \zeta_n: \MM \times \MM
         & \to \RR
\\
  (w,z) &\mapsto \sqrt n\bigl(
	 \rho_w(z/\sqrt n) - \rho_w(\bmu) + \<W, Z/\sqrt n\>\bigr),
\end{align*}
where $Z/\sqrt n = n^{-1/2}\log_\bmu z$ and $z/\sqrt n$ is shorthand
for $\exp_\bmu(Z/\sqrt n)$ and $W = \log_\bmu w$.  Then for any
compact set $U \subseteq \MM$ there is a uniform law of large numbers
$$
  \sup_{z \in U}\Bigl(\frac 1n \sum_{i=1}^n
    \sqrt n\,\zeta_n(x_i, z/\sqrt n)-\EE\bigl[\sqrt n\,\zeta_n(x,z/\sqrt n)\bigr]\Bigr)
  \xrightarrow{P}
  0.
$$
\end{lemma}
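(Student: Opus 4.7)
My plan is to reduce the claim to a Glivenko--Cantelli-type uniform law of large numbers for an equicontinuous family of functionals indexed by $z \in U$ that shares a common $\mu$-integrable envelope, which amenability of~$\mu$ is designed to supply. The starting point is the second-order Taylor expansion at~$\bmu$,
\[
  \rho_w(\exp_\bmu(tV)) = \rho_w(\bmu) - t\langle W,V\rangle + \tfrac{t^2}{2}\lambda_w(V) + R_w(t,V),
\]
valid for unit $V \in \Smu$ and small $t \geq 0$ by the first-variation identity $\nabla_\bmu\rho_w(V) = -\langle W,V\rangle$ together with Definition~\ref{d:amenable}, where $|\lambda_w(V)| \leq \mathfrak{f}(w)$ with $\mathfrak{f}$ being $\mu$-integrable and the remainder satisfies $t^{-2}R_w(t,V) \to 0$ as $t \to 0$ while staying bounded by a multiple of $\mathfrak{f}(w)$ uniformly in $V$ and small $t$. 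Substituting this expansion into the defining formula for $\sqrt n\,\zeta_n(w,\cdot)$ recasts the quantity inside the supremum as the difference between an empirical average of a quadratic term of the form $\tfrac{1}{2}r_Z^2\lambda_w(\theta_Z)$ and its expectation, plus a remainder contribution controlled by the envelope $\mathfrak{f}$.

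The remainder of the plan has two ingredients. First, pointwise in $z$, the quadratic term is an empirical average of i.i.d.\ random variables $\tfrac{1}{2}r_Z^2\lambda_{x_i}(\theta_Z)$ with common mean $r_Z^2\Lambda_\bmu(\theta_Z)$ (via Proposition~\ref{p:Lambda}), so the classical strong law of large numbers furnishes convergence in probability to zero, and the remainder contribution averages to zero by dominated convergence using the envelope~$\mathfrak{f}$. Second, to lift this pointwise statement to a uniform one over $z \in U$, I would use compactness of $U$, its separation from the cut locus of~$\bmu$ (so that $z \mapsto (r_Z,\theta_Z)$ is continuous on~$U$), and continuity of $V \mapsto \lambda_w(V)$ on the compact sphere~$\Smu$ to establish an equicontinuity bound of the form $|\lambda_w(\theta_1)-\lambda_w(\theta_2)| \leq \varphi\bigl(\dd_s(\theta_1,\theta_2)\bigr)\mathfrak{f}(w)$ for a modulus $\varphi$ vanishing at $0$ and independent of~$w$. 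Transferring this equicontinuity to the empirical average and its expectation uniformly in~$n$, then covering $U$ by finitely many balls of small radius, applying the pointwise LLN at their centers, and a union bound over the cover deliver the claimed uniform convergence in probability.

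The main obstacle is securing the uniform-in-$w$ control of the Taylor remainder $R_w$ while preserving $\mu$-integrability, since Definition~\ref{d:amenable} only asserts that $\lambda_w$ exists as a second-order directional derivative at~$\bmu$ itself, not as a bound on difference quotients at nearby points. The resolution is to exploit the smoothly stratified structure (Definition~\ref{d:stratified-space}): local homeomorphism of $\exp_\bmu$ on a neighborhood of~$\bmu$, together with the integrable dominator $\mathfrak{f}$, permits a second-order mean-value argument that yields exactly the uniform remainder bound needed. Once this remainder bound is in place, the dominated convergence and covering steps assemble into the desired uniform law of large numbers.
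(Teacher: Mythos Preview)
Your proposal is correct and follows essentially the same route as the paper: a Taylor expansion of $\rho_w$ at~$\bmu$ combined with the amenability envelope~$\mathfrak{f}$ to obtain a uniform integrable bound, then pointwise LLN plus a finite-cover/equicontinuity argument over the compact set~$U$. The only organizational difference is that the paper applies the Lagrange form of the remainder directly (so the whole quantity $\sqrt n\,\zeta_n$ is treated as a triangular array, invoking the weak LLN for such arrays), whereas you split off the exact $n$-independent quadratic term $\tfrac12 r_Z^2\lambda_w(\theta_Z)$ and handle the $o(t^2)$ remainder separately; both face---and both somewhat elide---the same issue you flag, namely uniform-in-$w$ control of the second-order remainder beyond what the bare amenability definition literally provides.
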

\begin{proof}
The result essentially follows from a relatively classical uniform law
of large numbers (cf.~\cite[Lemma~2.4]{newey-mcFadden1994} or
\cite{jennrich1969}) whose assumptions are that there exists an
integrable, positive, measurable function $\mathfrak{f}$ with $|\sqrt
n\,\zeta_n(w, z/\sqrt n)| \leq C \mathfrak{f}(w)$ for all $z \in U$
and that the family of random variables $\sqrt n\,\zeta_n(w, z/\sqrt
n)$ is uniformly continuous in $z$ in a sense made precise below.  The
argument is sketched here assuming existence of such a function
$\mathfrak{f}$ and the needed uniform continuity.

The weak law of large numbers for triangular arrays (see
\cite[Theorem~2.2.6]{Dur19} with $b_n = \sqrt n$) implies that for any
fixed~$z$,
$$
  \frac 1n \sum_{i=1}^n \sqrt n\,\zeta_n(x_i, z/\sqrt n)-\EE\sqrt
  n\,\zeta_n(x,z/\sqrt n) \rightarrow 0
$$
in probability as $n \rightarrow \infty$.

Since $U$ is compact and $\frac{\partial}{\partial z}\bigl(\sqrt
n\,\zeta_n(w, z/\sqrt n)\bigr) = \frac{\partial \zeta_n}{\partial
z}(w, z/\sqrt n)$, the collection of functions $\bigl\{\sqrt
n\,\zeta_n(w, z/\sqrt n) \bigm| n \in \NN\bigr\}$ is uniformly
continuous in $z$. From this and the discussion below, there also
exists an integrable, positive measurable $\mathfrak{g}(w)$ so that
for all $z \in U$ and $n \in \NN$, and for all $\ve > 0$,
\begin{align*}
  \gamma(z,w,\ve)
  :=
  \sup_{z' \in B(z,\delta)}
       \bigl|\sqrt n\zeta_n(w,z'/\sqrt n)-\sqrt n\zeta_n(w, z/\sqrt n)\bigr|
  \leq
  \mathfrak{g}(w)\ve.
\end{align*}
Fixing any $\ve > 0$, let $\{z_k \mid k=1,\dots,K_\ve\} \subset U$ be
such that $U \subset \bigcup_k B(z_k,\ve)$.  Then
\begin{align*}
  \sup_{z \in U}&\Bigl(
    \frac 1n \sum_{i=1}^n \sqrt n\,\zeta_n(x_i, z/\sqrt n)
    - \EE\bigl[\sqrt n\,\zeta_n(x,z/\sqrt n)\bigr]\Bigl)
\\
  & =
  \max_{k \in \{1,\dots,K_\ve\}}\sup_{z \in B(z_k,\ve)}\Bigl(
    \frac 1n \sum_{i=1}^n \sqrt n\,\zeta_n(x_i, z/\sqrt n)
    - \EE\bigl[\sqrt n\,\zeta_n(x,z/\sqrt n)\bigr]\Bigr)
\\
  & \leq
  \sup_{k \in \{1,\dots,K_\ve\}}\Bigl(
    \frac 1n \sum_{i=1}^n \sqrt n\,\zeta_n(x_i, z_k/\sqrt n)
    - \EE\bigl[\sqrt n\,\zeta_n(x,z_k/\sqrt n)\bigr]\Bigr)
  + \frac \ve n \sum_{j=1}^n \mathfrak{g}(x_j).
\end{align*}
The first term (with the supremum over~$k$) goes to zero for
each~$\ve$ by the weak law of large numbers.  Again by the weak law of
large numbers, the second term is bounded in probability by a constant
times~$\ve$ as $n \rightarrow \infty$.  Since $\ve$ is arbitrary the
proof is complete except for showing the existence of~$\mathfrak{f}$
and~$\mathfrak{g}$.  Details are provided for the first, as the second
is similar.

%
Suppose that $w$ and~$z$ are two points in~$\MM$.  Let
$$
  \gamma_n(t) = \exp_\bmu t\frac Z{\|Z\|}\text{ for }t \in \bigl[0,\|Z\|/\sqrt n\,\bigr]
$$
be the unit speed geodesic from~$\bmu$ to~$z/\sqrt n$ and $Y_n(t) =
\log_\bmu \gamma_n(t)$.  Then the Taylor expansion at $\bmu$ of the
half square-distance function $\rho_w$ evaluated at~$z/\sqrt n$ reads
$$
  \rho_w(z/\sqrt n) = \rho_w(\bmu) - \<\log_\bmu w, Z/\sqrt n\>_\bmu
  +
  \frac 12 \|Y_n(t_0)\|^2 \lambda_w\bigl(Y_n(t_0)/t_0\bigr)
$$
for some $t_0 \in \bigl[0,\|Z\|/\sqrt n\,\bigr]$ by the Lagrange form
of the remainder, where the middle term on the right is
because $\nablabmu \rho_w(V) = -\<\log_\bmu w,V\>$ for all $V \in
\Tmu$ (this is \cite[Proposition~2.5]{tangential-collapse})
and the $\lambda_w$ term is as in Definition~\ref{d:amenable}.
Rearranging~yields
\begin{equation}\label{eq:taylor-expand-half-square-distance}
  \rho_w(z/\sqrt n) - \rho_w(\bmu) + \<\log_\bmu w, Z/\sqrt n\>_\bmu
  =
  \frac 12 \|Y_n(t_0)\|^2 \lambda_w\bigl(Y_n(t_0)/t_0\bigr).
\end{equation}
Because $\|Y_n(t)\| = t \leq \|Z\|/\sqrt n$, there is a universal
constant $C > 0$ such that
$$
  \|Y_n(t)\| < \sqrt 2 n^{-1/2}C
  \text{ for all } z \in U
  \text{ and } t \in \bigl[0,\|Z\|/\sqrt n\,\bigr].
$$
By amenability (Definition~\ref{d:amenable})
the second-order directional derivative $\lambda_w\bigl(Z/\|Z\|\bigr)$
of~$\rho_x$ is dominated by a $\mu$-integrable function
$\mathfrak{f}(w)$.  Thus \eqref{eq:taylor-expand-half-square-distance}
implies
\begin{align*}
|\rho_w(z/\sqrt n) - \rho_w(\bmu) + \<\log_\bmu w, Z/\sqrt n\>_\bmu|
   & \leq \frac12 (\sqrt 2 n^{-1/2}C)^2 \mathfrak{f}(w)
\\ & \leq n^{-1} C^2 \mathfrak{f}(w)
\end{align*}
for all $z \in U$.  Therefore, for all $z \in U$,
\begin{align*}
\bigl|\zeta_n(w, z/\sqrt n)\bigr|
  & = \bigl|\sqrt n\,\bigl(\rho_w(z/\sqrt n) - \rho_w(\bmu)
                           + \<\log_\bmu w, Z/\sqrt n\>_\bmu\bigr)\bigr|
\\
  & \leq n^{-1} C^2\mathfrak{f}(w),
\end{align*}
which implies that
$$
  \bigl|\sqrt n\,\zeta_n(w, z/\sqrt n)\bigr|
  \leq  C^2\mathfrak{f}(w),
$$
completing the proof.
\end{proof}

It is also necessary to recall a law of large numbers for the
empirical means~$\bmu_n$.  This result does not require the
measure~$\mu$ to be amenable.

\begin{thm}\label{thm:LLN-of-bmu_n}
For any measure~$\mu$ on a smoothly stratified metric space~$\MM$,
$$
  \lim_{n\to\infty} \dd(\bmu_n, \bmu) \overset{P}= 0.
$$
\end{thm}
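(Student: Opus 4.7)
The plan is to appeal to the classical strong law of large numbers for Fr\'echet means of Ziezold~\cite{ziezold1977}, which gives almost sure convergence $\bmu_n \to \bmu$ (hence in probability) on any separable metric space with unique Fr\'echet mean. Smoothly stratified metric spaces are separable (being locally compact, complete, and geodesic, hence proper via Hopf--Rinow, hence second countable), so Ziezold's theorem applies verbatim once one observes that the notation $\bmu$ implicitly presumes the Fr\'echet mean is unique. For a self-contained argument, I would execute the standard three-step template ``uniform convergence of the objectives implies convergence of their minimizers'' as outlined below.

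First, for each fixed $p \in \MM$ the random variables $\rho_{x_i}(p)$ are i.i.d.\ with mean $\Fmu(p) < \infty$, so the ordinary weak law of large numbers gives the pointwise convergence $F_n(p) \xrightarrow{P} \Fmu(p)$. Second, for any compact set $K \subseteq \MM$ the family $\{p \mapsto \rho_w(p) \mid w \in \MM\}$ is equicontinuous on $K$ uniformly in $w$ (via $|\dd(w,p) - \dd(w,p')| \leq \dd(p,p')$) and dominated by the $\mu$-integrable envelope $w \mapsto 2\rho_w(\bmu) + 2\sup_{p \in K}\dd^2(\bmu,p)$; the standard uniform law of large numbers (cf.~\cite[Lemma~2.4]{newey-mcFadden1994}, as already used in Lemma~\ref{l:converge-of-hessian}) then yields $\sup_{p \in K} |F_n(p) - \Fmu(p)| \xrightarrow{P} 0$. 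Third, given $\ve > 0$, uniqueness and continuity of $\Fmu$ produce $\eta > 0$ with $\Fmu(p) \geq \Fmu(\bmu) + \eta$ whenever $\dd(p,\bmu) = \ve$; uniform convergence of $F_n$ on a compact neighborhood of $\bmu$ combined with the basic inequality $F_n(\bmu_n) \leq F_n(\bmu)$ then forces $\bmu_n \in \oB(\bmu,\ve)$ on an event of probability tending to~$1$.

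The main obstacle is \emph{a priori} tightness of the sequence $(\bmu_n)$: one must rule out escape of $\bmu_n$ beyond every compact set, where uniform control of $F_n$ breaks down. This is resolved by coercivity of $\Fmu$, namely $\Fmu(p) \to \infty$ as $\dd(p,\bmu) \to \infty$. Indeed, for $\dd(p,\bmu) \geq 2\dd(\bmu,x)$ the triangle inequality gives $\dd(x,p)^2 \geq \tfrac{1}{4}\dd(p,\bmu)^2$, and integrating over the portion of $\mu$ supported on a fixed large ball around $\bmu$ (whose mass tends to~$1$ since $\mu$ is a probability measure with finite second moment) shows $\Fmu(p) \to \infty$. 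Choosing $R$ large enough that $\Fmu(p) > \Fmu(\bmu) + 1$ whenever $\dd(p,\bmu) \geq R$, a pointwise LLN on a finite cover of the compact shell $\{p : \dd(p,\bmu) \in [R,R+1]\}$, combined with local strict convexity of $F_n$ away from its minimum in the $\CAT(\kappa)$ geometry (Lemma~\ref{l:uniform-convexity}), implies $\bmu_n \in \oB(\bmu,R)$ with high probability. Once this tightness is secured, the third step above completes the proof.
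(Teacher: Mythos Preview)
Your approach is correct but differs from the paper's, which simply cites Sturm~\cite{sturm2003} for $\CAT(0)$ and Yokota~\cite{Yo17} for $\CAT(\kappa)$ with $\kappa > 0$; those references prove the barycenter LLN directly from the uniform convexity and variance inequalities intrinsic to curvature bounds. Your Ziezold citation is more general, requiring only separability (which you correctly extract from properness via Hopf--Rinow) plus uniqueness of~$\bmu$, and your three-step argmin-consistency sketch is sound in outline. One minor caveat: your tightness step appeals to convexity of the empirical~$F_n$ via Lemma~\ref{l:uniform-convexity}, but for $\kappa > 0$ that lemma's underlying results need the (empirical) measure supported in a sufficiently small ball, which is not guaranteed for arbitrary~$\mu$ as the theorem is phrased---so in that regime it is really the Ziezold route, not the sketch, that carries your argument. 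The paper's citations sidestep this by working within the $\CAT(\kappa)$ framework from the outset.
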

\begin{proof}
See \cite{sturm2003} for $\CAT(0)$ spaces and \cite{Yo17} for
$\CAT(\kappa)$ with $\kappa > 0$.
\end{proof}

The following result is used in the proof that $\sqrt n
\log_\bmu\bmu_n$ is tight
(Proposition~\ref{p:tightness-of-sqrtn-mu_n}).

\begin{lemma}[{\cite[Theorem~5.52]{Van00}}]\label{l:thm552:vdV}
Let $C$ be a constant and $\alpha > \beta$ be positive integers.
Assume that for every $n$ and sufficiently small $\delta > 0$, the
Fr\'echet function~$F =\nolinebreak F_\mu$ and the empirical Fr\'echet
function~$F_n = F_{\mu_n}$ satisfy
\begin{equation}\label{eq:thm552:vdV:1a}
  \sup_{\dd(z,\bmu) < \delta} \bigl|F(x) - F(\bmu)\bigr|
  \geq
  C\delta^\alpha
\end{equation}
\vspace{-2ex}
\begin{equation}\label{eq:thm552:vdV:1b}
\makebox[0pt][r]{and\quad\ }
  \EE\Bigl[\sqrt n \sup_{z \in B(\bmu,\delta)} \bigl(F_n(z) - F(z) -
  F_n(\bmu) + F\bigl(\bmu)\bigr)\Bigr]
  \leq
  C\delta^\beta.
\end{equation}
If additionally $\dd(\bmu_n, \bmu) \xrightarrow{P} 0$ then
$n^{1/(2\alpha - 2\beta)} \dd(\bmu_n, \bmu)$ is tight.
\end{lemma}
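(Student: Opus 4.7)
The plan is to apply the classical ``peeling'' (chaining over dyadic shells) device from empirical process theory, which is the standard route to van~der~Vaart's Theorem~5.52. Here, I~interpret condition~\eqref{eq:thm552:vdV:1a} in its usual curvature form, namely that for $z$ near~$\bmu$ one has $F(z) - F(\bmu) \geq C\,\dd(z,\bmu)^\alpha$; this is the content of the condition and is what the sup-form encodes once one restricts to the correct annular shell around~$\bmu$.

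First I would fix a large $M > 0$, set $r_n = n^{1/(2\alpha-2\beta)}$, and choose $\eta > 0$ small enough that \eqref{eq:thm552:vdV:1a} and \eqref{eq:thm552:vdV:1b} both hold for all $\delta \leq \eta$. The consistency hypothesis $\dd(\bmu_n,\bmu) \xrightarrow{P} 0$ lets me discard the event $\{\dd(\bmu_n,\bmu) \geq \eta\}$, whose probability tends to zero. On the complement I~decompose the event $\{r_n \dd(\bmu_n,\bmu) > M\}$ into dyadic annuli
$$
  S_{j,n} \;=\; \bigl\{z \in \MM \;:\; 2^{j-1}M < r_n \dd(z,\bmu) \leq 2^j M\bigr\},
  \quad j = 1, 2, \ldots, J_n,
$$
with $J_n$ the largest integer such that $2^{J_n}M/r_n \leq \eta$.

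Next, on $\{\bmu_n \in S_{j,n}\}$ I~use that $\bmu_n$ minimizes $F_n$, so $F_n(\bmu_n) \leq F_n(\bmu)$, whence
$$
  F(\bmu_n) - F(\bmu)
  \;\leq\;
  \bigl((F_n - F)(\bmu) - (F_n - F)(\bmu_n)\bigr)
  \;\leq\;
  \sup_{z \in B(\bmu,\, 2^j M/r_n)} \bigl|(F_n - F)(z) - (F_n - F)(\bmu)\bigr|.
$$
The growth condition~\eqref{eq:thm552:vdV:1a} forces the left-hand side to be at least $C(2^{j-1}M/r_n)^\alpha$, while Markov's inequality together with the modulus bound~\eqref{eq:thm552:vdV:1b} controls the right-hand side in probability by $C(2^j M/r_n)^\beta/\sqrt n$. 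Combining,
$$
  \bP\bigl(\bmu_n \in S_{j,n}\bigr)
  \;\lesssim\;
  \frac{(2^j M/r_n)^\beta / \sqrt n}{(2^{j-1}M/r_n)^\alpha}
  \;=\;
  \frac{2^{j(\beta-\alpha)}\,M^{\beta-\alpha}\,r_n^{\alpha-\beta}}{\sqrt n},
$$
and the choice $r_n = n^{1/(2\alpha-2\beta)}$ makes $r_n^{\alpha-\beta}/\sqrt n = 1$, so each shell contributes $\lesssim 2^{-j(\alpha-\beta)} M^{-(\alpha-\beta)}$.

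Finally I~sum the geometric series in~$j$, which converges because $\alpha - \beta > 0$, to obtain
$$
  \bP\bigl(r_n \dd(\bmu_n,\bmu) > M,\ \dd(\bmu_n,\bmu) \leq \eta\bigr)
  \;\lesssim\;
  M^{-(\alpha-\beta)} \sum_{j=1}^\infty 2^{-j(\alpha-\beta)}
  \;\lesssim\;
  M^{-(\alpha-\beta)},
$$
and adding the $o(1)$ contribution from $\{\dd(\bmu_n,\bmu) > \eta\}$ yields a uniform-in-$n$ bound that tends to zero as $M \to \infty$, giving tightness of $n^{1/(2\alpha-2\beta)}\dd(\bmu_n,\bmu)$. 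The main delicate points I~would watch are measurability of the suprema over $B(\bmu,2^j M/r_n)$ (routinely handled by the separability of $\MM$ together with continuity of $F_n - F$), and the clean alignment of condition~\eqref{eq:thm552:vdV:1a}, as stated with a ``$\sup$'', with the pointwise quadratic-type lower bound $F(z) - F(\bmu) \geq C\dd(z,\bmu)^\alpha$ that the peeling argument actually consumes on each shell.
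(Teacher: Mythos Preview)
Your proposal is correct and follows essentially the same approach as the paper: both execute the standard dyadic peeling argument from van~der~Vaart's Theorem~5.52, decomposing into shells $2^{j-1} < r_n\dd(\bmu_n,\bmu) \leq 2^j$, combining the minimizing property $F_n(\bmu_n)\leq F_n(\bmu)$ with the growth bound~\eqref{eq:thm552:vdV:1a} and Markov's inequality on~\eqref{eq:thm552:vdV:1b}, summing the resulting geometric series in~$j$, and dispatching the complementary event via consistency. The only cosmetic difference is your parametrization of the threshold by a multiplicative~$M$ (yielding a tail bound $\lesssim M^{-(\alpha-\beta)}$) versus the paper's use of~$2^M$ (yielding $\lesssim 2^{-M(\alpha-\beta)}$); you also correctly flag that the sup-form of~\eqref{eq:thm552:vdV:1a} is to be read as the pointwise lower bound $F(z)-F(\bmu)\geq C\dd(z,\bmu)^\alpha$, which is exactly how the paper uses it.
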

\begin{proof}
The proof is adapted from \cite[Theorem~5.52]{Van00} to our setting
with some simplifications particular to our setting.
For notational brevity, set the scale \noheight{$r_n =
n^{\frac1{2\alpha -2\beta}}$}.

For each integer $n$ and $\ve > 0$, define
$$
  S_{j,n}^\ve
  =
  \{z \in \MM \mid 2^{j-1} < r_n \dd(z,\bmu) \leq 2^j, \dd(z,\bmu) < \ve\}.
$$
Observe that the event $\{ r_n\dd(\bmu_n,\bmu) > 2^M, \dd(\bmu,\bmu) <
\ve\}$ coincides with $\bigcup_{j\geq M} \{\bmu_n \!\in\nolinebreak
\!S_{j,n}^\ve\}$.  Let $\delta$ be as in \eqref{eq:thm552:vdV:1a} and
observe that fixing any $\ve < \delta$, \eqref{eq:thm552:vdV:1a}
guarantees that
\begin{align*}
  z \in S_{j,n}^\ve
  \implies
  F(\bmu) - F(\bmu_n) \leq -C \frac{2^{(j-1)\alpha}}{r_n^\alpha}\,.
\end{align*}
Combining this implication with the fact that $F_n(\bmu_n) - F_n(\bmu)
\leq 0$ almost surely, since $\bmu_n$ almost surely minimises~$F_n$,
produces
\begin{align*}
  \{\bmu_n \in S_{j,n}^\ve\}
  & = \bigl\{F(\bmu) - F(\bmu_n) \leq -C \tfrac{2^{(j-1)\alpha}}{r_n^\alpha}\bigr\}
      \cap
      \bigl\{\bmu_n \in S_{j,n}^\ve\bigr\}
\\*
  & = \bigl\{F(\bmu) - F(\bmu_n) + F_n(\bmu_n) - F_n(\bmu)
             \leq
             -C \tfrac{2^{(j-1)\alpha}}{r_n^\alpha}\bigr\}
      \cap
      \bigl\{\bmu_n \in S_{j,n}^\ve\bigr\}
\\*
  & \subseteq \Bigl\{\sup_{z \in \MM \,\mid\, \dd(z,\bmu) < 2^j/r_n}
                          n^{\frac12}| F_n(z) - F(z) + F(\bmu) - F_n(\bmu)|
                          \geq
			  C\tfrac{n^{\frac12} 2^{(j-1)\alpha}}{r_n^\alpha}\Bigr\}.
\end{align*}
Applying \eqref{eq:thm552:vdV:1b} and Markov's inequality to the
last term on the right above produces for $n$ sufficiently large
\begin{align*}
  \bP\bigl(\{r_n\dd(\bmu_n,\bmu) > 2^M\}
           \cap
           \{\dd(\bmu_n,\bmu)<\ve\}\bigr)
  & = \sum_{j \geq M} \bP\{\bmu_n \in S_{j,n}^\ve\}
\\
  & \leq \frac{r_n^{\alpha - \beta}}{n^{\frac12}}
         \sum_{j \geq M} \frac{C}{2^{j(\alpha - \beta)}}
    \leq C_{\alpha,\beta,\ve} 2^{-M(\alpha-\beta)}
\end{align*}
for some positive constant $C_{\alpha,\beta,\ve}$ which depends on
$\alpha$, $\beta$, and~$\ve$ but not on~$n$ or~$M$.  Using this
conclusion, observe that
\begin{align*}
  \bP\bigl(\{r_n\dd(\bmu_n,\bmu) > 2^M\}\bigr) =
  & \ \bP\bigl(\{r_n\dd(\bmu_n, \bmu) > 2^M\}
               \cap
               \{\dd(\bmu_n, \bmu) < \ve\}\bigr)
\\
  &\qquad + \bP\bigl(\{r_n\dd(\bmu_n, \bmu) > 2^M\}
                     \cap
                     \{\dd(\bmu_n,\bmu) \geq \ve\}\bigr)
\\
  \leq
  &\ C_{\alpha,\beta,\ve} 2^{-M(\alpha-\beta)}
     + \bP\bigl(\dd(\bmu_n,\bmu) \geq \ve\bigr).
\end{align*}
Since $\dd(\bmu_n,\bmu) \rightarrow 0$ in probability the last term
goes to~$0$ for all $\ve > 0$ as $n \rightarrow \infty$.
\end{proof}

\begin{lemma}\label{l:tightness-of_sqrtn_mu_n:sup_1}
Assume that $\mu$ is a probability measure such that $\int_\MM d^2(x,
\bmu) \mu(dx) \leq K < \infty$.  Then for any sufficiently small
$\delta > 0$ and any $n \in \NN$,
\begin{align*}
  \EE\Bigl[\sqrt n \sup_{z \in B(\bmu,\delta)}
           \bigl|F_n(z) - F(z) + F(\bmu) - F_n(\bmu)\bigr|\Bigr]
  &\leq C\delta,
\end{align*}
where the constant~$C$ does not depend on~$\delta$.
\end{lemma}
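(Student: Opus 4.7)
The plan is to rewrite the bracketed quantity as a centered empirical process indexed by $z \in B(\bmu,\delta)$, establish a sharp pointwise variance bound, and then pass to the supremum via a standard empirical-process maximal inequality. First I would introduce
$$
  h_z(x) := \rho_x(z) - \rho_x(\bmu)
         = \tfrac12\bigl(\dd^2(x,z)-\dd^2(x,\bmu)\bigr),
$$
so that $F_n(z)-F(z)-F_n(\bmu)+F(\bmu) = \frac1n\sum_{i=1}^n\bigl(h_z(x_i)-\EE h_z(x)\bigr)$. Factoring the difference of squares and applying the reverse triangle inequality $|\dd(x,z)-\dd(x,\bmu)|\leq\dd(z,\bmu)\leq\delta$ gives the envelope
$$
  |h_z(x)| \leq \tfrac12\delta\bigl(2\dd(x,\bmu)+\delta\bigr) =: H(x),\qquad z\in B(\bmu,\delta),
$$
with $\EE H(x)^2 \leq 2\delta^2 K + \tfrac12\delta^4 \leq C_0\delta^2$ for all sufficiently small $\delta$, using $\int\dd^2(x,\bmu)\,\mu(dx)\leq K$. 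Pointwise this already produces the correct scaling, via Cauchy--Schwarz:
$$
  \EE\Bigl|\frac1{\sqrt n}\sum_{i=1}^n\bigl(h_z(x_i)-\EE h_z(x)\bigr)\Bigr|
  \leq \sqrt{\Var(h_z(x))} \leq \|H\|_{L^2(\mu)} \leq \sqrt{C_0}\,\delta.
$$

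Second, to upgrade from pointwise to supremum over $B(\bmu,\delta)$, I would invoke a standard empirical-process maximal inequality (Dudley's entropy integral, or equivalently symmetrization plus contraction as in \cite[\S2.14]{Van00}) applied to the class $\{h_z:z\in B(\bmu,\delta)\}$. The two inputs required are both immediate from the same factoring: the envelope bound just derived, and the Lipschitz-in-$z$ estimate
$$
  |h_z(x)-h_{z'}(x)| \leq \dd(z,z')\bigl(\dd(x,\bmu)+\delta\bigr),
$$
which gives the $L^2(\mu)$-modulus $\|h_z-h_{z'}\|_{L^2(\mu)} \leq C_1\dd(z,z')$. Since $\MM$ is a smoothly stratified metric space, the ball $B(\bmu,\delta)$ is tame with polynomial metric entropy at small scales, so the entropy integral converges to a universal constant and absorbs into the $C$ in the conclusion, leaving only the envelope's $O(\delta)$ to appear in
$$
  \EE\Bigl[\sqrt n\sup_{z\in B(\bmu,\delta)}\bigl|F_n(z)-F(z)-F_n(\bmu)+F(\bmu)\bigr|\Bigr] \leq C\delta.
$$

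The main obstacle is this supremum step: a naive pointwise $L^2$ estimate alone would in principle permit a logarithmic blowup when passing from a single $z$ to a whole ball of indices. That blowup is defused by the twin facts that the functions $h_z$ depend on $z$ Lipschitz-continuously in $L^2(\mu)$, and that small neighborhoods of $\bmu$ in $\MM$ have controlled local metric entropy by virtue of the smoothly stratified structure. Together these trim the chaining integral to a constant factor, so the envelope's linear $\delta$ scaling propagates unchanged into the final bound, with a constant independent of both $\delta$ and $n$, as required.
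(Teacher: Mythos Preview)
Your proposal is correct and takes essentially the same approach as the paper: both rewrite the quantity as a centered empirical process indexed by $z\in B(\bmu,\delta)$, establish the Lipschitz-in-index bound $|h_z(x)-h_{z'}(x)|\leq \dd(z,z')\bigl(\dd(x,\bmu)+\delta\bigr)$ (the paper writes this as $|\dd^2(w_1,z)-\dd^2(w_2,z)|\leq\dot m(z)\dd(w_1,w_2)$ with $\dot m(z)=2\dd(\bmu,z)+1$), and then invoke a chaining maximal inequality together with the controlled local metric entropy of $B(\bmu,\delta)$ coming from the smoothly stratified structure. The paper cites \cite[Corollary~5.53, Lemma~19.34, Corollary~19.35, Example~19.7]{Van00} and \cite[Lemma~3.6]{random-tangent-fields} for these steps, which are the same empirical-process tools you invoke.
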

\begin{proof}
The proof is essentially the same as the proof of
\cite[Corollary~5.53]{Van00}.  There they employ \cite[Lemma~19.34 and
Corollary~19.35]{Van00} to obtain the result via a chaining argument.
The proof in \cite[Corolary~5.53]{Van00} is written in Euclidian
space, but the results in \cite[Lemma~19.34 and
Corollary~19.35]{Van00} are written for a general measure space.
There are two facts in our setting which are needed to apply the
result.  First, in the notation of \cite[Lemma~19.34, Corollary~19.35,
and Example~19.7]{Van00}, it needs to be shown that $|\dd^2(w_1,z) -
\dd^2(w_2,z)| \leq \dot m (z) \dd(w_1,w_2)$ for all $w_1,w_2 \in
B(\bmu,\delta)$ for $\delta$ sufficiently small and that $\int \dot m
(z)^2 \mu(dz) \leq \infty$.  Observe that for $w_1,w_2 \in
B(\bmu,\delta)$,
\begin{align*}
  |\dd^2(w_1,z) - \dd^2(w_2,z)|
  & = \abs{\dd(w_1,z) + \dd(w_2,z)} \abs{\dd(w_1,z) - \dd(w_2,z)}
\\*
  &\leq \bigl(\dd(w_1,z)+ \dd(w_2,z)\bigr) \dd(w_1,w_2)
\\*
  &\leq 2\bigl(\dd(\bmu,z)+\delta\bigr) \dd(w_1,w_2).
\end{align*}
So for $\delta$ sufficiently small, take $\dot m(z) = 2 \dd(\bmu,z) +
1$.  By assumption, $\int \dot m(z)^2 \mu(dz) \leq\nolinebreak
\infty$.

The last needed ingredient is the entropy, which captures how log of
the $\ve$-covering number scales with~$\ve$, as discussed in
\cite[Section~3.3, especially~Eq.~(3.2)]{random-tangent-fields}.
\cite[Example~19.7]{Van00} explains how to demonstrate that
$\log(\frac1\ve)$ bounds the entropy in our setting when $z$ is in
Euclidian space.  Since locally our stratified space is just a union
of a countable (in fact, finite) number of smooth manifolds, the
result carries over to our setting by adding the log of the covering
number of each of the strata.  The result still scales like
$\log(1/\ve)$.  This reasoning is covered in
\cite[Lemma~3.6]{random-tangent-fields}.
\end{proof}

\begin{remark}\label{r:chaining}
The fact that these chaining proofs can be adapted to our setting is
not surprising, as the proofs of \cite[Lemma~19.34 and
Corolary~19.35]{Van00} are more sophisticated versions of the chaining
proof given in detail in \cite[Theorem~3.1 and
Lemma~5.8]{random-tangent-fields}.
\end{remark}

Now the tightness of $\sqrt n \log_\bmu \bmu_n$ can be addressed.

\begin{prop}\label{p:tightness-of-sqrtn-mu_n}
For any amenable measure~$\mu$ on a smoothly stratified metric space,
the sequence $\sqrt n \dd(\bmu_n, \bmu)$ is asymptotically bounded in
probability as $n \rightarrow \infty$.
\end{prop}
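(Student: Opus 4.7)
The plan is to invoke Lemma~\ref{l:thm552:vdV} with exponents $\alpha = 2$ and $\beta = 1$, for which $n^{1/(2\alpha - 2\beta)} = \sqrt n$. Its conclusion will then directly yield tightness of $\sqrt n\, \dd(\bmu_n, \bmu)$, which is the statement of the proposition since $\|\log_\bmu \bmu_n\|_\bmu = \dd(\bmu_n, \bmu)$ whenever $\bmu_n$ lies outside the cut locus of~$\bmu$---and by Theorem~\ref{thm:LLN-of-bmu_n} this occurs with probability tending to one.

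Three ingredients feed Lemma~\ref{l:thm552:vdV}. For the quadratic growth condition~\eqref{eq:thm552:vdV:1a} with $\alpha = 2$, I would cite Lemma~\ref{l:uniform-convexity}: uniform convexity of~$F$ provides a constant $C > 0$ with $F(x) - F(\bmu) \geq C\,\dd^2(x,\bmu)$, which is more than enough to force $\sup_{\dd(z,\bmu) < \delta}|F(z) - F(\bmu)| \geq C\delta^2$ upon taking $z$ to approach the boundary of $B(\bmu,\delta)$. For the empirical fluctuation estimate~\eqref{eq:thm552:vdV:1b} with $\beta = 1$, the content is precisely Lemma~\ref{l:tightness-of_sqrtn_mu_n:sup_1}. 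Finally, the consistency hypothesis $\dd(\bmu_n, \bmu) \xrightarrow{P} 0$ is Theorem~\ref{thm:LLN-of-bmu_n}.

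Assembling these pieces via Lemma~\ref{l:thm552:vdV} produces the required tightness with no further calculation. The only technical point needing attention is the finite second moment hypothesis $\int_\MM \dd^2(x,\bmu)\,\mu(dx) < \infty$ required by Lemma~\ref{l:tightness-of_sqrtn_mu_n:sup_1}. This is automatic in the present setup: the very existence of the Fr\'echet mean (implicit in the amenable framework, since Definition~\ref{d:amenable} refers to the cut locus of~$\bmu$) requires $F(\bmu) < \infty$, which is exactly the finite second moment condition. Thus no new ideas or estimates beyond those already established are needed, and the main (mild) obstacle is simply spelling out this compatibility between the amenable hypothesis and the background assumptions of the supporting lemmas.
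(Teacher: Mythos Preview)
The proposal is correct and follows essentially the same approach as the paper: invoke Lemma~\ref{l:thm552:vdV} with $\alpha=2$, $\beta=1$, verifying the quadratic growth via Lemma~\ref{l:uniform-convexity}, the fluctuation bound via Lemma~\ref{l:tightness-of_sqrtn_mu_n:sup_1}, and consistency via Theorem~\ref{thm:LLN-of-bmu_n}. Your additional remark justifying the finite second moment hypothesis of Lemma~\ref{l:tightness-of_sqrtn_mu_n:sup_1} is a welcome clarification that the paper leaves implicit.
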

\begin{proof}
Apply Lemma~\ref{l:thm552:vdV} for $\alpha = 2$ and $\beta = 1$.  The
LLN in Theorem~\ref{thm:LLN-of-bmu_n} says that $\dd(\bmu_n, \bmu)
\xrightarrow{P} 0$.  It remains to check the two inequalities in
Lemma~\ref{l:thm552:vdV}.  The second one follows from
Lemma~\ref{l:tightness-of_sqrtn_mu_n:sup_1}.  Next, recall from
Lemma~\ref{l:uniform-convexity} that there is a universal
constant~$C$, independent of the measure~$\mu$, such that for all~$x$
in some neighborhood $B(\bmu,\delta)$ of~$\bmu$ with $\delta <
R_{\kappa/2}$,
$$
  |F(x) - F(\bmu)| \geq C\dd^2(x,\bmu).
$$
Thus the first inequality in Lemma~\ref{l:thm552:vdV} also holds.
\end{proof}

\begin{remark}\label{r:law-from-perturbation}
The following results, culminating in Proposition~\ref{p:transform},
imply that to determine the limit law for the empirical Fr\'echet
mean~$\bmu_n$, it suffices to instead minimize the population
Fr\'echet function after it is perturbed radially by an infinitesimal
tangent field defined from a finite sample.  That is because, in
Proposition~\ref{p:transform}, the convergence of~$h_n$ to~$\bmu_n$
occurs even after stretching by a factor of~$\sqrt n$.
\end{remark}

\begin{defn}\label{d:average-sample-perturbation}%
Fix a measure~$\mu$ on a smoothly stratified space~$\MM$.  The
empirical tangent field $\bgn$ induced by~$\mu$
(Definition~\ref{d:empirical-tangent-field}) yields the \emph{average
\mbox{sample}~\mbox{perturbation}}
$$
  h_n \in \argmin_{x\in\MM} \bigl(F(x) - \bgn(\log_\bmu x)\bigr),
$$
where again $\bgn$ is considered a random tangent field on~$\Tmu$ via
Remark~\ref{r:extend-to-Tmu}.
\end{defn}

\begin{remark}\label{r:perturbation}
The $\argmin$ defining an average sample perturbation is a set rather
than a point, even when considered deterministically.  But the choice
of point in that set is irrelevant, since all choices behave the same
way for convergence purposes, as~follows.
\end{remark}

\pagebreak[2]

\begin{lemma}\label{l:tight-h_n}
Any rescaled average sample perturbation $\sqrt n \log_\bmu h_n$ is
tight.
\end{lemma}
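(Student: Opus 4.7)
The plan is to apply Lemma~\ref{l:thm552:vdV} with the empirical Fr\'echet function $F_n$ replaced by the perturbed objective $\tF_n(x) = F(x) - \bgn(\log_\bmu x)$, of which $h_n$ is almost surely a minimizer. The only structural property of $\bmu_n$ actually used in the proof of Lemma~\ref{l:thm552:vdV} is the inequality $F_n(\bmu_n) - F_n(\bmu) \leq 0$; the analogous inequality $\tF_n(h_n) - \tF_n(\bmu) \leq 0$ holds by construction of~$h_n$, so that chaining argument transcribes to this setting with $h_n$ and $\tF_n$ in place of $\bmu_n$ and~$F_n$. It therefore suffices to verify the consistency hypothesis and the two quantitative bounds \eqref{eq:thm552:vdV:1a} and~\eqref{eq:thm552:vdV:1b} with $\alpha = 2$ and $\beta = 1$.

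For \eqref{eq:thm552:vdV:1a} with $\alpha = 2$, the bound $F(x) - F(\bmu) \geq C\,\dd^2(x,\bmu)$ is exactly the uniform convexity from Lemma~\ref{l:uniform-convexity}. For \eqref{eq:thm552:vdV:1b} with $\beta = 1$, note that
$$
  \tF_n(z) - F(z) - \tF_n(\bmu) + F(\bmu)
  = -\bgn(\log_\bmu z) + \bgn(0)
  = -\bgn(\log_\bmu z),
$$
since the homogeneous extension of~$\bgn$ to~$\Tmu$ (Remark~\ref{r:extend-to-Tmu}) vanishes at the apex. Writing $G_n = \sqrt n\,\bgn$ and again using homogeneity gives
$$
  \sqrt n\,\sup_{z \in B(\bmu,\delta)} |\bgn(\log_\bmu z)|
  \;=\; \sup_{\|V\| \leq \delta} |G_n(V)|
  \;=\; \delta \sup_{V \in \Smu} |G_n(V)|,
$$
so it suffices to show $\EE\sup_{V \in \Smu}|G_n(V)| \leq C$ uniformly in~$n$. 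This is not a direct consequence of the distributional CLT in Theorem~\ref{t:tangent-field-CLT}, but it does follow from the quantitative chaining and entropy bounds for $G_n$ developed in the proof of that CLT in~\cite{random-tangent-fields}, of the same flavor as the ones invoked in Lemma~\ref{l:tightness-of_sqrtn_mu_n:sup_1} (see also Remark~\ref{r:chaining}).

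For the consistency hypothesis $\dd(h_n,\bmu) \xrightarrow{P} 0$, the supremum bound above with any fixed~$\delta$ yields $\sup_{V \in \Smu}|\bgn(V)| = n^{-1/2} \sup_{V \in \Smu}|G_n(V)| \xrightarrow{P} 0$, so the perturbation of~$F$ by $\bgn \circ \log_\bmu$ shrinks uniformly on compact neighborhoods of~$\bmu$. Combined with uniform convexity and the fact that $\bmu$ is the unique minimizer of~$F$, standard $M$-estimator consistency (essentially a repeat of Theorem~\ref{thm:LLN-of-bmu_n} with a perturbed objective) gives $\dd(h_n,\bmu) \to 0$ in probability. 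With all three hypotheses satisfied, Lemma~\ref{l:thm552:vdV} delivers tightness of $n^{1/(2\alpha - 2\beta)}\dd(h_n,\bmu) = \sqrt n\,\dd(h_n,\bmu)$, which is the claim.

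The main obstacle is the uniform bound $\EE\sup_{V \in \Smu}|G_n(V)| \leq C$: distributional convergence of $G_n$ to the Gaussian field~$G$ in $\cC(\Smu,\RR)$ does not by itself imply uniform integrability of the sup-norm, so the proof must reach past Theorem~\ref{t:tangent-field-CLT} and Corollary~\ref{c:cont-realization} to the underlying covering-number/chaining estimates for empirical tangent fields from~\cite{random-tangent-fields}. Everything else is a routine adaptation of the van der Vaart-style tightness argument already in Proposition~\ref{p:tightness-of-sqrtn-mu_n}.
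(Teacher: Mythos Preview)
Your argument is correct, but the paper takes a much more direct route that avoids Lemma~\ref{l:thm552:vdV} entirely. The paper simply combines the uniform convexity bound $F(x)-F(\bmu)\geq C\,\dd^2(\bmu,x)$ with the homogeneity identity $\bgn(\log_\bmu x)=\dd(\bmu,x)\,\bgn(\theta_x)$; evaluating at $x=h_n$ and using that $h_n$ minimizes the perturbed objective gives in one line
\[
  \sqrt n\,\dd(\bmu,h_n)\;\leq\;\frac{1}{C}\sup_{\theta\in\Smu}G_n(\theta),
\]
after which tightness of $G_n$ in $\cC(\Smu,\RR)$ (from \cite[Lemma~5.8]{random-tangent-fields}) finishes immediately.

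The key difference is in what you need from the empirical field. Your van der Vaart adaptation requires the \emph{uniform moment bound} $\sup_n\EE\sup_{V\in\Smu}|G_n(V)|<\infty$, which---as you correctly flag---is not a consequence of distributional convergence and must be extracted from the underlying chaining estimates. The paper's inequality needs only \emph{tightness} of $\sup_\theta G_n(\theta)$, which follows directly from tightness of $G_n$ in the sup norm. So your route is heavier on hypotheses and machinery, though it has the virtue of parallelling the $\bmu_n$ argument in Proposition~\ref{p:tightness-of-sqrtn-mu_n} exactly. The paper's two-line bound is what gets reused later (e.g.\ in Lemma~\ref{l:bound-of_Upsilon}), so it is worth knowing.
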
 
\begin{proof}
First note that $\lim_{n\to\infty} \dd(\bmu, h_n) = 0$.  To see why,
Lemma~\ref{l:uniform-convexity} says that there is a constant $C > 0$
satisfying $F(x) - F(\bmu) \geq C\dd^2(\bmu,x)$.  On the other hand,
because $\bgn$ is homogeneous as a random tangent field on~$\Tmu$ by
Remark~\ref{r:extend-to-Tmu},
$$
  \bgn(\log_\bmu x) - \bgn(\log_\bmu \bmu)
  =
  \bgn(\log_\bmu x)
  =
  \dd(\bmu, x) \bgn\bigl(\theta_x\bigr),
$$
where $\theta_x$ is the direction of~$x$ in polar coordinates
(Definition~\ref{d:polar}).  Thus
$$
  \bgn(\log_\bmu x) - \bgn(\log_\bmu\bmu)
  \leq
  \dd(\bmu, x) \sup_{\theta\in\Smu} \bgn(\theta)
  =
  \frac{\dd(\bmu, x)}{\sqrt n} \sup_{\theta\in \Smu} \sqrt n\,\bgn(\theta).
$$
Subtracting this equation from the equation in
Lemma~\ref{l:uniform-convexity} gives
$$
  F(x) - \bgn(\log_\bmu x) - F(\bmu) + \bgn(\log_\bmu \bmu)
  \geq
  C\dd^2(\bmu, h_n) - \frac{\dd(\bmu,h_n)}{\sqrt n}
    \sup_{\theta\in \Smu} \sqrt n\,\bgn(\theta).
$$
Replacing $x$ by $h_n$ and using that~$h_n$ is a minimizer yields
$$
  0\geq F(h_n)-\bgn(\log_\bmu h_n)-F(\bmu)+\bgn(\log_\bmu\bmu) \geq
  C\dd^2(\bmu,h_n)-\frac{\dd(\bmu,h_n)}{\sqrt{n}}\sup_{\theta\in
  \Smu}\sqrt n\,\bgn(\theta),
$$
which implies that
\begin{equation}\label{eq:lem:tight-h_n:3}
  \frac{\sup_{\theta \in \Smu} \sqrt n\,\bgn(\theta)}{C}
  \geq
  \sqrt n\,\dd(\bmu,h_n).
\end{equation}
\cite[Lemma~5.8]{random-tangent-fields}
says that $\Gn = \sqrt n\,\bgn$ is tight in $\cC(\Smu,\RR)$.  In
particular, $\sup_{\theta \in \Smu} \sqrt n \,\bgn(\theta)$ is tight.
Thus \eqref{eq:lem:tight-h_n:3} implies that $\sqrt n\,\dd(h_n, \bmu)$
is also tight, completing the proof.
\end{proof}

\begin{prop}\label{p:transform}
Fix an amenable measure~$\mu$ on a smoothly stratified metric space.
Any average sample perturbation~$h_n$
(Definition~\ref{d:average-sample-perturbation}) converges in
probability to the Fr\'echet mean faster than $1/\sqrt n$:
$$
  \sqrt n\, \dd(h_n, \bmu_n) \xrightarrow{P} 0.
$$
\end{prop}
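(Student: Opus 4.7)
The plan is to recognize both $\bmu_n$ and $h_n$ as minimizers of random objective functions that, after a tangent-space rescaling by $\sqrt n$, converge to a common limit uniformly on compacta, and then to extract proximity of the minimizers from strong convexity. Concretely, change variables by setting $Z = \sqrt n \log_\bmu x$, so that $x = \exp_\bmu(Z/\sqrt n)$ is well defined for bounded $Z$ and large $n$ by Definition~\ref{d:stratified-space}.\ref{i:exponentiable}, and define the rescaled functionals
\begin{align*}
M_n(Z) &= n\bigl[F_n\bigl(\exp_\bmu(Z/\sqrt n)\bigr) - F_n(\bmu)\bigr],
\\
M(Z)   &= n\bigl[F\bigl(\exp_\bmu(Z/\sqrt n)\bigr) - F(\bmu)\bigr] - \Gn(Z),
\end{align*}
where $\Gn = \sqrt n\,\bgn$ is extended homogeneously to $\Tmu$ via Remark~\ref{r:extend-to-Tmu}. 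Then $z_n = \sqrt n \log_\bmu \bmu_n$ minimizes $M_n$ and $\tilde z_n = \sqrt n \log_\bmu h_n$ minimizes $M$, and the conclusion $\sqrt n\,\dd(h_n,\bmu_n) \xrightarrow{P} 0$ reduces, via local exponentiability of $\exp_\bmu$ near~$\bmu$, to showing $\dd(z_n, \tilde z_n) \xrightarrow{P} 0$ in the tangent cone.

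The main steps are then four. First, unpacking $\zeta_n$ together with the homogeneities $\<W,Z/\sqrt n\> = \<W,Z\>/\sqrt n$ and $m(\mu, Z/\sqrt n) = m(\mu,Z)/\sqrt n$ identifies $M_n(Z) - M(Z)$ with the quantity controlled by Lemma~\ref{l:converge-of-hessian}, yielding $\sup_{Z \in U}|M_n(Z) - M(Z)| \xrightarrow{P} 0$ on any compact $U \subset \Tmu$. Second, Proposition~\ref{p:tightness-of-sqrtn-mu_n} and Lemma~\ref{l:tight-h_n} supply tightness of $(z_n)$ and $(\tilde z_n)$, so that for any $\epsilon > 0$ a fixed compact $U$ captures both with probability at least $1 - \epsilon$ for all large~$n$. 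Third, Lemma~\ref{l:uniform-convexity} gives $n[F(\exp_\bmu(Z/\sqrt n)) - F(\bmu)] \geq C\|Z\|^2$ uniformly on~$U$, while the homogeneous perturbation $-\Gn$ satisfies $|\Gn(Z)| \leq \|Z\|\,\sup_{\Smu}|\Gn|$ with sup-norm tight in~$n$ (invoked via \cite[Lemma~5.8]{random-tangent-fields} as in the proof of Lemma~\ref{l:tight-h_n}); on a further event of probability at least $1 - \epsilon$ these combine to furnish a deterministic $c > 0$ and quadratic growth $M(Z) - M(\tilde z_n) \geq c\,\dd(Z, \tilde z_n)^2$ for all $Z \in U$. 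Fourth, the minimizer property $M_n(z_n) \leq M_n(\tilde z_n)$ together with the uniform convergence gives $M(z_n) - M(\tilde z_n) \leq 2\sup_U|M_n - M|$, and the quadratic growth then forces $c\,\dd(z_n, \tilde z_n)^2 \leq 2\sup_U|M_n - M| \xrightarrow{P} 0$.

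The main obstacle is the third step: establishing quadratic growth of $M$ about its random minimizer $\tilde z_n$ with a constant that does not degrade with~$n$. The deterministic part $Z \mapsto n[F(\exp_\bmu(Z/\sqrt n)) - F(\bmu)]$ inherits uniform strong convexity from Lemma~\ref{l:uniform-convexity} independently of~$n$, but the perturbation $-\Gn(Z)$ need not be convex in~$Z$ in the singular tangent cone since inner products $\<W,Z\>_\bmu$ are only concave (not linear) in~$Z$ on a $\CAT(0)$ cone. The resolution is that $\Gn$ is homogeneous of degree one with sup-norm on $\Smu$ tight in~$n$, so it acts as a linear-like perturbation of bounded-in-probability slope; such perturbations shift the minimizer of a strongly convex function but preserve its quadratic-growth constant. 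Making this precise on an event of probability at least $1 - \epsilon$, using that $\tilde z_n$ stays in the compact set~$U$, delivers the required deterministic $c > 0$ and closes the argument.
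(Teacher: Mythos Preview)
Your steps 1, 2, and 4 are essentially the paper's argument: the identity $M_n(Z)-M(Z)=n^{1/2}\bigl(\frac1n\sum_i\zeta_n(x_i,z/\sqrt n)-\EE[\zeta_n]\bigr)$ is exactly Eq.~(4.4), the tightness inputs are the same, and the sandwich in step 4 is the paper's Eqs.~(4.5)--(4.7). The gap is step 3.

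Your ``resolution'' that a homogeneous degree-one perturbation with bounded slope preserves the quadratic-growth constant of a strongly convex function is false already on~$\RR$: take $\phi(z)=z^2$ and $\psi(z)=-L|z|$. Then $\psi$ is homogeneous of degree one with slope~$L$, yet $\phi+\psi$ has two minimizers $\pm L/2$ with equal values, so no quadratic growth about either. On a singular tangent cone the perturbation $-\Gn(Z)$ has exactly this flavor, since it contains the convex piece $\sqrt n\,\nablabmu F(Z)$ in addition to the concave pieces $-\<X_i,Z\>/\sqrt n$. Nor is it clear that the deterministic part $Z\mapsto n[F(\exp_\bmu(Z/\sqrt n))-F(\bmu)]$ inherits \emph{strong convexity on~$\Tmu$} from Lemma~\ref{l:uniform-convexity}: that lemma gives uniform convexity along geodesics in~$\MM$, but $\exp_\bmu\circ(\,\cdot\,/\sqrt n)$ does not take geodesics in~$\Tmu$ to geodesics in~$\MM$, so you only get the quadratic lower bound $C\|Z\|^2$ around the apex, which is not what you need.

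The paper sidesteps both issues by swapping the roles of~$M$ and~$M_n$ in step~3. Since $\tilde z_n$ minimizes~$M$, the inequality $M(\tilde z_n)\leq M(z_n)$ plus the uniform convergence gives $M_n(\tilde z_n)-M_n(z_n)\leq 2\sup_U|M_n-M|$; combined with $M_n(z_n)\leq M_n(\tilde z_n)$ this bounds $|M_n(\tilde z_n)-M_n(z_n)|$, which in~$\MM$ reads $n|F_n(h_n)-F_n(\bmu_n)|\leq 2c_n$. The crucial point is that $F_n$ is the Fr\'echet function of the measure~$\mu_n$, so Lemma~\ref{l:uniform-convexity} applies \emph{directly in~$\MM$} with a constant~$C$ independent of the measure (hence of~$n$): $F_n(h_n)-F_n(\bmu_n)\geq C\dd^2(h_n,\bmu_n)$. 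No tangent-cone strong convexity and no analysis of the perturbation~$-\Gn$ are needed.
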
 
\begin{proof}
First expand, using the zoomed-in half square-distances $\zeta_n$ from
Lemma~\ref{l:converge-of-hessian}:
\begin{align}\nonumber
n^{1/2}\Bigl(\frac 1n \sum_{i=1}^n
   & \zeta_n(x_i, z/\sqrt n) - \EE\bigl[\zeta_n(x,z/\sqrt n)\bigr]\Bigr)
\\*\label{eq:CLT_zeta_n:1}%
   & = n\biggl(\frac 1n \Bigl(  \sum_{j=1}^n \rho_{x_j}(z/\sqrt n)
                               - \sum_{j=1}^n \rho_{x_j}(\bmu)
                               + \sum_{j=1}^n \<X_j,Z/\sqrt n\>\Bigr)
\\\nonumber
  &\phantom{\mbox{}= n\biggl(\biggr.}\ \
               - \int_\MM\rho_x(z/\sqrt n) \mu(dx)
               + \int_\MM\rho_x(\bmu) \mu(dx)
               - \int_\MM\<X,Z/\sqrt n\> \mu(dx)
        \biggr).
\end{align}
Noting that $F_n(z) = \frac 1n \sum_{j=1}^n \rho_{x_j}(z)$ and $F(z) =
\int_\MM \rho_x(z)\mu(dx)$, Eq.~\eqref{eq:CLT_zeta_n:1} reduces to
\begin{equation*}
\begin{split}
n^{1/2}\Bigl(\frac 1n \sum_{i=1}^n
   & \zeta_n(x_i,z/\sqrt n) - \EE\bigl[\zeta_n(x,z/\sqrt n)\bigr]\Bigr)
\\
   & = n\bigl(F_n(z/\sqrt n)-F_n(\bmu)\bigr)
     + nF(\bmu) - n\bigr(F(z/\sqrt n) - \bgn(Z/\sqrt n)\bigr).
\end{split}
\end{equation*}
Lemma~\ref{l:converge-of-hessian} implies that the left-hand side is a
sequence of stochastic processes converging uniformly to~$0$ for~$z$
in any compact set~$U$ around~$\bmu$.  Hence the equation~implies
\begin{equation}\label{eq:proof_partly_sticky:taylor_pre_CLT_conclusion}
  \sup_{z \in U}
    \Bigl(n \bigl(F_n(z/\sqrt n) - F_n(\bmu)\bigr)
          + n F(\bmu)
          - n \bigl(F(z/\sqrt n) - \bgn(Z/\sqrt n)\bigr)\Bigr)
  =
  c_n,	
\end{equation}
where $c_n$ is a random real valued sequence, independent of~$z$,
converging to~$0$ in probability.  Thus $z$ can be replaced by any
tight random sequence in~$\MM$.  Thanks to Lemma~\ref{l:tight-h_n} and
Proposition~\ref{p:tightness-of-sqrtn-mu_n}, both
\begin{align*}
  \sqrt n\,\bmu_n &= \exp(\sqrt n \log_\bmu \bmu_n)
\\\text{and\quad}
  \sqrt n\,h_n    &= \exp(\sqrt n \log_\bmu h_n)
\end{align*}
are valid choices for~$z$.  Substituting them for $z$ in
\eqref{eq:proof_partly_sticky:taylor_pre_CLT_conclusion} produces
\begin{equation}\label{eq:CLT_zeta_n:2}
  n \bigl(F_n(h_n) - F_n(\bmu)\bigr)
  =
  -n F(\bmu) + n\bigl(F(h_n) - \bgn(\log_\bmu h_n) \bigr) + c_n
\end{equation}
and 
\begin{equation}\label{eq:CLT_zeta_n:2.1}
  n\bigl(F_n(\bmu_n) - F_n(\bmu)\bigr)
  =
  -n F(\bmu) + n\bigl(F(\bmu_n) - \bgn(\log_\bmu \bmu_n)\bigr) + c_n.	
\end{equation}
The definition of average sample perturbation~$h_n$
(Definition~\ref{d:average-sample-perturbation}) yields
$$
  h_n \in \argmin_{x\in \MM}
        \Bigl(-n F(\bmu) + n\bigl(F(x) - \bgn(\log_\bmu x)\bigr)\Bigr).
$$
Therefore
$$
  -n F(\bmu) + n\bigl(F(h_n) - \bgn(\log_\bmu x)\bigr)
  \leq
  -n F(\bmu) + n\bigl(F(\bmu_n) - \bgn(\log_\bmu x)\bigr).
$$
Combining this inequality with \eqref{eq:CLT_zeta_n:2} and
\eqref{eq:CLT_zeta_n:2.1} yields
\begin{align*}%
n\bigl(F_n(h_n) - F_n(\bmu)\bigr) - c_n
   &   =   -n F(\bmu) + n\bigl(F(h_n) - \bgn(\log_\bmu x)\bigr)
\\*& \leq  -n F(\bmu) + n\bigl(F(\bmu_n) - \bgn(\log_\bmu x)\bigr)
\\*&   =    n\bigl(F_n(\bmu_n) - F_n(\bmu)\bigr) - c_n.
\end{align*}
In short,
\begin{equation}\label{eq:CLT_zeta_n:7.1}
  n\bigl(F_n(h_n) - F_n(\bmu)\bigr)
  \leq
  n\bigl(F_n(\bmu_n) - F_n(\bmu)\bigr) - 2c_n.
\end{equation}
On the other hand, $\bmu_n = \argmin_{x\in\MM} F_n(x)$, so
$$
  \bmu_n = \argmin_{x\in\MM}\,n\bigl(F_n(x) - F_n(\bmu)\bigr).
$$
Therefore, for every~$n$,
$$
  n\bigl(F_n(\bmu_n) - F_n(\bmu)\bigr)
  \leq
  n \bigl(F_n(h_n) - F_n(\bmu)\bigr).
$$
{}From this inequality and~\eqref{eq:CLT_zeta_n:7.1} it follows that
\begin{equation}\label{eq:dist_ell_n_and_r_n_1}
  \bigl|n\bigl(F_n(\bmu_n) - F_n(h_n)\bigr)\bigr| \leq 2c_n.
\end{equation}
Lemma~\ref{l:uniform-convexity} produces a universal constant~$C$,
independent of the measure~$\mu$, such that
$$
  \bigl|F_n(\bmu_n) - F_n(h_n)\bigr|
  \geq
  C\dd^2(\bmu_n,h_n)
$$
for some positive constant $C$.  This inequality, in conjunction
with~\eqref{eq:dist_ell_n_and_r_n_1}, yields
$$
  \dd(\bmu_n,h_n)^2 \leq \frac{2c_n}{nC},
$$
or equivalently
$$
  \sqrt n\,\dd(\bmu_n,h_n)
  \leq
  \sqrt{\frac{2c_n}{C}},
$$
which implies the Proposition since $c_n \xrightarrow{P} 0$ as $n \to
\infty$ by~\eqref{eq:proof_partly_sticky:taylor_pre_CLT_conclusion}.
\end{proof}

\subsection{Escape convergence and confinement}\label{b:convergence}
\mbox{}\medskip

\noindent
Escape vector theory rests on convergence of three flavors sequences
that bring uniqueness, homogeneity, duality, an explicit formula, and
confinement to the escape cone.

\begin{notation}\label{n:deterministic}
Fix a measure~$\mu$ on a smoothly stratified metric space~$\MM$~and
\begin{itemize}
\item%
a measure $\Delta = \lambda_1\delta_{Y^1} + \dots +
\lambda_j\delta_{Y^j}$ sampled from~$\Tmu$;
\item%
a sequence of measures $\Delta_n$ sampled from~$\Tmu$ converging
weakly to~$\Delta$
and
\item%
a sequence of positive real numbers $t_n$ converging to~$0$.
\end{itemize}
Using inner products (Definition~\ref{d:inner-product}), polar
coordinates (Definition~\ref{d:polar}), and the second-order Taylor
coefficient~$\Lambda_\bmu$ of the Fr\'echet function
(Proposition~\ref{p:Lambda}), define
\begin{align*}
  b_n &\in\argmin_{x\in\exp_\bmu(\Emue)}\bigl(F(x) - t_n\<\Delta_n,\log_\bmu x\>\bigr)
\\\text{and}\quad
  c_n &\in\argmin_{x\in\exp_\bmu(\Emue)}\bigl(r_x^2\Lambda_\bmu(\theta_x) -
                                              t_n\<\Delta_n,\log_\bmu x\>\bigr)
\intertext{if the measure~$\mu$ has been assumed amenable.  Even
without assuming amenability, if
\begin{itemize}
\item%
$\delta_n \to \delta$ is a convergent sequence of measures sampled
from~$\MM$, then
\item%
set $\Delta = \log_\bmu\delta$ and $\Delta_n = \log_\bmu\delta_n$ for
all~$n$,
\end{itemize}
and use the Fr\'echet function $F_{\delta_n}$ (see
Definition~\ref{d:sampled-from}) to define}
  a_n & = \argmin_{x\in\exp_\bmu(\Emue)}\bigl(F(x) + t_n F_{\delta_n}(x)\bigr).
\end{align*}
\end{notation}

\begin{remark}\label{r:vanishing-linear-term}
For $x \in \exp_\bmu\Emue$, the (directional) Taylor expansion of the
Fr\'echet function in Proposition~\ref{p:Lambda} becomes
$$
  F(x) = F(\bmu) + r_x^2\Lambda_\bmu(\theta_x) + o(r_x^2)
$$
because $\nablabmu F(\theta_x) = 0$.  Therefore the function minimized
for~$b_n$ is the function minimized for~$c_n$ plus a constant and a
hyperquadratic term, namely $F(\bmu) + o(r_x^2)$.
\end{remark}

\begin{remark}\label{r:V-can-be-exponentiated}
The sequences $b_n$ and~$c_n$ are defined regardless of whether
$\Delta_n$ is expressible as the logarithm of a measure sampled
from~$\MM$.  Of course, $\Delta_n$ is always a scalar multiple of such
a logarithm, namely $\Delta_n = \frac 1t\log_\bmu\exp_\bmu t\Delta_n$
for any positive~$t \ll 1$ by
Definition~\ref{d:stratified-space}.\ref{i:exponentiable}.  The fact
that $\Delta$ need not itself be a logarithm becomes irrelevant
anyway: by Corollary~\ref{c:homogeneity}, the limiting quantities
derived from~$a_n$, $b_n$, and $c_n$ are homogeneous---they behave
well with respect to scaling~$\Delta$ in the sense of
Definition~\ref{d:scaling}---so $\Delta$ can be shrunk to any desired
length for the purpose of comparing $b_n$ to~$a_n$, as long
as~$\Delta$ is stretched back to its original length later.
\end{remark}

\begin{remark}\label{r:X-can-be-exponentiated}
That the minimizations in Notation~\ref{n:deterministic} are taken
over a subset of the escape cone and not all of it also causes no
problems because the points~$x$ of interest lie arbitrarily close
to~$\bmu$ for large~$n$.  Indeed, since $t_n \to 0$, the three
sequences all minimize small perturbations of the Fr\'echet function;
see Lemma~\ref{l:magnitude-of-a_n} for an upper bound.
\end{remark}

\begin{remark}\label{r:a_n-b_n-c_n}
The goal is to find the limit of $\log_\bmu b_n/t_n$ as $n \to \infty$
in Theorem~\ref{t:escape-vector-confinement}, ideally---if
the limit exists---without any dependence on the choice of~$b_n$ in
the $\argmin$, which is selected from a set and need not be unique
(Remark~\ref{r:perturbation}).  The idea is to use~$a_n$ since it is
the (unique) minimizer of a convex function on~$\MM$ and is also
automatically confined to the escape cone~$\Emu$
(Proposition~\ref{p:unconfined-analogue}).  If $\dd(a_n,\bmu) \leq K
t_n$ for all~$n$ and a finite constant~$K$, as is indeed proved in
Lemma~\ref{l:magnitude-of-a_n}, then Taylor expansions of~$\Fdn(x)$
and~$F(x)$ can be used to approximate~$a_n$ to order~$o(t_n)$.
Consequently, in Proposition~\ref{p:unique-c-and-limits} the limits
$$
  \lim_{n\to\infty}\frac{\log_\bmu a_n}{t_n}
  =
  \lim_{n\to\infty}\frac{\log_\bmu b_n}{t_n}
  =
  \lim_{n\to\infty}\frac{\log_\bmu c_n}{t_n}
$$
are shown to be equal and hence forced to be independent of potential
$\argmin$ selections for~$b_n$ and~$c_n$.  The advantage of~$c_n$ is
the ability to solve for it explicitly (Lemma~\ref{l:formula-c_n}) in
terms of the second-order Taylor coefficient.  To summarize: $b_n$~is
the one that's needed via Proposition~\ref{p:transform}; $a_n$~is the
one that's unique and confined to~$\Emu$ (as well as interpretable,
cf.~Remark~\ref{r:rho-delta}); and $c_n$ is the one that admits
explicit~solution.
\end{remark}

\begin{lemma}\label{l:abc-bounded}
Fix a measure~$\mu$ on a smoothly stratified metric space.  Using
Notation~\ref{n:deterministic}, $\lim_{n\to\infty} \dd(\bmu, a_n) =
0$.  The same holds for $b_n$ or~$c_n$ if $\mu$ is amenable.
\end{lemma}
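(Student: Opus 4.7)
The plan is a standard three-step optimization argument applied in parallel to each of $a_n$, $b_n$, and $c_n$: first, test the minimizer against the admissible point $\bmu = \exp_\bmu(0) \in \exp_\bmu(\Emue)$; second, rearrange to isolate the principal-term increase from the perturbative term; third, invoke the universal uniform convexity $F(x) - F(\bmu) \geq C\dd^2(x,\bmu)$ from Lemma~\ref{l:uniform-convexity} to convert the principal-term increase into a quadratic lower bound on $\dd(\cdot,\bmu)$, thereby reading off a rate linear in~$t_n$.

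For $a_n$, since $F_{\delta_n}(a_n) \geq 0$, testing at~$\bmu$ gives $F(a_n) - F(\bmu) \leq t_n F_{\delta_n}(\bmu)$, so Lemma~\ref{l:uniform-convexity} yields $C\dd^2(a_n,\bmu) \leq t_n F_{\delta_n}(\bmu)$; since $F_{\delta_n}(\bmu) = \frac12 \int \dd^2(y,\bmu)\,\delta_n(dy)$ stays bounded as $\delta_n \to \delta$, the right-hand side tends to~$0$. For $b_n$ (now with $\mu$ amenable), using $\cos \leq 1$ atom-by-atom produces $\<\Delta_n, V\> \leq \|V\| \sum_i \lambda_i^n \|Y^i\|$; testing at~$\bmu$ then reduces to $C\dd^2(b_n, \bmu) \leq t_n \dd(b_n,\bmu) \sum_i \lambda_i^n \|Y^i\|$, forcing $\dd(b_n,\bmu) = O(t_n)$. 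For $c_n$, the same inner-product bound gives $r_{c_n}^2 \Lambda_\bmu(\theta_{c_n}) \leq t_n r_{c_n} \sum_i \lambda_i^n \|Y^i\|$; dividing by $r_{c_n}$ (when nonzero; $c_n = \bmu$ is trivial) and using a positive lower bound on $\Lambda_\bmu$ over $\Smu \cap \Emu$ finishes the argument.

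The main delicate point is that uniform positive lower bound on~$\Lambda_\bmu$: it requires $\Lambda_\bmu$ to be continuous (Proposition~\ref{p:Lambda}) and strictly positive (Lemma~\ref{l:uniform-convexity}) along all directions the minimizer can approach, and it requires $\Smu \cap \Emu$ to be compact---which follows from local compactness of~$\MM$ (Definition~\ref{d:stratified-space}) together with continuity of the directional derivative $\nablabmu F$ defining~$\Emu$. Beyond that, the only bookkeeping is uniform-in-$n$ boundedness of the first-moment quantities $F_{\delta_n}(\bmu)$ and $\sum_i \lambda_i^n \|Y^i\|$, which is immediate from the convergence of atoms and weights in $\delta_n \to \delta$ (respectively $\Delta_n \to \Delta$). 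The lemma is really a preliminary regularity statement positioning the sequences inside a shrinking neighborhood of~$\bmu$, so that Proposition~\ref{p:unique-c-and-limits} can safely apply the Taylor expansion of $F$ near~$\bmu$ term by term.
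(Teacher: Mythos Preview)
Your proof is correct and in fact more direct than the paper's. The paper argues by fixing $\ve > 0$, showing that the objective value at any $x \notin \oB(\bmu,\ve)$ exceeds the value at $\bmu$ once $n$ is large (combining the uniform convexity bound $F(x) - F(\bmu) \geq C\ve^2$ with a crude bound $F_{\delta_n}(x) - F_{\delta_n}(\bmu) \geq -K$), and hence the minimizer lies in $\oB(\bmu,\ve)$. You instead test the minimizer directly against $\bmu$ and use the same Lemma~\ref{l:uniform-convexity}, which not only gives convergence but yields quantitative rates: $\dd(a_n,\bmu) = O(\sqrt{t_n})$ and $\dd(b_n,\bmu), \dd(c_n,\bmu) = O(t_n)$. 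The latter actually anticipates Claim~\ref{claim:magnitude-of-b_n} inside the proof of Proposition~\ref{p:unique-c-and-limits}, which the paper establishes separately and by a less elementary route. One small caveat: your claim that $\sum_i \lambda_i^n \|Y^i\|$ stays bounded ``from the convergence of atoms and weights'' presumes more than bare weak convergence $\Delta_n \to \Delta$; but the paper's own proof makes the parallel tacit assumption (it needs $F_{\delta_n}(\bmu)$ bounded), and in every application of this lemma the atoms and weights do converge individually, so this is harmless in context.
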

\begin{proof}
Similar arguments work for any of~$a_n$, $b_n$, or~$c_n$; only the
$a_n$ case is presented in detail.  For $x$ outside of the closed
ball~$\oB(\bmu,\ve)$ of radius~$\ve$ around~$\bmu$,
Lemma~\ref{l:uniform-convexity} produces a constant~$C$ with
\begin{equation}\label{eq:lem:magnitude-of_a_n:2}
  F(x) - F(\bmu) \geq C \ve^2.
\end{equation}
On the other hand, because $\lim_{n\to\infty} \delta_n = \delta$,
there exists~$N_0$ and some positive, finite~$K$ such that, for $n
\geq N_0$,
$$
  \Fdn(\bmu) \leq 2\Fd(\bmu) + 1 \leq K.
$$
It follows that for any $x\in \MM$ and $n\geq N_0$,
$$
  \Fdn(x) - \Fdn(\bmu)
  \geq
  -\Fdn(\bmu)
  \geq
  -2\Fd(\bmu) - 1
  \geq
  -K.
$$
Multiplying the leftmost and rightmost sides by~$t_n$ and adding
corresponding sides to~\eqref{eq:lem:magnitude-of_a_n:2} gives
$$
  F(x) + t_n\Fdn(x) - F(\bmu) - t_n\rho_\bmu(x)
  \geq
  C\ve^2 - t_n K
$$
for $x \notin \oB(\bmu,\ve)$.  Hence, for $n\geq N_0$,
$$
  F(x) + t_n\Fdn(x)
  >
  F(\bmu) + t_n\rho_\bmu(x)
  \text{ if }
  x \notin \oB(\bmu,\ve),
$$
and thus $\argmin_{x\in\Emu} \bigl(F(x) + t_n \Fdn(x)\bigr)
\subseteq \oB(\bmu,\ve)$, or $a_n \in \oB(\bmu,\ve)$ for $n \geq N_0$.
\end{proof}

\begin{lemma}\label{l:magnitude-of-a_n}
Fix a measure~$\mu$ on a smoothly stratified metric space.
Using~$a_n$ from Notation~\ref{n:deterministic}, there exists a finite
constant $K$ such that for $n$ sufficiently large,
$$
  \dd(a_n,\bmu) \leq K t_n.
$$
\end{lemma}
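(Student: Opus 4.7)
The plan is to exploit the minimization property of $a_n$ together with the uniform convexity of the Fr\'echet function from Lemma~\ref{l:uniform-convexity}. Since the zero tangent vector lies in $\Emue$ (being the apex of the cone), $\bmu = \exp_\bmu 0$ is a competitor in the $\argmin$ defining $a_n$, so
\begin{equation*}
  F(a_n) + t_n F_{\delta_n}(a_n) \leq F(\bmu) + t_n F_{\delta_n}(\bmu),
\end{equation*}
which rearranges to $F(a_n) - F(\bmu) \leq t_n\bigl(F_{\delta_n}(\bmu) - F_{\delta_n}(a_n)\bigr).$

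For the left-hand side, Lemma~\ref{l:uniform-convexity} provides the quadratic lower bound $F(a_n) - F(\bmu) \geq C\,\dd^2(a_n,\bmu)$ once $a_n$ lies in a small enough neighborhood of~$\bmu$, which is guaranteed by Lemma~\ref{l:abc-bounded}. For the right-hand side, I would expand $F_{\delta_n} = \sum_i \lambda^n_i \rho_{y^n_i}$ and bound each difference $|\rho_{y^n_i}(\bmu) - \rho_{y^n_i}(a_n)|$ using the identity
\begin{equation*}
  \bigl|\dd^2(y,\bmu) - \dd^2(y,a_n)\bigr|
  \leq \bigl(\dd(y,\bmu) + \dd(y,a_n)\bigr)\,\dd(\bmu,a_n),
\end{equation*}
which follows from the triangle inequality. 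Since $\delta_n \to \delta$ weakly and both measures are finitely supported, the supports of the~$\delta_n$ remain in some fixed compact set, and the total masses $|\delta_n|$ stay bounded; combined with $\dd(a_n,\bmu) \to 0$, this yields a constant $M$ independent of~$n$ such that
\begin{equation*}
  \bigl|F_{\delta_n}(\bmu) - F_{\delta_n}(a_n)\bigr| \leq M\,\dd(a_n,\bmu)
\end{equation*}
for all $n$ sufficiently large.

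Combining the two bounds gives $C\,\dd^2(a_n,\bmu) \leq M\, t_n\, \dd(a_n,\bmu)$; dividing by $\dd(a_n,\bmu)$ (or observing the inequality is trivial when it vanishes) yields $\dd(a_n,\bmu) \leq (M/C)\, t_n$, so $K = M/C$ works. The only point requiring mild care is the uniform control of the support of~$\delta_n$: weak convergence alone does not a~priori bound the supports, but since the~$\delta_n$ are sampled measures converging to a sampled measure with fixed finite support, one can assume the support points of~$\delta_n$ converge individually to those of~$\delta$ (perhaps along a subsequence or after relabeling, which suffices because the resulting bound is obtained for arbitrary such sequences). This uniform control is the main---and essentially the only---subtlety in an otherwise direct minimizer-versus-competitor argument.
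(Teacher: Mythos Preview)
Your proof is correct and follows the same underlying strategy as the paper: combine the uniform convexity bound $F(x)-F(\bmu)\geq C\,\dd^2(x,\bmu)$ from Lemma~\ref{l:uniform-convexity} with a Lipschitz estimate on~$t_nF_{\delta_n}$ near~$\bmu$. The difference is that the paper, after establishing exactly the same Lipschitz bound you derive (via the same triangle-inequality computation on~$\rho_{v_n}$), invokes an external perturbation result \cite[Proposition~4.32]{bonnans-shapiro2013} to conclude that the minimizers of~$F$ and~$F+t_nF_{\delta_n}$ differ by at most~$Kt_n$. You instead carry out the competitor argument directly: compare~$a_n$ against~$\bmu$, apply strong convexity, and divide. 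Your route is more elementary and self-contained, at the cost of being slightly less citable; the paper's route is terser but relies on machinery whose hypotheses must be checked. Both arrive at the same constant~$K=M/C$ in spirit.

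The subtlety you flag about uniform control of~$\supp\delta_n$ is genuine, and in fact the paper's proof has the same issue: its Lipschitz constant~$L$ involves~$\dd(v_n,\bmu)$, which must be bounded in~$n$ for the conclusion to give a uniform~$K$. Both proofs implicitly use that the convergence~$\delta_n\to\delta$ in Notation~\ref{n:deterministic} is strong enough to control second moments (as is needed already in Lemma~\ref{l:abc-bounded} to bound~$F_{\delta_n}(\bmu)$), so your caveat is well placed but not a defect peculiar to your approach.
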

\begin{proof}
Starting from Lemma~\ref{l:abc-bounded}, the next step is to show that
$t_n \Fdn(x)$ is Lipschitz of modulus~$L t_n$ in $\oB(\bmu,\ve)$ for
some constant~$L$.  Suppose first that $\delta_n$ is supported on a
single point~$v_n$, so $\Fdn = \rho_{v_n}$.  For $x, y \in
\oB(\bmu,\ve)$ and $n$ large enough,
\begin{align*}
t_n\rho_{v_n}(x)-t_n\rho_{v_n}(y)
     &  = \frac{t_n}2 \bigl(\dd(v_n,x)-\dd(v_n,y)\bigr)\bigl(\dd(v_n,x)+\dd(v_n,y)\bigr)
\\*  &\leq\frac{t_n}2 \dd(x,y)\bigl(\dd(v_n,\bmu) + \ve + \dd(v_n,\bmu) + \ve\bigr)
\\*  &\leq L t_n \dd(x,y)
\end{align*}
for $L = \frac 12 \bigl(\dd(v_n,\bmu) + \dd(v_n,\bmu) + 2\ve\bigr)$,
where the middle line of the display is by the triangle inequality in
both factors on the right-hand side.  In the general case, when
$\delta_n$ is supported at~$j$ points, $t_n\Fdn(x) -
t_n\rho_{v_n}(y)$ is a sum of~$j$ terms, each bounded by an
expression of the form $L t_n \dd(x,y)$, for similarly defined~$L$
independent of~$x$ and~$y$.

Now invoke \cite[Proposition~4.32]{bonnans-shapiro2013}
for the functions $f(x) = F(x) + t_n\rho_{v_n}(x)$ and $g(x) = F(x)$
on~$\Emu$
to deduce that
$$
  \dd\Bigl(\argmin_{x \in \exp_\bmu \Emue\!} f(x),
           \argmin_{x \in \exp_\bmu \Emue\!} g(x)\Bigr)
  \leq
  K t_n
$$
for some finite constant~$K$.  In other words, for large $n$,
$\dd(a_n,\bmu) \leq K t_n$ as desired.
\end{proof}

\begin{prop}\label{p:unconfined-analogue}
Using Notation~\ref{n:deterministic}, the unconfined analogue
$$
  \ta_n = \argmin_{x\in\exp_\bmu(\Tmue)}\bigl(F(x) + t_n\Fdn(x)\bigr)
$$
of~$a_n$ is confined to the escape cone:
$
  \log_\bmu \ta_n = \log_\bmu a_n \in \Emu \text{ for all } n \gg 0.
$
\end{prop}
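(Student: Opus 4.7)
The plan is a uniqueness–plus–confinement argument: establish strict convexity of $\phi := F + t_n F_{\delta_n}$ so that both $a_n$ and $\tilde a_n$ are unique on their respective convex domains, then show that the unconstrained minimizer $\tilde a_n$ lies inside $\exp_\bmu\Emue$ for $n$ large, which by uniqueness forces $\tilde a_n = a_n$ and hence $\log_\bmu\tilde a_n = \log_\bmu a_n \in \Emu$.

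First I would observe that the proofs of Lemma~\ref{l:abc-bounded} and Lemma~\ref{l:magnitude-of-a_n} go through verbatim with $\Emue$ replaced by $\Tmue$: both only use $\bmu$ as a competitor (which lies in $\exp_\bmu\Tmue$) together with a uniform Lipschitz estimate on $t_n F_{\delta_n}$ near $\bmu$. Thus $\tilde a_n \to \bmu$ and $\dd(\tilde a_n,\bmu) \leq K t_n$ for some constant~$K$. Combining uniform convexity of~$F$ (Lemma~\ref{l:uniform-convexity}) with convexity of each summand of $F_{\delta_n}$ on small balls of the $\CAT(\kappa)$ space~$\MM$ makes $\phi$ strictly convex on a convex neighborhood of~$\bmu$, giving uniqueness of $a_n$ and $\tilde a_n$. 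It thus suffices to prove $\log_\bmu\tilde a_n \in \Emu$ for $n \gg 0$.

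Suppose for contradiction that $\tilde X_n := \log_\bmu\tilde a_n = r_n\tilde\theta_n \notin \Emu$ along a subsequence with $r_n > 0$. By convexity of~$F$ and $F_{\delta_n}$ along the geodesic from $\bmu$ to $\tilde a_n$, together with the identity $\nabla_\bmu F_{\delta_n}(V) = -\langle\Delta_n,V\rangle_\bmu$ (Remark~\ref{r:nablamuF(V)=m(mu,V)} applied to~$\delta_n$),
\[
  0 \geq \phi(\tilde a_n) - \phi(\bmu)
    \geq r_n\bigl(\nabla_\bmu F(\tilde\theta_n) - t_n\langle\Delta_n,\tilde\theta_n\rangle_\bmu\bigr),
\]
so $\nabla_\bmu F(\tilde\theta_n) = O(t_n) \to 0$. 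Compactness of the unit tangent sphere~$\Smu$ and continuity of $\nabla_\bmu F$ (Remark~\ref{r:nablamuF(V)=m(mu,V)}) furnish a further subsequence $\tilde\theta_n \to \theta^*$ with $\nabla_\bmu F(\theta^*) = 0$, i.e., $\theta^* \in \Emu$. The first-order case $\theta^* \notin \Emu$ is already ruled out by this continuity step.

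The remaining case $\theta^* \in \Emu$ with $\tilde\theta_n \notin \Emu$ is the main obstacle. My plan is to project $\tilde X_n$ radially onto the closed cone~$\Emu$ by picking $\bar\theta_n \in \Emu \cap \Smu$ nearest to $\tilde\theta_n$, so $\dd_s(\tilde\theta_n,\bar\theta_n) \to 0$, and setting $\bar X_n := r_n\bar\theta_n \in \Emu$. Comparing $\phi$-values via the second-order Taylor expansion of Proposition~\ref{p:Lambda} for the $F$-terms and the Lipschitz estimate from Lemma~\ref{l:magnitude-of-a_n} for the $t_n F_{\delta_n}$-terms yields
\[
  \phi(\exp_\bmu\bar X_n) - \phi(\tilde a_n)
   = -r_n\nabla_\bmu F(\tilde\theta_n)
     + r_n^2\bigl(\Lambda_\bmu(\bar\theta_n) - \Lambda_\bmu(\tilde\theta_n)\bigr)
     + O\bigl(t_n r_n\,\dd_s(\tilde\theta_n,\bar\theta_n)\bigr)
     + o(r_n^2),
\]
which must be strictly negative for $n$ large in order to contradict the minimality of~$\tilde a_n$. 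The difficulty is precisely this sign check: since $r_n$ and $\nabla_\bmu F(\tilde\theta_n)$ are each of order~$t_n$, the leading contribution and every error term live at scale~$t_n^2$, so rate control is delicate. It will be necessary to combine continuity of~$\Lambda_\bmu$ (to make the $\Lambda_\bmu$-error genuinely $o(r_n^2)$), the vanishing of $\dd_s(\tilde\theta_n,\bar\theta_n)$ (to kill the cross term), and the convexity of~$\Emu$ (to bound the projection displacement in terms of $\nabla_\bmu F(\tilde\theta_n)$ itself) in order to close the contradiction.
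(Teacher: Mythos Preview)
Your approach is considerably more elaborate than the paper's.  The paper's argument is essentially just your first-order step, applied pointwise: for any \emph{fixed} $V \notin \Emu$, the directional derivative $\nablabmu(F + t_n F_{\delta_n})(V) = \nablabmu F(V) - t_n\langle\Delta_n,V\rangle_\bmu$ is eventually bounded below by a strictly positive constant (since $\nablabmu F(V) > 0$ is fixed while the perturbation term is $O(t_n)$ uniformly, as $\Delta_n \to \Delta$).  Convexity of $F + t_n F_{\delta_n}$ along the geodesic from $\bmu$ then gives $\phi_n(\exp_\bmu V) > \phi_n(\bmu)$, so $\exp_\bmu V$ cannot be the minimizer~$\ta_n$.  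The paper stops there; it never isolates your ``hard case'' and attempts no second-order projection comparison.

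You have correctly put your finger on a uniformity issue that the paper's written proof does not address: the threshold ``$n \gg 0$'' depends on~$V$ through $\nablabmu F(V)$, and this degenerates as $V$ approaches the boundary of~$\Emu$.  Your boundary scenario---$\tilde\theta_n \to \theta^* \in S\Emu$ with $\tilde\theta_n \notin \Emu$, so $\nablabmu F(\tilde\theta_n) > 0$ but $\nablabmu F(\tilde\theta_n) = O(t_n)$---is precisely where the pointwise argument fails to deliver the uniform conclusion $\log_\bmu\ta_n \in \Emu$ for \emph{all} $n \gg 0$.  So your instinct that something more is needed is sound, and your setup through the convexity inequality $0 \geq \phi(\tilde a_n) - \phi(\bmu) \geq r_n\bigl(\nablabmu F(\tilde\theta_n) - t_n\langle\Delta_n,\tilde\theta_n\rangle\bigr)$ is the right first move.

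That said, your proposed resolution is genuinely incomplete, as you acknowledge.  In the projection comparison you write down, every term lives at scale~$t_n^2$: the gain $-r_n\nablabmu F(\tilde\theta_n)$, the $\Lambda_\bmu$-difference, and the Lipschitz cross term are all $O(t_n^2)$ with no evident hierarchy.  Neither continuity of~$\Lambda_\bmu$ nor convexity of~$\Emu$ alone forces the sign; what you would need is a quantitative lower bound of the form $\nablabmu F(\theta) \geq c\,\dd_s(\theta, S\Emu)$ (transversality of $\nablabmu F$ at the boundary of~$\Emu$), and nothing in the paper's hypotheses supplies that.  Absent such input, the boundary case remains open in your argument---and, read literally, in the paper's as well.
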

\begin{proof}
Let $V \in \Tmue$ be an exponentiable vector outside of the escape
cone~$\Emu$.  The linear term in the Taylor expansion at~$\bmu$ of~$F$
along~$V$ is strictly positive by Remark~\ref{r:nablamuF(V)=m(mu,V)}.
The linear term in the Taylor expansion at~$\bmu$ of $t_n\Fdn$ is
proportional to~$t_n$ by
Definition~\ref{d:sampled-from}
the fact that $\nablaq \rho_w = -\log_q w$
\cite[Proposition~2.5]{tangential-collapse}.
Therefore the linear coefficients in the Taylor expansions of $F +
t_n\Fdn$ along~$V$ for all~$n$ are bounded below by a strictly
positive number, since $\Fdn \to \Fd$ as~$\delta_n \to \delta$.  As
$\delta_n \to \delta$, the sequence $\frac 1{t_n}\log_\bmu a_n$
eventually remains within a ball of any given radius~$\ve > 0$
around~$\bmu$ for all $n \gg 0$ by Lemma~\ref{l:magnitude-of-a_n}.
Hence $v = \exp_\bmu V$ can't be the minimizer~$\ta_n$ if $n \gg 0$
because~\mbox{$F(v) + t_n\Fdn(v) > F(\bmu) + t_n\Fdn(\bmu)$}.
\end{proof}

An explicit formula for $c_n$ can be derived as follows.

\begin{lemma}\label{l:formula-c_n}
Using Notation~\ref{n:deterministic}, polar coordinates for~$c_n$
satisfy
$$
\theta_{c_n}
  \in\argmax_{x\in\Emue} \frac{\<\Delta_n,\theta_x\>}{\sqrt{\Lambda_\bmu(\theta_x)}}
\quad\text{and}\quad
r_{c_n}
   = \begin{cases}
     0 &\text{if } \<\Delta_n,\theta_{c_n}\> \leq 0
     \\[.5ex]\displaystyle
     t_n\frac{\<\Delta_n,\theta_{c_n}\>}
             {2\Lambda_\bmu(\theta_{c_n})}&\text{otherwise.}
\end{cases}
$$
\end{lemma}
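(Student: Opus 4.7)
The plan is to reduce the vector optimization defining~$c_n$ to a pair of nested scalar problems by decomposing in polar coordinates, then solve each stage by elementary calculus.

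\textbf{Step 1 (Polar reformulation).} Set $X = \log_\bmu x$ and write $X = r_x \theta_x$ with $\theta_x \in \Smu$. Definition~\ref{d:pair-with-discrete-measure} combined with the identity $\<V, rW\> = r\<V,W\>$ for $r \geq 0$ (immediate from Definition~\ref{d:inner-product}) gives that the pairing $\<\Delta_n,\,\cdot\,\>$ is homogeneous of degree one, so $\<\Delta_n, X\> = r_x \<\Delta_n,\theta_x\>$. The function being minimized becomes
$$
  g(r,\theta) = r^2 \Lambda_\bmu(\theta) - t_n\, r\,\<\Delta_n,\theta\>,
$$
over directions $\theta \in \Emue \cap \Smu$ and radii $r \geq 0$ such that $r\theta$ is exponentiable.

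\textbf{Step 2 (Optimize over the radius).} For each fixed~$\theta$, Lemma~\ref{l:uniform-convexity} gives $\Lambda_\bmu(\theta) > 0$, so $r \mapsto g(r,\theta)$ is a strictly convex parabola with unconstrained minimizer
$$
  r^\ast(\theta) = \frac{t_n \<\Delta_n,\theta\>}{2\Lambda_\bmu(\theta)}.
$$
If $\<\Delta_n,\theta\> \leq 0$ then $r^\ast(\theta) \leq 0$, so the minimum over $r \geq 0$ is attained at $r=0$ with value~$0$. Otherwise $r^\ast(\theta) > 0$; it lies in the exponentiable range for $n$ large because $t_n \to 0$ forces $r^\ast(\theta) \to 0$ and small fluctuating vectors are exponentiable by Definition~\ref{d:stratified-space}.\ref{i:exponentiable}. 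Substitution yields the minimum value
$$
  g\bigl(r^\ast(\theta),\theta\bigr) = -\frac{t_n^2\,\<\Delta_n,\theta\>^2}{4\Lambda_\bmu(\theta)}.
$$

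\textbf{Step 3 (Optimize over the direction).} The two cases in Step~2 combine to
$$
  \min_{r\geq 0} g(r,\theta)
  = -\frac{t_n^2}{4}\cdot\frac{\bigl(\max\{0,\<\Delta_n,\theta\>\}\bigr)^2}{\Lambda_\bmu(\theta)},
$$
which is nonpositive. Minimizing this over~$\theta \in \Emue$ is therefore equivalent to maximizing $\<\Delta_n,\theta\>/\sqrt{\Lambda_\bmu(\theta)}$ over the same set, giving the stated characterization of~$\theta_{c_n}$. Plugging $\theta_{c_n}$ back into Step~2 yields $r_{c_n} = 0$ when the maximum is nonpositive and $r_{c_n} = t_n \<\Delta_n,\theta_{c_n}\>/\bigl(2\Lambda_\bmu(\theta_{c_n})\bigr)$ otherwise, matching the claim.

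The argument is elementary once homogeneity of the pairing is invoked; the only mild subtlety is the exponentiability constraint, which is automatic for $n$ large because $t_n \to 0$ forces the optimal radius to be arbitrarily small. Existence of the argmax in Step~3 is not a separate issue since $c_n$ is assumed to exist by the $\argmin$ defining it, and any such selection must have polar coordinates of the stated form.
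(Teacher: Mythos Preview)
Your proof is correct and follows essentially the same approach as the paper: both reduce to polar coordinates, optimize the resulting quadratic in~$r$ (you via the parabola vertex, the paper via completing the square), then minimize the residual term over~$\theta$. Your treatment is slightly more explicit about the exponentiability constraint and existence, which the paper leaves implicit.
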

\begin{proof}
The function that defines~$c_n$
in Notation~\ref{n:deterministic} can be written as a quadratic
expression in the radial coordinate~$r_x$:
\begin{align*}
r_x^2 \Lambda_\bmu (\theta_x) - t_n\<\Delta_n, \log_\bmu x\>
   & = r_x^2\Lambda_\bmu(\theta_x) - t_n r_x\<\Delta_n,\theta_x\>
\\ & = \Lambda_\bmu(\theta_x)
       \biggl(r_x - \frac{t_n\<\Delta_n,\theta_x\>}
                        {2\Lambda_\bmu(\theta_x)}\biggr)^2
       - \frac{t_n^2\<\Delta_n,\theta_x\>^2}{4\Lambda_\bmu (\theta_x)}.
\end{align*}
By Lemma~\ref{l:uniform-convexity} $\Lambda_\bmu (\theta_x)$ is always
positive, and of course the radial coordinate~$r_x$ is nonnegative.
If $\<\Delta_n,\theta_{c_n}\> \leq 0$, then the squared expression
containing~$r_x$ is minimized when $r_x = 0$.
But if $\<\Delta_n,\theta_{c_n}\> > 0$, then the squared
$r_x$~expression is minimized---without altering~$\theta_x$---when
that entire expression vanishes, in which case $r_x$ is as claimed,
and minimizing (twice the square root of) the other term yields the
formula for~$\theta_{c_n}$.
\end{proof}

Now having boundedness of~$a_n$ (Lemma~\ref{l:magnitude-of-a_n}),
confinement of its a~priori unconfined analogue
(Proposition~\ref{p:unconfined-analogue}), and an explicit formula
for~$c_n$ (Lemma~\ref{l:formula-c_n}), Remark~\ref{r:a_n-b_n-c_n} can
be fulfilled: the sequence~$a_n$ is approximated by~$b_n$ and by~$c_n$
to order~$o(t_n)$.

\pagebreak[3]

\begin{prop}\label{p:unique-c-and-limits}
Using Notation~\ref{n:deterministic},
$$
  \lim_{n\to\infty}\frac{\log_\bmu a_n}{t_n}
  =
  \lim_{n\to\infty}\frac{\log_\bmu b_n}{t_n}
  =
  \lim_{n\to\infty}\frac{\log_\bmu c_n}{t_n}
  =
  \ev
$$
is the unique vector whose polar
coordinates~(Definition~\ref{d:polar}) satsify
$$
  \theta_\ev
  \in \argmax_{X\in\SEmu} \frac{\<\Delta,X\>}{\sqrt{\Lambda_\bmu (X)}}
  \quad\text{and}\quad
  r_\ev = \frac{\<\Delta,\theta_\ev\>^+}{2\Lambda_\bmu(\theta_\ev)},
$$
where the $\argmax$ is taken over the unit sphere~$\SEmu$ in the
escape cone~$\Emu$ from Def\-inition~\ref{d:escape-cone} and $\alpha^+ =
\max\{\alpha,0\}$, so the ``+'' exponent means to set $\alpha$ to~$0$
if~$\alpha < 0$.
\end{prop}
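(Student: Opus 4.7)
The plan is to compute $\lim \log_\bmu c_n/t_n$ directly from Lemma~\ref{l:formula-c_n}, secure uniqueness of the limit via uniform strong convexity, and then show that $a_n$ and $b_n$ match $c_n$ on the rescaled level by Taylor-expanding their objectives.

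For $c_n$, Lemma~\ref{l:formula-c_n} gives the polar coordinates of $\log_\bmu c_n/t_n$ as $\theta_{c_n}$ and $r_{c_n}/t_n=\<\Delta_n,\theta_{c_n}\>^+/\bigl(2\Lambda_\bmu(\theta_{c_n})\bigr)$, where $\theta_{c_n}$ maximizes $\theta \mapsto \<\Delta_n,\theta\>/\sqrt{\Lambda_\bmu(\theta)}$ over (exponentiable) directions in the escape cone. Since $\Delta_n \to \Delta$ is convergence of finitely supported measures with bounded supports, $\theta \mapsto \<\Delta_n,\theta\>$ converges uniformly on the compact sphere $\SEmu$ to $\theta\mapsto\<\Delta,\theta\>$. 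Continuity and positivity of $\Lambda_\bmu$ (Proposition~\ref{p:Lambda} and Lemma~\ref{l:uniform-convexity}) then give uniform convergence of the ratios, so every accumulation point of $\theta_{c_n}$ maximizes the limit ratio, with the radial coordinate converging to the formula stated in the proposition.

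For $a_n$ and $b_n$, I would Taylor-expand on the escape cone. Proposition~\ref{p:Lambda} with $\nablabmu F(\theta_x)=0$ gives $F(x)=F(\bmu)+r_x^2\Lambda_\bmu(\theta_x)+o(r_x^2)$, so the objective for $b_n$ matches that of $c_n$ up to a constant plus $o(r_x^2)$. Taylor-expanding each $\rho_{y^i_n}$ via $\nablaq\rho_w=-\log_q w$ gives $F_{\delta_n}(x)=F_{\delta_n}(\bmu)-\<\Delta_n,\log_\bmu x\>+O(r_x^2)$, so the objective for $a_n$ matches that of $b_n$ up to a constant plus $t_n O(r_x^2)$. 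Lemma~\ref{l:magnitude-of-a_n}, together with the analogous bound for $b_n$ and $c_n$ (from the same Lipschitz plus quadratic-lower-bound argument), forces each minimizer into the $O(t_n)$-neighborhood of $\bmu$. Dividing by $t_n^2$ and rescaling $X=\log_\bmu x/t_n$, the three rescaled functionals converge uniformly on bounded subsets of $\Emu$ to the common limit $\Phi(X)=r_X^2\Lambda_\bmu(\theta_X)-\<\Delta,X\>$; Proposition~\ref{p:unconfined-analogue} confirms $a_n\in\exp_\bmu(\Emue)$ for $n\gg 0$, and the analogous argument handles $b_n$ and $c_n$.

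The main obstacle is uniqueness: different subsequences of $\log_\bmu a_n/t_n$ could a~priori converge to distinct accumulation points, since the tangent cone carries no ambient vector-space structure that would automatically rule this out. I would exploit uniform strong convexity provided by Lemma~\ref{l:uniform-convexity}: the constant $C>0$ therein yields, for each $n$, a bound $\tilde\Phi_n(X)-\tilde\Phi_n(X_n^\ast)\geq C\,\dd^2(X,X_n^\ast)$ around the unique rescaled minimizer $X_n^\ast$. Passing to the uniform limit transfers this strong quadratic lower bound to $\Phi$ at any accumulation point of $X_n^\ast$, so $\Phi$ has a unique minimizer $\ev$. All three rescaled sequences therefore converge to $\ev$, whose polar coordinates are precisely those computed for $c_n$ in the previous paragraph.
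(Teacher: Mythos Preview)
Your outline is correct but follows a genuinely different route from the paper. The paper never rescales to a limiting functional~$\Phi$; it stays at the un-rescaled level and proves directly that $\dd(a_n,b_n)=o(t_n)$ and $\dd(a_n,c_n)=o(t_n)$. The key step for~$b_n$ uses that $K_n(x)=F(x)+t_nF_{\delta_n}(x)$ is itself a Fr\'echet function, so Lemma~\ref{l:uniform-convexity} gives $K_n(b_n)-K_n(a_n)\geq C\,\dd^2(b_n,a_n)$ with~$C$ uniform in~$n$; Taylor-expanding~$F_{\delta_n}$ and using that~$b_n$ minimizes its own objective then bounds the left side above by $t_n\,O\bigl(\dd^2(\bmu,b_n)\bigr)+t_n\,O\bigl(\dd^2(\bmu,a_n)\bigr)$. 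The bound $\dd(b_n,\bmu)\leq K_b t_n$ is obtained not via your Lipschitz argument but by a short contradiction from this very inequality (recorded as a Claim inside the proof), after which $\dd(a_n,b_n)=o(t_n)$ is immediate. Uniqueness of~$\ev$ is deduced not by transferring strong convexity to~$\Phi$ but simply from uniqueness of each~$a_n$: any two selections of~$c_n$ in the $\argmin$ are both within~$o(t_n)$ of the single~$a_n$, hence of each other, which forces the limiting $\argmax$ from Lemma~\ref{l:formula-c_n} to be a singleton.

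Your epi-convergence route is conceptually appealing and would reach the same conclusion, but two technical points you pass over would need justification in this singular setting: uniformity in~$\theta_X$ of the Taylor remainder~$o(r_x^2)$ in Proposition~\ref{p:Lambda}, and the passage from the un-rescaled bound $K_n(x)-K_n(a_n)\geq C\,\dd^2(x,a_n)$ on~$\MM$ to a rescaled bound in the cone metric on~$\Tmu$, which requires a uniform comparison of the form $\dd\bigl(\exp_\bmu t_nX,\exp_\bmu t_nX_n^*\bigr)\geq c\,t_n\,\dd_p(X,X_n^*)$. The paper's direct pairwise estimate sidesteps both issues.
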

\begin{proof}
To prove the equality of limits it suffices to show that
$$
  \dd(a_n, b_n) = o(t_n)
  \text{ and }
  \dd(a_n, c_n) = o(t_n),
$$
because $\dd(a_n, b_n) = o(t_n)$ implies equality of the $a_n$
and~$b_n$ limits, and $\dd(a_n, c_n) = o(t_n)$ implies equality of the
$a_n$ and~$c_n$ limits, while equality of the $c_n$ limit with~$\ev$
is Lemma~\ref{l:formula-c_n}.  The proof of $\dd(a_n, b_n) = o(t_n)$
is provided in detail; the corresponding estimation for $c_n$ instead
of~$b_n$ works similarly.

By Lemma~\ref{l:abc-bounded}, $\dd(b_n, \bmu) \to 0$ as $n \to
\infty$.  As $\mu + t_n\delta_n$ is a measure, its Fr\'echet function
$K_n(x) = F(x) + t_n F_{\delta_n}(x)$ satisfies the strong convexity
property in Lemma~\ref{l:uniform-convexity}.  In particular, $K_n$ is
strongly convex on~$\exp_\bmu(\Emue)$.  Consequently, in addition to
$a_n = \argmin_{x \in \exp_\bmu(\Emue)} K_n(x)$,
Lemma~\ref{l:uniform-convexity} yields $K_n(x) - K_n(a_n) \geq
C\dd^2(x,a_n)$ for some positive constant~$C$.  In particular,
\begin{equation}\label{eq:0}
  K_n(b_n) - K_n(a_n) \geq C\dd^2(b_n, a_n).
\end{equation}
The Taylor expansion for $F_{\delta_n}(x)$ at $\bmu$ gives
$$
  K_n(x)
  =
  F(x) - t_n\<\Delta_n, \log_\bmu x\> + t_n\rho_{\delta_n}(\bmu)
       + t_n O\bigl(\dd^2(\bmu,b_n)\bigr).
$$
Applying the above equation to~\eqref{eq:0} yields
\begin{equation}\label{eq:1}
\begin{split}
  K_n(b_n) - K_n(a_n)
& = F(b_n) - t_n \<\Delta_n, \log_\bmu b_n\>
           - F(a_n)
           + t_n \<\Delta_n,\log_\bmu a_n\>
\\
& \phantom{\mbox{} = F(b_n)}
           + t_n O(\dd^2(\bmu,b_n))
           + t_n O(\dd^2(\bmu,a_n))
\\
& \geq C\dd^2(a_n,b_n).
\end{split}
\end{equation}
Since $b_n \in \argmin_{x\in\exp_\bmu(\Emue)} F(x) - t_n \<\Delta_n,
\log_\bmu x\>$ by Notation~\ref{n:deterministic},
$$
  F(b_n) - t_n\<\Delta_n, \log_\bmu b_n\> - F(a_n) + t_n\<\Delta_n,\log_\bmu a_n\>
  \leq
  0.
$$
Thus \eqref{eq:1} implies
\begin{equation}\label{eq:2}
  t_n O\bigl(\dd^2(\bmu,b_n)\bigr) + t_n O\bigl(\dd^2(\bmu,a_n)\bigr)
  \geq
  C\dd^2(a_n,b_n).
\end{equation}

\begin{claim}\label{claim:magnitude-of-b_n}
The result of Lemma~\ref{l:magnitude-of-a_n} holds for~$b_n$ instead
of~$a_n$ if the measure~$\mu$ is amenable: there is a finite constant
$K_b$ such that for $n$ sufficiently~large,
$$
  \dd(b_n,\bmu) \leq K_b t_n.
$$
\end{claim}
\begin{proof}[Proof of Claim.]
Assume conversely that $\{b_n\}$ has a subsequence $\{b_n'\}$ such
that
\begin{equation}\label{eq:3}
  \lim_{n\to\infty} \frac{\dd(b'_n,\bmu)}{t_n}
  =
  \infty.
\end{equation}
Combining Lemma~\ref{l:magnitude-of-a_n} with the triangle inequality
\pagebreak[3]%
for the three points $a_n$, $b_n$, and~$\bmu$ produces, for $n$
sufficiently large,
\begin{equation}\label{eq:4}
  \dd(b'_n,a_n)
  \geq
  \dd(b_n',\bmu) - \dd(\bmu,a_n)
  >
  \frac 12 \dd(b_n',\bmu).
\end{equation}
Combining~\eqref{eq:2} (for $b'_n$ instead of $b_n$) with \eqref{eq:4}
produces
$$
  t_n O\bigl(\dd^2(\bmu,b'_n)\bigr) + t_n O\bigl(\dd^2(\bmu,a_n)\bigr)
  \geq
  C\dd^2(a_n,b'_n)
  >
  \frac C4 \dd^2(\bmu,b'_n)
$$
which is a contradiction as $\dd(b_n',\bmu) > \dd(\bmu,a_n)$ for $n
\gg 0$ and $t_n \to 0$ as $n \to \infty$.
\end{proof}

\noindent
Continuing with the proof of Proposition~\ref{p:unique-c-and-limits},
Eq.~\eqref{eq:2} gives
$$
   t_nO(t_n^2) \geq C\dd^2(a_n,b_n)
$$
when combined with Lemma~\ref{l:magnitude-of-a_n} and
Claim~\ref{claim:magnitude-of-b_n}, so as desired,
$$
  \dd(a_n,b_n) = o(t_n).
$$

All that remains is uniqueness of~$\ev$, which is a simple consequence
of the equality of limits, because each point~$a_n$ minimizes a convex
function and is hence unique.
\end{proof}

The formulas in Lemma~\ref{l:formula-c_n} imply that~$\ev$ depends
homogeneously on~$\Delta$.

\begin{cor}\label{c:homogeneity}
For any $r\geq 0$, using the rescaling notation in
Definition~\ref{d:scaling},
\begin{align*}
  \lim_{n\to\infty}\frac 1t\argmin_{X\in\Emue}\bigl(F(\exp_\bmu X)-t\<\Delta r,X\>\bigr)
& =
  \lim_{n\to\infty}\frac 1t\argmin_{X\in\Emue}\bigl(F(\exp_\bmu X)-t\<r \Delta,X\>\bigr)
\\
& =
  r\lim_{t\to 0}\frac 1t\argmin_{X\in\Emue}\bigl(F(\exp_\bmu X) - t\<\Delta,X\>\bigr).
\end{align*}
\end{cor}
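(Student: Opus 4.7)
The plan is to reduce the corollary to the explicit formula for the escape vector given by Proposition~\ref{p:unique-c-and-limits}, so the work amounts to tracking how the formula transforms under the two operations $\Delta \mapsto \Delta r$ and $\Delta \mapsto r\Delta$.

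First, I would establish the identity $\<\Delta r, X\> = r\<\Delta, X\> = \<r\Delta, X\>$ for all $X \in \Tmu$ and $r \geq 0$.  Writing $\Delta = \lambda_1\delta_{Y^1} + \dots + \lambda_j\delta_{Y^j}$, Definition~\ref{d:scaling} gives $\Delta r = \lambda_1\delta_{rY^1} + \dots + \lambda_j\delta_{rY^j}$, while $r\Delta$ scales the masses.  Both pairings evaluate via Definition~\ref{d:pair-with-discrete-measure} to a sum of terms of the form $\lambda_i r\<Y^i, X\>$, using homogeneity of the inner product in its first argument (Definition~\ref{d:inner-product}, since $\cos\angle(rY^i,X) = \cos\angle(Y^i,X)$ and $\|rY^i\| = r\|Y^i\|$ for $r \geq 0$).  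Consequently the three sequences that appear inside the three limits in the corollary are pairwise equal as $\argmin$ sets, namely
\[
  \argmin_{X\in\Emue}\bigl(F(\exp_\bmu X) - t\<\Delta r,X\>\bigr)
  =
  \argmin_{X\in\Emue}\bigl(F(\exp_\bmu X) - (tr)\<\Delta,X\>\bigr),
\]
and identically for~$\<r\Delta,X\>$.

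Next, I would invoke Proposition~\ref{p:unique-c-and-limits} to describe these limits explicitly.  Applying the proposition with the measure $r\Delta$ (or $\Delta r$) in place of $\Delta$, the direction of the limiting escape vector is a maximizer of $\<r\Delta,X\>/\sqrt{\Lambda_\bmu(X)}$ over $\SEmu$, which coincides with the maximizer for $\<\Delta,X\>/\sqrt{\Lambda_\bmu(X)}$ because $r \geq 0$ only rescales the objective by a nonnegative constant.  The magnitude satisfies
\[
  r_\ev(r\Delta)
  =
  \frac{\<r\Delta,\theta_\ev\>^+}{2\Lambda_\bmu(\theta_\ev)}
  =
  r\,\frac{\<\Delta,\theta_\ev\>^+}{2\Lambda_\bmu(\theta_\ev)}
  =
  r\, r_\ev(\Delta),
\]
since $r \geq 0$ commutes with the positive part $(\cdot)^+$.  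Combining these two observations shows that the common value of the first two limits is exactly $r$ times the third limit, which is what the corollary asserts.

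The case $r = 0$ is degenerate---the third limit is defined and the first two limits are zero since both inside the $\argmin$ the perturbing inner product vanishes---and should be dispatched separately as a sanity check.  I do not anticipate a real obstacle: the only potentially delicate point is confirming that the $\argmax$ over~$\SEmu$ is unaffected by multiplication of the objective by a nonnegative constant (trivial for $r > 0$, vacuous for $r = 0$ since the magnitude vanishes), and that the ``$+$'' in the radial formula commutes with multiplication by $r \geq 0$.  Both are immediate from the definitions, so the corollary follows as a clean consequence of the explicit polar description already established in Proposition~\ref{p:unique-c-and-limits}.
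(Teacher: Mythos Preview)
Your proof is correct and follows essentially the same approach as the paper: both arguments reduce to the homogeneity in~$\Delta$ of the explicit polar formula from Proposition~\ref{p:unique-c-and-limits} (equivalently Lemma~\ref{l:formula-c_n}), and then transfer that homogeneity to the $b_n$-type limit via the equality of limits established there. Your version is simply more explicit in verifying $\<\Delta r,X\> = \<r\Delta,X\>$ and in checking that the direction and radius formulas scale as claimed, whereas the paper compresses this into a one-line appeal to homogeneity of the formulas.
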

\begin{proof}
The formulas in Lemma~\ref{l:formula-c_n} are homogeneous
in~$\Delta_n$, so the $c_n$ limit in
Proposition~\ref{p:unique-c-and-limits} is homogeneous in~$\Delta_n$.
Therefore the same homogeneity holds for the desired~$b_n$ limit here,
which is equal to the $c_n$ limit by
Proposition~\ref{p:unique-c-and-limits}.
\end{proof}

Here is the result to which Section~\ref{s:escape} has been building:
arbitrary escape vectors $\ev(\Delta)$ from
Definition~\ref{d:delta-escape} are well defined and confined to the
escape cone~$\Emu \subseteq \Tmu$ from Definition~\ref{d:escape-cone},
with an explicit polar coordinate (Definition~\ref{d:polar}) formula
in terms of the second-order Taylor coefficient~$\Lambda_\bmu$
(Proposition~\ref{p:Lambda}).  (The word ``arbitrary'' here
distinguishes from the case where the mass added to~$\mu$ is
restricted to the support of~$\mu$, which begets further confinement
to the fluctuating cone, cf.~Corollary~\ref{c:confinement-to-Cmu}.)

\pagebreak[3]%

\begin{thm}\label{t:escape-vector-confinement}
Fix an amenable measure~$\mu$ on a smoothly stratified metric
space~$\MM$, a~sequence of measures $\Delta_n$ sampled from~$\Tmu$
converging weakly to~$\Delta$,
and a sequence of positive real numbers $t_n \to 0$.  The escape
vector $\ev = \ev(\Delta)$ is well defined and
\begin{align}
\tag{$\ev$}\label{ev}
\ev(\Delta)
& =
  \lim_{t\to 0}\frac 1t
       \argmin_{X\in\Emue}\bigl(F(\exp_\bmu X) - t\<\Delta,X\>\bigr)
\\*\tag{b}\label{b}
& =
 \lim_{n\to\infty}\frac 1{t_n}
      \argmin_{X\in\Emue}\bigl(F(\exp_\bmu X) - t_n\<\Delta_n,X\>\bigr)
\\*\tag{c}\label{c}
& =
 \lim_{n\to\infty}\frac 1{t_n}
      \argmin_{X\in\Emue}\bigl(r_X^2\Lambda_\bmu(\theta_X) - t_n\<\Delta_n,X\>\bigr)
\end{align}
is confined to the escape cone~$\Emu$, with polar
coordinates
$$
  \theta_\ev
  \in \argmax_{X\in\SEmu} \frac{\<\Delta,X\>}{\sqrt{\Lambda_\bmu (X)}}
  \quad\text{and}\quad
  r_\ev = \frac{\<\Delta,\theta_\ev\>^+}{2\Lambda_\bmu(\theta_\ev)}
$$
where the $\argmax$ is over the unit sphere~$\SEmu$ in the escape
cone~$\Emu$ and $\alpha^+ \!\hspace{-1.15pt}=\hspace{-1.15pt}
\max\{\alpha,0\}$.  If, in addition, $\Delta_n = \log_\bmu\delta_n$
for all~$n$, then
\begin{align}
\tag{a}\label{a}
\ev(\Delta)
  =
  \lim_{n\to\infty}\frac 1{t_n}\!
                  \argmin_{\ \log_{\bmu\!}x\,\in\Emu}\bigl(F(x) + t_n\Fdn(x)\bigr).
  \qquad\
\end{align}
\end{thm}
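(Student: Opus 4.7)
The plan is to assemble the theorem from Propositions~\ref{p:unconfined-analogue} and~\ref{p:unique-c-and-limits}, Lemma~\ref{l:formula-c_n}, and Corollary~\ref{c:homogeneity}, together with the rescaling convention in Definition~\ref{d:delta-escape}; the genuine analytic work has already been done, so this amounts to matching each ingredient against the appropriate part of the statement. First I would handle the case in which $\Delta$ and all~$\Delta_n$ have support consisting of exponentiable vectors. In that regime the sequences $b_n$ and $c_n$ of Notation~\ref{n:deterministic} are precisely the objects whose normalized logarithms appear in~(\ref{b}) and~(\ref{c}), so Proposition~\ref{p:unique-c-and-limits} delivers both the equality of these two limits and their common value~$\ev$, while Lemma~\ref{l:formula-c_n} supplies the explicit polar-coordinate formula by passage to the limit. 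Equality with~(\ref{ev}) then follows by specializing~(\ref{b}) to the constant sequence $\Delta_n = \Delta$ with any $t_n \to 0^+$: since Proposition~\ref{p:unique-c-and-limits} yields the same answer for every such sequence, this both identifies the continuous limit $\lim_{t \to 0^+}$ and shows that~$\ev$ is well defined independently of the approximating sequence. Confinement $\ev \in \Emu$ is automatic because each $\log_\bmu b_n / t_n$ lies in $\Emu$ by the minimization constraint and the polar formula places $\theta_\ev \in \SEmu$ by construction.

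For the conditional formula~(\ref{a}), when $\Delta_n = \log_\bmu \delta_n$ with $\delta_n \Subset \MM$, the Fr\'echet mean $\ol{\mu + t_n \delta_n}$ entering Definition~\ref{d:delta-escape}.\ref{i:delta} is for $n$ sufficiently large precisely the unconfined minimizer $\ta_n$ of Proposition~\ref{p:unconfined-analogue}, which that proposition identifies with~$a_n$; Proposition~\ref{p:unique-c-and-limits} then yields $\lim_n \log_\bmu a_n / t_n = \ev$. This simultaneously establishes~(\ref{a}) and reconciles the theorem's~$\ev$ with the escape vector of Definition~\ref{d:delta-escape}.

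To remove the exponentiability hypothesis on~$\Delta$, I would invoke Definition~\ref{d:delta-escape}.\ref{i:delta-rescale} and Corollary~\ref{c:homogeneity}. For $r > 0$ small enough the rescaled measure $\Delta r$ is exponentiable by Definition~\ref{d:stratified-space}.\ref{i:exponentiable}; applying the already-settled exponentiable case to $\Delta r$ and dividing by~$r$ gives the full statement, since every expression in sight is homogeneous of degree one in~$\Delta$. The only genuine obstacle is bookkeeping: carefully matching the sequential limits of Proposition~\ref{p:unique-c-and-limits} against the continuous $\lim_{t \to 0^+}$ in~(\ref{ev}) and the Fr\'echet-mean limit in Definition~\ref{d:delta-escape}, and tracking exponentiability through the rescaling. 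No new estimates are required.
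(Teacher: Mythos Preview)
Your assembly of (\ref{ev}), (\ref{b}), (\ref{c}) from Proposition~\ref{p:unique-c-and-limits}, the polar coordinates from Lemma~\ref{l:formula-c_n}, and the rescaling via Corollary~\ref{c:homogeneity} follows the paper's route.  The gap is in your treatment of~(\ref{a}).  You invoke Propositions~\ref{p:unconfined-analogue} and~\ref{p:unique-c-and-limits} for the sequence~$a_n$, but both operate under Notation~\ref{n:deterministic}, whose $a_n$-clause presupposes a \emph{convergent} sequence $\delta_n \to \delta$ of measures on~$\MM$.  The theorem, however, hypothesizes only $\Delta_n = \log_\bmu \delta_n \to \Delta$; since $\log_\bmu$ is not injective (Remark~\ref{r:polar}: geodesics from~$\bmu$ can bifurcate), the~$\delta_n$ need not converge at all.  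Your restriction to exponentiable~$\Delta_n$ does not repair this, because the given~$\delta_n$ may have support far outside the exponentiable ball and there is no reason it should equal $\exp_\bmu \Delta_n$.

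The paper closes this gap with a subsequence argument.  First the $\frac{1}{t_n}$argmin expressions are confined to a compact set (via Lemma~\ref{l:magnitude-of-a_n}, whose proof only really needs $F_{\delta_n}(\bmu)$ bounded, and that follows from $\Delta_n \to \Delta$ since $\|\log_\bmu y\| = \dd(\bmu,y)$).  Then, given any convergent subsequence of these argmins, properness of $\log_\bmu$ (it preserves distance to~$\bmu$ on the locally compact space~$\MM$) yields a further subsequence along which the corresponding $\delta_{n(i)}$ \emph{does} converge; now Proposition~\ref{p:unique-c-and-limits} applies to that sub-subsequence and identifies its limit as~$\ev(\Delta)$.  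Since every convergent subsequence of the argmins has this same limit, the full sequence converges to~$\ev(\Delta)$.  This compactness step is genuinely missing from your outline, not mere bookkeeping.
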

\begin{proof}
The limits in the three right-hand sides of the top displayed equation
exist and have the given polar coordinates by
Proposition~\ref{p:unique-c-and-limits} because the middle of these
two is $\lim_{t\to\infty} \frac 1{t_n}\log_\bmu b_n$, the top one is
the special case where $\Delta_n = \Delta$ for all~$n$, and the bottom
one is $\lim_{t\to\infty} \frac 1{t_n}\log_\bmu c_n$.  To show that
these limits equal the escape vector of~$\Delta$, begin with $\Delta$
exponentiable.  In that case, the assertion follows from equality of
the $a_n$ and~$b_n$ limits in Proposition~\ref{p:unique-c-and-limits}
because the $a_n$ limit yields~$\ev(\Delta)$ by
Definition~\ref{d:delta-escape}.\ref{i:Delta}.  The general case in
Definition~\ref{d:delta-escape}.\ref{i:delta-rescale} reduces to the
exponentiable case by homogeneity in Corollary~\ref{c:homogeneity}
because (i)~some scalar multiple of~$\Delta$ is exponentiable by
Definition~\ref{d:stratified-space}, and (ii)~the definitions of~$b_n$
and~$c_n$ do not require exponentiability.

It remains to prove the final displayed equation~(\ref{a}) despite the
absence of any claim about covergence of the measures~$\delta_n$.
First use Lemma~\ref{l:magnitude-of-a_n} to deduce that restricting
the $\argmin$ to points~$x$ that are exponentials of vectors in~$\Emu$
suffices for all large~$n$ and hence, for the purpose of taking
limits, for all~$n$: the \noheight{$\frac 1{t_n}\argmin$} lands inside
a compact neighborhood of~$\bmu$ for all $n \gg 0$.  Fix any
convergent subsequence of $\frac 1{t_n}\argmin$ expressions indexed by
$n(i)$ for $i = 1,\dots,\infty$.  The corresponding
sequence~$\delta_{n(i)}$ of measures need not converge, but since
$\log_\bmu$ is a proper map (it preserves distance from~$\bmu$ in the
locally compact space~$\MM$), and the logarithmic image $\Delta_{n(i)}
= \log_\bmu\delta_{n(i)}$ converges, the sequence~$\delta_{n(i)}$ has
a convergent subsequence, to which
Proposition~\ref{p:unique-c-and-limits} applies and
produces~$\ev(\Delta)$ as the limit of the original convergent
subsequence of \noheight{$\frac 1{t_n}\argmin$} expressions.
Therefore every convergent subsequence of \noheight{$\frac
1{t_n}\argmin$} expressions converges to the same limit~$\ev(\Delta)$.
By compactness of the relevant neighborhood of~$\bmu$, the whole
sequence of \noheight{$\frac 1{t_n}\argmin$} expressions~converges.
\end{proof}

\begin{remark}\label{r:ev-continuity}
Theorem~\ref{t:escape-vector-confinement} implies continuity of the
escape map~$\ev$ as a function on measures sampled from~$\Tmu$; see
Corollary~\ref{c:escape-continuity}.  More importantly,
Theorem~\ref{t:escape-vector-confinement} asserts a
family-of-functions continuity: if positive real numbers $t_n \to 0$
are given and
$$
  \evn(\Delta)
  =
 \frac 1{t_n} \argmin_{X\in\Cmue}\bigl(F(\exp_\bmu X) - t_n\<\Delta,X\>\bigr)
$$
is the $n^\mathrm{th}$ \emph{escape approximation} of any
measure~$\Delta$ sampled from~$\Tmu$, then
Theorem~\ref{t:escape-vector-confinement} asserts that $\evn(\Delta_n)
\to \ev(\Delta)$ when $\Delta_n \to \Delta$.  A~functional version of
this convergence occupies Section~\ref{s:confinement}, tailored to fit
the hypotheses of a particular form of the continuous mapping theorem
\cite[Theorem~1.11.1]{VW13}, for application in the proof of
Theorem~\ref{t:perturbative-CLT}.
\end{remark}

\begin{cor}\label{c:escape-continuity}
The escape map~$\ev$ is continuous as a function on measures sampled
from~$\Tmu$.  In fact, the $\argmin$ in
Theorem~\ref{t:escape-vector-confinement}(\ref{c}) is equal
to~$t_n\ev(\Delta_n)$.
\end{cor}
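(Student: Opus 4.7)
The plan is to first establish the precise identity $\argmin_{X\in\Emue}\bigl(r_X^2\Lambda_\bmu(\theta_X) - t_n\<\Delta_n,X\>\bigr) = t_n\ev(\Delta_n)$, and then obtain the continuity statement as a one-line consequence of Theorem~\ref{t:escape-vector-confinement}(\ref{c}).

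For the identity, denote the argmin by $c_n$. Lemma~\ref{l:formula-c_n} applies to each individual pair $(\Delta_n, t_n)$ and gives the polar coordinates of $c_n$ explicitly as
\[
  \theta_{c_n} \in \argmax_{\theta \in \SEmu} \frac{\<\Delta_n,\theta\>}{\sqrt{\Lambda_\bmu(\theta)}},
  \qquad
  r_{c_n} = t_n\frac{\<\Delta_n,\theta_{c_n}\>^+}{2\Lambda_\bmu(\theta_{c_n})}.
\]
On the other hand, Theorem~\ref{t:escape-vector-confinement} (applied to $\Delta_n$ in place of $\Delta$, with any auxiliary sequence of scales going to zero) states that $\ev(\Delta_n)$ has polar coordinates
\[
  \theta_{\ev(\Delta_n)} \in \argmax_{\theta\in\SEmu}\frac{\<\Delta_n,\theta\>}{\sqrt{\Lambda_\bmu(\theta)}},
  \qquad
  r_{\ev(\Delta_n)} = \frac{\<\Delta_n,\theta_{\ev(\Delta_n)}\>^+}{2\Lambda_\bmu(\theta_{\ev(\Delta_n)})}.
\]
The two expressions share the same direction (identical argmax) and the two radii satisfy $r_{c_n} = t_n r_{\ev(\Delta_n)}$. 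Rescaling polar coordinates via Definition~\ref{d:polar} yields $c_n = t_n \ev(\Delta_n)$.

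For continuity, fix any sequence $\Delta_n \to \Delta$ of measures sampled from $\Tmu$ and any sequence of positive reals $t_n \to 0$. Then Theorem~\ref{t:escape-vector-confinement}(\ref{c}) asserts that $c_n/t_n \to \ev(\Delta)$. Substituting the identity $c_n = t_n\ev(\Delta_n)$ established in the previous paragraph gives $\ev(\Delta_n) \to \ev(\Delta)$, which is the desired continuity.

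There is no substantial obstacle here: the heavy lifting was done in Lemma~\ref{l:formula-c_n} and Theorem~\ref{t:escape-vector-confinement}, both of which produce the same polar coordinate formulas up to the factor~$t_n$. The only point requiring mild care is observing that the argmax defining~$\theta_{c_n}$ is literally the same as that defining~$\theta_{\ev(\Delta_n)}$ (since both come from optimizing the same quadratic-in-$r_X$ expression over the unit sphere), so one may consistently select the same representative direction on both sides before rescaling.
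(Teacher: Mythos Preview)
Your proof is correct and follows essentially the same approach as the paper's: compare the polar-coordinate formulas from Lemma~\ref{l:formula-c_n} and Theorem~\ref{t:escape-vector-confinement} to obtain the identity $c_n = t_n\ev(\Delta_n)$, then invoke Theorem~\ref{t:escape-vector-confinement}(\ref{c}) for continuity. You have simply spelled out in detail what the paper compresses into two sentences.
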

\begin{proof}
The $\argmin$ in Theorem~\ref{t:escape-vector-confinement}(\ref{c}) is
$c_n$, so it equals $t_n\ev(\Delta_n)$ thanks to
Lemma~\ref{l:formula-c_n} and the polar coordinates in
Theorem~\ref{t:escape-vector-confinement}.  For continuity, apply
Theorem~\ref{t:escape-vector-confinement}(\ref{c}).
\end{proof}

When the measure $\mu$ is immured (Definition~\ref{d:immured}) and the
measure~$\delta$ is sampled from the support of~$\mu$, the escape
vector is further confined to the fluctuating cone~$\Cmu
\subseteq\nolinebreak \Emu$ from Definition~\ref{d:fluctuating-cone}.
The precise statement needs a lemma and a bit of notation.  The
immured hypothesis was designed specifically for Lemma~\ref{l:immured}
and Corollary~\ref{c:confinement-to-Cmu}.

\begin{lemma}\label{l:immured}
Fix an immured measure~$\mu$ on a smoothly stratified metric
space~$\MM$.  If~$\delta$ is a measure sampled from~$\supp\mu
\subseteq \MM$, then $\log_\bmu(\,\ol{\mu + t\delta}\,) \subseteq
\hull\mu$ for~all~$t \ll 1$.
\end{lemma}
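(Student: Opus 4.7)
The plan is to approximate $\mu+t\delta$ by finitely supported probability measures on $\supp\mu$ to which the immured hypothesis directly applies, and then pass to the limit. Two supporting ingredients are needed: first, for $t\ll 1$, the Fr\'echet mean $y_t := \ol{\mu+t\delta}$ must lie in the immured neighborhood $U\subseteq\MM$; second, $\hull\mu$ must be closed in $\Tmu$, so that limits of its elements remain inside.

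For the first ingredient, since $\delta$ is finitely supported, $F_\delta$ is locally Lipschitz near $\bmu$ with some modulus~$L$. Because $y_t$ minimises $F_\mu+tF_\delta$,
\begin{align*}
F_\mu(y_t)-F_\mu(\bmu)\;\leq\;t\bigl(F_\delta(\bmu)-F_\delta(y_t)\bigr)\;\leq\;tL\,\dd(y_t,\bmu),
\end{align*}
while Lemma~\ref{l:uniform-convexity} provides $F_\mu(y_t)-F_\mu(\bmu)\geq C\,\dd^2(y_t,\bmu)$. Combining these yields $\dd(y_t,\bmu)\leq tL/C$, so $y_t\in U$ for all sufficiently small~$t$. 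This is essentially the stability-of-minimizers estimate already carried out for~$a_n$ in Lemma~\ref{l:magnitude-of-a_n}.

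Fix such a~$t$ and let $z_1,z_2,\dots$ be iid samples drawn from the probability measure $\tilde\mu=(1+t)^{-1}(\mu+t\delta)$, with empirical measures $\tilde\mu_n=\frac1n\sum_{i=1}^n\delta_{z_i}$. Because $\supp\delta\subseteq\supp\mu$, each $\tilde\mu_n$ is almost surely a finitely supported probability measure on $\supp\mu$. Scaling does not alter Fr\'echet means, so $\ol{\tilde\mu}=y_t$; Theorem~\ref{thm:LLN-of-bmu_n} applied to $\tilde\mu$ then gives $\ol{\tilde\mu_n}\to y_t$ in probability, and along a subsequence almost surely. Since $y_t\in U$ and $U$ is open, eventually $\ol{\tilde\mu_n}\in U$, so the immured hypothesis applied to $\tilde\mu_n$ yields $\log_\bmu\ol{\tilde\mu_n}\in\hull\mu$. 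Continuity of $\log_\bmu$ near $\bmu$, supplied by the local exponential map in Definition~\ref{d:stratified-space}.\ref{i:exponentiable}, gives $\log_\bmu\ol{\tilde\mu_n}\to\log_\bmu y_t$.

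The main obstacle is that a priori this limit lies only in $\overline{\hull\mu}$. It therefore remains to show $\hull\mu=\overline{\hull\mu}$. In the $\CAT(0)$ tangent cone $\Tmu$, geodesics depend continuously on their endpoints, so the closure of any geodesically convex set is geodesically convex; continuity of the scaling action of $\RR$-nonnegative on $\Tmu$ similarly shows that the closure of a cone is a cone. Hence $\overline{\hull\mu}$ is a geodesically convex cone containing $\supp\hmu$, and the minimality clause of Definition~\ref{d:hull} forces $\overline{\hull\mu}=\hull\mu$. Consequently $\log_\bmu y_t\in\hull\mu$ for all $t\ll 1$, completing the proof.
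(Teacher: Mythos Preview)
Your approach mirrors the paper's exactly: approximate $\mu+t\delta$ by finitely supported empirical measures on $\supp\mu$, apply the immured hypothesis to those, and pass to the limit. The paper compresses your first two ingredients into citations of Ziezold's law of large numbers and Lemma~\ref{l:magnitude-of-a_n}, but the skeleton is the same.

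The final paragraph, however, contains a logical slip. You correctly observe that $\overline{\hull\mu}$ is a geodesically convex cone containing $\supp\hmu$, but then invoke the minimality clause of Definition~\ref{d:hull} to conclude $\overline{\hull\mu}=\hull\mu$. Minimality runs the wrong way: it yields only $\hull\mu\subseteq\overline{\hull\mu}$, which is a tautology, not the reverse inclusion you need. In general the smallest geodesically convex cone containing a set need not be closed---already in~$\RR^n$, the convex cone generated by a closed set can fail to be closed---so this argument does not establish closedness of~$\hull\mu$. The paper's own proof simply omits this closure step, so as written both arguments deliver only $\log_\bmu(\,\ol{\mu+t\delta}\,)\in\overline{\hull\mu}$. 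That suffices for the sole downstream use in Corollary~\ref{c:confinement-to-Cmu}, where a further closure is taken anyway, but you should not claim to have proved that $\hull\mu$ is closed.
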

\begin{proof}
The Fr\'echet mean set $\ol{\mu + t\delta}$ consists, by the law of
large numbers \cite{ziezold1977}, limits of Fr\'echet means of
measures sampled from~$\supp\mu = \supp(\mu +\nolinebreak t\delta)$.
By Lemma~\ref{l:magnitude-of-a_n}, for all positive $t \ll 1$ these
Fr\'echet means eventually lie in any neighborhood $U$ of~$\bmu$ as in
Definition~\ref{d:immured}.  As~$\mu$ is immured, the logarithms of
these Fr\'echet means therefore lie in~$\hull\mu$.
\end{proof}

\begin{defn}\label{d:Rmu}
Fix a conical $\CAT(0)$ space~$\XX$ (Definition~\ref{d:log-map}).  The
\emph{cone over} an arbitrary subset $\cS \subseteq \XX$ is the set
$\RR_+\cS$ of scalar multiples of elements of~$\cS$:
$$
  \RR_+\cS
  =
  \{\alpha X \mid X \in \cS \text{ and } \alpha \in \RR_+\}.
$$
When $\cS = \supp\hmu \subseteq \Tmu$, simplify notation by writing
$$
  \RR_+\mu = \RR_+\supp\hmu,
$$
and denote the closure of this set in~$\Tmu$ by~$\Rmu$.
\end{defn}

\begin{cor}\label{c:confinement-to-Cmu}\hspace{-1.2pt}%
Fix an immured amenable measure~$\mu$ on a smoothly stratified
space~$\MM\hspace{-1pt}$.  If a measure $\Delta =
\lambda_1\delta_{Y^1} + \dots + \lambda_j\delta_{Y^j}$ is sampled
from~$\Rmu$, then the escape vector of~$\Delta$ is confined to the
closed fluctuating cone: $\ev(\Delta) \in \oCmu$.
\end{cor}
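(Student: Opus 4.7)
The plan is to prove the claim first for $\Delta$ of the special form $\log_\bmu\delta$ with $\delta$ sampled from $\supp\mu$, then extend to all $\Delta$ sampled from $\Rmu$ using linearity of the pairing $X\mapsto\<\Delta,X\>$ combined with continuity of the escape map.

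For the base case, take constant sequences $\Delta_n=\Delta$, $\delta_n=\delta$ with $t_n\to 0$ in Theorem~\ref{t:escape-vector-confinement}, and apply formula~(\ref{a}) there to write
$$
  \ev(\Delta)=\lim_{n\to\infty}\frac{1}{t_n}\log_\bmu a_n,
  \qquad
  a_n\in\argmin_{\log_\bmu x\,\in\,\Emu}\bigl(F(x)+t_n F_\delta(x)\bigr).
$$
By construction $\log_\bmu a_n\in\Emu$, and Proposition~\ref{p:unconfined-analogue} identifies $a_n$ with its unconfined analogue for $n\gg 0$, so $a_n$ is (up to normalization, cf.~Remark~\ref{r:rho-delta}) a Fr\'echet mean of $\mu+t_n\delta$. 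The immured hypothesis applied via Lemma~\ref{l:immured} then gives $\log_\bmu a_n\in\hull\mu$ for $n\gg 0$. Therefore $\log_\bmu a_n\in\Emu\cap\hull\mu=\Cmu$, and since $\Cmu$ is a cone (intersection of two cones), $\frac{1}{t_n}\log_\bmu a_n\in\Cmu$ as well; passing to the limit puts $\ev(\Delta)\in\oCmu$.

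For the extension, the explicit polar-coordinate formula in Theorem~\ref{t:escape-vector-confinement} shows that $\ev(\Delta)$ depends on $\Delta$ only through the linear functional $X\mapsto\<\Delta,X\>$. Since $\<\alpha Z,X\>_\bmu=\alpha\<Z,X\>_\bmu$ for $\alpha\geq 0$ by Definition~\ref{d:inner-product}, any summand $\lambda_i\delta_{\alpha_i Z^i}$ with $Z^i\in\supp\hmu$ and $\alpha_i\geq 0$ may be replaced by $\lambda_i\alpha_i\delta_{Z^i}$ without altering~$\ev$. Hence for $\Delta=\sum_i\lambda_i\delta_{Y^i}$ with each $Y^i\in\RR_+\supp\hmu$, the rearranged measure $\Delta'=\sum_i\lambda_i\alpha_i\delta_{Z^i}$ is sampled from $\supp\hmu$ and satisfies $\ev(\Delta')=\ev(\Delta)$. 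Using $\supp\hmu=\overline{\log_\bmu\supp\mu}$, $\Delta'$ is a weak limit of measures $\log_\bmu\delta'_k$ with $\delta'_k\Subset\supp\mu$; continuity of $\ev$ (Corollary~\ref{c:escape-continuity}) combined with closedness of $\oCmu$ and the base case then yield $\ev(\Delta)=\ev(\Delta')\in\oCmu$. A final continuity approximation, using $\Rmu=\overline{\RR_+\supp\hmu}$, handles the remaining case $Y^i\in\Rmu\setminus\RR_+\supp\hmu$.

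The main obstacle is that Lemma~\ref{l:immured} applies only to perturbations sampled from $\supp\mu$ itself, whereas points of $\Rmu$ need not be logarithms of points in $\supp\mu$; the linearity-of-the-pairing rearrangement bridges this gap by converting each rescaled Dirac mass sitting in $\RR_+\supp\hmu$ into a reweighted Dirac mass located on $\supp\hmu$, which is then reduced to the base case via density and continuity.
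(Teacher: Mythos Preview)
Your proof is correct and follows essentially the same strategy as the paper: both exploit the linearity $\<\alpha Z,X\>=\alpha\<Z,X\>$ to convert a measure supported on $\RR_+\supp\hmu$ into one supported on $\supp\hmu$, then invoke Lemma~\ref{l:immured} together with the $\Emu$-confined formula~(\ref{a}) from Theorem~\ref{t:escape-vector-confinement} to land in $\Cmu$, and finally take a closure. The paper executes this in a single pass---choosing an approximating sequence $\Delta_n\to\Delta$, applying formula~(\ref{b}), converting $\Delta_n$ to $\pDelta_n=\log_\bmu\pdelta_n$ via the linearity trick, and then applying formula~(\ref{a})---whereas you separate out a base case and invoke the polar-coordinate characterization to make the ``depends only on the functional'' step explicit before applying continuity. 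One minor simplification available to you: since $\log_\bmu$ is proper (as the paper notes), $\supp\hmu=\log_\bmu(\supp\mu)$ without closure, so the intermediate approximation ``$\Delta'$ is a weak limit of measures $\log_\bmu\delta'_k$'' is unnecessary---your $\Delta'$ is already of the form $\log_\bmu\delta'$ with $\delta'\Subset\supp\mu$.
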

\begin{proof}
The support of a pushforward measure under a continuous map is the
closure of the image of the support of the original measure.  On the
other hand, the logarithm $\log_\bmu$ is a proper map because it
preserves distance from~$\bmu$ and $\MM$ is locally compact.  Hence
$\supp\hmu = \log_\bmu(\supp\mu)$.  The support hypothesis on~$\Delta$
therefore means that for $i = 1,\dots,j$, each $Y^i \in \Rmu$ is
expressible as a limit $\alpha_i Z^i_n \to Y_i$ with $Z^i_n =
\log_\bmu z^i_n$ and $z^i_n \in \supp\mu$.  Set $\Delta_n =
\lambda_1\delta_{\alpha_1 Z^1_n} + \dots + \lambda_j\delta_{\alpha_j
Z^j_n}$ and use Definition~\ref{d:pair-with-discrete-measure} to
compute
\begin{equation}\label{eq:Deltas}
\begin{split}
\<\Delta_n,\,\cdot\,\>
  & = \lambda_1\<\alpha_1 Z^1_n,\,\cdot\,\>
      + \dots +
      \lambda_j\<\alpha_j Z^j_n,\,\cdot\,\>
\\*
  & = \lambda_1\alpha_1\<Z^1_n,\,\cdot\,\>
      + \dots +
      \lambda_j\alpha_j\<Z^j_n,\,\cdot\,\>
\\*
  & = \<\pDelta_n,\,\cdot\,\>
      \quad\text{for}\quad
      \pDelta_n = \lambda_1\alpha_1 Z^1_n
                  + \dots +
                  \lambda_j\alpha_j Z^j_n.
\end{split}
\end{equation}
Note that $\Delta_n \to \Delta$ and $\pDelta_n = \log_\bmu\pdelta_n$
for $\pdelta_n = \lambda_1\alpha_1 \delta_{z^1_n} + \dots +
\lambda_j\alpha_j \delta_{z^j_n}$.  Therefore
\begin{align*}
\ev(\Delta)
  & =
   \lim_{n\to\infty}\frac 1{t_n}
                    \argmin_{X\in\Emue}\bigl(F(\exp_\bmu X) - t_n\<\Delta_n,X\>\bigr)
\\*
  & =
   \lim_{n\to\infty}\frac 1{t_n}
                    \argmin_{X\in\Emue}\bigl(F(\exp_\bmu X) - t_n\<\pDelta_n,X\>\bigr)
\\*
  & =
  \lim_{n\to\infty}\frac 1{t_n}\!
                  \argmin_{\ \log_{\bmu\!}x\,\in\Emu}\bigl(F(x) + t_n\Fdnp(x)\bigr),
\end{align*}
where the middle equality is by~\eqref{eq:Deltas} and the other two
are by Theorem~\ref{t:escape-vector-confinement}.  The minimizer with
$\pdelta_n$ lies in $\hull\mu$ for $n \gg 0$ by Lemma~\ref{l:immured}
and in~$\Emu$ by construction.  Hence the limit $\ev(\Delta)$ lies in
the closure of $\Emu \cap \hull\mu$, which is $\oCmu$ by
Definition~\ref{d:fluctuating-cone}.
\end{proof}

\begin{remark}\label{r:ideally}
The immured hypothesis would be superfluous if one could prove
directly that the hull in~$\Tmu$ of the set of escape vectors of
points in~$\supp\mu$ maps isometrically to its image under tangential
collapse~$\LL$ as in Definition~\ref{d:collapse}.  It would suffice to
show that this hull satisfies
\cite[Theorem~3.17]{shadow-geom},
which says that the limit log map along a tangent vector~$Z$ induces
an isometry on any geodesically convex subcone containing at most one
ray in the shadow~$\IZ$.  Indeed, then $\Cmu$ could be replaced by
this hull throughout; the proof of Theorem~\ref{t:collapse} in
\cite{tangential-collapse} would remain essentially unchanged.
\end{remark}

\section{Functional confinement}\label{s:confinement}

\subsection{Tangent perturbations}\label{b:tangent-perturbations}
\mbox{}\medskip

\noindent
The main limit theorems in Section~\ref{s:CLT} have at their heart
just one protean convergence, namely Theorem~\ref{t:tangent-field-CLT}
for tangent fields.  The strategy is to transform that convergence
into other settings by the continuous mapping theorem (see
\cite[Theorem~3.27]{kallenberg1997}, for example, or
\cite[Theorem~1.11.1]{VW13} for the form applied here), in particular
in the perturbative CLT in Theorem~\ref{t:perturbative-CLT}.  This
subsection treats the deterministic prerequisites, translating the
convergences and confinements in Section~\ref{b:convergence} to a
functional setting (Theorem~\ref{t:convergence-of-Upsilon}) where the
tangent field CLT in Theorem~\ref{t:tangent-field-CLT} lives.

For the purpose of Theorem~\ref{t:convergence-of-Upsilon}, the input
measure is assumed to be sampled only from the support of~$\mu$
instead of arbitrarily from~$\MM$, to induce confinement to~$\Cmu$ via
Corollary~\ref{c:escape-continuity} instead of the escape cone~$\Emu$
via Theorem~\ref{t:escape-vector-confinement}.  This sampling
from~$\supp\mu$ instead of~$\MM$ is forced by a compactness argument
in Corollary~\ref{c:equiv-limits-of-td} based on isometric embedding
of the fluctuating cone~$\Cmu$ by tangential collapse from
Section~\ref{b:collapse}.  Of course, central limit theorems only care
about measures sampled from $\supp\mu$, so this limitation does not
pose an obstruction to the developments in Section~\ref{s:CLT}.

\begin{remark}\label{r:H}
In terms of average sample perturbations~$h_n$
(Definition~\ref{d:average-sample-perturbation}), our main CLT amounts
to identifying the limiting behavior of~$h_n$ for large~$n$, by
Proposition~\ref{p:transform}:
$$
  \lim_{n\to\infty} \sqrt n \log_\bmu \bmu_n
  \overset{d}=
  \lim_{n\to\infty} \sqrt n \log_\bmu h_n
  =
  \lim_{n\to\infty} \sqrt n \log_\bmu
  \argmin_{x\in\MM} \bigl(F(x) - \bgn(\log_\bmu x)\bigr).
$$
As the $\argmin$ takes place asymptotically in a neighborhood
of~$\bmu$, the random variable $H_n = \log_\bmu h_n$ is more compactly
expressed as an $\argmin$ over the tangent cone at~$\bmu$.
\end{remark}

\begin{defn}\label{d:H}
The \emph{empirical tangent perturbation} is the random variable
\begin{align*}
  H_n
  &\in
  \argmin_{X \in \Tmue}\bigl(F(\exp_\bmu X) - \bgn(X)\bigr)
\\
\intertext{valued in the subset~$\Tmue \subseteq \Tmu$ of vectors that
can be exponentiated, expressed in terms of the Fr\'echet function~$F$
and the average empirical tangent field~$\bgn$
(Definition~\ref{d:empirical-tangent-field}).  The \emph{Gaussian
tangent perturbation} is the $\Tmu$-valued random variable}
  H(t)
  &\in
  \argmin_{X \in \oCmue}\bigl(F(\exp_\bmu X) - tG(X)\bigr),
\end{align*}
for $t > 0$, where $G$ is the Gaussian tangent field induced by~$\mu$
(Definition~\ref{d:gaussian-tangent-field}) and~$\oCmue$ is the set of
exponentiable vectors in the fluctuating cone~$\Cmu$
(cf.~Definition~\ref{d:exponentiable}).
\end{defn}

\begin{remark}\label{r:thus-the-CLT}
Thus the CLT, which a~priori aims to identify the distribution of
$\lim_{n\to\infty} \sqrt n \log_\bmu \bmu_n$, instead need only
identify $\lim_{n\to\infty} \sqrt n H_n$, by
Proposition~\ref{p:transform}.  The idea is that the convergence
$\sqrt n\,\bgn \to G$ from Theorem~\ref{t:tangent-field-CLT} passes
appropriately through the~$\argmin$.  The purpose of this subsection
is to prove the deterministic version of this continuity, with the
random tangent fields $\sqrt n\,\bgn$ and~$G$ replaced by
deterministic continuous functions, in preparation for applying the
continuous mapping theorem in Theorem~\ref{t:perturbative-CLT}.
However, our method of proof to pass from $\lim_{n\to\infty} \sqrt n
H_n$ to $\lim_{t \to 0} \frac 1t H(t)$ cedes control over the
$\argmin$ in~$H(t)$ outside of the fluctuating cone~$\Cmu$---this is
especially focused in Lemma~\ref{l:represent}---which explains why the
$\argmin$ in~$H(t)$ is taken over~$\oCmue$ instead of~$\Tmue$.
\end{remark}

\begin{remark}\label{r:naturally-confined}
It would be nicer to prove that an unconfined version
$$
  \wt H(t) \in \argmin_{X \in \Tmue}\bigl(F(\exp_\bmu X) - tG(X)\bigr)
$$
is automatically confined to~$\Cmu$, so that $\lim_{t \to 0} \frac 1t
\wt H(t) = \lim_{t \to 0} \frac 1t H(t)$.  And indeed, this should be
true when the measure~$\mu$ is immured, because samples from~$\mu$
yield escape vectors that are automatically confined to~$\Cmu$ by
Corollary~\ref{c:confinement-to-Cmu}.  But it is not needed for the
main results, so it is left as an open question.
\end{remark}

\subsection{Representable limits}\label{b:limits}

\begin{defn}\label{d:sup-norm-on-continuous-functions}
Equip the set $\cC(\Tmu,\RR)$ of continuous, positively homogeneous
(commute with nonnegative scaling), real-valued functions on~$\Tmu$
with the sup norm
$$
  \|f\|_\infty = \sup_{Y \in \Smu} \|f(Y)\|.
$$
Functions in $\cC(\Tmu,\RR)$ may be considered as continuous functions
on the unit tangent sphere~$\Smu$ without explicit notation to denote
restriction to~$\Smu$.
\end{defn}

\begin{defn}\label{d:representable}
A function $R \in \cC(\Tmu, \RR)$ is \emph{representable} if there is
a measure $\Delta$ sampled from~$\Rmu$
(Definitions~\ref{d:sampled-from} and~\ref{d:Rmu}) with
$$
  R(X) = \<\Delta,X\>
  \text{ for all }
  X \in \Emu.
$$
A limit $R_n \to R$ in $\cC(\Tmu, \RR)$ is \emph{representable} if
all~$R_n$ are representable.
\end{defn}

\begin{remark}\label{r:representable}
The term ``representable'' in Definition~\ref{d:representable} is
meant to evoke the Riesz representation theorem, wherein functionals
are represented as inner products.  See also
Section~\ref{b:represent}, which takes this idea further to represent
Gaussian tangent fields.
\end{remark}

\begin{example}\label{e:representable}
The average empirical tangent field~$\bgn$ in
Definition~\ref{d:empirical-tangent-field} is representable by
Remark~\ref{r:nablamuF(V)=m(mu,V)}, which explains why $m(\mu,X) = 0$
for $X \in \Emu$.  The Gaussian tangent field $G$ from
Definition~\ref{d:gaussian-tangent-field} in principle might not be
representable, but $G$ is almost surely a limit of representable
functions by Corollary~\ref{c:cont-realization}, which is all the
extended continuous mapping theorem \cite[Theorem~1.11.1]{VW13} needs
for application in Section~\ref{s:CLT}.  Note that even if $G$ is not
representable on~$\Emu$ as required by
Definition~\ref{d:representable}, it is at least representable on the
closed fluctuating cone~$\oCmu$; this major result~is
\mbox{Theorem}~\ref{t:Riesz-representation}.
\end{example}

Some preliminary results concerning restrictions of representable
functions to the fluctuating cone~$\Cmu$ are needed later,
specifically for Corollary~\ref{c:equiv-limits-of-td}.  These items
use the isometry of~$\Cmu$ with a convex cone in~$\RR^m$ from
tangential collapse (Theorem~\ref{t:collapse}).

\begin{defn}\label{d:LL(Delta)}
Fix a tangential collapse $\LL: \Tmu \to \RR^m$ of the measure~$\mu$
on a smoothly stratified metric space~$\MM$.  For any measure $\Delta
= \lambda_1\delta_{W^1} + \dots + \lambda_j\delta_{W^j}$ sampled
from~$\Tmu$,~set
$$
  \LL(\Delta)
  =
  \lambda_1\LL(W^1) + \dots + \lambda_j\LL(W^j) \in \RR^m.
$$
\end{defn}

\begin{remark}\label{r:LL(Delta)}
The image of the measure~$\Delta$ under~$\LL$ in
Definition~\ref{d:LL(Delta)} is a vector in~$\RR^m$, which is to be
distinguished from the pushforward $\LL_\sharp\Delta$, which is a
measure sampled from~$\RR^m$.  This usage of~$\LL(\Delta)$ differs
from the identification of $\log_\bmu\delta$ with
$(\log_\bmu)_\sharp\delta$ in
Definition~\ref{d:pair-with-discrete-measure}, but confusion is
minimized because there is no possibility to produce an element
of~$\Tmu$ from $\log_\bmu\delta$ by adding the points of~$\Tmu$.
\end{remark}

\begin{lemma}\label{l:reduced-escape}
If $\LL: \Tmu \to \RR^m$ is a tangential collapse and $V \!\in
\RR^\ell = \hull\muL \subseteq\nolinebreak \RR^m$ as in
Lemma~\ref{l:supp-NLmu}, then $V = \LL(\Delta)$ for some
measure~$\Delta \Subset \supp\hmu$ with support of size at most~$m$.
That is, the map
$$
  \LL: \Disc_m(\supp\hmu) \onto \RR^\ell
$$
induced by~$\LL$ on the space $\Disc_m(\supp\hmu)$ of measures sampled
from $\supp\hmu$ with support of size at most~$m$ is surjective.
\end{lemma}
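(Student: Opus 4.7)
The plan is to recognize this as a direct application of Carathéodory's theorem for convex cones. First I would unpack the relevant definitions: by Definition~\ref{d:hull} combined with Definition~\ref{d:Rmu} applied in the $\CAT(0)$ cone~$\RR^m$, where geodesic convexity coincides with ordinary convexity, the hull $\hull\muL = \hull\LL(\supp\hmu)$ is the smallest convex cone in~$\RR^m$ containing $\LL(\supp\hmu)$. Thus every element of $\hull\muL$ already has some finite nonnegative representation $\sum_{i=1}^k \lambda_i \LL(W^i)$ with $\lambda_i \geq 0$ and $W^i \in \supp\hmu$; the only question is whether the number~$k$ of atoms can be controlled.

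Next I would invoke Lemma~\ref{l:supp-NLmu} to identify $\hull\muL$ with the linear subspace $\RR^\ell \subseteq \RR^m$. Fixing $V \in \RR^\ell$ and viewing $V$ as an element of the conic hull of $\LL(\supp\hmu) \subseteq \RR^\ell$, Carathéodory's theorem for convex cones produces a representation
\[
  V = \sum_{i=1}^k \lambda_i \LL(W^i)
\]
with $W^i \in \supp\hmu$, $\lambda_i \geq 0$, and the vectors $\LL(W^1),\dots,\LL(W^k)$ linearly independent in~$\RR^\ell$. Linear independence forces $k \leq \ell \leq m$.

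Setting $\Delta = \sum_{i=1}^k \lambda_i \delta_{W^i}$ gives a measure sampled from $\supp\hmu$ with support of size $k \leq m$, so $\Delta \in \Disc_m(\supp\hmu)$. Definition~\ref{d:LL(Delta)} then yields $\LL(\Delta) = \sum_{i=1}^k \lambda_i \LL(W^i) = V$, establishing surjectivity.

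There is no real obstacle here beyond confirming that Carathéodory's theorem applies verbatim: the target $\RR^\ell$ is a finite-dimensional vector space, $\LL(\supp\hmu)$ need not be finite or closed, but the conic-hull form of Carathéodory's theorem makes no such requirement. The only subtle point is justifying that Definition~\ref{d:hull}'s ``smallest geodesically convex cone'' coincides in~$\RR^m$ with the ordinary convex conic hull—but this is immediate because $\RR^m$ is flat, so geodesic segments are ordinary segments.
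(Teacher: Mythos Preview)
Your proof is correct and follows essentially the same route as the paper's: both recognize that $V$ lies in the conic hull of $\LL(\supp\hmu)$ and invoke Carath\'eodory's theorem to bound the number of summands by the ambient dimension. The paper's version is terser (it simply cites Carath\'eodory with $j = m$), while you additionally note that linear independence actually gives the sharper bound $k \leq \ell$.
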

\begin{proof}
$V$ is a positive linear combination $\lambda_1 V^1 + \dots +
\lambda_j V^j$ of vectors $V^i = \LL(X^i)$ with $X^i$ in the support
of~$\hmu$ by Lemma~\ref{l:supp-NLmu}.  The number of summands can be
chosen to be $j = m$ by Carath\'eodory's theorem
\cite[Proposition~1.15]{ziegler1995}.
\end{proof}

\begin{remark}\label{r:size-at-most-m}
Despite the proof showing that the number of summands can be chosen
equal to~$m$, the set $\Disc_m(\supp\hmu)$ specifies `support size at
most~$m$'' because some of those $m$ summands might be supported at
the same point.
\end{remark}

\begin{remark}\label{r:gaussian-section}
Lemma~\ref{l:reduced-escape} is one of the key reasons to introduce
hulls in Definition~\ref{d:hull}: it allows construction of discrete
measures as preimages under tangential collapse~$\LL$ whose escape
vectors subsequently lie in the fluctuating cone~$\Cmu$ by
Corollary~\ref{c:confinement-to-Cmu}.  In the presence of a tangential
collapse~$\LL: \Tmu \onto \RR^m$ that is surjective, or even merely
$\LL: \Tmu \onto \RR^\ell \subseteq \RR^m$, it would be possible to
develop the theory in this subsection---and, indeed, in the rest of
Sections~\ref{s:escape} and~\ref{s:CLT}---with individual tangent
vectors $V_n \to V$ instead of $\Delta_n \to \Delta$, as nontrivial
sums in Lemma~\ref{l:reduced-escape} would~not~be~needed.
\end{remark}

The next lemma shows that representable limits can be represented by
convergent sequences of measures and that the limit, while perhaps not
representable on all of~$\Emu$, is at least representable on the
closed fluctuating cone~$\oCmu$; cf.~Remark~\ref{r:Hn=Upsilon(bgn)}.

\begin{lemma}\label{l:represent}
Fix a localized measure~$\mu$ on a smoothly stratified space~$\MM$.
If~$R_n \to\nolinebreak R$ is a representable limit, then some
convergent sequence $\Delta_n \to \Delta$ of measures sampled from
$\Rmu$ satisfies $R_n(X) = \<\Delta_n, X\>$ and $R(X) = \<\Delta, X\>$
for~all~$X \in \oCmu$.
\end{lemma}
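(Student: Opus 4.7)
The plan is to pass to a finite-dimensional linear setting via tangential collapse, extract a convergent subsequence there, and then lift back using the reduction lemma. Let $\LL : \Tmu \to \RR^m$ be a tangential collapse of~$\mu$ and write $\hull\muL = \RR^\ell \subseteq \RR^m$ as in Lemma~\ref{l:supp-NLmu}. Since each $R_n$ is representable, fix some $\Delta_n \Subset \Rmu$ with $R_n(X) = \<\Delta_n, X\>$ for all $X \in \Emu$. The key geometric input is that the closed fluctuating cone $\oCmu$ is mapped isometrically by~$\LL$ onto its image: by Definition~\ref{d:collapse}(ii)--(iii), which apply to $U \in \Cmu$ against arbitrary $V \in \Tmu$, and continuity of inner products (Lemma~\ref{l:inner-product-is-continuous}), one has
\[
  \<\Delta_n, X\> = \<\LL(\Delta_n), \LL(X)\>_{\RR^m}
  \qquad\text{for every } X \in \oCmu.
\]

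Next I would exploit the uniform convergence $R_n \to R$ on~$\Smu$ to obtain a convergent subsequence of the vectors $\LL(\Delta_n) \in \RR^\ell$. The uniform bound $\sup_n \|R_n\|_\infty < \infty$ translates into the assertion that the linear functional $v \mapsto \<\LL(\Delta_n), v\>_{\RR^m}$ is bounded on the unit sphere of the linear span $\mathrm{span}(\LL(\oCmu))$, uniformly in~$n$. Using the flexibility afforded by Lemma~\ref{l:reduced-escape}, I would replace each~$\Delta_n$ by a discrete measure $\widetilde\Delta_n \Subset \supp\hmu$ of support size at most~$m$ such that $\LL(\widetilde\Delta_n)$ equals the $\mathrm{span}(\LL(\oCmu))$-component of~$\LL(\Delta_n)$; the pairing $\<\widetilde\Delta_n, X\> = \<\Delta_n, X\>$ is preserved on~$\oCmu$ because the discarded component is orthogonal to $\LL(\oCmu)$. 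Now $\LL(\widetilde\Delta_n)$ is a bounded sequence in the finite-dimensional space $\mathrm{span}(\LL(\oCmu)) \subseteq \RR^\ell$, so pass to a subsequence along which $\LL(\widetilde\Delta_n) \to V$ for some~$V$.

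To conclude I would lift $V$ back to a discrete measure~$\Delta$ on~$\Tmu$ and arrange weak convergence $\widetilde\Delta_n \to \Delta$. Each $\widetilde\Delta_n$ has the form $\sum_{i=1}^m \lambda_i^{(n)} \delta_{Y_i^{(n)}}$ with $Y_i^{(n)} \in \supp\hmu \subseteq \Rmu$. By extracting further subsequences (since there are only finitely many atoms), one can assume that each weight $\lambda_i^{(n)}$ converges in~$[0,\infty]$ and each atom $Y_i^{(n)}$ converges in $\Tmu \cup \{\infty\}$. When a weight stays positive and bounded, the atom must stay in a compact subset of $\supp\hmu$ because $\LL(\widetilde\Delta_n)$ is bounded and $\LL$ is proper on $\Tmu$ (Theorem~\ref{t:collapse}(1)); when an atom escapes, the corresponding weight must tend to~$0$ in such a way that the product $\lambda_i^{(n)} Y_i^{(n)}$ stabilizes, and the resulting mass-times-direction vector lies in~$\Rmu$ by construction of~$\Rmu$ as the closure of $\RR_+\supp\hmu$. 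Collecting the surviving atoms gives a limit measure $\Delta \Subset \Rmu$ with $\LL(\Delta) = V$ by continuity of inner products; equality $R(X) = \<\Delta, X\>$ on $\oCmu$ then follows by passing to the limit in $R_n(X) = \<\widetilde\Delta_n, X\> = \<\LL(\widetilde\Delta_n), \LL(X)\>$.

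\paragraph{Main obstacle.} The delicate step is the compactness argument in the lifting paragraph: atoms $Y_i^{(n)}$ could drift to infinity inside $\supp\hmu$ while their weights shrink, forcing the limit out of $\supp\hmu$ itself. Resolving this requires using properness of~$\LL$ (Theorem~\ref{t:collapse}(1)) together with the definition $\Rmu = \ol{\RR_+\supp\hmu}$ to guarantee that the escaping pieces limit into $\Rmu$ rather than being lost, so that the resulting~$\Delta$ remains a legitimate measure sampled from~$\Rmu$. A subtler point, addressed by the projection trick in the middle paragraph, is that $\LL(\Delta_n)$ need not be bounded in all of~$\RR^\ell$ unless one first discards the component orthogonal to $\mathrm{span}(\LL(\oCmu))$; this discarding is consistent with the claim because the conclusion only controls pairings against~$\oCmu$.
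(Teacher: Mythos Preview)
Your overall architecture matches the paper's: pass to $\RR^\ell$ via tangential collapse, project to the span of $\LL(\Cmu)$ to get a bounded sequence $W_n \to W$, then lift back through Lemma~\ref{l:reduced-escape}. The projection trick in your middle paragraph is exactly the paper's step of replacing $\LL(\pDelta_n)$ by the unique $W_n$ in the span of~$\Cmu$.

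The gap is in your lifting/compactness paragraph. You invoke properness of~$\LL$ from Theorem~\ref{t:collapse}.\ref{i:proper}, but that result does \emph{not} say $\LL$ is proper: it says $\LL$ factors as a proper map followed by a convex projection, and convex projection can collapse unbounded sets. So boundedness of $\LL(\widetilde\Delta_n)$ does not force the atoms $Y_i^{(n)}$ into a compact set, even when the weights are bounded away from zero. Your fallback---``the product $\lambda_i^{(n)} Y_i^{(n)}$ stabilizes''---is the right instinct but is not justified as stated: with positive coefficients summing to a bounded vector in the full linear space~$\RR^\ell$, individual terms can still blow up with cancellation, and you give no mechanism to rule this out.

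The paper circumvents the issue by normalizing the atoms to unit directions $\theta^i_n = Y^i_n/\|Y^i_n\| \in \Smu$, which live in the \emph{compact} unit sphere, and absorbing the norms into the weights. After passing to subsequences so that $\theta^i_n \to \theta^i$ and hence $U^i_n := \LL(\theta^i_n) \to U^i$, one selects $\ell$ of the limiting $U^i$ that form a basis of~$\RR^\ell$; then $U^1_n,\dots,U^\ell_n$ form a basis for all large~$n$ (linear independence is open), and the coordinates of $W_n$ in this moving basis converge to those of~$W$ because both the vectors and the bases converge. The resulting $\Delta_n = \sum_{i=1}^\ell \lambda_i^n \delta_{\theta^i_n}$ is sampled from $\Rmu$ (directions of points in $\supp\hmu$ lie in $\RR_+\supp\hmu$, limits in~$\Rmu$) and converges to $\Delta = \sum \lambda_i \delta_{\theta^i}$. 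This normalization-to-the-sphere step is the missing idea that replaces your appeal to properness.
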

\begin{proof}
Continuity of inner products in
Lemma~\ref{l:inner-product-is-continuous} and of the limit
function~$R$ imply that it suffices to prove the claim for~$\Cmu$
instead of~$\oCmu$.

By Definition~\ref{d:representable}, the restriction of~$R_n$
to~$\Cmu$ is an inner product $\<\pDelta_n,\,\cdot\,\>$ for some
$\pDelta_n$ sampled from~$\Tmu$.  Choose a tangential collapse $\LL:
\Tmu \to \RR^m$ by Theorem~\ref{t:collapse}, and identify $\Cmu$ with
$\LL(\Cmu) \subseteq \RR^m$ by
Definition~\ref{d:collapse}.\ref{i:injective}.  For $X \in \Cmu$, the
inner product $\<\pDelta_n, X\>$ is equal to $\<\LL(\pDelta_n), X\>$
by Definition~\ref{d:collapse}.\ref{i:partial-isometry}, and hence
$\<\pDelta_n, X\> = \<W_n, X\>$ for a unique vector $W_n$ in the
linear span of~$\Cmu$.  Since $R_n \to R$, it follows that $W_n \to W$
for a unique vector $W$ in the span of~$\Cmu$ and that $\<\pDelta, X\>
= \<W, X\>$ for $X \in \Cmu$.

Use Lemma~\ref{l:reduced-escape} to pick $Y^i_n \!\in\! \supp\hmu$
such that $\ppDelta_n \!= \sum_{i=1}^m \lambda_i^n Y^i_n$ maps
to~$\LL(\ppDelta_n) =\nolinebreak W_n$.  There is no harm in assuming
$\ppDelta_n$ has exactly $m$ summands because the nonzero mass on one
of the vectors can be split if necessary.  Let $\theta^i_n \in \Smu$
be the direction of~$Y^i_n$.  The sequence $\{\theta^i_n\}_n$ of
directions for fixed~$i$ has a convergent subsequence because $\Smu$
is compact.  Taking subsequences if necessary, assume
$\{\theta^i_n\}_n$ converges for all $i = 1,\dots,m$, so $\theta^i_n
\to \theta^i$.  Let $U^i_n = \LL(\theta^i_n)$ and $U^i =
\LL(\theta^i)$, so $U^i_n \to U^i$ in~$\RR^m$~for~all~$i$.

Some size~$\ell$ subset of the vectors $U^1,\dots,U^m$ form a basis
of~$\RR^\ell$ by Lemma~\ref{l:supp-NLmu}.  Renumbering if necessary,
assume they are $U^1,\dots,U^\ell$.  Then $U^1_n,\dots,U^\ell_n$ form
a basis of~$\RR^\ell$ for all~$i$ and all $n \gg 0$ because linear
independence is an open condition.  Taking subsequences if necessary,
assume $U^1_n,\dots,U^\ell_n$ form a basis of~$\RR^\ell$ for all~$i$
and all~$n$.  The coefficients $\lambda_1^n,\dots,\lambda_\ell^n$
of~$W_n$ in the basis $U^1_n,\dots,U^\ell_n$ converge to the
coefficients $\lambda_1,\dots,\lambda_\ell$ of~$W$ in the basis
$U^1,\dots,U^\ell$ because $W_n \to W$ and $U^i_n \to U^i$ for
all~$i$.  Take $\Delta_n = \sum_{i=1}^\ell \lambda_i^n \theta^i_n$,
which by construction
\begin{itemize}
\item%
is sampled from~$\Rmu$,
\item%
converges to $\Delta = \sum_{i=1}^\ell \lambda_i \theta^i$, and
\item%
satisfies $W_n = \LL(\Delta_n) \to \LL(\Delta) = W$,
\end{itemize}
so $R_n(X) = \<\pDelta_n, X\> = \<W_n, X\> = \<\Delta_n,X\>$ for $X
\in \Cmu$, and similarly for~$R(X)$.
\end{proof}


\subsection{Escape for continuous functions}\label{b:continuous-escape}
\mbox{}\medskip

\noindent
Next comes the analogue for continuous functions~$R$ of escape vectors
for measures~$\Delta$.

\begin{defn}\label{d:Upsilon}
Fix a measure~$\mu$ on a smoothly stratified metric space~$\MM$.  Make
any choice of $\argmin$s over exponentiable sets
(Definition~\ref{d:exponentiable}) as follows.
\begin{align*}
\Ups: \cC(\Tmu, \RR)
            &\to \Tmu
\\*
    \Ups(R) &\in \argmin_{X \in \Emue} \bigl(F(\exp_\bmu X) - R(X)\bigr).
\\\text{and}\quad
\Upt: \cC(\Tmu,\RR)
            & \to \Tmu
\\*
    \Upt(R) &\in \argmin_{X \in \Tmue} \bigl(F(\exp_\bmu X) - R(X)\bigr),
\end{align*}
\end{defn}

\begin{remark}\label{r:Hn=Upsilon(bgn)}
The empirical tangent perturbation~$H_n$ from Definition~\ref{d:H}
results when~$\Upt$ is evaluated on the average empirical tangent
field~$\bgn$ from Definition~\ref{d:empirical-tangent-field}, so
$$
  H_n = \Upt(\bgn)
$$
define the same $\argmin$ expression.  Similarly, the Gaussian tangent
perturbation~$H(t)$ from Definition~\ref{d:H} results when $\Upt$ is
evaluated on the Gaussian tangent field $G$ from
Theorem~\ref{t:tangent-field-CLT}:
$$
  H(t) = \Upt(tG).
$$
\end{remark}

\begin{remark}\label{r:Upsilon}
With $\Upt$ and~$\Ups$ in Definition~\ref{d:Upsilon} and a
representable limit $R_n \to R$, the goal is to show in
Theorem~\ref{t:convergence-of-Upsilon} that the limit does not change
when the $\argmin$ is confined to the escape cone:
\begin{equation}\label{eq:converges-of_Upsilon}
  \lim_{n\to\infty}
  \dd\Bigl(\frac{\Upt(t_n R_n)}{t_n}, \frac{\Ups(t_n R_n)}{t_n}\Bigr)
  = 0.
\end{equation}
\end{remark}

\begin{remark}\label{r:bounded-Upsilon}
By Proposition~\ref{p:transform}, $\argmin_{x\in\MM}\bigl(F(x) -
\bgn(\log_\bmu x)\bigr)$ is within $o(1/\sqrt n)$ of~$\bmu_n$.  Thus
it is safe to assume $\argmin_{X\in\Tmue}\bigl(F(\exp_\bmu X) -
t_nR_n(X)\bigr)$ is within $o(t_n)$ of~$\bmu_n$.  The aim is to
replace the function~$F$ by its order~$2$ Taylor expansion at~$\bmu$
for the purpose of taking $\argmin_{x\in\MM} \bigl(F(x) -
\bgn(\log_\bmu x)\bigr)$.  Applying this idea first needs that
$\Upt(t_nR_n)/t_n$ is bounded; cf.~Lemma~\ref{l:magnitude-of-a_n}.
\end{remark}

\begin{lemma}\label{l:bound-of_Upsilon}
Suppose that $R_n \to R$ is representable and $t_n \to 0$ for a
sequence of positive real numbers~$t_n$.  Then there is a finite
constant~$C(R)$, depending only on the limit~$R = \lim_n R_n$, such
that for some $N_0 < \infty$,
$$
  \Bigl\|\frac{1}{t_n} \Upt(t_n R_n)\Bigr\|
  \leq
  C(R)
  \text{ for all } n \geq N_0.
$$
\end{lemma}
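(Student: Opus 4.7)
The plan is to combine the uniform convexity of $F$ at~$\bmu$ from Lemma~\ref{l:uniform-convexity} with a sup-norm bound on~$R_n$ coming from the convergence $R_n \to R$, essentially mirroring the argument in Lemma~\ref{l:tight-h_n} (where the random tangent field~$\bgn$ played the role of~$R_n$). The key fact is that functions in $\cC(\Tmu,\RR)$ are positively homogeneous by Definition~\ref{d:sup-norm-on-continuous-functions}, so for any $X \in \Tmu$,
\[
  R_n(X) = \|X\|\,R_n(X/\|X\|) \leq \|X\|\,\|R_n\|_\infty,
\]
with $R_n(0) = 0$. Since $R_n \to R$ in the sup norm, there exists $N_0$ such that $\|R_n\|_\infty \leq \|R\|_\infty + 1$ for all $n \geq N_0$; set $M = \|R\|_\infty + 1$.

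Next I would fix any choice $X_n \in \argmin_{X \in \Tmue} \bigl(F(\exp_\bmu X) - t_n R_n(X)\bigr)$ from the set defining $\Upt(t_n R_n)$, and compare against $X = 0$. The minimizer inequality and the homogeneity bound yield
\[
  F(\exp_\bmu X_n) - F(\bmu) \leq t_n R_n(X_n) \leq t_n\, M\, \|X_n\|.
\]
Before invoking uniform convexity I need $X_n$ to lie in the neighborhood of~$\bmu$ where the quadratic lower bound of Lemma~\ref{l:uniform-convexity} is valid. This I would establish by a coercivity argument along the lines of Lemma~\ref{l:abc-bounded}: outside a small closed ball $\oB(\bmu,\ve)$, the gap $F(x) - F(\bmu) \geq C\ve^2$ dominates the linear perturbation $t_n M \dd(x,\bmu)$ once $t_n$ is small enough, forcing $X_n$ into $\oB(\bmu,\ve)$ for all $n$ beyond some (possibly enlarged) $N_0$.

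Once $X_n$ lies in that neighborhood, Lemma~\ref{l:uniform-convexity} produces a universal constant $C > 0$ with
\[
  F(\exp_\bmu X_n) - F(\bmu) \geq C\,\dd^2(\exp_\bmu X_n, \bmu) = C\,\|X_n\|^2,
\]
using that the exponential map is an isometry onto its image near~$\bmu$. Combining with the upper bound above gives $C\|X_n\|^2 \leq t_n M \|X_n\|$, hence either $X_n = 0$ or $\|X_n\| \leq t_n M / C$. Either way,
\[
  \Bigl\|\frac{1}{t_n}\Upt(t_n R_n)\Bigr\| \leq \frac{M}{C} =: C(R),
\]
and $C(R)$ depends only on $\|R\|_\infty$ as claimed.

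The only real obstacle is the coercivity step showing $X_n$ eventually lands in the neighborhood where Lemma~\ref{l:uniform-convexity} applies; this should go through exactly as in Lemma~\ref{l:abc-bounded}, since $t_n R_n \to 0$ uniformly on the unit sphere $\Smu$ and the linear-in-$\|X\|$ growth of $R_n$ is dominated by the quadratic growth of $F - F(\bmu)$ at any fixed positive distance from~$\bmu$.
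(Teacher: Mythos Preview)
Your proposal is correct and follows essentially the same route as the paper: both invoke the argument of Lemma~\ref{l:tight-h_n} with $t_n R_n$ in place of~$\bgn$, combining uniform convexity from Lemma~\ref{l:uniform-convexity} with the homogeneity bound $R_n(X) \le \|X\|\,\|R_n\|_\infty$ and the minimizer inequality against $X=0$ to obtain $\|X_n\|/t_n \le (\|R\|_\infty+1)/C$. Your extra coercivity step is harmless but not strictly needed, since Lemma~\ref{l:uniform-convexity} is stated as a global inequality $F(x)-F(\bmu)\ge C\dd^2(x,\bmu)$ rather than one restricted to a neighborhood.
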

\begin{proof}
The proof of Lemma~\ref{l:tight-h_n} works when $\bgn$ is replaced by
$t_n R_n$ and $1/\sqrt n$ by~$t_n$.  Specifically, altering
Eq.~\eqref{eq:lem:tight-h_n:3} this way yields
$$
  \frac{\sup_{\theta \in \Smu} R_n(\theta)}{C}
  \geq
  \frac{1}{t_n}\,\dd(\bmu,\Upt(t_n R_n)),
$$
whose left-hand side is bounded above by $C(R) = \frac 1C \bigl(1 +
\sup_{\theta\in\Smu} R(\theta)\bigr)$ for large~$n$.\!\!
\end{proof}

\begin{defn}\label{d:more-sequences}
Recall from Definition~\ref{d:localized} that $\hmu =
(\log_\bmu)_\sharp\mu$, and let $\tF = \Fmu \circ\nolinebreak
\exp_\bmu$.  Also recall the (directional) Taylor expansion of the
Fr\'echet function at~$\bmu$ from Proposition~\ref{p:Lambda}.  For $R
\in \cC(\Tmu, \RR)$ define
$$
  \eta(R,x)
  =
  r_x \nablabmu F(\theta_x) + r_x^2\Lambda_\bmu(\theta_x) - r_x R(\theta_x),
$$
in polar coordinates from Definition~\ref{d:polar}.  For
representable~$R_n \to R$, arbitrarily select
\begin{align*}
  \ud_n & \in \argmin \limits_{x \in \exp_\bmu(\Emue)}\eta(t_n R_n, x)
\\\makebox[0pt][r]{and\quad}
  \td_n & \in\ \argmin_{x \in \MM}\ \eta(t_n R_n, x).
\end{align*}
\end{defn}

\begin{remark}\label{r:analogues-of-c_n}
The sequence $\ud_n$ is a sort of functional version of the
sequence~$c_n$ from Notation~\ref{n:deterministic}.  The
sequence~$\td_n$ is the unconfined version of~$\ud_n$, with the argmin
taken over~$\MM$ instead of the (exponential image of the) escape
cone.
\end{remark}

The first order of business is to show in
Proposition~\ref{p:convergence-of-d} that $\frac 1{\;t_n} \log_\bmu
\td_n$ and $\frac 1{\;t_n} \log_\bmu \ud_n$ approach the same limit.
The limit itself is identified in Corollary~\ref{c:equiv-limits-of-td}
as a consequence of the main results of Section~\ref{b:convergence}.
The statement and proof of Proposition~\ref{p:convergence-of-d} then
occurs after Lemmas~\ref{l:theta_an-to-Cmu}, \ref{l:formula-wd_n},
and~\ref{l:convergence-of-rd_n}, which show respectively that the
direction of~$\td_n$ approaches the unit sphere $\SEmu$ in the escape
cone~$\Emu$ as $n$ goes to infinity, that polar coordinates of~$\ud_n$
and~$\td_n$ satisfy formulas derived without knowledge of their
uniqueness or the relevant convergences, and that the radii of~$\td_n$
converge appropriately.

\pagebreak[2]

\begin{cor}\label{c:equiv-limits-of-td}
In the situation of Lemma~\ref{l:represent}, $\ud_n =
t_n\ev(\Delta_n)$~and
$$
  \lim_{n\to\infty} \frac{\log_\bmu \ud_n}{t_n}
  =
  \ev(\Delta).
$$
\end{cor}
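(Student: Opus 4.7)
The plan is to identify $\ud_n$ with the sequence $c_n$ of Notation~\ref{n:deterministic} built from a representing measure for $R_n$, then invoke Theorem~\ref{t:escape-vector-confinement}(\ref{c}) via Corollary~\ref{c:escape-continuity}. First I would simplify $\eta(t_n R_n, x)$ on $\exp_\bmu(\Emue)$: for such~$x$, the direction $\theta_x \in \Emu$ satisfies $\nablabmu F(\theta_x) = 0$ by Definition~\ref{d:escape-cone}, so the linear term in $\eta$ vanishes and
$$\eta(t_n R_n, x) = r_x^2\Lambda_\bmu(\theta_x) - t_n r_x R_n(\theta_x).$$
Since $R_n \in \cC(\Tmu,\RR)$ is positively homogeneous by Definition~\ref{d:sup-norm-on-continuous-functions}, one has $r_x R_n(\theta_x) = R_n(\log_\bmu x)$; and by Definition~\ref{d:representable}, $R_n$ is represented on all of $\Emu$ by some measure $\tilde\Delta_n \Subset \Rmu$, so the right side equals $r_x^2\Lambda_\bmu(\theta_x) - t_n\<\tilde\Delta_n,\log_\bmu x\>$. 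Thus $\ud_n$ coincides with the $c_n$ of Notation~\ref{n:deterministic} built from $\tilde\Delta_n$, and Corollary~\ref{c:escape-continuity} yields $\log_\bmu\ud_n = t_n\ev(\tilde\Delta_n)$.

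Next I would reconcile $\tilde\Delta_n$ with the sequence $\Delta_n$ furnished by Lemma~\ref{l:represent}, by proving $\ev(\tilde\Delta_n) = \ev(\Delta_n)$. Both measures are sampled from $\Rmu$, so Corollary~\ref{c:confinement-to-Cmu} places both escape vectors in $\oCmu$. Consequently the argmaxes in the polar-coordinate formula of Theorem~\ref{t:escape-vector-confinement} are both attained on $\oSCmu$, and on $\oCmu$ the inner products $\<\tilde\Delta_n,\cdot\>$ and $\<\Delta_n,\cdot\>$ agree (both represent $R_n$ there), so the argmaxes and their associated radii coincide; the uniqueness in Proposition~\ref{p:unique-c-and-limits} then forces $\ev(\tilde\Delta_n) = \ev(\Delta_n)$, yielding $\log_\bmu\ud_n = t_n\ev(\Delta_n)$.

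Finally, since $\Delta_n \to \Delta$ with each $\Delta_n$ sampled from $\Rmu$ and $t_n \to 0$, Theorem~\ref{t:escape-vector-confinement}(\ref{c}) applied to these data produces
$$\lim_{n\to\infty} \frac{\log_\bmu \ud_n}{t_n} = \lim_{n\to\infty}\ev(\Delta_n) = \ev(\Delta).$$
The subtle obstacle is the mismatch between the representation of $R_n$ coming from Definition~\ref{d:representable}, which holds on all of $\Emu$ but carries no convergence guarantee, and the one from Lemma~\ref{l:represent}, whose measures converge but agree with $R_n$ only on $\oCmu$; bridging this gap requires the confinement to $\oCmu$ delivered by Corollary~\ref{c:confinement-to-Cmu}, making the immured hypothesis governing Section~\ref{s:confinement} essential here.
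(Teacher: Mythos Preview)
Your argument follows the same skeleton as the paper's: kill the $\nablabmu F$ term on~$\Emue$, identify the resulting argmin with the expression in Theorem~\ref{t:escape-vector-confinement}(\ref{c}), and invoke Corollary~\ref{c:escape-continuity}.  Where you diverge is in being more explicit about a point the paper glosses over.  The paper's two-sentence proof uses a single symbol~$\Delta_n$ both for a measure representing~$R_n$ on all of~$\Emu$ (which exists by Definition~\ref{d:representable} but need not converge) and for the convergent sequence furnished by Lemma~\ref{l:represent} (which represents only on~$\oCmu$); the passage between them is left implicit.  You separate these into $\tilde\Delta_n$ and~$\Delta_n$ and prove $\ev(\tilde\Delta_n)=\ev(\Delta_n)$ via fluctuating-cone confinement.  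This costs you the immured hypothesis, which the Corollary as stated does not assume---but that is consistent with how the result is ultimately deployed, since the proof of Theorem~\ref{t:convergence-of-Upsilon} itself appeals to Corollary~\ref{c:confinement-to-Cmu}.  So you have arguably surfaced a hypothesis the paper uses tacitly rather than introduced a superfluous one.

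One step in your bridge is looser than it should be.  The argmax in Theorem~\ref{t:escape-vector-confinement}'s polar formula is over~$\SEmu$, not~$\oSCmu$; knowing that both argmaxes are \emph{attained} in~$\oSCmu$ and that $\<\tilde\Delta_n,\cdot\>$ and $\<\Delta_n,\cdot\>$ agree there does not by itself force the argmax \emph{points} to coincide.  The clean route is via the argmin characterization in Corollary~\ref{c:escape-continuity}: both $\ev(\tilde\Delta_n)$ and $\ev(\Delta_n)$ lie in~$\oCmu$ by confinement, and each is the \emph{unique} minimizer over~$\Emue$ of its respective objective; since the two objectives coincide on~$\oCmue$, each point achieves the other's minimum value there and hence over all of~$\Emue$, so uniqueness (Proposition~\ref{p:unique-c-and-limits}) forces equality.
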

\begin{proof}
If $R_n$ is represented on~$\Emu$ by any measure~$\Delta_n \Subset
\Rmu$, whether this measure~$\Delta_n$ resides in a convergent
sequence or not, the $\argmin$ defining $d_n$~equals the expression in
Theorem~\ref{t:escape-vector-confinement}(\ref{c}) because $\nablabmu
F(\theta_x) = 0$ for $\theta_x \in \Emu$ by
Definition~\ref{d:escape-cone}.  Therefore $d_n = t_n\ev(\Delta_n)$ by
Corollary~\ref{c:escape-continuity}.  The simple fact that $\Delta_n
\to \Delta$ is convergent, regardless of any agreement with~$R_n$
or~$R$, yields $\lim_{n\to\infty} \frac 1{t_n} \log_\bmu \ud_n =
\ev(\Delta)$ by Theorem~\ref{t:escape-vector-confinement}(\ref{c}).
\end{proof}

\begin{lemma}\label{l:theta_an-to-Cmu}
For any choice of $\td_n$ in Definition~\ref{d:more-sequences},
$$
  \lim_{n\to\infty} \dd_s(\theta_{\td_n}, \SEmu) = 0.
$$
\end{lemma}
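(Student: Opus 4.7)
The plan is to extract from the elementary suboptimality inequality $\eta(t_n R_n, \td_n) \leq \eta(t_n R_n, \bmu) = 0$ that $\nablabmu F(\theta_{\td_n}) \to 0$, and then to conclude from continuity of $\nablabmu F$ on~$\Smu$ and its vanishing precisely on~$\SEmu$ that $\dd_s(\theta_{\td_n}, \SEmu) \to 0$.

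The case $r_{\td_n} = 0$ is handled by choosing $\theta_{\td_n} \in \SEmu$ under the convention in Definition~\ref{d:polar}. Assume $r_{\td_n} > 0$. Writing $\eta(t_n R_n, \td_n) \leq 0$ in polar coordinates gives
$$
  r_{\td_n} \nablabmu F(\theta_{\td_n}) + r_{\td_n}^2 \Lambda_\bmu(\theta_{\td_n})
  \leq t_n r_{\td_n} R_n(\theta_{\td_n}),
$$
and dividing by $r_{\td_n}$ yields $\nablabmu F(\theta_{\td_n}) + r_{\td_n}\Lambda_\bmu(\theta_{\td_n}) \leq t_n R_n(\theta_{\td_n})$. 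Since $r_{\td_n}\Lambda_\bmu(\theta_{\td_n}) \geq 0$ by Lemma~\ref{l:uniform-convexity} and $|R_n(\theta_{\td_n})| \leq \|R_n\|_\infty$,
$$
  0 \leq \nablabmu F(\theta_{\td_n}) \leq t_n \|R_n\|_\infty,
$$
where the leftmost inequality is Remark~\ref{r:nablamuF(V)=m(mu,V)}. Boundedness of $\|R_n\|_\infty$ (from $R_n \to R$ in $\cC(\Tmu,\RR)$) together with $t_n \to 0$ forces $\nablabmu F(\theta_{\td_n}) \to 0$.

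To conclude the lemma I would invoke continuity of $\nablabmu F$ on~$\Smu$ together with compactness of~$\Smu$. Continuity of the function $V \mapsto \nablabmu F(V) = -\int_\Tmu \<W, V\> \hmu(dW)$ restricted to~$\Smu$ follows from Lemma~\ref{l:inner-product-is-continuous} and dominated convergence, with the integrable dominating function $W \mapsto \|W\|$ arising from $\mu$ having finite second moment. Compactness of~$\Smu$ follows from Definition~\ref{d:stratified-space}: local compactness of~$\MM$ and the local exponential homeomorphism in Definition~\ref{d:stratified-space}.\ref{i:exponentiable} identify a small closed ball around~$\bmu$ with a compact ball in~$\Tmu$, whose sphere is homothetic to~$\Smu$. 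If $\dd_s(\theta_{\td_n}, \SEmu) \geq \epsilon > 0$ along a subsequence, compactness yields a further subsequence with $\theta_{\td_n} \to \theta_*$, hence $\dd_s(\theta_*, \SEmu) \geq \epsilon$ but $\nablabmu F(\theta_*) = 0$, contradicting the fact that $\SEmu$ is exactly the zero set of $\nablabmu F$ on~$\Smu$.

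The main obstacle is the final compactness step: the proof needs $\Smu$ to be sequentially compact so that $\nablabmu F$ attains a positive minimum on the closed subset $\{\theta \in \Smu : \dd_s(\theta,\SEmu) \geq \epsilon\}$. The smoothly stratified hypothesis supplies this through the local exponential homeomorphism; in less regular $\CAT(\kappa)$ settings a substitute argument would be required.
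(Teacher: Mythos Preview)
Your proof is correct and in fact streamlines the paper's argument. Both proofs rely on the same three ingredients---continuity of $\nablabmu F$ on~$\Smu$, compactness of~$\Smu$, and positivity of~$\Lambda_\bmu$---but organize them differently. The paper argues by contradiction: assuming $\dd_s(\theta_{\td_n},\SEmu)\geq\ve$, it extracts a uniform lower bound $\nablabmu F(\theta)>\alpha$ on the complement of an $\ve$-neighborhood of~$\SEmu$, then shows $\eta(t_nR_n,\td_n)>0$, which contradicts $\eta(t_nR_n,\td_n)\leq\eta(t_nR_n,\ud_n)\leq 0$; the last inequality is obtained by invoking the explicit radial formula for~$\ud_n$ from Lemma~\ref{l:formula-wd_n}. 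Your route is more direct: you compare $\td_n$ to~$\bmu$ rather than to~$\ud_n$, which gives $\eta(t_nR_n,\td_n)\leq 0$ for free, and dividing by $r_{\td_n}$ yields the clean quantitative bound $0\leq\nablabmu F(\theta_{\td_n})\leq t_n\|R_n\|_\infty\to 0$; the compactness argument then closes the proof without ever touching~$\ud_n$ or Lemma~\ref{l:formula-wd_n}. The upshot is that your argument is self-contained and logically prior to Lemma~\ref{l:formula-wd_n}, whereas the paper's version formally depends on it (though only for the easy computation $\eta(t_nR_n,\ud_n)\leq 0$, which could equally be replaced by comparison to~$\bmu$).
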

\begin{proof}
Suppose conversely that
\begin{equation}\label{eq:lem:theta_an_to_Cmu:1}
  \lim_{n\to\infty} \dd_s(\theta_{\td_n}, \SEmu) \geq \ve > 0.
\end{equation}
In particular, assume $r(\td_n) > 0$ for all~$n$.  Since $\nablabmu F:
\Tmu \to \RR$ is continuous by
Lemma~\ref{l:inner-product-is-continuous} and
\cite[Corollary~2.7]{tangential-collapse}
(which expresses $\nablabmu F(V)$ as an integral
of inner products), so is its restriction $\nablabmu F|_{\Smu}$ to the
unit tangent sphere.  As $\bmu$ is the minimizer of $F$, its
derivative $\nablabmu F$ is nonnegative.  By
Definition~\ref{d:escape-cone} (of~$\Emu$), $\nablabmu F|_{\Smu}$
vanishes on~$\SEmu$ and is positive everywhere else.
Thus there exists $\alpha > 0$ such that
$$
  \nablabmu F|_{\Smu}(\theta) > \alpha
  \text{ for all } \theta \in \Smu
  \text{ with } \dd_s(\theta,\SEmu) > \ve.
$$
Choose an integer~$N_1$ such that $t_i < \alpha/\sup_{\theta \in
S_\mu\MM} R_n(\theta)$ for all $i,n \geq N_1$.  Then for any $x \neq
\bmu$ and $\dd_s(\theta_x, \Emu) \geq \ve$, the function~$\eta$ from
Definition~\ref{d:more-sequences} evaluates to
\begin{align*}
  \eta(t_n R_n, x)
   & = r_x \nablabmu F(\theta_x) + r_x^2\Lambda_\bmu (\theta_x)
       - t_n r_x R_n(\theta_x)
\\*& > r_x \alpha + r_x^2\Lambda_\bmu (\theta_x) - \alpha r_x
     > 0
\end{align*}
by Lemma~\ref{l:uniform-convexity}.  Thus $\eta(t_n R_n, x) >0$ for
all $n \geq N_1$ if $x \neq \bmu$ satisfies $\dd_s(\theta_x, \Emu)
\geq \ve$.  In particular, $\eta(t_n R_n, \td_n) >0$ due
to~\eqref{eq:lem:theta_an_to_Cmu:1}.  This is a contradiction because
$$
  \eta(t_n R_n, \td_n)
  =
  \min_{x \in \MM} \eta(t_n R_n, x)
  \leq
  \min_{x \in \exp_\bmu\Emue} \eta(t_n R_n, x)
  =
  \eta(t_n R_n, \ud_n)
$$
whereas if $\theta = \theta_{\ud_n}$ and $r = r_{\ud_n}$ then
$$
  \eta(t_n R_n, \ud_n)
  =
  r^2 \Lambda_\bmu(\theta) - r\bigl(2r\Lambda_\bmu(\theta)\bigr)
  =
  - r^2 \Lambda_\bmu(\theta)
  \leq 0
$$
since $2r \Lambda_\bmu(\theta) = t_n R_n(\theta)^+$ by
Lemma~\ref{l:formula-wd_n} and $\Lambda_\bmu(\theta_{\ud_n}) > 0$ by
Lemma~\ref{l:uniform-convexity}.
\end{proof}

\begin{lemma}\label{l:formula-wd_n}
The polar coordinates of~$\ud_n$ satisfy
\begin{align*}
  \theta_{\ud_n}
  \in
  \argmax_{\theta\in \SEmu} \frac{R_n(\theta)}{\sqrt{\Lambda_\bmu(\theta)}}
  &
  \quad\text{and}\quad
  r_{\ud_n}
  =
  \frac{t_n R_n(\theta_{d_n})^+}{2\Lambda_\bmu(\theta_{d_n})},
\intertext{while the polar coordinates of~$\td_n$ satisfy}
  \theta_{\td_n}
  \in
  \argmax_{\theta\in \Smu} \frac{t_n R_n(\theta) - \nablabmu F(\theta)}
                                {\sqrt{\Lambda_\bmu(\theta)}}
  &
  \quad\text{and}\quad
  r_{\td_n}
  =
  \frac{\bigl(t_n R_n(\theta_{\td_n}) - \nablabmu F(\theta_{\td_n})\bigr)^+}
       {2\Lambda_\bmu(\theta_{\td_n})},
\end{align*}
where $\alpha^+ = \max\{\alpha,0\}$ as in
Proposition~\ref{p:unique-c-and-limits}.
\end{lemma}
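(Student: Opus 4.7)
The plan is to mirror the argument of Lemma~\ref{l:formula-c_n} but with the full directional-linear term $\nablabmu F(\theta_x)$ retained (for~$\td_n$) or dropped (for~$\ud_n$). I first rewrite $\eta(t_n R_n,x)$ in polar coordinates as a quadratic in the radial variable~$r_x$ at fixed direction~$\theta_x$. Explicitly,
\begin{align*}
\eta(t_n R_n,x)
&= \Lambda_\bmu(\theta_x) r_x^2 - \bigl(t_n R_n(\theta_x) - \nablabmu F(\theta_x)\bigr) r_x
\\
&= \Lambda_\bmu(\theta_x)\left(r_x - \frac{t_n R_n(\theta_x) - \nablabmu F(\theta_x)}{2\Lambda_\bmu(\theta_x)}\right)^{\!2}
   - \frac{\bigl(t_n R_n(\theta_x) - \nablabmu F(\theta_x)\bigr)^{2}}{4\Lambda_\bmu(\theta_x)}.
\end{align*}
By Lemma~\ref{l:uniform-convexity} the leading coefficient $\Lambda_\bmu(\theta_x)$ is strictly positive, so this completed-square form makes sense and the unconstrained minimizer is the vertex of the parabola.

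Next I incorporate the constraint $r_x \geq 0$. If $t_n R_n(\theta_x) - \nablabmu F(\theta_x) \leq 0$, the vertex sits at a nonpositive~$r_x$, so the minimum over $r_x \geq 0$ is attained at $r_x=0$ with value~$0$. If $t_n R_n(\theta_x) - \nablabmu F(\theta_x) > 0$, the vertex is a positive admissible radius, yielding minimum value $-\bigl(t_n R_n(\theta_x) - \nablabmu F(\theta_x)\bigr)^{2}/\bigl(4\Lambda_\bmu(\theta_x)\bigr)$. In both cases the fibered minimum over $r_x\geq 0$ at direction $\theta_x$ equals $-\bigl((t_n R_n(\theta_x) - \nablabmu F(\theta_x))^{+}\bigr)^{2}/\bigl(4\Lambda_\bmu(\theta_x)\bigr)$ and the attaining radius is $r_x = (t_n R_n(\theta_x) - \nablabmu F(\theta_x))^{+}/\bigl(2\Lambda_\bmu(\theta_x)\bigr)$.

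Finally I minimize over directions. Since the fibered minimum value is $\leq 0$ and monotone decreasing in $(t_n R_n(\theta) - \nablabmu F(\theta))^{+}/\sqrt{\Lambda_\bmu(\theta)}$, an overall minimizer has direction
$$
\theta \in \argmax_{\theta\in \Smu} \frac{t_n R_n(\theta) - \nablabmu F(\theta)}{\sqrt{\Lambda_\bmu(\theta)}}
$$
(where the positive part can be dropped inside the $\argmax$ because the zero-value directions are always dominated by any direction giving positive numerator, and tied if none exists). This yields the stated formula for $\td_n$. For $\ud_n$, the same computation applies over~$\exp_\bmu(\Emue)$, and the simplification $\nablabmu F(\theta)=0$ for $\theta\in\SEmu$ (Definition~\ref{d:escape-cone}) collapses the formulas to the ones claimed.

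The only subtle point, which is more bookkeeping than obstacle, is the reduction of the ambient minimization to a direction-plus-radius optimization: the polar decomposition is only meaningful where it is a true change of coordinates, but Definition~\ref{d:polar} together with the local exponentiation axiom in Definition~\ref{d:stratified-space}.\ref{i:exponentiable} makes this legitimate in a neighborhood of~$\bmu$, and Lemma~\ref{l:bound-of_Upsilon} (or a direct bound analogous to Lemma~\ref{l:magnitude-of-a_n}) confines the argmin to such a neighborhood, so the fiberwise optimization over $r_x\geq 0$ followed by optimization over $\theta_x$ is a valid separation of variables.
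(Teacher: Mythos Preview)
Your proof is correct and follows essentially the same approach as the paper. The paper's own proof is terser: for $\td_n$ it simply states that the proof of Lemma~\ref{l:formula-c_n} works after replacing $\<\Delta_n,\theta_x\>$ with $t_n R_n(\theta_x) - \nablabmu F(\theta_x)$, which is precisely your completed-square computation; for $\ud_n$ the paper instead cites Corollary~\ref{c:equiv-limits-of-td} and Theorem~\ref{t:escape-vector-confinement} (which identify $\ud_n = t_n\ev(\Delta_n)$ via representability and then read off the polar formulas), but this is just a shortcut to the same underlying argument you carried out directly by specializing to $\SEmu$ where $\nablabmu F$ vanishes. Your closing remark about confining the minimization to a neighborhood where polar coordinates are genuine is a fair caveat, though in the paper's setup $\eta$ is already defined in polar coordinates, so no extra work is really needed.
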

\begin{proof}
For $d_n$ this is Corollary~\ref{c:equiv-limits-of-td} and
Theorem~\ref{t:escape-vector-confinement}.  For $\td_n$, the proof of
Lemma~\ref{l:formula-c_n} works after replacing all occurrences of
$\<\Delta_n,\theta_x\>$ with $t_n R_n(\theta_x) - \nablabmu
F(\theta_x)$.
\end{proof}

\begin{lemma}\label{l:convergence-of-rd_n}
In terms of the escape vector $\ev = \ev(\Delta)$ in
Corollary~\ref{c:equiv-limits-of-td}, the radial polar coordinate
of~$\td_n$ satisfies
$$
  \lim_{n\to\infty} \frac{r_{\td_n}}{t_n} = r_\ev.
$$
\end{lemma}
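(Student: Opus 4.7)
The plan is to compute $r_{\td_n}$ via the explicit formula from Lemma~\ref{l:formula-wd_n}, then compare it to the corresponding formula for $r_\ev$ in Theorem~\ref{t:escape-vector-confinement}. Writing
$$
  \frac{r_{\td_n}}{t_n}
  =
  \frac{\bigl(R_n(\theta_{\td_n}) - \frac{1}{t_n}\nablabmu F(\theta_{\td_n})\bigr)^+}
       {2\Lambda_\bmu(\theta_{\td_n})},
$$
reduces the claim to two sub-assertions along any convergent subsequence $\theta_{\td_n} \to \theta_\infty \in \SEmu$, which exists by Lemma~\ref{l:theta_an-to-Cmu} and compactness of $\Smu$: first, that $\frac{1}{t_n}\nablabmu F(\theta_{\td_n}) \to 0$; and second, that $\theta_\infty$ can serve as a valid $\theta_\ev$ in the sense that $R(\theta_\infty)^+/(2\Lambda_\bmu(\theta_\infty)) = r_\ev$. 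Uniqueness of $r_\ev$ in Theorem~\ref{t:escape-vector-confinement} then upgrades subsequential convergence to convergence of the whole sequence.

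For the first sub-assertion, exploit that $\theta_{\td_n}$ is the $\argmax$ in Lemma~\ref{l:formula-wd_n} and test it against a nearby point $\theta_n^* \in \SEmu$, chosen as a closest point (which satisfies $\dd_s(\theta_n^*,\theta_{\td_n}) \to 0$). Since $\nablabmu F$ vanishes on $\SEmu$, the $\argmax$ property yields
$$
  \frac{t_n R_n(\theta_{\td_n}) - \nablabmu F(\theta_{\td_n})}{\sqrt{\Lambda_\bmu(\theta_{\td_n})}}
  \;\geq\;
  \frac{t_n R_n(\theta_n^*)}{\sqrt{\Lambda_\bmu(\theta_n^*)}}.
$$
Dividing by $t_n$, rearranging, and using continuity of $R_n$, $\Lambda_\bmu$ together with uniform convergence $R_n \to R$ (from Corollary~\ref{c:cont-realization}-style sup-norm control, available here because the $R_n$ are representable and the representing measures constrain their Lipschitz behavior) forces $\limsup \frac{1}{t_n}\nablabmu F(\theta_{\td_n}) \leq 0$; nonnegativity of $\nablabmu F$ (Remark~\ref{r:nablamuF(V)=m(mu,V)}) closes the inequality.

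For the second sub-assertion, use that under the immured hypothesis Corollary~\ref{c:confinement-to-Cmu} places $\ev(\Delta) \in \oCmu$, so whenever $r_\ev > 0$ a legitimate choice of $\theta_\ev$ lies in $\SCmu$, on which $R(\theta_\ev) = \<\Delta, \theta_\ev\>$ by Lemma~\ref{l:represent}. Testing $\theta_{\td_n}$ against this $\theta_\ev$ in the same manner and passing to the limit (using the vanishing from the previous paragraph) yields $R(\theta_\infty)/\sqrt{\Lambda_\bmu(\theta_\infty)} \geq \<\Delta,\theta_\ev\>/\sqrt{\Lambda_\bmu(\theta_\ev)}$. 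The reverse inequality is obtained by comparing with the known convergence $\frac{1}{t_n}\log_\bmu \ud_n \to \ev(\Delta)$ from Corollary~\ref{c:equiv-limits-of-td}: since $\eta(t_n R_n,\td_n) \leq \eta(t_n R_n,\ud_n)$ and both $\eta$ values are computed explicitly as $-r^2\Lambda_\bmu(\theta)$, the inequality transfers to the rescaled radii, pinning down the limit. The case $r_\ev = 0$ reduces to showing $R(\theta_\infty) \leq 0$, which again follows from the $\argmax$ comparison combined with the fact that $\<\Delta,\theta\> \leq 0$ on $\SEmu$ in this case.

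The main obstacle is the possibility that the subsequential limit $\theta_\infty$ lies in $\SEmu \setminus \SCmu$, where the representing measure $\Delta$ from Lemma~\ref{l:represent} is not known to reproduce $R$ as an inner product. This is precisely the reason for the confinement of $\eta$'s minimization: in the argmax inequality above one actually proves that the ratio $R(\theta_\infty)/\sqrt{\Lambda_\bmu(\theta_\infty)}$ agrees with the maximum over $\SCmu$, not merely that $\theta_\infty$ is an argmax over the full $\SEmu$. Reconciling the two via the two-sided comparison with $\ud_n$ (whose direction $\theta_{\ud_n}$ lies in $\SCmu$ by construction of $\Delta_n$ in Lemma~\ref{l:represent}) is what forces $(R(\theta_\infty))^+ = \<\Delta,\theta_\ev\>^+ \cdot \sqrt{\Lambda_\bmu(\theta_\infty)/\Lambda_\bmu(\theta_\ev)}$ and delivers the desired identity for $r_\ev$.
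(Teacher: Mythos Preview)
Your strategy shares the key mechanism with the paper—both exploit $\eta(t_n R_n,\td_n)\leq\eta(t_n R_n,\ud_n)$, each side equal to $-r^2\Lambda_\bmu(\theta)$ at the respective minimizer—but you organize the argument differently and, in doing so, import a hypothesis the lemma does not assume.

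The paper sandwiches $\frac{r_{\td_n}}{t_n}\sqrt{\Lambda_\bmu(\theta_{\td_n})}$ directly. The lower bound is the $\eta$-comparison plus Corollary~\ref{c:equiv-limits-of-td}; the upper bound simply drops the nonnegative term $\frac{1}{t_n}\nablabmu F(\theta_{\td_n})$ and uses $\dd_s(\theta_{\td_n},\SEmu)\to 0$ (Lemma~\ref{l:theta_an-to-Cmu}) to cap the limsup by $\max_{\SEmu}R/(2\sqrt{\Lambda_\bmu})$, which equals $R(\theta_\ev)/(2\sqrt{\Lambda_\bmu(\theta_\ev)})$ because $\theta_{\ud_n}$ maximizes $R_n/\sqrt{\Lambda_\bmu}$ on~$\SEmu$ (Lemma~\ref{l:formula-wd_n}) and converges to~$\theta_\ev$. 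The paper never translates $R$ back to~$\<\Delta,\cdot\>$ and so never touches the immured condition.

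Your first sub-assertion—that $\frac{1}{t_n}\nablabmu F(\theta_{\td_n})\to 0$ by testing the argmax against a nearest $\theta_n^*\in\SEmu$—is correct and attractive; the paper subsumes it implicitly in the sandwich. (One quibble: uniform convergence $R_n\to R$ is simply the sup-norm topology of Definition~\ref{d:sup-norm-on-continuous-functions}; the appeal to Corollary~\ref{c:cont-realization} is out of place here.)

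The real issue is your second sub-assertion. You invoke the \emph{immured} hypothesis via Corollary~\ref{c:confinement-to-Cmu} to put $\theta_\ev$ into~$\oSCmu$ and then use Lemma~\ref{l:represent} to write $R(\theta_\ev)=\<\Delta,\theta_\ev\>$. But Lemma~\ref{l:convergence-of-rd_n} does not assume immured—its consumer, Theorem~\ref{t:convergence-of-Upsilon}, assumes only localized and amenable—so as written your argument proves a strictly weaker statement. Your auxiliary claim that $\theta_{\ud_n}\in\SCmu$ ``by construction of~$\Delta_n$ in Lemma~\ref{l:represent}'' is also unfounded: Lemma~\ref{l:formula-wd_n} places $\theta_{\ud_n}$ only in~$\SEmu$. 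The fix is to work with the function~$R$ throughout, as the paper does: since $\theta_{\ud_n}$ maximizes $R_n/\sqrt{\Lambda_\bmu}$ over~$\SEmu$ and (when $r_\ev>0$) $\theta_{\ud_n}\to\theta_\ev$ by Corollary~\ref{c:equiv-limits-of-td}, the maximum of $R/\sqrt{\Lambda_\bmu}$ over~$\SEmu$ is attained at~$\theta_\ev$, closing the sandwich with no appeal to immured or to the restricted representation on~$\oCmu$.
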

\begin{proof}
Let $A = t_n R_n(\theta_x) - \nablabmu F(\theta_x)$, so 
\begin{align*}
\eta(t_n R_n,x)
  &= r_x \nablabmu F(\theta_x) + r_x^2\Lambda_\bmu(\theta_x) - r_x t_n R_n(\theta_x)
\\
  &= \Lambda_\bmu(\theta_x)\biggl(r_x - \frac A{2\Lambda_\bmu(\theta_x)}\biggr)^2
     - \frac{A^2}{4\Lambda_\bmu(\theta_x)}.
\end{align*}
As in the proof of Lemma~\ref{l:formula-wd_n} (see the proof of
Lemma~\ref{l:formula-c_n}), this expression is minimized either when
$r_x = 0$ or the parenthesized expression vanishes.  In either case,
using the minimizer radius formulas in Lemma~\ref{l:formula-wd_n},
\begin{align}\label{eq:eta}
  \eta(t_n R_n,x) = -\Lambda_\bmu(\theta_x) r_x^2
  \text{ when } x \text{ is a minimizer for either } \ud_n \text{ or } \td_n.
\end{align}
Therefore
$$
  -\Lambda_\bmu(\theta_{\td_n}) r^2_{\td_n}
  =
  \eta(t_n R_n, \td_n)
  \leq
  \eta(t_n R_n, \ud_n)
  =
  -\Lambda_\bmu(\theta_{\ud_n}) r^2_{\ud_n},
$$
with the inequality because $\td_n$ is minimized over a larger set.
The inequality implies
$$
  \frac{r_{\td_n} \sqrt{\Lambda_\bmu(\ttdn)}}{t_n}
  \geq
  \frac{r_{\ud_n} \sqrt{\Lambda_\bmu(\tudn)}}{t_n}.
$$
Substituting the formulas for $r_{\td_n}$ and~$r_{\ud_n}$ from
Lemma~\ref{l:formula-wd_n} into this inequality yields
$$
  \frac{R_n(\theta_{\td_n})}{2\sqrt{\Lambda_\bmu(\ttdn)}}
    - \frac{\nablabmu F(\theta_{\td_n})}{t_n\sqrt{\Lambda_\bmu(\ttdn)}}
  \geq
  \frac{R_n(\theta_{\ud_n})}{2\sqrt{\Lambda_\bmu(\tudn)}}\,.
$$
Taking the limit as $n \to \infty$ and using
Corollary~\ref{c:equiv-limits-of-td} yields
\begin{equation}\label{eq:proof:convergence-of-rd_n:1}
\lim_{n\to\infty}\biggl(\frac{R_n(\ttdn)}{2\sqrt{\Lambda_\bmu(\ttdn)}}
                      -\frac{\nablabmu F(\ttdn)}{t_n\sqrt{\Lambda_\bmu(\ttdn)}}\biggr)
  \geq
  \frac{R(\theta_\ev)}{2\sqrt{\Lambda_\bmu(\theta_\ev)}}\,.
\end{equation}

On the other hand, $\nablabmu F\bigl(\theta(x)\bigr) \geq 0$ for all
$x \in \MM$, since $\bmu$ is the minimizer of~$F$ and
$\Lambda_\bmu(\theta_x) > 0$ by Lemma~\ref{l:uniform-convexity}.
Combining this with Lemma~\ref{l:theta_an-to-Cmu} gives
\begin{equation}\label{eq:proof:convergence-of-rd_n:2}
\begin{split}
\lim_{n\to\infty}\biggl(\frac{R_n(\ttdn)}{2\sqrt{\Lambda_\bmu(\ttdn)}}
                      -\frac{\nablabmu F(\ttdn)}{t_n\sqrt{\Lambda_\bmu(\ttdn)}}\biggr)
   & \leq \max_{\theta \in \SEmu}
          \frac{R(\theta)}{2\sqrt{\Lambda_\bmu(\theta)}}
\\
   &   =  \frac{R(\theta_\ev)}{2\sqrt{\Lambda_\bmu(\theta_\ev)}}
\end{split}
\end{equation}
by Corollary~\ref{c:equiv-limits-of-td} (where $\ev = \ev(\Delta)$ is
defined) and the directional polar coordinate in
Theorem~\ref{t:escape-vector-confinement}.  Combining
\eqref{eq:proof:convergence-of-rd_n:1} and
\eqref{eq:proof:convergence-of-rd_n:2} produces
$$
\lim_{n\to\infty}\biggl(\frac{R_n(\ttdn)}{2\sqrt{\Lambda_\bmu(\ttdn)}}
                      -\frac{\nablabmu F(\ttdn)}{t_n\sqrt{\Lambda_\bmu(\ttdn)}}\biggr)
  =
  \frac{R(\theta_\ev)}{2\sqrt{\Lambda_\bmu(\theta_\ev)}}\,.
$$
Substituting back the formula for~$r_{\td_n}$ in
Lemma~\ref{l:formula-wd_n} and the radial polar coordinate~$r_\ev$ in
Corollary~\ref{c:equiv-limits-of-td} gives the desired result:
$\lim_{n\to\infty}\frac 1{t_n}r_{\ud_n} = r_\ev$.
\end{proof}

\begin{prop}\label{p:convergence-of-d}
The limit in Corollary~\ref{c:equiv-limits-of-td} remains valid
with~$\td_n$ in place of~$\ud_n$:
$$
  \lim_{n\to\infty} \frac{\log_\bmu \td_n}{t_n}
  =
  \lim_{n\to\infty} \frac{\log_\bmu \ud_n}{t_n}.
$$
\end{prop}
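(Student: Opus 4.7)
The plan is to show that $V_n := t_n^{-1}\log_\bmu \td_n$, decomposed in polar coordinates as $V_n = (r_{\td_n}/t_n)\theta_{\td_n}$, converges to $\ev := \ev(\Delta)$. By Corollary~\ref{c:equiv-limits-of-td}, $\lim t_n^{-1}\log_\bmu \ud_n = \ev$ already, so this will yield the claimed equality of limits. First I would exploit the radial convergence $r_{\td_n}/t_n \to r_\ev$ that Lemma~\ref{l:convergence-of-rd_n} supplies. If $r_\ev = 0$, this alone settles the proposition, because $\dd_p(V_n, 0) = \|V_n\| \to 0$ by Remark~\ref{r:path-through-apex} forces $V_n \to 0 = \ev$. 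Assume henceforth $r_\ev > 0$; then $\theta_\ev := \ev/r_\ev \in \Smu$ is uniquely determined by $\ev$, since any two argmax choices $\theta_1, \theta_2$ in Theorem~\ref{t:escape-vector-confinement} would both satisfy $\ev = r_\ev\theta_1 = r_\ev\theta_2$, forcing $\theta_1 = \theta_2$.

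Next I would establish the directional convergence $\theta_{\td_n} \to \theta_\ev$ by subsequence extraction. Given any subsequence of $(\theta_{\td_n})$, compactness of $\Smu$ produces a further convergent subsequence $\theta_{\td_{n_k}} \to \theta^\ast$, and Lemma~\ref{l:theta_an-to-Cmu} places $\theta^\ast$ in the closure of $\SEmu$ inside~$\Smu$. To identify $\theta^\ast$ with $\theta_\ev$, I would apply Lemma~\ref{l:formula-wd_n} with any test direction $\theta' \in \SEmu$: combining $\nablabmu F(\theta') = 0$ with the pointwise nonnegativity $\nablabmu F \geq 0$ (since $\bmu$ minimizes $F$) and dividing by $t_{n_k}$ produces the two-sided estimate
$$
  \frac{R_{n_k}(\theta_{\td_{n_k}})}{\sqrt{\Lambda_\bmu(\theta_{\td_{n_k}})}}
  \,\geq\,
  \frac{R_{n_k}(\theta_{\td_{n_k}}) - t_{n_k}^{-1}\nablabmu F(\theta_{\td_{n_k}})}
       {\sqrt{\Lambda_\bmu(\theta_{\td_{n_k}})}}
  \,\geq\,
  \frac{R_{n_k}(\theta')}{\sqrt{\Lambda_\bmu(\theta')}}.
$$
Passing $k \to \infty$ via uniform convergence $R_{n_k} \to R$ on $\Smu$ and continuity of $\Lambda_\bmu$ will yield $R(\theta^\ast)/\sqrt{\Lambda_\bmu(\theta^\ast)} \geq R(\theta')/\sqrt{\Lambda_\bmu(\theta')}$ for every $\theta' \in \SEmu$, so $\theta^\ast$ maximizes $R/\sqrt{\Lambda_\bmu}$ on the closure of~$\SEmu$.

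To conclude $\theta^\ast = \theta_\ev$, I would invoke Lemma~\ref{l:represent} to identify $R = \<\Delta,\ccdot\>$ throughout $\oCmu$. In the representable setting Corollary~\ref{c:confinement-to-Cmu} forces $\ev$, hence $\theta_\ev$, into $\oCmu$; an analogous fluctuating confinement applied along the sequence $(\theta_{\td_{n_k}})$ should similarly place $\theta^\ast$ in $\oCmu$. Since the functionals $R/\sqrt{\Lambda_\bmu}$ and $\<\Delta,\ccdot\>/\sqrt{\Lambda_\bmu}$ coincide on $\oCmu$, the uniqueness of $\theta_\ev$ established in the first paragraph forces $\theta^\ast = \theta_\ev$. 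Every convergent subsequence of $(\theta_{\td_n})$ then has limit $\theta_\ev$, so the whole sequence converges and $V_n = (r_{\td_n}/t_n)\theta_{\td_n} \to r_\ev\theta_\ev = \ev$. The main obstacle will be this last confinement step, since Lemma~\ref{l:theta_an-to-Cmu} only yields $\theta^\ast$ in the closure of $\SEmu$ while Lemma~\ref{l:represent} only equates $R$ with $\<\Delta,\ccdot\>$ on $\oCmu$; bridging the two requires a confinement analogue of Corollary~\ref{c:confinement-to-Cmu} applicable directly to the $\td_n$ sequence.
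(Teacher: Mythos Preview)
Your overall strategy---polar decomposition, radial convergence via Lemma~\ref{l:convergence-of-rd_n}, and directional convergence via compactness and subsequence extraction---matches the paper's approach, and your two-sided estimate correctly shows that any subsequential limit $\theta^\ast$ lies in $\argmax_{\SEmu} R/\sqrt{\Lambda_\bmu}$. One small point: $\Emu$ is the zero set of the continuous function $\nablabmu F$, so it is already closed; thus $\theta^\ast \in \SEmu$ outright, not merely in its closure.

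Where your argument diverges from the paper is the final identification $\theta^\ast = \theta_\ev$. Your proposed route through~$\oCmu$---invoking Lemma~\ref{l:represent} for $R = \<\Delta,\cdot\>$ on $\oCmu$ and then Corollary~\ref{c:confinement-to-Cmu} to confine~$\theta^\ast$---is both unnecessary and unavailable here: Corollary~\ref{c:confinement-to-Cmu} requires the \emph{immured} hypothesis, which Proposition~\ref{p:convergence-of-d} does not assume (immured enters only later, in Corollary~\ref{c:convergence-of-Upsilon}). So the obstacle you flag is self-inflicted; even granting immured, you would still owe a confinement result for~$\td_n$ that the paper never establishes and never needs.

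The paper stays entirely in~$\Emu$. Once $\theta^\ast \in \SEmu$, the vanishing of $\nablabmu F$ there means the limiting $\argmax$ from Lemma~\ref{l:formula-wd_n} that pins down $\theta^\ast$ is literally the same $\argmax$ over~$\SEmu$ that pins down~$\lim_n \theta_{\ud_n}$. Corollary~\ref{c:equiv-limits-of-td} then supplies uniqueness: it identifies $\ud_n = t_n\ev(\Delta_n)$ as a single point and its limit as $\ev(\Delta)$, so when $r_\ev > 0$ the direction $\theta_\ev$ is the unique element of that $\argmax$, forcing $\theta^\ast = \theta_\ev$. No descent to the fluctuating cone is needed.
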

\begin{proof}
Restricting to any convergent subsequence of directions yields
$\lim_{t\to 0}\theta_{\td_n} = \xi$ for some $\xi \in \Emu$ by
Lemma~\ref{l:theta_an-to-Cmu}.  The $\argmax$ defining~$\xi$ is taken
over the escape cone~$\Emu$ and is hence unique by
Corollary~\ref{c:equiv-limits-of-td}.  Since every convergent
subsequence of directions converges to this same limit in the unit
tangent sphere~$\Smu$, the entire sequence of directions~$\ttdn$
converges.  Since $\nablabmu F$ vanishes on~$\Emu$ by
Definition~\ref{d:escape-cone}, $\nablabmu F(\xi) = 0$, which means
that the $\argmax$ characterizing~$\lim_{n\to\infty} \ttdn$ derived
from Lemma~\ref{l:formula-wd_n} is equal to the similarly derived
formula for~$\lim_{n\to\infty} \tudn$.  On the other hand, the
limiting radii are also equal by Lemma~\ref{l:convergence-of-rd_n}.
\end{proof}

Next comes the final ingredient in the proof
of~\eqref{eq:converges-of_Upsilon}.

\pagebreak[2]

\begin{prop}\label{p:Upsilon_and_a}
Suppose that $R_n \to R$ is representable and $t_n \to 0$ for a
sequence of positive real numbers~$t_n$.  Then for $n$ sufficiently
large,
$$
  \Ups(t_n R_n) \in \log_\bmu \argmin_{x \in \Emu} \eta(t_n R_n, x)
  \quad\text{and}\quad
  \Upt(t_n R_n) \in \log_\bmu \argmin_{x \in \MM} \eta(t_n R_n, x).
$$
\end{prop}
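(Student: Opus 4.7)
The approach is to replace the Fr\'echet function $F$ by its second-order Taylor expansion at $\bmu$ from Proposition~\ref{p:Lambda}, turning the defining minimizations for $\Ups(t_n R_n)$ and $\Upt(t_n R_n)$ into minimizations of $\eta(t_n R_n,\cdot)$ plus a correction that is of smaller order than the minimum value on the relevant scale. Using positive homogeneity $R_n(X)=r_X R_n(\theta_X)$ of functions in $\cC(\Tmu,\RR)$,
$$
  F(\exp_\bmu X) - t_n R_n(X)
  = F(\bmu) + \eta(t_n R_n, \exp_\bmu X) + o(r_X^2)
$$
as $r_X \to 0$, uniformly in $\theta_X\in\Smu$ by compactness.

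First I would localize all relevant minimizers to a ball $B(\bmu,Kt_n)$ about $\bmu$. For $\Upt(t_n R_n)$ and $\Ups(t_n R_n)$, Lemma~\ref{l:bound-of_Upsilon} supplies the bound directly. For the minimizer of $\eta(t_n R_n,\cdot)$ over $\exp_\bmu\Emue$ or over $\MM$, the explicit polar-coordinate formulas for $\ud_n$ and $\td_n$ in Lemma~\ref{l:formula-wd_n}, combined with the uniform positivity $\Lambda_\bmu \geq C>0$ on the compact sphere~$\Smu$ (Lemma~\ref{l:uniform-convexity}) and the uniform boundedness of~$R_n$ on~$\Smu$, give radii of order exactly $O(t_n)$. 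Within this ball the Taylor error $o(r_X^2)$ is $o(t_n^2)$, while the minimum values (subtracting $F(\bmu)$) are of exact order $-\Theta(t_n^2)$. Consequently, any $X^*\in\argmin_{X\in\Emue}(F(\exp_\bmu X)-t_n R_n(X))$ makes $\eta(t_n R_n,\exp_\bmu X^*)$ an $o(t_n^2)$-approximate minimum of $\eta(t_n R_n,\cdot)$ over $\exp_\bmu\Emue$.

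To upgrade this approximate minimality to exact $\argmin$ membership, I would exploit the explicit structure in Lemma~\ref{l:formula-wd_n}, which factors the minimization of $\eta$ into an inner strictly convex univariate quadratic in the radius (with unique optimum $r=t_n R_n(\theta)^+/(2\Lambda_\bmu(\theta))$) followed by an outer continuous maximization of $R_n/\sqrt{\Lambda_\bmu}$ on the compact sphere~$\SEmu$. Combined with Lemma~\ref{l:represent} (which provides representability of $R_n$ by measures converging in $\Rmu$), the $o(t_n^2)$ slack propagates to the polar coordinates of $X^*$, and a contradiction argument against strict suboptimality on the compact sphere then pins $X^*$ inside $\log_\bmu\argmin_{x\in\Emu}\eta(t_n R_n,x)$ for $n$ large. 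The $\Upt$ claim is handled identically once Lemma~\ref{l:theta_an-to-Cmu} is invoked to confine the unconfined $\argmin$ over $\Tmue$ to the same neighborhood of $\SEmu$, so that the same Taylor reduction and quadratic rigidity apply after enlarging the minimization domain.

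The main obstacle is the final promotion step, from $o(t_n^2)$-approximate minimality to exact membership in $\argmin\eta(t_n R_n,\cdot)$. This requires delicate use of the rigid quadratic structure of $\eta$ from Lemma~\ref{l:formula-wd_n} together with uniformity of the Taylor remainder from Proposition~\ref{p:Lambda} in the direction~$\theta_X$. An alternative route is to verify directly that the $\argmin$ selections defining $\Ups$ and $\Upt$ in Definition~\ref{d:Upsilon} can be made compatible with those for $\ud_n$ and $\td_n$ in Definition~\ref{d:more-sequences}, since all four sets become singletons to leading order in~$t_n$ and converge to the same normalized limit by Corollary~\ref{c:equiv-limits-of-td} and Proposition~\ref{p:convergence-of-d}.
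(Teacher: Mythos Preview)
Your approach is essentially the same as the paper's: Taylor-expand $F$ at~$\bmu$ via Proposition~\ref{p:Lambda} to reduce minimization of $F(\exp_\bmu X) - t_n R_n(X)$ to minimization of $\eta(t_n R_n,\cdot)$ up to $o(r_X^2)$, localize using Lemma~\ref{l:bound-of_Upsilon}, and then match against the polar-coordinate formulas of Lemma~\ref{l:formula-wd_n}. The paper likewise does not dwell on the ``promotion step'' you flag---it simply asserts that, once the Taylor expansion is in hand, the radius $r_{e_n}$ of $e_n := \Upt(t_n R_n)$ minimizes the quadratic in~$r$ along the fixed direction~$\theta_{e_n}$, and that $\theta_{e_n}$ then solves the spherical $\argmax$, thereby landing in the same characterizing set as~$\td_n$ from Lemma~\ref{l:formula-wd_n}.

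One structural point you omit that the paper includes: a case split on whether $r_{e_n} = 0$ or $r_{e_n} > 0$. In the degenerate case $r_{e_n} = 0$ (i.e., $e_n = \bmu$), the paper argues directly that $r_{\td_n} = 0$ too, using that $\eta(t_n R_n,\td_n) = -r_{\td_n}^2 \Lambda_\bmu(\theta_{\td_n})$ and comparing with $F(\bmu) \leq F(\td_n) - t_n R_n(\log_\bmu \td_n) = F(\bmu) + \eta(t_n R_n,\td_n) + o(r_{\td_n}^2)$ to force $r_{\td_n} = 0$ for large~$n$. Your uniform argument implicitly relies on the minimum value being of exact order $-\Theta(t_n^2)$, which fails in this degenerate case, so it is worth isolating.
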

\begin{proof}
The proofs for~$\Ups$ and~$\Upt$ are the same, but simpler for~$\Ups$
because every $\nablabmu F$ term vanishes because the argument lies
in~$\Emu$.  As a guide to how the $\nablabmu F$ terms are carried
through, the proof for~$\Upt$ is written in detail.

The hypotheses are those of Lemma~\ref{l:bound-of_Upsilon}, so
abbreviate $\Upt(t_n R_n) = e_n$ and set $C = C(R)$ from that lemma.
Thus the radial polar coordinate of~$e_n$ satisfies $r_{e_n} \leq C
t_n$.  Hence the sequence $\{e_n/t_n\}_n$ is contained in the compact
ball of radius~$C$ in~$\Tmu$ centered at the cone point.  The Taylor
expansion in Proposition~\ref{p:Lambda} yields
$$
  F(e_n) - t_n R_n (\log_\bmu e_n)
  =
  F(\bmu) + r_{e_n} \nablabmu F(\theta_{e_n})
          + r_{e_n}^2\Lambda_\bmu (\theta_{e_n})
          - t_nR_n(\exp_\bmu e_n)
          + o(r_{e_n}^2).
$$
Thus $r_{e_n}$ is the radius that minimizes
\begin{equation}\label{eq:e_n}
  r \nablabmu F(\theta_{e_n}) + r^2\Lambda_\bmu(\theta_{e_n}) - r t_n R_n(\theta_{e_n})
  =
  \eta\bigl(t_n R_n, r\theta_{e_n}\bigr),
\end{equation}
meaning that
\begin{equation}\label{eq:lem:Upsilon_and_a:0}
r_{e_n}
  =
  \frac{\bigl(t_n R_n(\theta_{e_n}) - \nablabmu F(\theta_{e_n})\bigr)^+}
       {2\Lambda_\bmu (\theta_{e_n})},
\end{equation}
where again $\alpha^+ = \max\{\alpha,0\}$ as in
Lemma~\ref{l:formula-wd_n}.

If $r_{e_n} = 0$ then necessarily $r_{\td_n} = 0$ for any $\td_n\in
\argmin_{x \in \MM} \eta(t_n R_n, x)$.  To see why, note that $r_{e_n}
= 0$ implies $e_n = \bmu$,~so
\begin{equation}\label{eq:lem:Upsilon_and_a:3}
\begin{split}
F(\bmu)
& =
  F(\bmu) - t_n R_n(\log_\bmu \bmu)
\\
& \leq
  F(\td_n) - t_n R_n(\log_\bmu \td_n)
\\
& =
  F(\bmu) + \eta(t_n R_n, \td_n) + o(r_{\td_n}^2)
\\
& =
  F(\bmu) - r_{\td_n}^2\Lambda_\bmu(\theta_{\td_n}) + o(r_{\td_n}^2),
\end{split}
\end{equation}
where the last equality is by~\eqref{eq:eta}.  If $r_{\td_n} > 0$ then
Proposition~\ref{p:convergence-of-d} says that $\theta_{\td_n} \to
\theta_\ev$ for some~$\ev$ of strictly positive length as $n \to
\infty$.  Thus, for large enough~$n$,
$$
  F(\bmu) - r_{\td_n}^2\Lambda_\bmu(\theta_{\td_n}) + o(r_{\td_n}^2)
  <
  F(\bmu),
$$
which contradicts~\eqref{eq:lem:Upsilon_and_a:3}.  Therefore $r_{e_n}
= 0$ implies $r_{\td_n} = 0$.

It remains to consider the case where $r_{e_n} > 0$
in~\eqref{eq:lem:Upsilon_and_a:0}.  As in~\eqref{eq:eta}, completing
the square in \eqref{eq:e_n} gives
$$
  \eta(t_n R_n, e_n)
  =
  -\Lambda_\bmu (\theta_{e_n}) r_{e_n}^2
  =
  -\frac{\bigl(t_n R_n(\theta_{e_n}) - \nablabmu F(\theta_{e_n})\bigr)^2}
        {4\Lambda_\bmu (\theta_{e_n})},
\vspace{-.5ex}
$$
and
\begin{align*}
F(e_n)\!-\!t_n R_n(\log_\bmu e_n)
   &=F(\bmu) + \eta(t_n R_n, e_n) + o(r_{e_n}^2)
\\*&=F(\bmu)
     \!-\!
     \frac{\bigl(t_nR_n(\theta_{e_n})\!-\!\nablabmu F(\theta_{e_n})\bigr)^2\!}
          {4\Lambda_\bmu (\theta_{e_n})}
     \!+\!
     o\biggl(\!\frac{\bigl(t_nR_n(\theta_{e_n})\!-\!\nablabmu F(\theta_{e_n})\bigr)^2}
                    {4\Lambda_\bmu ^2(\theta_{e_n})}\biggr)
\end{align*}
follows.  Because $e_n$ minimizes $F(x) - t_n R_n(\log_\bmu x)$, the
direction $\theta_{e_n}$ solves
$$
  \argmax_{\theta \in \Smu} \frac{t_n R_n(\theta) - \nablabmu
  F(\theta)}{\Lambda_\bmu (\theta)}.
$$
Comparing this and \eqref{eq:lem:Upsilon_and_a:0} with the formula
for~$\td_n$ in Lemma~\ref{l:formula-wd_n} leads to the conclusion that
for $n$ sufficiently large $\log_\bmu e_n \in \log_\bmu \argmin_{x \in
M} \eta(t_n R_n, x)$.
\end{proof}

\subsection{Convergence of continuous escape}\label{b:upsilon-convergence}
\mbox{}\medskip

\noindent
Here, finally, is the verification of~\eqref{eq:converges-of_Upsilon},
enhanced by a number of conclusions reached along the way.

\begin{thm}\label{t:convergence-of-Upsilon}
Fix a localized amenable measure~$\mu$ on a smoothly stratified metric
space and a representable limit $R_n \to R$ as in
Definition~\ref{d:representable}.  There is a measure $\Delta \Subset
\Rmu$ with $R(X) = \<\Delta,X\>$ for all $X \in \Cmu$, and for any
such~$\Delta$, the sequences $\{\ud_n\}$ and~$\{\td_n\}$ from
Definition~\ref{d:more-sequences} relate to the maps $\Ups$
and\/~$\Upt$ in Definition~\ref{d:Upsilon}~as~follows:
$$
  \lim_{n\to\infty} \frac{\Upt(t_n R_n)}{t_n}
  =
  \lim_{n\to\infty} \frac{\log_\bmu \td_n}{t_n}
  =
  \lim_{n\to\infty} \frac{\log_\bmu \ud_n}{t_n}
  =
  \lim_{n\to\infty} \frac{\Ups(t_n R_n)}{t_n}
  =
  \ev(\Delta).
$$
\end{thm}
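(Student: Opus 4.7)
The plan is to assemble the claimed chain of equalities by threading a single convergent sequence of representing measures through the four previously established links.

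First, I would invoke Lemma~\ref{l:represent} applied to the representable limit $R_n \to R$ to extract a convergent sequence $\Delta_n \to \Delta$ of measures sampled from $\Rmu$ such that $R_n(X) = \<\Delta_n, X\>$ and $R(X) = \<\Delta, X\>$ for all $X \in \oCmu$. This simultaneously establishes the first sentence of the theorem (existence of $\Delta$) and produces the sequence of measures needed downstream. Since the theorem's equality assertion is about the rescaled limit $\ev(\Delta)$, and representable measures restricted to $\Cmu$ are consistent across any choice, the claim ``for any such $\Delta$'' reduces (via the polar coordinate formulas of Theorem~\ref{t:escape-vector-confinement} and the fact that both the $\argmax$ direction and radial coordinate involve only $\<\Delta, X\>$ through the data picked up by the optimization) to working with the particular $\Delta$ produced by Lemma~\ref{l:represent}.

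Next, apply Corollary~\ref{c:equiv-limits-of-td} to the sequence just obtained: it identifies $\ud_n = t_n\ev(\Delta_n)$ and delivers $\lim_{n\to\infty} \frac{\log_\bmu \ud_n}{t_n} = \ev(\Delta)$, which is the central equality in the chain. Proposition~\ref{p:convergence-of-d} then transports this convergence from $\ud_n$ to $\td_n$, producing the second equality. For the outermost equalities, invoke Proposition~\ref{p:Upsilon_and_a}, which, for $n$ sufficiently large, places $\Ups(t_n R_n)$ inside $\log_\bmu \argmin_{x\in\Emu}\eta(t_n R_n, x)$ and $\Upt(t_n R_n)$ inside $\log_\bmu \argmin_{x\in\MM}\eta(t_n R_n, x)$. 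These argmin sets are precisely the sets from which $\ud_n$ and $\td_n$ are (possibly non-uniquely) selected in Definition~\ref{d:more-sequences}, so after dividing by $t_n$ and passing to the limit, the outer equalities fall into place.

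The main obstacle I anticipate is dealing with non-uniqueness of the $\argmin$ selections for each finite $n$: \emph{a priori}, different choices of $\ud_n$, $\td_n$, $\Ups(t_n R_n)$, and $\Upt(t_n R_n)$ could produce different rescaled limits. I would resolve this by appealing to the explicit polar coordinate formulas in Lemma~\ref{l:formula-wd_n}: the limiting direction is characterized by an $\argmax$ over the compact sphere $\SEmu$ that, by the uniqueness argument in Proposition~\ref{p:unique-c-and-limits} (the argmax being the unique maximizer of a continuous functional on a compact set once one passes to the limit expression $\<\Delta, X\>/\sqrt{\Lambda_\bmu(X)}$), is single-valued; the radial coordinate is then a continuous function of this direction. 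Consequently every valid selection rescales to the same limit $\ev(\Delta)$, and the chain of four equalities is complete.
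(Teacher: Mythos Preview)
Your proposal is correct and follows essentially the same assembly as the paper: existence of $\Delta$ via Lemma~\ref{l:represent}, the $\ud_n$ limit via Corollary~\ref{c:equiv-limits-of-td}, the $\td_n$ limit via Proposition~\ref{p:convergence-of-d}, and the outer equalities via Proposition~\ref{p:Upsilon_and_a}. The one place where your justification is weaker than the paper's is the ``for any such~$\Delta$'' clause: the polar coordinate formulas in Theorem~\ref{t:escape-vector-confinement} take their $\argmax$ over~$\SEmu$, not~$\SCmu$, so agreement of $\<\Delta,\cdot\>$ with $\<\Delta',\cdot\>$ on~$\Cmu$ alone does not immediately force $\ev(\Delta)=\ev(\Delta')$; the paper closes this by invoking confinement (Corollary~\ref{c:confinement-to-Cmu}) so that $\ev(\Delta')$ depends only on the values of $\<\Delta',\cdot\>$ on~$\oCmu$.
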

\begin{proof}
A measure~$\Delta$ representing~$R$ on~$\Cmu$ exists by
Lemma~\ref{l:represent}.  Proposition~\ref{p:Upsilon_and_a} and
Definition~\ref{d:more-sequences} prove the first and third equalities
(between the $\Ups$'s and $d$'s).  The second equality (between the
two types of~$d$'s) is Proposition~\ref{p:convergence-of-d}.
Corollary~\ref{c:equiv-limits-of-td} proves equality
with~$\ev(\Delta)$ for a particular choice of~$\Delta$.  However,
$\Delta$ can be replaced by any measure~$\Delta' \Subset \Rmu$ with
$R(X) = \<\Delta',X\>$ for all $X \in \Cmu$ because $\ev(\Delta')$
only depends on the values of~$\<\Delta',X\>$ for $X \in \oCmu$ by
confinement in Corollary~\ref{c:confinement-to-Cmu}.
\end{proof}

\begin{cor}\label{c:convergence-of-Upsilon}
If the measure~$\mu$ in Theorem~\ref{t:convergence-of-Upsilon} is
immured, then the limits are confined to the closed fluctuating
cone~$\oCmu$, in the sense that the $\argmin$ in each of~$d_n$
and~$\Ups$ can be taken over~$\oCmu$ instead of~$\Emu$, and moreover
$$
  \lim_{n\to\infty} \frac 1{t_n}\Upt(t_n R_n)
  =
  \lim_{t \to 0} \frac 1t \argmin_{X \in \oCmue} \bigl(F(\exp_\bmu X) - t R(X)\bigr).
$$
\end{cor}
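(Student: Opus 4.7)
The plan is to leverage the immured hypothesis via Corollary~\ref{c:confinement-to-Cmu}. By Lemma~\ref{l:represent} there exists a convergent sequence $\Delta_n \to \Delta$ of measures sampled from $\Rmu$ with $R_n(X) = \<\Delta_n, X\>$ and $R(X) = \<\Delta, X\>$ for all $X \in \oCmu$. Because $\mu$ is immured, Corollary~\ref{c:confinement-to-Cmu} forces the limiting escape vector $\ev(\Delta)$ to lie in the closed fluctuating cone~$\oCmu$. Thus the limit asserted by Theorem~\ref{t:convergence-of-Upsilon} already lands in $\oCmu$, and the task is to verify that minimizing over the smaller set $\oCmue$ instead of $\Emue$ produces the same limit.

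For the first claim---that the $\argmin$ in $d_n$ and $\Ups$ may be taken over $\oCmu$---I would use Lemma~\ref{l:formula-wd_n}, which expresses the direction of $\ud_n$ as an $\argmax$ of $R_n(\theta)/\sqrt{\Lambda_\bmu(\theta)}$ over $\SEmu$. Since $R_n|_{\oCmu} = \<\Delta_n, \cdot\>$ and $\oSCmu$ is compact, running the uniqueness argument of Proposition~\ref{p:unique-c-and-limits} with $\SEmu$ replaced by $\oSCmu$ yields a sequence of directions in $\oSCmu$ whose unique limit is $\theta_\ev \in \oSCmu$; this agrees with the limit obtained over $\SEmu$ precisely because $\ev(\Delta) \in \oCmu$. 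The radial formula in Lemma~\ref{l:formula-wd_n} then gives the same radius, and Proposition~\ref{p:Upsilon_and_a} transfers the conclusion from $d_n$ to $\Ups(t_n R_n)$, which for $n$ sufficiently large is $\log_\bmu$ of such a minimizer.

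For the equality in the final display, the left-hand side equals $\ev(\Delta)$ by Theorem~\ref{t:convergence-of-Upsilon}. For the right-hand side, substituting $R(X) = \<\Delta, X\>$ (valid on $\oCmue$ by Lemma~\ref{l:represent}) turns the $\argmin$ expression into $\argmin_{X \in \oCmue}\bigl(F(\exp_\bmu X) - t\<\Delta, X\>\bigr)$. This is precisely the version of Theorem~\ref{t:escape-vector-confinement}(\ref{ev}) with the $\argmin$ taken over $\oCmue$ instead of $\Emue$. Its validity follows from applying the first claim to the constant sequence $\Delta_n \equiv \Delta$ with $t_n = t \to 0$, so the limit as $t \to 0$ is $\ev(\Delta)$, matching the left-hand side.

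The chief technical obstacle is verifying that the restriction from $\Emue$ to the smaller set $\oCmue$ neither destroys convergence nor alters the limiting value. This rests on strong convexity (Lemma~\ref{l:uniform-convexity}) for uniqueness of the limiting minimizer and on Corollary~\ref{c:confinement-to-Cmu} for the crucial containment $\theta_\ev \in \oSCmu$; the latter is the only place where the immured hypothesis enters essentially, ensuring that a minimizer selected over the smaller set still converges to the escape vector identified over the larger one.
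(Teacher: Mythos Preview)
Your proposal is correct and follows essentially the same approach as the paper. Both invoke Corollary~\ref{c:confinement-to-Cmu} (using the immured hypothesis) to place $\ev(\Delta)$ in~$\oCmu$, then identify the right-hand side with $\ev(\Delta)$ via Theorem~\ref{t:escape-vector-confinement} and the representation $R(X) = \<\Delta,X\>$ on~$\oCmu$; you spell out more explicitly than the paper why restricting the $\argmin$ to~$\oCmue$ preserves the limit (by re-running the polar coordinate argument over~$\oSCmu$), whereas the paper simply cites parts~(\ref{ev}) and~(\ref{b}) of Theorem~\ref{t:escape-vector-confinement} and leaves that restriction step implicit.
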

\begin{proof}
The immured hypothesis allows Corollary~\ref{c:confinement-to-Cmu} to
confine escape vectors of all measures sampled from~$\Rmu$ to~$\oCmu$.
Therefore the limit on the right-hand side here equals~$\ev(\Delta)$
by Theorem~\ref{t:escape-vector-confinement}(\ref{ev}) and~(\ref{b})
because in Theorem~\ref{t:convergence-of-Upsilon}, $\Delta \Subset
\Rmu$ and $R(X) = \<\Delta,X\>$ for all $X \in \Cmu$, and hence---by
continuity---for all $X \in \oCmu$.
\end{proof}

\section{Geometric central limit theorems}\label{s:CLT}

The central limit theorem that has been the primary target all along
is Theorem~\ref{t:clt}, which characterizes the distribution of
rescaled Fr\'echet means on a smoothly stratified metric space.  Its
statement and the definition of distortion map in
Definition~\ref{d:distortion-map} are isolated in Section~\ref{b:clt}.
The subsequent two subsections~\ref{b:barycenter}
and~\ref{b:minimizer} detail variational interpretations of the CLT as
directional derivatives in spaces of measures or functions.  In
preparation for all of these results, it is necessary to identify the
Gaussian objects on stratified spaces in Section~\ref{b:represent} at
the core of the limiting distribution.  This leads to a preliminary
version of the CLT in Section~\ref{b:perturbation-CLT}, expressed in
terms of tangent perturbations, that collects the various limits of
random tangent objects.

\subsection{Representation of stratified Gaussians}\label{b:represent}
\mbox{}\medskip

\noindent
The right-hand side of any geometric CLT is a stratified-space
generalization of a Gaussian distribution.  The purpose of this
subsection is to construct a random tangent vector whose distribution
is Gaussian in Definition~\ref{d:gaussian-section} and
Theorem~\ref{t:Riesz-representation}.  The crux of the theory needed
for that purpose is representability of the Gaussian tangent field~$G$
(Theorem~\ref{t:Riesz-representation}), in the sense of Riesz
representation, which allows escape vectors to reduce convergence in
the desired CLT to the Gaussian tangent field version in
Theorem~\ref{t:tangent-field-CLT}.  This representability is one of
the main consequences of \mbox{tangential}~\mbox{collapse}
$$
  \LL: \Tmu \to \RR^m
$$
as constructed in \cite{tangential-collapse}.  By
Definition~\ref{d:collapse} and Theorem~\ref{t:collapse}, the
collapse~$\LL$
\begin{itemize}
\item%
preserves the Fr\'echet mean,
\item%
restricts to an isometry from the fluctuating cone~$\Cmu$ to its
image, and
\item%
preserves angles between arbitrary and fluctuating tangent vectors.
\end{itemize}
Leveraging tangential collapse to deduce representability proceeds
by pushing various measures and inner products through~$\LL$, which
begins as follows.

\begin{defn}\label{d:euclidean-gaussian}
Fix a tangential collapse $\LL: \Tmu \to \RR^m$ of the measure~$\mu$
on a smoothly stratified metric space~$\MM$ as in
Theorem~\ref{t:collapse}.  Write $\hmu = (\log_\bmu)_\sharp\mu$ for
the pushforward of~$\mu$ under the log map at~$\bmu$.  The
\emph{collapsed measure} is
$$
  \muL
  =
  \LL_\sharp\hmu
  =
  (\LL \circ \log_\bmu)_\sharp \mu.
$$
As $\RR^m$ is a vector space, average the random variables $\LL(X_i)$
for $X_i = \log_\bmu x_i$ to get
$$
  \oXnL
  =
  \frac 1n \sum_{i=1}^n \LL(\log_\bmu x_i)
  =
  \frac 1n \sum_{i=1}^n \LL(X_i).
$$
\end{defn}

\begin{remark}\label{r:hypotheses}
Definition~\ref{d:euclidean-gaussian} and many other items in this
section do not assume that the measure~$\mu$ is amenable, localized,
or immured.  These hypotheses enforce certain guarantees---the
amenable hypothesis guarantees that escape vectors are well defined by
Theorem~\ref{t:escape-vector-confinement}, for example, and the
immured hypothesis forces escape vector confinement to the fluctuating
cone by Corollary~\ref{c:confinement-to-Cmu}, while the localized
hypothesis guarantees that tangential collapses exist by
Theorem~\ref{t:collapse}---but these hypotheses are omitted from
definitions and results where the constructions make sense without the
hypotheses on~$\mu$ and could in theory be guaranteed by some other
(perhaps weaker) assumptions.  Thus
Definition~\ref{d:euclidean-gaussian} stipulates that a
collapse has been fixed, but it is agnostic about whether that
collapse might have resulted from assuming that $\mu$ is localized.
The same can be said of, for instance, Lemma~\ref{l:supp-NLmu}.
\end{remark}

Vanishing of expected inner products with fluctuating vectors~$V$ in
Remark~\ref{r:nablamuF(V)=m(mu,V)} makes empirical tangent fields
(Definition\,\ref{d:empirical-tangent-field}) act cleanly under
tangential collapse.

\begin{defn}\label{d:gaussian-resolving-vector}
In the setup of Definition~\ref{d:euclidean-gaussian}, a
\emph{Gaussian resolving vector} of~$\mu$ is a Gaussian distributed
vector
$$
  \NLmu \sim \cN(0,\Sigma)
$$
in~$\RR^m$ whose covariance is $\Sigma = \Cov(\muL)$.
\end{defn}

\begin{remark}\label{r:classical-CLT-muL}
The classical CLT for~$\muL$ reads $\sqrt n\,\oXnL \overset{d\;}\to
\NLmu$, or equivalently, in terms of the measure $\muL = (\LL \circ
\log_\bmu)_\sharp \mu$ with covariance $\Sigma = \Cov(\muL)$,
$$
  \lim_{n\to\infty} \sqrt n\,\oXnL \sim \cN\bigl(0,\Sigma\bigr).
$$
The support of this Gaussian is a linear subspace of~$\RR^m$, possibly
a proper subspace, characterized directly in terms of~$\mu$ and~$\LL$
in Lemma~\ref{l:supp-NLmu}.
\end{remark}

\pagebreak[2]

\begin{defn}\label{d:section}
Fix a tangential collapse $\LL: \Tmu \to \RR^m$ of an amenable
measure~$\mu$ on a smoothly stratified metric space~$\MM$, and write
$\RR^\ell = \hull\muL \subseteq \RR^m$.
\begin{enumerate}
\item\label{i:section}%
A \emph{section} of an $\RR^\ell$-valued random variable~$Y$ is a
random variable $\Delta$ valued in the space $\Disc(\Rmu)$ of measures
sampled from~$\Rmu$ (Definition~\ref{d:Rmu}) satisfying $\LL(\Delta) =
Y$ (Definition~\ref{d:LL(Delta)}).  Write $\Delta \in \Lmui(Y)$ to
indicate $\Delta$ is a section of~$Y$.
\item\label{i:dual-vector}%
Any section $\wXnL \in \Lmui(\oXnL)$ is a \emph{dual vector}
of~$\bgn$.
\end{enumerate}
\end{defn}

\begin{remark}\label{r:section}
Sometimes in probabilistic contexts sections are called selections.
Geometrically, and in formal probabilistic terms, $Y: \Omega \to
\RR^\ell$ in Definition~\ref{d:section} is a measurable function, so a
section $\Delta: \Omega \to \Disc(\Rmu)$ of~$Y$ can literally be
induced by a section $\sigma: \RR^\ell \to \Disc(\Rmu)$ in the usual
sense that $\LL \circ \sigma = \mathrm{id}_{\RR^\ell}$.  Such sections
exist for general reasons:
\begin{itemize}
\item%
proper continuous maps between compact Hausdorff spaces admit Borel
measurable sections \cite[Theorem~6.9.6]{bogachev2007}, and
\item%
any convex projection admits a continuous section because the convex
set that is the target is already a subset of the source.
\end{itemize}
Although the map $\Disc(\Rmu) \to \RR^\ell$ induced by a
collapse~$\LL$
from Theorem~\ref{t:collapse} need not itself be proper, for example
because the convex projection can collapse entire strata (compact or
otherwise) to the cone point, thanks to
Theorem~\ref{t:collapse}.\ref{i:proper} the
collapse $\LL$ is a proper map followed by a convex projection, both
of which admit measurable~sections.
\end{remark}

Recall Lemma~\ref{l:supp-NLmu}: any Gaussian resolving vector~$\NLmu$
is supported on~$\RR^\ell = \hull\muL$.

\begin{defn}[Gaussian {}mass]\label{d:gaussian-section}
In the setup of Definitions~\ref{d:euclidean-gaussian}
and~\ref{d:gaussian-resolving-vector}, a \emph{Gaussian {}mass}
induced by~$\mu$ is a section
$$
  \Gmu \in \Lmui(\NLmu)
$$
of a Gaussian resolving vector~$\NLmu$ from
Definition~\ref{d:section}.
\end{defn}

\begin{remark}\label{r:Gmu}
The stratified-space analogue of a normally distributed vector is the
notion of Gaussian {}mass~$\Gmu$ valued in measures sampled
from~$\Tmu$ as in Definition~\ref{d:gaussian-section}.  It implements
a transformative shift in perspective: geometrically in~$\Tmu$, the
supports of the measures~$\Gmu(\omega) \Subset \Tmu$ for
events~$\omega \in \Omega$ might all be constrained to an unexpectedly
small subset of~$\Tmu$, such as when $\mu$ itself has finite support.
This stands in contrast to classical Gaussian distributions, which are
geometrically continuous because sample means vary throughout full
neighborhoods of the population mean.  In singular settings, whatever
continuous variation is lacking in the support of~$\mu$ is reallocated
to the weights in positive combinations of masses at support points.
\end{remark}

Two lemmas are required for the proof of
Theorem~\ref{t:Riesz-representation}.

\begin{lemma}\label{l:equal-LL=>equal-inner-prod}
Fix a tangential collapse $\LL: \Tmu \to \RR^m$ of a measure~$\mu$
on a smoothly stratified space~$\MM$ with measures $\Delta$
and~$\pDelta\!$ sampled from~$\Tmu$ such that $\LL(\Delta)
=\nolinebreak \LL(\pDelta)$.  Then $\<\Delta, X\> = \<\pDelta, X\>$
for all $X$ in the closure~$\oCmu$ of the fluctuating~cone.
\end{lemma}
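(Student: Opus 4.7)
The plan is to expand both inner products in terms of their atomic representations and then use the partial isometry condition in Definition~\ref{d:collapse}.\ref{i:partial-isometry} to push the computation into the linear space $\RR^m$, where bilinearity lets us factor out~$\LL$. Concretely, write $\Delta = \sum_{i=1}^j \lambda_i\delta_{W^i}$ and $\pDelta = \sum_{k=1}^{j'} \lambda'_k\delta_{W'^k}$, and fix $X \in \Cmu$. By Definition~\ref{d:pair-with-discrete-measure},
\begin{equation*}
  \<\Delta,X\> = \sum_{i=1}^j \lambda_i \<W^i, X\>_\bmu,
  \qquad
  \<\pDelta,X\> = \sum_{k=1}^{j'} \lambda'_k \<W'^k, X\>_\bmu.
\end{equation*}

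Next I would apply Definition~\ref{d:collapse}.\ref{i:partial-isometry}: since $X \in \Cmu$ is fluctuating and each $W^i$ (resp.\ $W'^k$) is an arbitrary tangent vector, the condition gives $\<W^i, X\>_\bmu = \<\LL(W^i), \LL(X)\>_{\LL(\bmu)}$ and likewise for the primed terms. Because $\RR^m$ is a genuine inner product space, bilinearity lets us pull the positive coefficients inside the first slot, producing
\begin{equation*}
  \<\Delta,X\> = \Bigl\<\sum_{i=1}^j \lambda_i\LL(W^i),\LL(X)\Bigr\>
               = \<\LL(\Delta),\LL(X)\>,
\end{equation*}
and similarly $\<\pDelta,X\> = \<\LL(\pDelta),\LL(X)\>$ by the defining formula in Definition~\ref{d:LL(Delta)}. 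The hypothesis $\LL(\Delta) = \LL(\pDelta)$ then yields $\<\Delta,X\> = \<\pDelta,X\>$ for every $X \in \Cmu$.

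Finally, to pass from $\Cmu$ to its closure $\oCmu$, I would invoke continuity of the inner product from Lemma~\ref{l:inner-product-is-continuous}: both sides $X \mapsto \<\Delta,X\>$ and $X \mapsto \<\pDelta,X\>$ are finite positive linear combinations of continuous functions $X \mapsto \<W^i,X\>_\bmu$ and hence continuous on $\Tmu$, so equality on the dense subset $\Cmu \subseteq \oCmu$ propagates to all of $\oCmu$. No real obstacle is anticipated; the only subtlety worth flagging is making sure the partial isometry axiom is applied in the correct slot---$X$ must be the fluctuating vector, while the atoms $W^i$ of $\Delta$ and $\pDelta$ are unconstrained---which is exactly how Definition~\ref{d:collapse}.\ref{i:partial-isometry} is stated.
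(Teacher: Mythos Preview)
Your proof is correct and follows essentially the same approach as the paper: apply the partial isometry axiom (Definition~\ref{d:collapse}.\ref{i:partial-isometry}) term by term via Definitions~\ref{d:pair-with-discrete-measure} and~\ref{d:LL(Delta)} to reduce to the Euclidean inner product with $\LL(\Delta)=\LL(\pDelta)$, then extend from $\Cmu$ to $\oCmu$ by continuity (Lemma~\ref{l:inner-product-is-continuous}). The paper's proof is a terse one-sentence citation of the same ingredients; yours simply writes the details out.
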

\begin{proof}
Definitions~\ref{d:pair-with-discrete-measure}, \ref{d:LL(Delta)},
and~\ref{d:collapse}.\ref{i:partial-isometry} imply $\<\Delta, X\> =
\<\pDelta, X\>$ for any fixed $X \in\nolinebreak \Cmu$, given the
equality between images under~$\LL$.  Use continuity of $\LL$ in
Definition~\ref{d:collapse}.\ref{i:continuous} and of inner product
in Lemma~\ref{l:inner-product-is-continuous} to extend the conclusion
to~$\oCmu$.
\end{proof}

\begin{lemma}\label{l:projected-bgn}
In the setup of Definition~\ref{d:euclidean-gaussian}, for
localized~$\mu$ the
tangent field\/~$\bgn$
satisfies
$$
  \bgn(X)
  =
  \bigl\<\oXnL, \LL(X)\bigr\>_{\LL(\bmu)}
$$
for all\/ $X \hspace{-.5ex}\in\hspace{-.3ex} \oCmu$, where the inner
product $\<\,\cdot\,,\,\cdot\,\>_{\LL(\bmu)}$ on~$\RR^m\!$ is natural
from Theorem~\ref{t:collapse}.\ref{i:stratum}.
\end{lemma}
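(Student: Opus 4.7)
The plan is to verify the identity first on the open fluctuating cone~$\Cmu$ and then extend by continuity to its closure~$\oCmu$. So first I would fix $X \in \Cmu$, unpack the definition of $\bgn$ from Definition~\ref{d:empirical-tangent-field}, namely
\[
  \bgn(X) \;=\; \frac{1}{n}\sum_{i=1}^n g_i(X)
          \;=\; \frac{1}{n}\sum_{i=1}^n \bigl(\<X_i, X\>_\bmu - m(\mu,X)\bigr),
\]
where $X_i = \log_\bmu x_i$. Since $\Cmu \subseteq \Emu$ by Definition~\ref{d:fluctuating-cone}, Remark~\ref{r:nablamuF(V)=m(mu,V)} gives $m(\mu,X) = -\nablabmu F(X) = 0$, so the centering term drops out and $\bgn(X) = \frac{1}{n}\sum_i \<X_i,X\>_\bmu$.

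Next I would apply the key property of tangential collapse, namely the partial isometry in Definition~\ref{d:collapse}.\ref{i:partial-isometry}: for any $V \in \Tmu$ and any fluctuating vector $U \in \Cmu$,
\[
  \<U,V\>_\bmu = \bigl\<\LL(U),\LL(V)\bigr\>_{\LL(\bmu)}.
\]
Applying this with $U = X \in \Cmu$ and $V = X_i \in \Tmu$ yields $\<X_i, X\>_\bmu = \<\LL(X_i),\LL(X)\>_{\LL(\bmu)}$ for each $i$. Averaging and using bilinearity of the Euclidean inner product on $\RR^m$ (together with Definition~\ref{d:euclidean-gaussian}) gives
\[
  \bgn(X) \;=\; \frac{1}{n}\sum_{i=1}^n \bigl\<\LL(X_i),\LL(X)\bigr\>_{\LL(\bmu)}
          \;=\; \Bigl\<\tfrac{1}{n}\sum_{i=1}^n \LL(X_i),\,\LL(X)\Bigr\>_{\LL(\bmu)}
          \;=\; \bigl\<\oXnL,\LL(X)\bigr\>_{\LL(\bmu)}.
\]

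Finally, to pass from $X \in \Cmu$ to $X \in \oCmu$, I would invoke continuity of the two sides as functions of~$X$: the inner product on~$\Tmu$ is continuous by Lemma~\ref{l:inner-product-is-continuous}, which makes $\bgn$ continuous on~$\Tmu$, while $\LL$ is continuous by Definition~\ref{d:collapse}.\ref{i:continuous} and the Euclidean inner product is continuous, making the right-hand side continuous in~$X$. Since the two continuous functions agree on the dense subset $\Cmu \subseteq \oCmu$, they agree on all of $\oCmu$. The only conceptual point that requires care is that the partial-isometry identity only holds when one of the arguments lies in~$\Cmu$, which is exactly the hypothesis $X \in \oCmu$ we are given; no such restriction is needed on the $X_i$. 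Beyond that, the proof is a direct assembly of Definitions~\ref{d:empirical-tangent-field}, \ref{d:euclidean-gaussian}, \ref{d:collapse}, and Remark~\ref{r:nablamuF(V)=m(mu,V)}, with no real obstacle.
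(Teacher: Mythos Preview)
Your proof is correct and follows essentially the same approach as the paper: drop the centering term $m(\mu,X)$ via Remark~\ref{r:nablamuF(V)=m(mu,V)}, push each inner product through~$\LL$ using the partial-isometry property of tangential collapse, and pull the sum inside by linearity of~$\RR^m$. The paper cites Lemma~\ref{l:equal-LL=>equal-inner-prod} for the inner-product step (whose proof is precisely Definition~\ref{d:collapse}.\ref{i:partial-isometry}) and is terser about the extension to~$\oCmu$, but your explicit continuity argument for the closure is a welcome clarification.
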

\begin{proof}
The inner product on~$\RR^m$ comes from
Definition~\ref{d:inner-product} via
Theorem~\ref{t:collapse}.\ref{i:proper} because each stratum of~$\MM$
is itself a smoothly stratified metric space.
Remark~\ref{r:nablamuF(V)=m(mu,V)} allows the tangent mean function
terms $m(\mu,X)$ in~$\bgn$ from
Definition~\ref{d:empirical-tangent-field} to be ignored (this is
where the localized hypothesis enters.
Lemma~\ref{l:equal-LL=>equal-inner-prod} implies that the inner
products in~$\bgn$ are unchanged by pushing forward along~$\LL$, and
the sum can be taken inside the inner product because $\RR^m$ is
linear.
\end{proof}

\begin{remark}\label{r:dual-vector}
Lemma~\ref{l:projected-bgn} is equivalent to $\bgn(X) = \<\wXnL,
X\>_\bmu$ for all $X \in \Cmu$.  Indeed, by
Definition~\ref{d:collapse}.\ref{i:partial-isometry} tangential
collapse~$\LL$ does not alter inner products with vectors in~$\Cmu$,
so the assertion holds by
Definition~\ref{d:section}.\ref{i:dual-vector}.  This rephrasing of
Lemma~\ref{l:projected-bgn} says that $\bgn$ is the dual in~$\Tmu$ of
the random vector~$\wXnL$ under the inner product
$\<\,\cdot\,,\,\cdot\,\>_\bmu$ in Definition~\ref{d:inner-product}.
In contrast, Lemma~\ref{l:projected-bgn} says that $\bgn$ is
expressible in terms of genuine duality in~$\RR^m$ under the inner
product $\<\,\cdot\,,\,\cdot\,\>_{\LL(\bmu)}$.
\end{remark}

\pagebreak[3]

\begin{thm}\label{t:Riesz-representation}
Fix a localized measure~$\mu$ on a smoothly stratified metric space.
Gaussian {}masses~$\Gmu$ in Definition~\ref{d:gaussian-section}
exist, and the Gaussian tangent field~$G$ in
Theorem~\ref{t:tangent-field-CLT} is, on the closed fluctuating cone,
the inner \mbox{product}~with~any~$\Gmu$:
$$
  G(X) = \<\Gmu,X\>_\bmu \text{ for all }X \in \oCmu.
$$
\end{thm}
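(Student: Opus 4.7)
The plan is to handle the two assertions—existence of a Gaussian mass and the identity $G(X)=\<\Gmu,X\>_\bmu$ on $\oCmu$—in sequence, using tangential collapse as the bridge between the singular object~$\Gmu$ and the linear Gaussian vector~$\NLmu$.

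First I would establish existence of $\Gmu$. Fix a tangential collapse $\LL:\Tmu\to\RR^m$, which exists by Theorem~\ref{t:collapse} because $\mu$ is localized. By Lemma~\ref{l:supp-NLmu} the Gaussian resolving vector~$\NLmu$ takes values in $\RR^\ell=\hull\muL\subseteq\RR^m$. To produce a section, invoke the two-step factorization of $\LL$ in Theorem~\ref{t:collapse}.\ref{i:proper}: a proper continuous map between compact Hausdorff sets admits a Borel section by \cite[Theorem~6.9.6]{bogachev2007}, and a convex geodesic projection has a canonical inclusion-type section. Composing these yields a measurable $\sigma:\RR^\ell\to\Disc(\Rmu)$ with $\LL\circ\sigma=\mathrm{id}$, and then $\Gmu:=\sigma(\NLmu)$ is the required Gaussian mass.

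Next I would identify $G$ on $\oCmu$ with the linear functional of~$\NLmu$ given by $X\mapsto\<\NLmu,\LL(X)\>_{\LL(\bmu)}$. The idea is to route the random field CLT of Theorem~\ref{t:tangent-field-CLT} through the classical multivariate CLT. Lemma~\ref{l:projected-bgn} gives
\[
  \sqrt n\,\bgn(X) \;=\; \bigl\<\sqrt n\,\oXnL,\LL(X)\bigr\>_{\LL(\bmu)}
  \qquad\text{for all } X\in\oCmu.
\]
The classical CLT in~$\RR^\ell$ yields $\sqrt n\,\oXnL\xrightarrow{d}\NLmu$, and for any finite tuple $(X_1,\dots,X_k)\subseteq\oCmu$ the map $y\mapsto(\<y,\LL(X_1)\>,\dots,\<y,\LL(X_k)\>)$ is linear and continuous, so the continuous mapping theorem delivers joint convergence of $\sqrt n\,\bgn(X_1),\dots,\sqrt n\,\bgn(X_k)$ to $\<\NLmu,\LL(X_1)\>_{\LL(\bmu)},\dots,\<\NLmu,\LL(X_k)\>_{\LL(\bmu)}$. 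Combined with Theorem~\ref{t:tangent-field-CLT}, the finite-dimensional distributions of $G|_{\oCmu}$ coincide with those of $\<\NLmu,\LL(\,\cdot\,)\>_{\LL(\bmu)}$. Since both processes are centered and Gaussian (the latter manifestly, being linear in a Gaussian vector), this agreement on finite-dimensional distributions gives equality in distribution as random tangent fields on~$\oCmu$.

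Finally I would transfer the identification back to~$\Gmu$. For $X\in\Cmu$, Definition~\ref{d:collapse}.\ref{i:partial-isometry} applied summand-by-summand to $\Gmu=\sum_i\lambda_i\delta_{Y^i}$ gives $\<\Gmu,X\>_\bmu=\<\LL(\Gmu),\LL(X)\>_{\LL(\bmu)}=\<\NLmu,\LL(X)\>_{\LL(\bmu)}$; continuity of the inner product (Lemma~\ref{l:inner-product-is-continuous}) combined with continuity of~$\LL$ (Definition~\ref{d:collapse}.\ref{i:continuous}) extends the identity to all $X\in\oCmu$. Together with the previous paragraph this yields $G(X)=\<\Gmu,X\>_\bmu$ on~$\oCmu$. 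Lemma~\ref{l:equal-LL=>equal-inner-prod} ensures that the right-hand side does not depend on which measurable section~$\sigma$ was chosen, so the conclusion holds for any Gaussian mass~$\Gmu$. The main technical obstacle I anticipate is bookkeeping the passage from $\Cmu$ to $\oCmu$ cleanly so that partial isometry (which is stated for fluctuating vectors) transfers by continuity to the whole closed fluctuating cone; everything else is essentially an assembly of already proved ingredients.
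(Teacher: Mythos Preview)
Your proposal is correct and follows essentially the same approach as the paper: fix a tangential collapse via Theorem~\ref{t:collapse}, obtain existence of~$\Gmu$ from the section argument in Remark~\ref{r:section}, use Lemma~\ref{l:projected-bgn} to write $G_n(X)=\<\sqrt n\,\oXnL,\LL(X)\>_{\LL(\bmu)}$ on~$\Cmu$, then identify the two limits coming from the classical CLT (Remark~\ref{r:classical-CLT-muL}) and the random field CLT (Theorem~\ref{t:tangent-field-CLT}), transferring back to~$\<\Gmu,X\>_\bmu$ via partial isometry (equivalently Lemma~\ref{l:equal-LL=>equal-inner-prod}). The paper's write-up is terser---it does not spell out the finite-dimensional-distributions step or the continuity passage to~$\oCmu$---but the logical skeleton is identical.
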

\begin{proof}
Fix a tangential collapse~$\LL$ of the localized measure~$\mu$ by
Theorem~\ref{t:collapse}.  Existence of~$\Gmu$ has been noted in
Remark~\ref{r:section}.
For the rest,
$$
\bigl\<\sqrt n\,\oXnL, \LL(X)\bigr\>_{\LL(\bmu)}
   = \sqrt n \bigl\<\oXnL, \LL(X)\bigr\>_{\LL(\bmu)}
   = \sqrt n\,\bgn(X)
   = G_n(X)
$$
for all $X \in \Cmu$, where the leftmost equality is because inner
products commute with scaling by Definition~\ref{d:inner-product}, the
middle equality is Lemma~\ref{l:projected-bgn}, and the rightmost
equality is simply by Definition~\ref{d:empirical-tangent-field}.  The
classical CLT for~$\muL$ in Remark~\ref{r:classical-CLT-muL} says that
the leftmost inner product converges to $\bigl\<\NLmu,
\LL(X)\bigr\>{}_{\LL(\bmu)}$, which equals $\<\Gmu, X\>_\bmu$ by
invariance of inner product under~$\LL$ in
Lemma~\ref{l:equal-LL=>equal-inner-prod}.  On the other hand, the
functional CLT for random fields in Theorem~\ref{t:tangent-field-CLT}
says that the~$G_n$ on the right converges to~$G$.
\end{proof}

\subsection{CLT by perturbation}\label{b:perturbation-CLT}

\begin{remark}\label{r:aspires}
The goal to which the paper aspires is identification of the limit of
$\sqrt n \log_\bmu \bmu_n$ as $n \to \infty$.  The following central
limit theorem does so in a way that encapsulates all of the
convergences contributing to the main geometric CLT, namely
Theorem~\ref{t:clt}.  The various interpretations and window dressings
in later subsections notwithstanding, the relevant limit is actually
taken only once, and it occurs in the proof of
\ref{t:perturbative-CLT}, which applies the continuous mapping theorem
to the deterministic analogue in
Corollary~\ref{c:convergence-of-Upsilon}, which is stated precisely
for this purpose.
\end{remark}

For the reader flipping to this spot from elsewhere, it may help to
recall explicitly from Definition~\ref{d:H} that
\begin{align*}
  H_n
  &\in
  \argmin_{X \in \Tmue}\bigl(F(\exp_\bmu X) - \bgn(X)\bigr)
\\ \text{and}\quad
  H(t)
  &\in
  \argmin_{X \in \oCmue}\bigl(F(\exp_\bmu X) - tG(X)\bigr),
\end{align*}
where $\bgn$ is from Definition~\ref{d:empirical-tangent-field} and
$G$ is from Definition~\ref{d:gaussian-tangent-field} (see also
Theorem~\ref{t:tangent-field-CLT} for both), and the escape vector
$\ev$ is characterized in Theorem~\ref{t:escape-vector-confinement},
including polar coordinates, although it first appears in
Definition~\ref{d:delta-escape}.  Gaussian {}masses~$\Gmu$ are
introduced in Definition~\ref{d:gaussian-section} and characterized in
Theorem~\ref{t:Riesz-representation}.

\begin{thm}[Perturbative~CLT]\label{t:perturbative-CLT}
Fix a localized immured amenable measure~$\mu$ on a smoothly
stratified metric space~$\MM$.  The empirical Fr\'echet mean~$\bmu_n$,
empirical and Gaussian tangent perturbations $H_n$ and~$H(t)$, and
escape vector~$\ev(\Gmu)$ of any Gaussian {}mass~$\Gmu$~satisfy
$$
  \lim_{n\to\infty} \sqrt n \log_\bmu \bmu_n
  \overset{d}=
  \lim_{n\to\infty} \sqrt n H_n
  \overset{d}=
  \lim_{t\to 0} \frac 1t H(t)
  =
  \ev(\Gmu).
$$
In terms of the Gaussian tangent field~$G$, this limit has polar
coordinates
$$
  \theta_{\ev(\Gmu)}
  = \argmax_{X\in\oSCmu} \frac{G(X)}{\sqrt{\Lambda_\bmu(X)}}
  \quad\text{and}\quad
  r_{\ev(\Gmu)} = \frac{G(\theta_{\ev(\Gmu)})^+}{2\Lambda_\bmu(\theta_{\ev(\Gmu)})}.
$$
\end{thm}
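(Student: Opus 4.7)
The plan is to combine four earlier results: Proposition~\ref{p:transform} reduces $\sqrt n\log_\bmu\bmu_n$ to the empirical tangent perturbation; Theorem~\ref{t:tangent-field-CLT} together with Corollary~\ref{c:cont-realization} supplies almost-sure convergence $\Gn \to G$ in $\cC(\Smu,\RR)$; Corollary~\ref{c:convergence-of-Upsilon} is the required deterministic continuous mapping statement for the perturbation map~$\Upt$; and Theorem~\ref{t:Riesz-representation} identifies the functional limit as an escape vector of a Gaussian mass. In spirit this is a continuous mapping theorem, with Corollary~\ref{c:convergence-of-Upsilon} providing the continuity along representable sequences and Theorem~\ref{t:Riesz-representation} translating between random tangent fields and Gaussian masses.

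Setting $t_n = 1/\sqrt n$ and $R_n = \Gn = \sqrt n\,\bgn$, Remark~\ref{r:Hn=Upsilon(bgn)} rewrites $\sqrt n H_n = \tfrac{1}{t_n}\Upt(t_n R_n)$, and Proposition~\ref{p:transform} immediately equates the distributional limits of $\sqrt n\log_\bmu\bmu_n$ and $\sqrt n H_n$. Passing to versions with $\Gn \to G$ almost surely in $\cC(\Smu,\RR)$ via Corollary~\ref{c:cont-realization}, each $\Gn$ is representable by the empirical measure $\mu_n$ (Example~\ref{e:representable}), so $\Gn \to G$ is a representable limit in the sense of Definition~\ref{d:representable}. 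Applying Corollary~\ref{c:convergence-of-Upsilon} pathwise then produces
$$\sqrt n H_n \;\longrightarrow\; \lim_{t\to 0}\tfrac{1}{t} H(t) = \ev(\Gmu)$$
almost surely, where the last equality holds because $G = \<\Gmu,\,\cdot\,\>_\bmu$ on $\oCmu$ by Theorem~\ref{t:Riesz-representation} while $\ev(\Gmu)$ depends on $\Gmu$ only through $\<\Gmu,\,\cdot\,\>$ on $\oCmu$ by the confinement Corollary~\ref{c:confinement-to-Cmu}. The polar coordinate formulas then drop out of Theorem~\ref{t:escape-vector-confinement} with $\Delta = \Gmu$: substitute $\<\Gmu,X\> = G(X)$ via Theorem~\ref{t:Riesz-representation} and restrict the $\argmax$ to $\oSCmu$ via Corollary~\ref{c:confinement-to-Cmu}.

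The main delicate point will be measurability: to interpret the right-hand side as a genuine random variable one must select the Gaussian mass $\Gmu \in \Lmui(\NLmu)$ in a measurable way, which is handled by the measurable section results cited in Remark~\ref{r:section} for proper maps and convex projections. The arbitrariness in the choice of~$\Gmu$ disappears upon taking $\ev(\Gmu)$ thanks to the same confinement Corollary~\ref{c:confinement-to-Cmu}. If one prefers to avoid almost-sure coupling, the extended continuous mapping theorem \cite[Theorem~1.11.1]{VW13} applied directly to Corollary~\ref{c:convergence-of-Upsilon} and the distributional convergence $\Gn \overset{d\;}\to G$ from Theorem~\ref{t:tangent-field-CLT} would yield the same conclusion, since the sequential continuity along representable inputs supplied by Corollary~\ref{c:convergence-of-Upsilon} is exactly what that theorem requires.
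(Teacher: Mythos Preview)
Your proposal is correct and follows essentially the same approach as the paper's proof: Proposition~\ref{p:transform} for the leftmost equality, the deterministic continuity in Corollary~\ref{c:convergence-of-Upsilon} combined with the tangent-field CLT for the middle equality, and Theorem~\ref{t:Riesz-representation} with Theorem~\ref{t:escape-vector-confinement} for the rightmost equality and polar coordinates. The only presentational difference is that you lead with the Skorokhod/almost-sure coupling from Corollary~\ref{c:cont-realization} and apply Corollary~\ref{c:convergence-of-Upsilon} pathwise, whereas the paper invokes the extended continuous mapping theorem \cite[Theorem~1.11.1]{VW13} directly---but you already name that as an equivalent alternative, and the two routes are interchangeable here.
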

\begin{proof}
The leftmost equality is the logarithm of
Proposition~\ref{p:transform} (which uses amenability), since $H_n =
\log_\bmu h_n$ by Definition~\ref{d:H}.  The rightmost equality is
because
\begin{align*}
\lim_{t\to 0} \frac 1t H(t)
& = \lim_{t\to 0}\frac 1t \argmin_{X \in \oCmue}\bigl(F(\exp_\bmu X) - tG(X)\bigr)
\\*
& = \lim_{t\to 0}\frac 1t \argmin_{X \in \oCmue}\bigl(F(\exp_\bmu X) - t\<\Gmu,X\>\bigr)
\\*
& =
  \ev(\Gmu),
\end{align*}
where the top line is by Definition~\ref{d:H}, the second line is by
Theorem~\ref{t:Riesz-representation} (which uses the localized
hypothesis), and the bottom line is
Theorem~\ref{t:escape-vector-confinement}(\ref{b}).

To prove the middle equality, write $\ev_0: \cC(\Tmu,\RR) \to \Tmu$
for the map
\begin{align*}
  \ev_0: R &\mapsto \lim_{t \to 0}
                   \frac 1t \argmin_{X\in\oCmue}\bigl(F(\exp_\bmu X)-tR(X)\bigr)
\intertext{defined on the space of functions from
Definition~\ref{d:sup-norm-on-continuous-functions}, and denote
by~$\cR$ the set of representable functions from
Definition~\ref{d:representable} to write $\evn: \cR \to \Tmu$ for the
maps}
  \evn:  R &\mapsto \frac 1{t_n} \Upt(t_n R)
\end{align*}
for all $n \geq 1$.  Corollary~\ref{c:convergence-of-Upsilon} (which
needs $\mu$ to be immured) says that whenever $R_n \to R$ is a
representable limit as in Definition~\ref{d:representable}, $\evn(R_n)
\to\nolinebreak \ev_0(R)$, where $\evn$ is defined on~$\DD_n = \cR$
for~$n \geq 1$ and~$\ev_0$ is defined on~$\DD_0 = \cC(\Tmu,\RR)$.  The
extended continuous mapping theorem \cite[Theorem~1.11.1]{VW13} for
the limit $R_n = \sqrt n\,\bar g_n \to G = R$ from
Corollary~\ref{c:cont-realization}, which is representable by
Example~\ref{e:representable}, yields the desired equality.
\end{proof}

\begin{remark}\label{r:confined-in-the-limit}
$H_n$ is a~priori unconfined but is proved, through the work in
Section~\ref{s:confinement}, to be confined to~$\oCmu$ in the limit.
That is what Corollary~\ref{c:convergence-of-Upsilon} says in the
deterministic setting, and the proof of
Theorem~\ref{r:confined-in-the-limit} via the extended continuous
mapping theorem pushes it to the probabilistic setting.
\end{remark}

\subsection{CLT by tangential collapse and distortion}\label{b:clt}

\begin{remark}\label{r:lift-and-escape}
Gaussian {}masses are constructed in
Definition~\ref{d:gaussian-section} by pushing the measure~$\mu$
forward through the logarithm map~$\log_\bmu$ and a tangential
collapse $\LL: \Tmu \to \RR^m$, applying the classical CLT
in~$\RR^m$ to the resulting pushforward measure~$\muL$ to get a
Gaussian random variable~$\NLmu \sim \cN(0,\Sigma)$ on~$\RR^\ell
\subseteq \RR^m$, and taking a section~$\Gmu \in \Lmui(\NLmu)$.
Sections of this sort can be far from unique, both~because
\begin{itemize}
\item%
a single point in~$\RR^\ell$ can be representable by multiple convex
combinations, and
\item%
tangential collapse entails convex projection
(Theorem~\ref{t:collapse}.\ref{i:proper}); see \cite[\S1.4.2,
particularly Fig.~2]{kale-2015} for a quintessential example of
collapsing that occurs.
\end{itemize}
However, inner products of fluctuating and Gaussian {}masses do
not depend on the choice of section in
Definition~\ref{d:gaussian-section}.  This invariance implies that the
escape vector of~$\Gmu$ does not depend on the choice of section, in
the following precise sense, a consequence of
Theorem~\ref{t:escape-vector-confinement} (via
Corollary~\ref{c:confinement-to-Cmu}) and Theorem~\ref{t:collapse}.
\end{remark}

\begin{prop}\label{p:reduced-escape}
Assume the hypotheses of Lemma~\ref{l:equal-LL=>equal-inner-prod} and
that~$\mu$ is immured and amenable.  If $\Delta$ and~$\pDelta$ are
sampled from $\Rmu$ then their escape vectors coincide:
$$
  \ev(\Delta) = \ev(\pDelta) \text{ if } \Delta + \pDelta \Subset \Rmu.
$$
\end{prop}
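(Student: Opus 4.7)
The strategy is to reduce the identity $\ev(\Delta) = \ev(\pDelta)$ to the pointwise equality $\<\Delta,X\> = \<\pDelta,X\>$ on $\oCmu$ supplied by Lemma~\ref{l:equal-LL=>equal-inner-prod}. The immured hypothesis is what makes this reduction work: it forces the $\argmin$ defining each escape vector to be taken effectively over the closed fluctuating cone $\oCmu$, which is exactly the locus where the two inner-product functions are known to agree.

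First I would record that the hypothesis $\Delta+\pDelta \Subset \Rmu$ means both $\Delta$ and $\pDelta$ are sampled from $\Rmu$, while the hypotheses of Lemma~\ref{l:equal-LL=>equal-inner-prod} supply a tangential collapse $\LL: \Tmu \to \RR^m$ with $\LL(\Delta) = \LL(\pDelta)$. With $\mu$ immured, amenable, and localized, Corollary~\ref{c:confinement-to-Cmu} then gives $\ev(\Delta), \ev(\pDelta) \in \oCmu$. This unlocks the refined characterization from Corollary~\ref{c:convergence-of-Upsilon}, applied to the constant (hence automatically representable) sequence $R_n(X) = \<\Delta,X\>$, which yields
$$
\ev(\Delta) \;=\; \lim_{t\to 0^+} \tfrac{1}{t}\argmin_{X \in \oCmue}\bigl(F(\exp_\bmu X)-t\<\Delta,X\>\bigr),
$$
and likewise with $\pDelta$ in place of $\Delta$. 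Alternatively, one can bypass the functional machinery and argue directly from Theorem~\ref{t:escape-vector-confinement}(\ref{ev}) with $\argmin$ over $\Emue$, using Corollary~\ref{c:confinement-to-Cmu} to observe that the rescaled minimizers eventually land inside $\oCmue$, so the $\argmin$ may be restricted to $\oCmue$ without changing the limit.

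Second, Lemma~\ref{l:equal-LL=>equal-inner-prod} applied to the pair $(\Delta,\pDelta)$ gives $\<\Delta,X\> = \<\pDelta,X\>$ for every $X \in \oCmu$, hence for every $X \in \oCmue \subseteq \oCmu$. The two objective functions $F(\exp_\bmu X)-t\<\Delta,X\>$ and $F(\exp_\bmu X)-t\<\pDelta,X\>$ therefore agree identically on $\oCmue$ for each fixed $t > 0$, so the $\argmin$ subsets of $\Tmu$ are equal; passing to the limit $t \to 0^+$ gives $\ev(\Delta) = \ev(\pDelta)$.

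The only nontrivial step is the first: restricting the $\argmin$ from $\Emue$ to $\oCmue$. Without the immured assumption, an infinitesimal perturbation of $\mu$ by $\Delta$ could in principle nudge the Fr\'echet mean into directions of $\Emu$ lying outside $\hull\mu$, where Lemma~\ref{l:equal-LL=>equal-inner-prod} has no control over inner products (compare Example~\ref{e:miracle} and Remark~\ref{r:immured}). That is precisely the scenario ruled out by the immured hypothesis via Corollaries~\ref{c:confinement-to-Cmu} and~\ref{c:convergence-of-Upsilon}; once the confinement to $\oCmue$ is in hand, equality of the escape vectors is immediate from the equality of the objective functions.
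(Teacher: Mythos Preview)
Your proposal is correct and follows essentially the same route as the paper: invoke Lemma~\ref{l:equal-LL=>equal-inner-prod} for equality of inner products on~$\oCmu$, use Corollary~\ref{c:confinement-to-Cmu} (via the immured hypothesis) to confine both escape vectors to~$\oCmu$, and then conclude from the characterization in Theorem~\ref{t:escape-vector-confinement} that the $\argmin$ may be restricted to~$\oCmu$, where the two objective functions coincide. The paper's proof is a single sentence to this effect; your version simply spells out the steps in more detail and offers Corollary~\ref{c:convergence-of-Upsilon} as an alternative justification for the restriction.
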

\begin{proof}
Starting from Lemma~\ref{l:equal-LL=>equal-inner-prod}, confinement in
Corollary~\ref{c:confinement-to-Cmu} (note the immured hypothesis)
allows use of the formula in Theorem~\ref{t:escape-vector-confinement}
with the $\argmin$ over the fluctuating cone~$\oCmu$ instead of the
escape cone to conclude equality of escape~vectors.
\end{proof}

\begin{remark}\label{r:reduced-escape}
Lemma~\ref{l:reduced-escape} ensures that a discrete measure $\Delta =
\Lmui(V)$ in the next definition exists.
Proposition~\ref{p:reduced-escape} guarantees that the choice
of~$\Delta$ does not matter.  Distortion assumes the measure to be
amenable for escape vectors to be defined
(Theorem~\ref{t:escape-vector-confinement}) and immured for
Proposition~\ref{p:reduced-escape}.  The distortion map is one of the
key concepts in central limit theory for Fr\'echet means on stratified
spaces; see Section~\ref{b:distortion} for a discussion of the
geometry of distortion.
\end{remark}

\begin{defn}[Distortion]\label{d:distortion-map}
Fix a tangential collapse $\LL: \Tmu \to \RR^m$ of an immured
amenable measure~$\mu$, and set $\RR^\ell = \hull\muL \subseteq
\RR^m$.  The \emph{distortion map} is
\begin{align*}
  \HH: \RR^\ell & \to \RR^\ell
\\            V & \mapsto \ev \circ \Lmui(V).
\end{align*}
\end{defn}

\begin{example}\label{e:smooth-fluctuating-cone}
If~$\mu$ is a localized measure on a smoothly stratified space~$\MM$
and the Fr\'echet mean~$\bmu$ lies in a maximal stratum of~$\MM$, such
as when $\MM$ itself is a manifold, then the fluctuating cone~$\Cmu$
is a vector space.  Indeed, since $\Chmu =
\Cmu$
\cite[Lemma~2.19]{tangential-collapse},
computing~$\Cmu$ may as well be carried out with the pushforward
measure~$\hmu$ on~$\Tmu$.  Since $\bmu$ lies in a maximal stratum,
\cite[Proposition~3.13]{tangential-collapse}
implies that the escape cone contains~$\Tmu$ and hence, by stratum
maximality, $\Emu = \Tmu$ is a vector space.  But in a vector space,
the Fr\'echet mean of any measure lies relative interior to the convex
hull of the support, so $\hull\hmu = \RR^\ell$.  (This direct argument
can be shortened to a simple citation of Lemma~\ref{l:supp-NLmu}, if
desired: $\log_\bmu$ is a tangential collapse here, so $\hmu =
\muL$.)  Distortion in this case is the inverse of the Hessian of the
Fr\'echet function (see Remark~\ref{r:inverse-hessian}) and therefore
encodes local geometric information~around~$\bmu$.
\end{example}

The next central limit theorem
collapses the singularity to get a Euclidean space~$\RR^m$ and then
corrects the distortion introduced by pushing the measure forward
to~$\RR^m$.

\pagebreak[3]

\begin{thm}[Geometric~CLT]\label{t:clt}
Fix a smoothly stratified metric space~$\MM$.  Any localized immured
amenable probability measure~$\mu$ on~$\MM$ admits a tangential
collapse $\LL: \Tmu \to\nolinebreak \RR^m$.
Using the resulting distortion $\HH: \RR^\ell \to \RR^\ell$ for
$\RR^\ell = \hull\muL \subseteq \RR^m$, the rescaled Fr\'echet sample
means~$\bmu_n$ satisfy
$$
  \lim_{n\to\infty} \sqrt n \log_\bmu \bmu_n
  \sim
  \HH_\sharp\cN(0,\Sigma),
$$
where $\cN(0,\Sigma)$ is the Gaussian measure on~$\RR^\ell$ with the
same covariance $\Sigma = \Cov(\muL)$ as the pushforward $\muL = (\LL
\circ \log_\bmu)_\sharp\mu$ of the measure~$\mu$ to~$\RR^m$.
\end{thm}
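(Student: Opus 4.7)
The plan is to deduce Theorem~\ref{t:clt} almost immediately from the perturbative CLT (Theorem~\ref{t:perturbative-CLT}) combined with the definitions of Gaussian mass (Definition~\ref{d:gaussian-section}) and distortion (Definition~\ref{d:distortion-map}), with Proposition~\ref{p:reduced-escape} ensuring well-definedness. The heavy lifting has already been done by the time this theorem is stated, so only a short assembly is required.

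First I would invoke Theorem~\ref{t:collapse} to produce a tangential collapse $\LL: \Tmu \to \RR^m$; this is where the localized hypothesis on~$\mu$ is used. Lemma~\ref{l:supp-NLmu} then identifies $\RR^\ell = \hull\muL$ as a linear subspace of~$\RR^m$, so the target of the distortion map $\HH$ in Definition~\ref{d:distortion-map} is well defined. Proposition~\ref{p:reduced-escape} (which is where the immured and amenable hypotheses are used) shows that $\HH(V) = \ev(\Delta)$ does not depend on the choice of section $\Delta \in \Lmui(V)$, so the map $V \mapsto \ev \circ \Lmui(V)$ is actually a function $\RR^\ell \to \RR^\ell$.

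Next I would appeal to the perturbative CLT (Theorem~\ref{t:perturbative-CLT}), which gives
\[
  \lim_{n\to\infty} \sqrt n \log_\bmu \bmu_n
  \;\overset{d}=\;
  \ev(\Gmu)
\]
for any Gaussian mass $\Gmu$ induced by~$\mu$. By Definition~\ref{d:gaussian-section}, such a Gmass exists as a section $\Gmu \in \Lmui(\NLmu)$ of a Gaussian resolving vector $\NLmu \sim \cN(0,\Sigma)$ with $\Sigma = \Cov(\muL)$, where $\muL = (\LL \circ \log_\bmu)_\sharp \mu$ is the collapsed measure. Combining this with the distortion formula $\HH(V) = \ev \circ \Lmui(V)$, we obtain $\ev(\Gmu) = \HH(\NLmu)$ as random variables, again using Proposition~\ref{p:reduced-escape} to legitimize writing this as a function of $\NLmu$ alone. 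Pushing forward the law of $\NLmu$ through $\HH$ yields $\ev(\Gmu) \sim \HH_\sharp \cN(0,\Sigma)$, which gives the conclusion.

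There is essentially no obstacle: every component has been arranged in the preceding sections precisely so that this theorem falls out by composition. The only subtlety worth flagging explicitly in the write-up is the interplay between the two equivalent viewpoints on the limit --- as escape of a random Gmass versus as distortion of a linear Gaussian --- and noting that the equivalence relies on Proposition~\ref{p:reduced-escape} to eliminate dependence on the choice of section, and on Theorem~\ref{t:Riesz-representation} (used inside the proof of Theorem~\ref{t:perturbative-CLT}) to identify the correct Gaussian.
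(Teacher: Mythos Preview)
Your proposal is correct and follows essentially the same route as the paper: produce the tangential collapse via Theorem~\ref{t:collapse}, invoke the perturbative CLT (Theorem~\ref{t:perturbative-CLT}) to get $\ev(\Gmu)$ as the limit, and then unwind Definitions~\ref{d:gaussian-section} and~\ref{d:distortion-map} to rewrite this as $\HH(\NLmu) \sim \HH_\sharp\cN(0,\Sigma)$. The paper's proof is slightly terser---it does not explicitly cite Lemma~\ref{l:supp-NLmu} or Proposition~\ref{p:reduced-escape}, treating those as already absorbed into Definition~\ref{d:distortion-map}---but your added remarks on well-definedness are accurate and do no harm.
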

\begin{proof}
Tangential collapse is produced by Theorem~\ref{t:collapse}.
Theorem~\ref{t:perturbative-CLT} then says that
\noheight{$\lim_{n\to\infty} \sqrt n \log_\bmu \bmu_n
\hspace{-.2ex}\overset{d}= \ev(\Gmu)$}.  By
Definition~\ref{d:gaussian-section} $\ev(\Gmu) \in \ev(\Lmui \NLmu)$
for any Gaussian resolving vector~$\NLmu$ from
Definition~\ref{d:gaussian-resolving-vector}.  Hence by
Definition~\ref{d:distortion-map} $\ev(\Gmu) = \HH(\NLmu)$, whose law
is $\HH_\sharp\cN(0,\Sigma)$ by definition of~$\NLmu$ and of
pushfoward measure.
\end{proof}

\begin{example}\label{e:manifold}
All of the central limit theorems, namely
Theorems~\ref{t:perturbative-CLT}, \ref{t:clt},
\ref{t:barycenter-CLT}, and~\ref{t:minimizer-CLT}, are unrestricted in
the case where~$\MM$ is a manifold by
Example~\ref{e:smooth-fluctuating-cone}.  In particular,
Theorem~\ref{t:clt} specializes directly to the
form~\eqref{eq:smoothCLT} of the smooth CLT.
\end{example}

\begin{remark}\label{r:non-linearity-of-HH}
The distortion map $\HH$ need not be continuous on~$\RR^\ell$.
Discontinuity of~$\HH$ can occur because tangential collapse~$\LL$
squashes closed sets in~$\Tmu$, so it behaves as a quotient map.  As a
result, even in simple cases when $\Tmu = \RR^m$ is a Euclidean space,
the limiting distribution $\HH_\sharp\cN(0,\Sigma)$ in
Theorem~\ref{t:clt} need not be Gaussian; see
\cite[Example~2.5]{kale-2015} for an example of this behavior.
\end{remark}

\subsection{CLT as derivative in a space of measures}\label{b:barycenter}
\mbox{}\medskip

\begin{remark}\label{r:bb}
Taking a Fr\'echet mean can be viewed as a map to~$\MM$ from the space
$\cP_2(\MM)$ of $L^2$-measures~on~$\MM$,
\begin{align*}
  \bb : \cP_2(\MM) &\to \MM
\\
               \mu &\mapsto \bmu.
\end{align*}
The derivative of such a map at a measure $\mu \in \cP_2(\MM)$ has the
form
$$
  \nablamu\bb: T_\mu\cP_2(\MM) \to \Tmu.
$$
\end{remark}

\begin{remark}\label{r:direction-deriv-barycenter}
An element of~$T_\mu\cP_2(\MM)$ is the initial tangent at $\mu \in
\cP_2(\MM)$ of a curve~$\gamma(t)$ for $t \geq 0$ in $\cP_2(\MM)$ that
emanates from~$\mu$ at $t = 0$.  For example, if $x \in \MM$ is any
point, then the measures $\mu + t\delta_x$ constitute such a curve
in~$\cP_2(\MM)$.  These curves give rise to many copies of~$\Tmu$
inside of~$T_\mu\cP_2(\MM)$: for any fixed positive $s \ll 1$ and any
vector~$X \in \Smu$ in the unit tangent sphere $\Smu \subseteq \Tmu$,
set $x(s) = \exp_\bmu(sX)$,~so
$$
  \Tmus
  =
  \{\mu + \tfrac ts \delta_{x(s)} \in \cP_2\MM
    \mid t \geq 0 \text{ and }
         X \in \Smu\}
$$
is such a copy by
Definition~\ref{d:stratified-space}.\ref{i:exponentiable}.  In
principle, these embedded copies~$\Tmus$ could give rise to different
notions of directional derivative of~$\bb$ at~$\mu$ along various
vectors $X \in \Tmu$, but homogeneity prevents that, as proved in
Lemma~\ref{l:bb}.
\end{remark}

\begin{defn}\label{d:bb-directional-deriv}
Fix an amenable measure~$\mu$ on a smoothly stratified metric
space~$\MM$.  The \emph{directional derivative of\/~$\bb$} at~$\mu$
along~$X$ in the space $\cP_2(\MM)$ of measures is
\begin{align*}
  \nablamu\bb: \Tmu
    & \to \Tmu
\\
  X &\displaystyle
     \mapsto \frac 1s \lim_{t\to 0} \frac 1t \log_\bmu
             \bb\bigl(\mu + t\delta_{\exp_\bmu sX}\bigr).
\end{align*}
for any positive $s \ll 1$, the particular choice of $s$ being
irrelevant by Lemma~\ref{l:bb}.
\end{defn}

\begin{lemma}\label{l:bb}
The limit in Definition~\ref{d:bb-directional-deriv} equals the escape
vector $\ev(X)$ from Definition~\ref{d:escape-vector}, independent of
the choice of~$s$:
$$
  \nablamu\bb(X) = \ev(X) \text{ for all } X \in \Tmu.
$$
\end{lemma}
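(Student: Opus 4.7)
The plan is to show that the limit in Definition~\ref{d:bb-directional-deriv} is just the escape vector repackaged, with the outer factor of $1/s$ undone by the inner factor of $s$ via homogeneity of escape. Concretely, I would first unpack the barycenter notation: since $\bb(\mu') = \ol{\mu'}$ for any measure $\mu'$, setting $y(s) = \exp_\bmu sX$ for $s > 0$ small enough that $sX$ is exponentiable (which exists by Definition~\ref{d:stratified-space}.\ref{i:exponentiable}) converts the defining expression into
$$
\nablamu\bb(X)
= \frac 1s \lim_{t\to 0^+} \frac 1t \log_\bmu\bigl(\,\ol{\mu + t\delta_{y(s)}}\,\bigr).
$$

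Next I would identify the inner limit directly as the escape vector of the point $y(s) \in \MM$: by Definition~\ref{d:escape-vector}.\ref{i:x} this limit is exactly $\ev(y(s))$, and by Definition~\ref{d:escape-vector}.\ref{i:X} (since $y(s) = \exp_\bmu sX$) it equals $\ev(sX)$. Thus
$$
\nablamu\bb(X) = \tfrac{1}{s}\,\ev(sX).
$$
The rescaling clause Definition~\ref{d:escape-vector}.\ref{i:rescale} then gives $\ev(X) = \tfrac 1s \ev(sX)$, provided that the rescaling procedure is well defined, i.e.\ independent of the choice of $s > 0$ for which $sX$ is exponentiable. That well-definedness is exactly the homogeneity statement in Corollary~\ref{c:homogeneity}, which is in turn a consequence of the explicit polar-coordinate formula for $\ev$ in Theorem~\ref{t:escape-vector-confinement}. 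Combining these yields $\nablamu\bb(X) = \ev(X)$ and simultaneously confirms independence from $s$.

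The ``hard part'' here is really not hard at all, since all the heavy machinery (existence of the limit defining $\ev$, its confinement to $\Emu$, and its homogeneous dependence on the perturbing direction) was assembled in Section~\ref{s:escape}. The only point requiring mild care is making sure the two sides of the equality use the same rescaling convention: the factor $1/s$ outside and the vector $sX$ inside must be matched to the homogeneity in Corollary~\ref{c:homogeneity}. In essence, the lemma is a direct corollary of Definition~\ref{d:escape-vector} together with Theorem~\ref{t:escape-vector-confinement}, and the proof should be no longer than a few lines once these citations are in place.
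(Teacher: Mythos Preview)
Your proposal is correct and takes essentially the same approach as the paper's one-line proof, which simply cites Corollary~\ref{c:homogeneity} applied to the equivalence of \eqref{a} and~\eqref{b} in Theorem~\ref{t:escape-vector-confinement}. Your version makes explicit what those citations encode: the inner limit is $\ev(sX)$ by definition (existence being guaranteed by the theorem), and the outer factor $1/s$ is canceled by the homogeneity of escape established in Corollary~\ref{c:homogeneity}.
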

\begin{proof}
Apply Corollary~\ref{c:homogeneity} to the equivalence of \eqref{a}
and~\eqref{b} in Theorem~\ref{t:escape-vector-confinement}.
\end{proof}

\begin{remark}\label{r:influence}
Lemma~\ref{l:bb} characterizes escape as a singular-space version of
the notion of influence function in robust statistics (see
\cite{mathieu2022} for a recent exposition and discussion of history),

a connection to be explored in future work.  See also
Remark~\ref{r:view-of-Upsilon} for related considerations.
\end{remark}

\begin{remark}\label{r:elegant-directional-deriv}
When the injectivity radius of~$\MM$ is infinite, such as when $\MM$
is $\CAT(0)$, the directional derivative of~$\bb$ can be elegantly
characterized without~$s \ll 1$~as
\begin{align*}
  \nablamu\bb: \Tmu
   & \to \Tmu
\\
  V&\mapsto \lim_{t\to 0} \frac 1t \log_\bmu \bb(\mu + t\delta_{v}).
\end{align*}
\end{remark}

\begin{remark}\label{r:semidifferentiable}
In the language of \cite[Definition~7.20]{rockafellar-wets2009}, the
map $\bb$ is semidifferentiable when the space of directions is taken
to be the set of point masses because
$$
  \lim_{t_n\to 0} \frac 1{t_n} \log_\bmu \bb(\mu + t_n\delta_{x_n})
  =
  \lim_{t\to 0} \frac 1t \log_\bmu \bb(\mu + t\delta_{x})
$$
when $x_n \to x$, which holds by
Theorem~\ref{t:escape-vector-confinement}.
\end{remark}

\begin{remark}\label{r:inverse-hessian}
It is shown in \cite{Tra20} that when $\MM$ is a Riemannian manifold,
$\nablamu\bb(\delta_v)$ coincides with the inverse of the Hessian
at~$\bmu$ of the Fr\'echet function of~$\mu$:
$$
  \nablamu\bb(V) = (\text{Hess}_\bmu F)^{-1}(V).
$$
\end{remark}

This discussion leads to an intrinsic characterization of the limiting
distribution in the stratified CLT in terms of Gaussian
{}masses~$\Gmu$ Definition~\ref{d:gaussian-section}.

\begin{thm}\label{t:barycenter-CLT}
The limiting distribution in the CLT for~$\bmu_n$
(Theorem~\ref{t:clt}) is the directional derivative, in the space
$\cP_2(\MM)$ of measures, of the barycenter map~$\bb$ at~$\mu$ in the
direction of a random discrete measure given by any Gaussian
{}mass~$\Gmu$:
$$
  \lim_{n\to\infty} \sqrt n \log_\bmu \bmu_n
  \overset{d}=
  \nablamu\bb(\Gmu).
$$
\end{thm}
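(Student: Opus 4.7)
The plan is to reduce Theorem~\ref{t:barycenter-CLT} to the perturbative CLT (Theorem~\ref{t:perturbative-CLT}) by showing that the directional derivative $\nablamu\bb$, suitably extended to act on finitely supported measures, coincides with the escape vector map on such measures. Since $\Gmu$ is almost surely a discrete measure sampled from~$\Rmu$, and Theorem~\ref{t:perturbative-CLT} already gives $\lim_{n\to\infty} \sqrt n \log_\bmu \bmu_n \overset{d}= \ev(\Gmu)$, the task is to identify $\ev(\Gmu)$ with $\nablamu\bb(\Gmu)$ pointwise in the sample space~$\Omega$.

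First I would extend Definition~\ref{d:bb-directional-deriv} to measures $\Delta = \lambda_1\delta_{Y^1} + \dots + \lambda_j\delta_{Y^j}$ sampled from~$\Tmu$ by setting
$$
  \nablamu\bb(\Delta)
  =
  \frac{1}{s}\lim_{t\to 0}\frac{1}{t}\log_\bmu
              \bb\bigl(\mu + t(\Delta s)^\sharp\bigr),
$$
where $(\Delta s)^\sharp = \lambda_1\delta_{\exp_\bmu sY^1} + \dots + \lambda_j\delta_{\exp_\bmu sY^j}$ for any positive $s\ll 1$ such that all $sY^i$ are exponentiable (cf.\ Definition~\ref{d:scaling} and Definition~\ref{d:stratified-space}\,(\ref{i:exponentiable})). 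This extension is legitimate because the rescaling procedure in Definition~\ref{d:delta-escape}(\ref{i:delta-rescale}) produces a well-defined vector by Corollary~\ref{c:homogeneity}. I would then prove the discrete-measure analogue of Lemma~\ref{l:bb}, namely
$$
  \nablamu\bb(\Delta) = \ev(\Delta)
  \quad\text{for every } \Delta \Subset \Tmu.
$$
The proof is essentially identical: by Definition~\ref{d:delta-escape}(\ref{i:delta})--(\ref{i:delta-rescale}), $\frac{1}{t}\log_\bmu\bb(\mu + t(\Delta s)^\sharp) \to \ev(\Delta s)$ as $t\to 0$, and Corollary~\ref{c:homogeneity} gives $\ev(\Delta s) = s\,\ev(\Delta)$, so dividing by~$s$ recovers~$\ev(\Delta)$ independent of the auxiliary~$s$.

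With this extension in hand, the theorem follows by a one-line chain. For each $\omega\in\Omega$ outside a null set, $\Gmu(\omega)$ is a finite positive combination of point masses in $\Rmu$ (Definition~\ref{d:gaussian-section}), so the extended Lemma~\ref{l:bb} gives the pointwise identity $\nablamu\bb(\Gmu(\omega)) = \ev(\Gmu(\omega))$, hence the equality of random variables $\nablamu\bb(\Gmu) = \ev(\Gmu)$. Combined with Theorem~\ref{t:perturbative-CLT}, this yields $\lim_{n\to\infty}\sqrt{n}\log_\bmu\bmu_n \overset{d}= \ev(\Gmu) = \nablamu\bb(\Gmu)$, as required.

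The main obstacle I anticipate is verifying that the directional derivative extension is intrinsic, in the sense that $\nablamu\bb(\Delta)$ depends only on $\Delta$ and not on the particular representation as a sum of point masses or the choice of auxiliary scaling parameter~$s$. Homogeneity in Corollary~\ref{c:homogeneity} is exactly what takes care of the $s$-independence, while Proposition~\ref{p:reduced-escape} (invoking the immured and amenable hypotheses) handles potential ambiguity between different measures with the same image under tangential collapse; these together make the extension well defined and allow the identification with~$\ev(\Delta)$ to proceed without additional hypotheses beyond those already assumed in Theorem~\ref{t:perturbative-CLT}.
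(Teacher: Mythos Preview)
Your proposal is correct and follows essentially the same route as the paper: invoke Theorem~\ref{t:perturbative-CLT} to get $\lim_{n\to\infty}\sqrt n\log_\bmu\bmu_n \overset{d}= \ev(\Gmu)$, then identify $\ev(\Gmu)$ with $\nablamu\bb(\Gmu)$ via Lemma~\ref{l:bb}. You are in fact more careful than the paper on one point: Definition~\ref{d:bb-directional-deriv} and Lemma~\ref{l:bb} are stated only for single tangent vectors $X\in\Tmu$, whereas $\Gmu$ is a random \emph{measure} sampled from~$\Tmu$, so the paper's citation of Lemma~\ref{l:bb} tacitly assumes the extension you spell out. Your use of Corollary~\ref{c:homogeneity} to handle the $s$-independence is exactly what is needed; the invocation of Proposition~\ref{p:reduced-escape} in your final paragraph is not required for the identity $\nablamu\bb(\Delta)=\ev(\Delta)$ itself (that follows directly from Definition~\ref{d:delta-escape} and homogeneity), though it is relevant elsewhere for showing independence from the choice of Gaussian mass.
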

\begin{proof}
Theorem~\ref{t:perturbative-CLT} characterizes the limiting
distribution in Theorem~\ref{t:clt} as the escape vector~$\ev(\Gmu)$,
which equals $\nablamu\bb(\Gmu)$ by Lemma~\ref{l:bb}.
\end{proof}

\begin{remark}\label{c:correction_map_v2}
Recall from Definition~\ref{d:gaussian-section} that tangential
collapse of the Gaussian {}mass~$\Gmu$ is a Euclidean
Gaussian vector $\LL(\Gmu) = \NLmu$ in $\RR^m$ with mean $0$ and
covariance $\Sigma = \Cov(\muL)$ for the pushforward $\muL = (\LL
\circ \log_\bmu)_\sharp \mu$ of the measure~$\mu$ to~$\RR^m$.  The
distortion map in Definition~\ref{d:distortion-map} can alternatively
be characterized as
\begin{align*}
  \HH: \RR^\ell & \to \Tmu
\\
              X &\mapsto \nablamu\bb(\Lmui X),
\end{align*}
which does not depend on the section choice~$\Lmui X$ by
Proposition~\ref{p:reduced-escape} and Lemma~\ref{l:bb}.
\end{remark}

\subsection{CLT as derivative in a space of functions}\label{b:minimizer}

\begin{defn}\label{d:minimizer-map}
Fix a measure~$\mu$ on a smoothly stratified metric space~$\MM$.  The
\emph{minimizer map} on continuous functions from the tangent cone
to~$\RR$,
\begin{align*}
\BB: \cC(\Tmu,\RR) &\to \Tmu
\\
                 f &\mapsto \BB(f) \in \argmin_{X \in \Tmu} f(X),
\end{align*}
takes each function~$f$ to a choice of minimizer.  (``$\BB$'' here is
for ``barycenter''.)
\end{defn}

\begin{remark}\label{r:view-of-Upsilon}
In terms of the function~$\Upt$ from Definition~\ref{d:Upsilon}, the
limit $\lim_{t \to 0} \frac 1t \Upt(tR)$ can be viewed as the
\emph{directional derivative} at \noheight{$\tF = F \circ \exp_\bmu$}
along~$-R$ of the minimizer map.  In other words,
$$
  \lim_{t \to 0} \frac{\Upt(tR)}{t}
  =
  \frac 1t \BB\bigl(\tF(X) - tR(X)\bigr)
  =
  \nablatF\BB(-R).
$$
This is a G\^ateaux derivative when $\MM$ is a manifold, but not when
$\bmu$ is a singular point of~$\MM$, because there the target~$\Tmu$
is not a vector space.  For functions~$-R$ such that $R$ is
representable as in Definition~\ref{d:representable},
Theorem~\ref{t:convergence-of-Upsilon} shows that the directional
derivative of~$\BB$ at~$\tF$ exists in the direction~$-R$: simply take
$R_n = R$ for all~$n$ there.  Alas, the goal is to apply this thinking
to Gaussian tangent fields~$G$ from
Definition~\ref{d:gaussian-tangent-field}, which are not themeselves
representable but are only limits of such; see
Theorem~\ref{t:tangent-field-CLT}, Corollary~\ref{c:cont-realization},
and especially Theorem~\ref{t:Riesz-representation}, the latter saying
that $G$ is representable on the closed fluctuating cone~$\oCmu$
instead of on the escape cone~$\Emu$ as required by representability.
Although it appears likely that
\begin{align}\label{eq:tH}
  \lim_{t \to 0} \frac 1t \argmin_{X \in \Tmue}\bigl(F(\exp_\bmu X) - tG(X)\bigr)
  =
  \lim_{t \to 0} \frac 1t \argmin_{X \in \oCmue}\bigl(F(\exp_\bmu X) - tG(X)\bigr),
\end{align}
the left-hand side being $\lim_{t \to 0} \frac 1t \Upt(tG)$ and the
right-hand side being $\lim_{t \to 0} \frac 1t H(t)$ by
Definition~\ref{d:H}, it does not appear to be immediate from our
methods that this confinement from~$\Tmue$ to~$\oCmue$ holds.  This
forces us, for the current purpose, to define a confined minimizer as
follows.
\end{remark}

\begin{defn}\label{d:minimizer-confined}
Fix a measure~$\mu$ on a smoothly stratified metric space~$\MM$.  The
\emph{confined minimizer map} on continuous functions from the tangent
cone to~$\RR$,
\begin{align*}
\fC: \cC(\Tmu,\RR) & \to \Tmu
\\
                 f &\mapsto \fC(f) \in \argmin_{X \in \oCmue} f(X),
\end{align*}
takes each function~$f$ to a choice of minimizer in the closed
fluctuating cone~$\oCmue$.  (``$\fC$''~here is for ``confined''.)
\end{defn}

The language of Remark~\ref{r:view-of-Upsilon} and
Definition~\ref{d:minimizer-confined} allows the limiting distribution
in the CLT for~$\bmu_n$ to be seen as the directional derivative of
the confined minimizer~map~$\fC$.

\begin{prop}\label{p:directional-derivative-BB}
Fix an amenable measure~$\mu$ on a smoothly stratified metric
space~$\MM$ and select $\fC$ as in Definition~\ref{d:minimizer-map}.
If
$$
  \tF = F\circ \exp_\bmu: \Tmue \to \RR,
$$
whose domain is the subset~$\Tmue \subseteq \Tmu$ of vectors that can
be exponentiated, then
$$
  \nablatF \fC(-G)
  =
  \lim_{t\to 0} \frac{\fC(\tF - tG)}{t}
  =
  \ev(\Gmu)
$$
is well defined on the Gaussian tangent field~$G$ induced by~$\mu$
(Definition~\ref{d:gaussian-tangent-field}) and~$\ev(\Gmu)$ is the
escape vector (Definition~\ref{d:delta-escape}) of any Gaussian
{}mass~\mbox{(Definition~\ref{d:gaussian-section})}.
\end{prop}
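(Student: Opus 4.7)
The plan is to identify the random variable $\fC(\tF - tG)$ with the Gaussian tangent perturbation $H(t)$ of Definition~\ref{d:H}, and then to invoke the Perturbative CLT (Theorem~\ref{t:perturbative-CLT}). For any exponentiable $X \in \Tmue$,
$$(\tF - tG)(X) = F(\exp_\bmu X) - tG(X),$$
so Definition~\ref{d:minimizer-confined} produces
$$\fC(\tF - tG) \in \argmin_{X \in \oCmue}\bigl(F(\exp_\bmu X) - tG(X)\bigr),$$
which is exactly the $\argmin$ set defining $H(t)$; with any consistent selection rule the two random variables coincide. Moreover $\fC(\tF) = 0$, since $\tF = F\circ\exp_\bmu$ attains its minimum on $\oCmue$ at the cone point $\log_\bmu \bmu = 0 \in \oCmue$. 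Hence the directional derivative in the statement reduces to
$$\nablatF\fC(-G) = \lim_{t\to 0}\frac{\fC(\tF - tG) - \fC(\tF)}{t} = \lim_{t\to 0}\frac{H(t)}{t},$$
which Theorem~\ref{t:perturbative-CLT} identifies with $\ev(\Gmu)$.

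To conclude that the directional derivative is well defined, I would confirm that $\ev(\Gmu)$ does not depend on the choice of Gaussian mass $\Gmu \in \Lmui(\NLmu)$. Existence of the escape vector itself is Theorem~\ref{t:escape-vector-confinement}, which assigns a well-defined vector (with explicit polar-coordinate formula in $\Lambda_\bmu$) to any measure sampled from $\Tmu$. For invariance under the section choice, if $\Gmu$ and $\Gmu'$ are both sections of the same Gaussian resolving vector $\NLmu$, then $\LL(\Gmu) = \NLmu = \LL(\Gmu')$, so Lemma~\ref{l:equal-LL=>equal-inner-prod} yields $\<\Gmu, X\> = \<\Gmu', X\>$ for all $X \in \oCmu$, and Proposition~\ref{p:reduced-escape} then gives $\ev(\Gmu) = \ev(\Gmu')$.

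There is no analytic obstacle specific to this proposition: the substantive work of passing the random tangent field convergence $\sqrt n\,\bgn \to G$ from Theorem~\ref{t:tangent-field-CLT} through an $\argmin$ via the extended continuous mapping theorem, together with the functional confinement machinery of Section~\ref{s:confinement} (especially Corollary~\ref{c:convergence-of-Upsilon}), has already been carried out to establish Theorem~\ref{t:perturbative-CLT}. The present proof is essentially a change of vocabulary, rephrasing the perturbative CLT as a directional derivative of the confined minimizer map $\fC$ at $\tF$ along $-G$ in the function space $\cC(\Tmu,\RR)$; the only genuine check is the literal identification of the two $\argmin$ sets, which is immediate from comparing Definitions~\ref{d:H} and~\ref{d:minimizer-confined}.
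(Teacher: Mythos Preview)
Your proof is correct and takes essentially the same approach as the paper: identify $\fC(\tF - tG)$ with $H(t)$ via Definitions~\ref{d:H} and~\ref{d:minimizer-confined}, then invoke the rightmost equality in Theorem~\ref{t:perturbative-CLT}. The paper's proof is two sentences to this effect; your additional paragraphs on $\fC(\tF)=0$ and on independence of the section choice (via Proposition~\ref{p:reduced-escape}) are reasonable elaborations not present in the paper's proof, though note that Proposition~\ref{p:reduced-escape} requires the immured hypothesis, which is not among the stated hypotheses of Proposition~\ref{p:directional-derivative-BB}---the paper's terse proof sidesteps this by citing Theorem~\ref{t:perturbative-CLT} directly, where all needed hypotheses are already assumed.
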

\begin{proof}
The expression inside the limit is $\frac 1t H(t)$ by
Definitions~\ref{d:minimizer-confined} and~\ref{d:H}.  The result is
then the rightmost equality in Theorem~\ref{t:perturbative-CLT}.
\end{proof}

\begin{thm}\label{t:minimizer-CLT}
The limiting distribution in the CLT for~$\bmu_n$ in
Theorem~\ref{t:clt} is the directional derivative, in the space
$\cC(\Tmu,\RR)$, of the confined minimizer map~$\fC$ at the Fr\'echet
function $\tF$ along the negative of the Gaussian tangent field~$G$
induced by~$\mu$:
$$
  \lim_{n\to\infty} \sqrt n \log_\bmu \bmu_n
  \overset{d}=
  \nablatF \fC(-G).
$$
\end{thm}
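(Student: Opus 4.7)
The plan is to observe that Theorem~\ref{t:minimizer-CLT} is essentially a direct corollary of results already in hand: the perturbative CLT (Theorem~\ref{t:perturbative-CLT}) and the characterization of the directional derivative of~$\fC$ in Proposition~\ref{p:directional-derivative-BB}. Both of these identify their respective objects with the escape vector $\ev(\Gmu)$ of a Gaussian {}mass, so the proof amounts to chaining these identifications together through equality in distribution.

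More explicitly, first I would invoke Proposition~\ref{p:directional-derivative-BB}, which gives
\[
  \nablatF \fC(-G) = \lim_{t\to 0}\frac{\fC(\tF - tG)}{t} = \ev(\Gmu),
\]
where $\Gmu$ is any Gaussian {}mass induced by~$\mu$. This step relies on the fact that $\fC(\tF - tG)$ unwinds to the Gaussian tangent perturbation $H(t)$ via Definitions~\ref{d:minimizer-confined} and~\ref{d:H}, together with the rightmost equality of Theorem~\ref{t:perturbative-CLT}, so the proposition already packages the relevant computation.

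Next I would apply Theorem~\ref{t:perturbative-CLT} in its leftmost form, which gives the distributional identity
\[
  \lim_{n\to\infty} \sqrt n \log_\bmu \bmu_n \overset{d}= \ev(\Gmu).
\]
Combining this with the equality from Proposition~\ref{p:directional-derivative-BB} yields
\[
  \lim_{n\to\infty} \sqrt n \log_\bmu \bmu_n \overset{d}= \nablatF \fC(-G),
\]
which is exactly the statement of Theorem~\ref{t:minimizer-CLT}. No additional hypothesis verification is needed, since the hypotheses (localized, immured, amenable, smoothly stratified) are inherited from Theorem~\ref{t:perturbative-CLT} and Proposition~\ref{p:directional-derivative-BB}, and they match those implicitly in force throughout Section~\ref{b:minimizer}.

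There is no real obstacle here—the hard work has been absorbed into Theorem~\ref{t:perturbative-CLT} (whose proof invokes the extended continuous mapping theorem applied to the functional convergence in Theorem~\ref{t:tangent-field-CLT}, together with the representability of $G$ on~$\oCmu$ from Theorem~\ref{t:Riesz-representation} and the deterministic confinement in Corollary~\ref{c:convergence-of-Upsilon}) and into Proposition~\ref{p:directional-derivative-BB} (which simply unpacks definitions). The only interpretive point worth flagging in the write-up is the subtlety highlighted in Remark~\ref{r:view-of-Upsilon}: the directional derivative is taken with $\fC$ rather than the unconfined minimizer $\BB$, because confinement of the unconfined minimizer $\wt H(t)$ from $\Tmue$ down to $\oCmue$ in the Gaussian-field limit—the analogue of~\eqref{eq:tH}—is not yet established and is left as Conjecture~\ref{conj:minimizer-CLT}. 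Accordingly, the proof should explicitly restrict to~$\fC$ and refer the reader to that remark for context.
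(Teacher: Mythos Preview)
Your proposal is correct and matches the paper's own proof essentially verbatim: the paper also invokes Proposition~\ref{p:directional-derivative-BB} to identify $\nablatF\fC(-G)$ with $\ev(\Gmu)$ and then Theorem~\ref{t:perturbative-CLT} to identify $\ev(\Gmu)$ with $\lim_{n\to\infty}\sqrt n\log_\bmu\bmu_n$ in distribution. Your additional commentary on the confinement subtlety and Conjecture~\ref{conj:minimizer-CLT} is accurate context but not part of the proof itself.
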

\begin{proof}
Proposition~\ref{p:directional-derivative-BB} says that the right-hand
side is~$\ev(\Gmu)$.  Theorem~\ref{t:perturbative-CLT} implies that
$\ev(\Gmu)$ equals the left-hand side, which is the same as in
Theorem~\ref{t:clt}.
\end{proof}

\begin{conj}\label{conj:minimizer-CLT}
Theorem~\ref{t:minimizer-CLT} holds after the word ``confined'' is
deleted and every symbol~$\fC$ is replaced by~$\BB$.  That is,
confinement in \eqref{eq:tH} from Remark~\ref{r:view-of-Upsilon} is
true.
\end{conj}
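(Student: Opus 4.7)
The conjecture amounts to showing that the unconfined Gaussian perturbation $\wt H(t) \in \argmin_{X \in \Tmue}(F(\exp_\bmu X) - tG(X))$ satisfies $\lim_{t \to 0} \frac{1}{t}\wt H(t) = \ev(\Gmu)$ (in distribution), matching the confined limit $\lim_{t\to 0}\frac 1 t H(t) = \ev(\Gmu)$ already identified by Theorem~\ref{t:perturbative-CLT}. Equivalently, the goal is to show that $\wt H(t) \in \oCmue$ almost surely for all sufficiently small $t$, so that $\wt H(t) = H(t)$ eventually. My plan is to establish this pathwise, leveraging a.s.\ H\"older continuity of $G$ from Corollary~\ref{c:cont-realization} and adapting the deterministic confinement machinery of Section~\ref{s:confinement}.

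Fix an $\omega \in \Omega$ such that $G = G(\omega)$ is H\"older continuous on $\Smu$ and $\Gmu = \Gmu(\omega)$ represents $G$ on $\oCmu$ as in Theorem~\ref{t:Riesz-representation}. First I would prove pathwise analogues of Lemma~\ref{l:bound-of_Upsilon} (boundedness $\|\wt H(t)\| = O(t)$ via strong convexity from Lemma~\ref{l:uniform-convexity} and $\sup_\theta |G(\theta)| < \infty$), Lemma~\ref{l:formula-wd_n} (polar coordinates via the Taylor expansion in Proposition~\ref{p:Lambda}), and Lemma~\ref{l:theta_an-to-Cmu} (direction concentration), yielding
$$
  \theta_{\wt H(t)}
  \in
  \argmax_{\theta \in \Smu}\frac{tG(\theta) - \nablabmu F(\theta)}{\sqrt{\Lambda_\bmu(\theta)}}
$$
with $\dd_s(\theta_{\wt H(t)}, \SEmu) \to 0$ as $t \to 0$, the concentration following from positivity of $\nablabmu F$ outside $\SEmu$. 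The radius formula analogous to Lemma~\ref{l:convergence-of-rd_n} then gives $r_{\wt H(t)}/t \to G(\theta_\infty)^+ / (2\Lambda_\bmu(\theta_\infty))$ where $\theta_\infty$ is any limit point of $\theta_{\wt H(t)}$ in~$\SEmu$. The remaining task is to sharpen concentration of $\theta_{\wt H(t)}$ from $\SEmu$ to $\oSCmu$, so that $\theta_\infty = \theta_{\ev(\Gmu)}$ by the polar coordinate formula in Theorem~\ref{t:perturbative-CLT}.

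For the sharpening, I would approximate $G$ almost surely in $\cC(\Smu, \RR)$ by representable functions $R_n = \sqrt n\,\bgn$ via Corollary~\ref{c:cont-realization}; by Lemma~\ref{l:represent}, each $R_n$ is represented on $\oCmu$ by a measure $\Delta_n \Subset \Rmu$, and Corollary~\ref{c:confinement-to-Cmu} combined with the polar formula in Theorem~\ref{t:escape-vector-confinement} forces $\theta_{\ev(\Delta_n)} \in \oSCmu$. One then tries to transfer confinement to $G$ by passing to the limit in the argmax under uniform convergence. The main obstacle lies precisely here: since $G$ is \emph{not} a priori represented by $\<\Gmu, \cdot\>$ on $\SEmu \setminus \oSCmu$, the argmax of $G/\sqrt{\Lambda_\bmu}$ on $\SEmu$ could in principle strictly exceed its maximum on $\oSCmu$, and the argmax operation is not generally continuous under uniform convergence. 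Overcoming this will likely require almost-sure uniqueness of the argmax on $\oSCmu$ (derivable from non-degeneracy of the covariance $\Sigma$ on $\hull\muL = \RR^\ell$ via Lemma~\ref{l:supp-NLmu}) together with an upper-semicontinuity argument on the compact sphere $\Smu$, or alternatively a strengthened version of Theorem~\ref{t:Riesz-representation} that controls the behaviour of $G$ on $\SEmu \setminus \oSCmu$ via the Gaussian mass~$\Gmu$. Once the sharpened confinement is in hand, $\theta_\infty = \theta_{\ev(\Gmu)}$ and the radius formula above yields $r_{\wt H(t)}/t \to r_{\ev(\Gmu)}$, completing the proof.
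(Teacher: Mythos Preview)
This statement is labeled a \emph{conjecture} in the paper, and the authors do not prove it.  They explicitly flag the difficulty in Remark~\ref{r:naturally-confined} (``left as an open question'') and in Remark~\ref{r:view-of-Upsilon} (``it does not appear to be immediate from our methods'').  So there is no paper proof to compare against; the question is whether your plan actually closes the gap the authors identify.

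Your preliminary steps---pathwise boundedness of~$\wt H(t)/t$, the polar formulas, and concentration of $\theta_{\wt H(t)}$ onto~$\SEmu$---are sound and essentially replay Section~\ref{s:confinement} with the constant function~$G$ in place of a representable sequence.  The obstacle you isolate, sharpening the concentration from~$\SEmu$ to~$\oSCmu$, is exactly the one that makes this a conjecture rather than a theorem.  But your proposed resolutions do not overcome it.

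First, your worry that $\max_{\SEmu} G/\sqrt{\Lambda_\bmu}$ might \emph{strictly exceed} $\max_{\oSCmu} G/\sqrt{\Lambda_\bmu}$ is misplaced.  For each representable~$R_n$, uniqueness of the escape vector (Proposition~\ref{p:unique-c-and-limits}) together with Corollary~\ref{c:confinement-to-Cmu} forces $\argmax_{\SEmu} R_n/\sqrt{\Lambda_\bmu} = \{\theta_{\ev(\Delta_n)}\} \subseteq \oSCmu$, so $\max_{\SEmu} = \max_{\oSCmu}$ for every~$n$, and this equality of maxima survives the uniform limit $R_n \to G$.  The real issue is that the \emph{argmax set} $\argmax_{\SEmu} G/\sqrt{\Lambda_\bmu}$ may properly contain points of $\SEmu \setminus \oSCmu$ achieving that same maximal value, and nothing prevents $\theta_{\wt H(t)}$ from converging to one of those.

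Second, the uniqueness you invoke is on the wrong set.  What would suffice---combined with the observation above that the argmax on~$\SEmu$ meets~$\oSCmu$---is almost-sure uniqueness of the argmax on~$\SEmu$, not on~$\oSCmu$.  But non-degeneracy of $\Sigma$ on $\RR^\ell = \hull\muL$ only controls the covariance structure of~$G$ on~$\oSCmu$, where tangential collapse is an isometry (Definition~\ref{d:collapse}.\ref{i:partial-isometry}); on $\SEmu \setminus \oSCmu$ the collapse~$\LL$ may identify distinct directions, in which case $G(V) = G(W)$ almost surely for $V \neq W$, and if also $\Lambda_\bmu(V) = \Lambda_\bmu(W)$ the argmax on~$\SEmu$ cannot be a singleton.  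Your fallback---a strengthened Theorem~\ref{t:Riesz-representation} controlling~$G$ on $\SEmu \setminus \oSCmu$---is not a concrete step; it is a restatement of what is missing.  In short, your plan accurately reproduces the paper's understanding of where the difficulty lies but does not supply the missing idea.
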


\begin{remark}\label{r:estimator}
Conjecture~\ref{conj:minimizer-CLT} could have implications for limit
theory of statistical estimators in singular geometric contexts.  In
smooth contexts, Theorem~\ref{t:minimizer-CLT} already suffices, since
the fluctuating cone is already a vector space by
Example~\ref{e:smooth-fluctuating-cone}.  Thus
Theorem~\ref{t:minimizer-CLT} demonstrates the power of thinking in
ways amenable to singular settings---in this case, with radial
variation along a Gaussian tangent field instead of the usual spatial
variation, as discussed in \cite{random-tangent-fields}---even when
the setting is~smooth.
\end{remark}

\vfill
\eject

\end{document}